\newcommand*\tasklabelformat[1]{#1)}
\numberwithin{equation}{section}
\newtheorem*{rep@theorem}{\rep@title}
\newcommand{\newreptheorem}[2]{%
\newenvironment{rep#1}[1]{%
 \def\rep@title{#2 \ref{##1}}%
 \begin{rep@theorem}}%
 {\end{rep@theorem}}}
\theoremstyle{theorem}
\newtheorem{thm}{Theorem}[section]
\newtheorem*{thm*}{Theorem}
\theoremstyle{definition}
\newtheorem{prop}[thm]{Proposition}
\newtheorem*{prop*}{Proposition}
\newtheorem{lem}[thm]{Lemma}
\newtheorem*{cor*}{Corollary}
\theoremstyle{remark}
\newtheorem{rem}[thm]{Remark}
\title{\vspace*{-1.5cm} Geometry at the infinity of the space of positive metrics:
\\
test configurations, geodesic rays and chordal distances
}
\author
{Siarhei Finski
}
\date{}
\newcommand{\imun} {\sqrt{-1}}
\newcommand{\res}{{\rm{Res}}}
\newcommand{\comp}{\mathbb{C}}
\newcommand{\real}{\mathbb{R}}
\newcommand{\nat}{\mathbb{N}}
\newcommand{\integ}{\mathbb{Z}}
\newcommand{\enmr}[1]{\text{End}{(#1)}}
\newcommand{\ccal}{\mathscr{C}}
\newcommand{\dbar}{ \overline{\partial} }
\newcommand{\rk}[1]{{\rm{rk}} ( #1 )}
\newcommand{\tr}[1]{{\rm{Tr}} \big[ #1 \big]}
\DeclareFontFamily{OMX}{MnSymbolE}{}
\DeclareSymbolFont{MnLargeSymbols}{OMX}{MnSymbolE}{m}{n}
\DeclareFontShape{OMX}{MnSymbolE}{m}{n}{
    <-6>  MnSymbolE5
   <6-7>  MnSymbolE6
   <7-8>  MnSymbolE7
   <8-9>  MnSymbolE8
   <9-10> MnSymbolE9
  <10-12> MnSymbolE10
  <12->   MnSymbolE12
}{}
\DeclareFontShape{OMX}{MnSymbolE}{b}{n}{
    <-6>  MnSymbolE-Bold5
   <6-7>  MnSymbolE-Bold6
   <7-8>  MnSymbolE-Bold7
   <8-9>  MnSymbolE-Bold8
   <9-10> MnSymbolE-Bold9
  <10-12> MnSymbolE-Bold10
  <12->   MnSymbolE-Bold12
}{}
\let\llangle\@undefined
\let\rrangle\@undefined
\DeclareMathDelimiter{\llangle}{\mathopen}%
                     {MnLargeSymbols}{'164}{MnLargeSymbols}{'164}
\DeclareMathDelimiter{\rrangle}{\mathclose}%
                     {MnLargeSymbols}{'171}{MnLargeSymbols}{'171}
\newenvironment{sciabstract}{}
\begin{document}

\maketitle

\begin{sciabstract}
  \textbf{Abstract.}
  From the work of Phong and Sturm in 2007, for a polarised projective manifold and an ample test configuration, one can associate the geodesic ray of plurisubharmonic metrics on the polarising line bundle using the solution of the Monge-Ampère equation on an equivariant resolution of singularities of the test configuration.  We prove that the Mabuchi chordal distance between the geodesic rays associated with two ample test configurations coincides with the spectral distance between the associated filtrations on the section ring. 
  \par 
  This gives an algebraic description of the boundary at the infinity of the space of positive metrics, viewed — as it is usually done for spaces of negative curvature — through geodesic rays.
\end{sciabstract}

\pagestyle{fancy}
\lhead{}
\chead{Geometry at the infinity of the space of positive metrics}
\rhead{\thepage}
\cfoot{}


\newcommand{\Addresses}{{
  \bigskip
  \footnotesize
  \noindent \textsc{Siarhei Finski, CNRS-CMLS, École Polytechnique F-91128 Palaiseau Cedex, France.}\par\nopagebreak
  \noindent  \textit{E-mails }: \texttt{finski.siarhei@gmail.com} $\quad$ or  $\quad$  \texttt{siarhei.finski@polytechnique.edu}.
}} 

\vspace*{0.25cm}

\par\noindent\rule{1.25em}{0.4pt} \textbf{Table of contents} \hrulefill

\vspace*{-1.5cm}

\tableofcontents

\vspace*{-0.2cm}

\noindent \hrulefill


\section{Introduction}\label{sect_intro}
	The main goal of this article is to study the geometry at the infinity of the space of positive metrics on an ample line bundle over a given projective manifold.
	Here we view the infinity in terms of geodesic rays; this point of view goes in line with the general philosophy advocated by Donaldson \cite{DonaldSymSp} that the space of positive metrics on an ample line bundle is as an infinite-dimensional manifold of non-positive sectional curvature. In this perspective, our study here is similar to the study of Tits boundary of ${\rm{CAT}}(0)$ spaces, cf. \cite{HaefligerBrid}.
	\par 
	More precisely, let $X$ be a complex projective manifold, and let $L$ be an ample line bundle over $X$.
	We denote by $\mathcal{H}^L$ (or simply $\mathcal{H}$ for brevity) the space of positive Hermitian metrics on $L$.
	\par 
	For any $p \in [1, +\infty]$, one can introduce on $\mathcal{H}$ a collection of $L^p$-type Mabuchi metrics, see Section \ref{sect_pp_thr}.
	Using these Finsler metrics, we introduce the path length metric structures $(\mathcal{H}, d_p)$.
	By \cite{DarvasFinEnerg}, the metric completions $(\mathcal{E}^p, d_p)$ of $(\mathcal{H}, d_p)$ are complete geodesic metric spaces, which means that between any two points of $\mathcal{E}^p$, there is a geodesic of $(\mathcal{E}^p, d_p)$ connecting them.
	\par 
	By definition, a \textit{geodesic ray} in $(\mathcal{E}^p, d_p)$ is the distinguished geodesic segment, closed at one extremity and of infinite length, which can be constructed as some psh envelope, see (\ref{eq_geod_as_env}), or, alternatively, as a solution to a certain Monge-Ampère equation, see (\ref{eq_ma_geod}).
	For $p \in ]1, +\infty[$, by the result of Darvas-Lu \cite[Theorem 1.2]{DarLuGeod}, the space $(\mathcal{E}^p, d_p)$ is uniquely geodesic, and the above notion of geodesic rays coincides with the respective notion in the sense of metric spaces.
	\par 
	By \cite[Theorem 1.3]{DarLuGeod}, the space of geodesic rays satisfies Euclid's 5th postulate for half-lines, meaning that geodesic rays departing from different initial points are in bijective correspondence.
	From now on, we consider geodesic rays departing from a fixed initial point.
	\par 
	Following Darvas-Lu \cite[(3)]{DarLuGeod}, we define the chordal $L^p$-distance, $d_p(\{ h^{L, 1}_{t} \}, \{ h^{L, 2}_{t} \})$, $p \in [1, +\infty]$, between two geodesic rays $h^{L, 1}_{t}, h^{L, 2}_{t}$, $t \in [0, + \infty[$, in the following way
	\begin{equation}\label{eq_defn_chordal}
		d_p(\{ h^{L, 1}_{t} \}, \{ h^{L, 2}_{t} \}) := 
		\lim_{t \to \infty} \frac{d_p(h^{L, 1}_{t}, h^{L, 2}_{t} )}{t}.
	\end{equation}
	The limit is finite by the triangle inequality and it exists by the fact that metric spaces $(\mathcal{E}^p, d_p)$ are Buseman convex, see Chen-Cheng \cite[Theorem 1.5]{ChenCheng3}, cf. (\ref{eq_bus_conv_mab}) and after Lemma \ref{eq_lim_d_p_inf}.
	Darvas-Lu in \cite[Theorem 1.3]{DarLuGeod} proved that the chordal distance is indeed a distance on the space of geodesic rays departing from the same initial point (in particular, it separates the geodesic rays).
	\par 
	Geodesic rays have recently found applications in several areas of complex geometry.
	Most notably, many results towards Yau-Tian-Donaldson conjecture, studying the existence of constant scalar curvature Kähler metrics in a given Kähler class, rely substantially on geodesic rays, see Phong-Ross-Sturm \cite{PhongRossSturm}, Paul-Tian \cite{PaulTianII}, Berman-Boucksom-Jonsson \cite{BerBouckJonYTD} or Li \cite{ChiLiENS}.
	\par 
	Part of the reason for this is that while the points of the space $\mathcal{H}$ parametrize geometric objects, (a subset of) points on the boundary at the infinity of $\mathcal{H}$ are parametrized by ample test-configurations -- some special degenerations of manifolds, algebraic in nature.
	This proves useful in relating the existence of a certain metric on the line bundle to some algebraic obstruction.
	\par 
	The main goal of the current article is to further investigate the geometry at the infinity of the space of positive metrics on a given ample line bundle by studying chordal distances between pairs of geodesic rays.
	As we show, for geodesic rays generated by ample test configurations, this chordal distance coincides with the spectral distance on filtrations on the section ring associated with the test configurations.
	This fulfills the general philosophy of Boucksom-Hisamoto-Jonsson \cite[Definition 3.4]{BouckHisJon} for the distance functional, saying that the limiting behavior of a functional on the boundary of the space of positive metrics should be related with an appropriate functional defined on the space of non-Archimedean metrics on the line bundle.
	\par 
	Remark that the chordal distance is a complex-geometric quantity, defined using complex pluripotential theory, and spectral distance on the filtrations is a purely algebrogeometric quantity.
	Our result, hence, lies on the interface of the three domains.
	\par 
	To describe our main statement in more details, recall that on the geometric side, to any ample test configuration $\mathcal{T}$ of $(X, L)$ and a fixed positive metric $h^L_0$ on $L$, Phong-Sturm in \cite[Theorem 3]{PhongSturmDirMA} associated a geodesic ray $h^{\mathcal{T}}_t$, $t \in [0, + \infty[$, of plurisubharmonic metrics on $L$ emanating from $h^L_0$ by considering the solution of the Dirichlet problem for a Monge-Ampère equation over a $\comp^*$-equivariant resolution of singularities of the test configuration with boundary conditions prescribed by the initial point of the ray, see Section \ref{sect_filt} for details.
	By the results of Chu-Tosatti-Weinkove \cite{ChuTossVeinC11}, based on the previoius work of Phong-Sturm \cite{PhongSturmRegul}, the metrics $h^{\mathcal{T}}_t$ are $\mathscr{C}^{1, 1}$; from toric examples \cite{ChenTangAster}, \cite{SongZeld}, we cannot hope for a better regularity in general. 
	\par 
	On the algebraic side, recall that Witt Nystr{\"o}m in \cite[Lemma 6.1]{NystOkounTest} associated with any test configuration $\mathcal{T}$ a submultiplicative filtration $\mathcal{F}^{\mathcal{T}}$ on the \textit{section ring}
	\begin{equation}
		R(X, L) := \oplus_{k = 1}^{\infty} H^0(X, L^k),
	\end{equation}
	by considering the vanishing order along the central fiber of $\mathcal{T}$ of the $\comp^*$-equivariant meromorphic extension of a section from $R(X, L)$, see Section \ref{sect_filt} for details.
	\par 
	Now, for any two filtrations $\mathcal{F}_1, \mathcal{F}_2$ on a finitely dimensional vector space $V$, and any $p \in [1, +\infty]$, we define spectral distances $d_p(\mathcal{F}_1, \mathcal{F}_2)$ using the $l^p$-norms of the joint spectrum of the filtrations $\mathcal{F}_1, \mathcal{F}_2$, see (\ref{eq_dp_filtr}) for details.
	It was established by Chen-Maclean \cite[Theorem 4.3]{ChenMaclean}, cf. also Boucksom-Jonsson \cite[Theorem 3.3 and \S 3.4]{BouckJohn21}, that for the filtrations $\mathcal{F}^{\mathcal{T}_1}, \mathcal{F}^{\mathcal{T}_2}$ associated with ample test configurations $\mathcal{T}_1, \mathcal{T}_2$, and any $p \in [1, +\infty[$, the following limit exists
	\begin{equation}\label{eq_spec_dist}
		d_p(\mathcal{F}^{\mathcal{T}_1}, \mathcal{F}^{\mathcal{T}_2}) := 
		\lim_{k \to \infty} \frac{d_p(\mathcal{F}^{\mathcal{T}_1}_k, \mathcal{F}^{\mathcal{T}_2}_k)}{k},
	\end{equation}
	where $\mathcal{F}^{\mathcal{T}_1}_k, \mathcal{F}^{\mathcal{T}_2}_k$, $k \in \nat$, are the restrictions of $\mathcal{F}^{\mathcal{T}_1}, \mathcal{F}^{\mathcal{T}_2}$ on the graded pieces $H^0(X, L^k)$.
	We shall prove, cf. Remark \ref{rem_chen_mclean_alt_pf}, that the limit also exists for $p = +\infty$.
	\par 
	Our main result of this article says that the geometric and algebraic viewpoints on the distances associated with ample test configurations are compatible.
	\begin{thm}\label{thm_dist_na}
		For any ample test configurations $\mathcal{T}_1, \mathcal{T}_2$ and any $p \in [1, +\infty]$, we have
		\begin{equation}\label{eq_dist_na}
			d_p \big(\{ h^{\mathcal{T}_1}_t \}, \{ h^{\mathcal{T}_2}_t \} \big)
			=
			d_p \big( \mathcal{F}^{\mathcal{T}_1}, \mathcal{F}^{\mathcal{T}_2} \big).
		\end{equation}
	\end{thm}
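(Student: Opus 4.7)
The plan is to reduce both sides of (\ref{eq_dist_na}) to finite-dimensional spectral quantities on the graded pieces $H^0(X, L^k)$ via Bergman-type quantization, and then to pass to the limit $k \to \infty$. The case $p = +\infty$ should follow from the cases $p \in [1, +\infty[$ by a monotone limiting argument (combined with a separate verification that the limit in (\ref{eq_spec_dist}) also exists for $p = +\infty$, as promised in the excerpt); we therefore focus on finite $p$.

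\textbf{Step 1} (Quantization of the Mabuchi $d_p$-distance). Fix a smooth volume form $\mu$ on $X$. Any $h^L \in \mathcal{H}$ induces an $L^2$-Hermitian inner product $H^k(h^L)$ on $H^0(X, L^k)$, and for a pair $h^L_1, h^L_2 \in \mathcal{H}$ the joint spectrum of $(H^k(h^L_1), H^k(h^L_2))$ is well-defined. Standard Bergman kernel asymptotics, combined with Darvas' comparison between the $d_p$-distance and the relative $L^p$-energy, should yield
$$d_p(h^L_1, h^L_2) = \lim_{k \to \infty} \frac{1}{k}\, d_p\big(H^k(h^L_1), H^k(h^L_2)\big),$$
where the right-hand side is the appropriately normalized $l^p$-distance on joint spectra. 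A refinement valid on $\mathcal{E}^p$, applied to the $\mathscr{C}^{1,1}$ metrics along the Phong-Sturm ray, will be needed.

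\textbf{Step 2} (Asymptotic diagonalization by the filtration). The key analytic step is to show that along $h^{\mathcal{T}}_t$, the induced $L^2$-inner products $H^{\mathcal{T}, k}_t := H^k(h^{\mathcal{T}}_t)$ are asymptotically diagonalized by $\mathcal{F}^{\mathcal{T}}_k$: if $\{s_j\}$ is a basis of $H^0(X, L^k)$ adapted to $\mathcal{F}^{\mathcal{T}}_k$ with jump values $\lambda_j$, then
$$\log \|s_j\|^2_{H^{\mathcal{T}, k}_t} = -2 t \lambda_j + O(1) \quad \text{as } t \to \infty,$$
with errors controllable uniformly in $j$ and (ideally) in $k$. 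This can be derived by pulling the computation back to an equivariant resolution of $\mathcal{T}$, where the sections extend meromorphically with vanishing orders $\lambda_j$ along the central fiber and the $\comp^*$-weight structure makes the Monge-Ampère solution of Phong-Sturm amenable to an explicit normal form analysis, using the $\mathscr{C}^{1,1}$-regularity from Chu-Tosatti-Weinkove.

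\textbf{Step 3} (Combining the steps). Applying Step 1 to the pair $(h^{\mathcal{T}_1}_t, h^{\mathcal{T}_2}_t)$ and dividing by $t$, the joint spectrum of $t^{-1}(H^{\mathcal{T}_1, k}_t, H^{\mathcal{T}_2, k}_t)$ converges — by Step 2 — as $t \to \infty$ to the joint spectrum of $(\mathcal{F}^{\mathcal{T}_1}_k, \mathcal{F}^{\mathcal{T}_2}_k)$. Then the $k \to \infty$ limit recovers $d_p(\mathcal{F}^{\mathcal{T}_1}, \mathcal{F}^{\mathcal{T}_2})$ by Chen-Maclean. The main obstacle is the interchange of the $t \to \infty$ and $k \to \infty$ limits. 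I would address this using two monotonicity principles: the Buseman convexity of $(\mathcal{E}^p, d_p)$ of Chen-Cheng, which makes $t \mapsto d_p(h^{\mathcal{T}_1}_t, h^{\mathcal{T}_2}_t)/t$ monotone, and the subadditivity underlying (\ref{eq_spec_dist}), which controls the filtration-side limit from above. Combined with a uniform-in-$t$ quantification of the $O(1)$ error in Step 2 — which is the most delicate point and will likely require extending the standard quantization estimates beyond the smooth setting to geodesic rays coming from test configurations — these should allow the double limit to be computed in either order, proving (\ref{eq_dist_na}).
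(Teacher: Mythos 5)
Your three-step scheme correctly identifies the macroscopic architecture that the paper itself follows: quantize the Mabuchi geometry via $L^2$-inner products on $H^0(X,L^k)$, compare those inner products with the geodesic rays of Hermitian norms attached to the filtration, and then pass to the double limit in $k$ and $t$ using finite-dimensional Busemann convexity and the spectral-measure convergence of Chen--Maclean. The paper's proof of the lower bound of (\ref{eq_dist_na}) rests on precisely these ideas, organized as Theorems~\ref{thm_2_step}, \ref{thm_3_step} and (for a preliminary reduction) \ref{thm_1_step}. So the plan is the right one. However, the hard content of the theorem lies exactly where you wave your hands, and some of your intermediate claims are not quite true as stated.

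The decisive difficulty is the interchange of the limits $t\to\infty$ and $k\to\infty$. Your Step~1 is, for fixed $t$, essentially the theorem of Darvas--Lu--Rubinstein, and your Step~2 is, for fixed $k$, a plausible consequence of the meromorphic extension of sections across the central fiber. But the existence of the two iterated limits, together with monotonicity of the $t$-limit on one side, is \emph{not} sufficient to equate them; one needs a quantitative two-parameter estimate. The paper supplies this in Theorem~\ref{thm_3_step}, which asserts $d_p({\rm{Hilb}}_k(h^{\mathcal{T}_1}_t,\omega),{\rm{Hilb}}_k(h^{\mathcal{T}_2}_t,\omega))\leq k\,d_p(h^{\mathcal{T}_1}_t,h^{\mathcal{T}_2}_t)+C(k+t)+\epsilon kt$ uniformly in both parameters; it is this one-sided uniform bound that kills the cross term in the double limit. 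Proving it requires a substantial package of tools that your outline does not mention: the rooftop envelope and Pythagorean formula to reduce to ordered metrics, Berndtsson's quantized maximum principle, and a new uniform Bergman kernel expansion on degenerating families of manifolds (Theorem~\ref{thm_unif_bound_toepl}), which in turn forces a reduction to ample semistable snc models (Theorem~\ref{thm_1_step}) that your proposal omits entirely.

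Your Step~2 also overstates what can be achieved. You ask for $\log\|s_j\|^2_{H^{\mathcal{T},k}_t}=-2t\lambda_j+O(1)$ as $t\to\infty$ with the $O(1)$ ideally uniform in $k$. What the paper actually proves (Theorem~\ref{thm_2_step}) is the two-sided estimate $d_{+\infty}(H^{\mathcal{T}}_{t,k},{\rm{Hilb}}_k(h^{\mathcal{T}}_t,\omega))\leq C(t+k)$, with a genuine linear-in-$t$ term arising from the Ohsawa--Takegoshi constants, which depend exponentially on $d_{+\infty}(h^{\mathcal{T}}_t,h^L_0)\sim Ct$. This weaker estimate is harmless after dividing by $kt$, but an honest $O(1)$ in $t$ uniform in $k$ is very likely false, and in any event nothing in your sketch rules out the degradation.

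Finally, note that the paper splits the theorem into two inequalities and proves the \emph{upper} bound by a quite different and more economical route: Lemma~\ref{lem_subm_ray_norms} shows that the associated ray of norms $N^{\mathcal{T}}_t$ is submultiplicative, and then the Fubini--Study isometry on submultiplicative norms (Theorem~\ref{thm_d_p_norm_fs_rel} from the author's prior work) together with the Phong--Sturm convergence $h^{\mathcal{T}}_t=\lim_k(\inf_{l\geq k}FS(H^{\mathcal{T}}_{t,l})^{1/l})_*$ yields $d_p(h^{\mathcal{T}_1}_t,h^{\mathcal{T}_2}_t)=d_p(H^{\mathcal{T}_1}_t,H^{\mathcal{T}_2}_t)$ directly; the upper bound then falls out of (\ref{eq_toponogov}) and (\ref{eq_d_p_fil_norms_herm}). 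Your symmetric ``prove both inequalities by the same double-limit argument'' is conceptually appealing, but obtaining the upper bound that way would again require uniform two-sided estimates you haven't established. As it stands, the proposal identifies the right landmarks but leaves the central technical work undone.
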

	\begin{rem}
		a) A relation between the two distances was speculated and conjectured in the literature, see Darvas-Lu \cite[p. 3 and 7]{DarLuGeod}, Zhang \cite[Remark 6.12]{KeweiZhangVal} and Remark \ref{rem_dist_na_sm} for details. 
		\par
		b) 
		When one of the test configurations is trivial and $p \in [1, +\infty[$, (\ref{eq_dist_na}) is equivalent to the result of Hisamoto \cite{HisamSpecMeas}, which followed the work of Witt Nystr{\"o}m \cite[Theorems 1.1 and 1.4]{NystOkounTest}, see Remark \ref{rem_spec_meas} for details.
		For $p = 1$, (\ref{eq_dist_na}) is due to Reboulet \cite[Theorem 4.1.1]{ReboulGeodDeger}, see Remark \ref{rem_geod_rays}.
		Our proof of Theorem \ref{thm_dist_na} is new even in these special cases.
	\end{rem}
	\par 
	\par 
	Let us now briefly describe the main idea behind the proof of Theorem \ref{thm_dist_na}.
	It relies in an essential way on the well-known observation that one can naturally interpret the space of filtrations on a given finitely dimensional vector space as the boundary at the infinity (viewed in terms of geodesic rays) of the space of Hermitian norms on the vector space, see \cite[\S II.10]{HaefligerBrid}.
	This result can be viewed as a finitely-dimensional analogue of Theorem \ref{thm_dist_na}.
	To pass from this finitely-dimensional picture to the infinitely-dimensional one, we rely on the methods of geometric quantization.
	Previous works of Phong-Sturm  \cite{PhongSturmDirMA}, \cite{PhongSturmRegul} and the author \cite{FinSecRing}, \cite{FinNarSim}, lie in the heart of our approach.
	\par 
	More precisely, recall that Phong-Sturm in \cite{PhongSturmDirMA} constructed for any ample test configuration a ray of Hermitian norms on $R(X, L)$ which quantizes the geodesic ray of metrics on $L$ associated with the test configuration (in the sense that the Fubini-Study metric of the ray of norms is related to the ray of metrics), see Theorem \ref{thm_phong_sturm_limit}.
	Recall further that author in \cite{FinNarSim} established that the Fubini-Study map, when restricted to the set of submultiplicative norms, is an isometry with respect to the natural distances, see Theorem \ref{thm_d_p_norm_fs_rel}.
	These two results as well as the fact that the space of Hermitian norms endowed with the natural distances is Buseman convex, see (\ref{eq_toponogov}), and the fact that the geodesic ray of Hermitian norms, constructed by Phong-Sturm is “almost submultiplicative" in the sense which will be made precise in Section \ref{sect_part1_pf}, allow us to establish one part of Theorem \ref{thm_dist_na}, showing that the left-hand side of (\ref{eq_dist_na}) is no bigger than the right-hand side.
	\par 
	Establishing the opposite bound, showing that the right-hand side of (\ref{eq_dist_na}) is no bigger than the left-hand side, is much more intricate and requires a more detailed analysis of the geodesic ray at the infinity.
	We first compare in Theorem \ref{thm_2_step} the geodesic ray of Hermitian norms on the section ring and the ray of $L^2$-norms associated with the geodesic ray of the test configuration.
	To do so, we rely on the results of Phong-Sturm \cite{PhongSturmRegul} about the boundness of geodesic rays, see Theorem \ref{thm_ph_st_regul}, and on a refinement of our previous work on the study of the metric structure of section ring, \cite{FinSecRing}, showing that our results can be extended in the degenerating family setting.	
	Then we prove that it is sufficient to assume that the singularities of the central fibers of the test configurations are mild enough.
	Then we show that for test configurations with mild singularities, it is possible to estimate the distance between the $L^2$-norms of geodesic rays of metrics in terms of the distance between the geodesic rays of metrics themselves.
	This is done by using quantized maximum principle of Berndtsson \cite{BerndtProb} and by relying on the techniques of Dai-Liu-Ma \cite{DaiLiuMa} and Ma-Marinescu \cite{MaHol} on the study Bergman kernels, which we generalize to the setting of degerating families of manifolds.
	In total, we establish another part of Theorem \ref{thm_dist_na}, showing that the right-hand side of (\ref{eq_dist_na}) is no bigger than the left-hand side, finishing the proof of Theorem \ref{thm_dist_na}.
	\par 
	This article is organized as follows.
	In Section \ref{sect_prel}, we recall the necessary preliminaries for Theorem \ref{thm_dist_na} and provide some applications.
	In Section \ref{sect_first}, we establish one part of Theorem \ref{thm_dist_na}, showing that the left-hand side of (\ref{eq_dist_na}) is no bigger than the right-hand side, and in Section \ref{sect_second}, we establish the opposite bound.
	\par 
	\textbf{Notations}. 
	We denote by $\mathbb{D}_n(r)$ (resp. $\mathbb{D}^*_n(r)$) the (resp. punctured) euclidean ball in $\comp^n$ of radius $r > 0$, and by $\mathbb{D}_n(r_1, r_2)$ the euclidean annulus in $\comp^n$ of interior radius $r_1 > 0$ and exterior radius $r_2 > r_1$.
	When $n = 1$ or $r = 1$, we omit them from the notation.
	\par 
	On a metric space $(X, d)$, for $x \in X$, $r > 0$, we denote by $B(x, r)$ the ball of radius $r$ around $x$.
	For a function $f : X \to \real$, defined on $(X, d)$, we denote by $f_*$ the lower-semicontinuous regularization of $f$, given by $f_*(x) := \lim_{\epsilon \to 0} \inf_{y \in B(x, \epsilon)} f(y)$.
	We similarly define the upper-semicontinuous regularization and we extend these notations to metrics on line bundles.
	\par 
	Let $(X, \omega)$ be a compact Kähler manifold.
	By $\partial \dbar$-lemma, the space $\mathcal{H}_{[\omega]}$ of Kähler metrics on $X$ cohomologous to $\omega$ can be identified with the space $\mathcal{H}_{\omega}$ of Kähler potentials, consisting of $u \in \ccal^{\infty}(X, \real)$, such that $\omega_u := \omega + \imun \partial \dbar u$ is strictly positive.
	Assume that there is a holomorphic line bundle $L$, such that the De Rham class $[\omega]$ of $\omega$ is related with the first Chern class $c_1(L)$ of $L$ as $[\omega] = 2 \pi c_1(L)$.
	Then the space $\mathcal{H}_{\omega}$ can be viewed as the space of positive Hermitian metrics $\mathcal{H}^L$ on $L$ upon the identification 
	\begin{equation}\label{eq_pot_metr_corr}
		u \mapsto h^L := e^{-u} \cdot h^L_0, 
	\end{equation}
	where $h^L_0$ is a positive Hermitian metric on $L$, verifying $\omega = 2 \pi c_1(L, h^L_0)$. 
	The function $u$ is called the potential of $h^L$.
	These identifications will be implicit later on, and we sometimes use the letter $\mathcal{H}$ to designate $\mathcal{H}_{\omega}, \mathcal{H}_{[\omega]}$ or $\mathcal{H}^L$.
	\par 
	We denote by ${\rm{PSH}}(X, \omega)$ the set of $\omega$-psh potentials; these are upper semicontinuous functions $u \in L^1(X, \real \cup \{ -\infty \})$, such that 
	\begin{equation}
		\omega_u := \omega + \imun \partial \dbar u
	\end{equation}
	is positive as a $(1, 1)$-current.
	We say a (singular) metric $h^L$ on $L$ is psh if its potential is $\omega$-psh.
	\par 
	A Hermitian metric $h^L$ on a line bundle $L$ over a compact manifold is called \textit{bounded} if for any (or some) smooth metric $h^L_0$ on $L$, there is $C > 0$, such that $\exp(-C) \cdot h^L_0 \leq h^L \leq \exp(C)  \cdot h^L_0$.
	We denote by $d_{+ \infty}(h^L_0, h^L)$ the smallest constant $C > 0$, verifying above inequality.
	\par 
	For a fixed Hermitian metric $h^L$ on a line bundle $L$ over a manifold $X$ (resp. and a measure $\mu$ on $X$), we denote by ${\rm{Ban}}^{\infty}_k(h^L) = \| \cdot \|_{L^{\infty}_k(h^L)}$ (resp. ${\rm{Hilb}}_k(h^L, \mu) = \| \cdot \|_{L^2_k(h^L, \mu)}$), $k \in \nat$, the $L^{\infty}$-norm (resp. $L^2$-norm) on $H^0(X, L^k)$ induced by $h^L$ (resp. and $\mu$), i.e. for any $f \in H^0(X, L^k)$, we define $\| f \|_{L^{\infty}_k(h^L)} = \sup_{x \in X} |f(x)|_{h^L}$ (resp. $\| f \|_{L^2_k(h^L, \mu)} = \int_{x \in X} |f(x)|^2_{h^L} d \mu(x)$).
	We denote by ${\rm{Ban}}^{\infty}(h^L) = \sum_{k = 0}^{\infty} {\rm{Ban}}^{\infty}_k(h^L)$ and ${\rm{Hilb}}(h^L, \mu)  = \sum_{k = 0}^{\infty} {\rm{Hilb}}_k(h^L, \mu)$ the induced graded norms on $R(X, L)$.
	When $h^L$ is bounded psh and $\mu$ is given by $\frac{1}{n!} c_1(L, h^L)^n$, where the power is interpreted in Bedford-Taylor sense \cite{BedfordTaylor}, we omit $\mu$ from the notation.
	When the volume form $\mu$ is the symplectic volume $\frac{\omega^n}{n!}$ of some Kähler form $\omega$ on $X$, we denote ${\rm{Hilb}}(h^L, \mu)$ by ${\rm{Hilb}}(h^L, \omega)$.
	\par 
	\textbf{Acknowledgement}. 
	I would like to thank Rémi Reboulet and Lars Martin Sektnan for their invitation to University of Gothenburg; in particular, Rémi who drew my attention to the problem of this article during my visit and shared some of his ideas.
	I also thank Sébastien Boucksom for many enlightening discussions on non-Archimedean pluripotential theory and related fields.
	Finally, I would like to acknowledge the support of CNRS and École polytechnique.

\section{Norms, filtrations, metrics and degenerations}\label{sect_prel}
	The main goal of this section is to recall the necessary preliminaries for Theorem \ref{thm_dist_na} and to describe some applications.
	More precisely, in Section \ref{sect_append} we introduce natural metric structures on the sets Hermitian norms and filtrations on a given finitely dimensional vector space.
	In Section \ref{sect_pp_thr}, we recall the basics of pluripotential theory.
	In Section \ref{sect_filt}, we recall the basics of test configurations.
	Finally, in Section \ref{sect_spec_mes}, we describe some applications of Theorem \ref{thm_dist_na}.
	
	\subsection{Metric structures on Hermitian norms and filtrations}\label{sect_append}
	The main goal of this section is to introduce natural distances on the spaces of Hermitian norms and filtrations on a given finitely dimensional vector space.
	\par 
	Let $V$ be a complex vector space, $\dim V = n$.
	We denote by $\mathcal{H}_V$ the space of Hermitian norms $H$ on $V$, viewed as an open subset of the Hermitian operators ${\rm{Herm}}(V)$. 
	Let $\lambda_1, \ldots, \lambda_n$ be the ordered spectrum of $h \in {\rm{Herm}}(V)$ with respect to a norm $H \in \mathcal{H}_V$.
	For $p \in [1, +\infty[$, we define
	\begin{equation}
		\| h \|^H_p
		:=
		\sqrt[p]{\frac{\sum_{i = 1}^{\dim V} |\lambda_i|^p}{\dim V}},
	\end{equation}
	and we let $\| h \|^H_{+ \infty} := \max |\lambda_i|$.
	By Ky Fan inequality, one can establish that $\| \cdot \|^H_p$, $p \in [1, +\infty]$, is a Finsler norm for any $H$, i.e. it satisfies the triangle inequality, cf. \cite[Lemma 1.1]{BouckICM}.
	\par 
	We then define the length metric $d_p(H_0, H_1)$, $H_0, H_1 \in \mathcal{H}_V$, as usual through the infimum of the length $l(\gamma) := \int_0^1 \|\gamma'(t)\|^{\gamma(t)}_p dt$, where $\gamma$ is a piecewise smooth path in $\mathcal{H}_V$ joining $H_0, H_1$.
	\par 
	One can verify, cf. \cite[Theorem 3.1]{BouckErik21}, that this metric admits the following explicit description.
	Let $T \in {\rm{Herm}}(V)$, be the \textit{transfer map} between Hermitian norms $H_0, H_1 \in \mathcal{H}_V$, i.e. the Hermitian products $\langle \cdot, \cdot \rangle_{H_0}, \langle \cdot, \cdot \rangle_{H_1}$ induced by $H_0$ and $H_1$, are related as $\langle \cdot, \cdot \rangle_{H_1} = \langle T \cdot, \cdot \rangle_{H_0}$, then
	\begin{equation}\label{eq_dist_transf}
		d_p(H_0, H_1)
		=
		\sqrt[p]{\frac{{\rm{Tr}}[|\log T|^p]}{\dim V}},
	\end{equation}
	for any $p \in [1, +\infty[$ and $d_{+ \infty}(H_0, H_1) = \|\log T \|$, where $\| \cdot \|$ is the operator norm with respect to $H_0$.
	Moreover, the Hermitian norms $H_t$, $t \in [0, 1]$, corresponding to the scalar products $\langle \cdot, \cdot \rangle_{H_t} := \langle T^t \cdot, \cdot \rangle_{H_0}$ are geodesics in $(\mathcal{H}_V, d_p)$, $p \in [1, +\infty]$.
	Later on, we call them the \textit{distinguished geodesics}.
	For $p \in ]1, +\infty[$, it is possible to verify that $(\mathcal{H}_V, d_p)$ is a uniquely geodesic space, cf. \cite[Theorem 6.1.6]{BhatiaBook}, and hence these are the only geodesic segments between $H_0$ and $H_1$; see, however, \cite[Lemma 2.4]{FinNarSim} for a counterexample of the analogous statement for $p = 1, +\infty$.
	\par 
	Let us now discuss the non-Archimedean part of the story.
	A \textit{filtration} $\mathcal{F}$ of a vector space $V$ is a map from $\real$ to vector subspaces of $V$, $t \mapsto \mathcal{F}^t V$, verifying $\mathcal{F}^t V \subset \mathcal{F}^s V$ for $t > s$, and such that $\mathcal{F}^t V  = V$ for sufficiently small $t$ and $\mathcal{F}^t V = \{0\}$ for sufficiently big $t$.
	We assume that this map is left-continuous, i.e. for any $t \in \real$, there is $\epsilon_0 > 0$, such that $\mathcal{F}^t V = \mathcal{F}^{t - \epsilon} V $ for any $0 < \epsilon < \epsilon_0$.
	We define the \textit{jumping numbers} $e_{\mathcal{F}}(j)$, $j = 1, \ldots, n$, of the filtration $\mathcal{F}$ as follows
	\begin{equation}\label{eq_defn_jump_numb}
		e_{\mathcal{F}}(j) := \sup \Big\{ t \in \real : \dim \mathcal{F}^t V \geq j \Big\}.
	\end{equation}
	\par 
	Filtrations $\mathcal{F}$ on $V$ are in bijection with functions $\chi_{\mathcal{F}} : V \to [0, +\infty[$, defined as
	\begin{equation}\label{eq_filtr_norm}
		\chi_{\mathcal{F}}(s) := \exp(- w_{\mathcal{F}}(s)).
	\end{equation}
	where $w_{\mathcal{F}}(s)$ is the weight associated with the filtration, defined as
	$
		w_{\mathcal{F}}(s) := \sup \{ \lambda \in \real : s \in \mathcal{F}^{\lambda} V \}
	$.
	An easy verification shows that $\chi_{\mathcal{F}}$ is a non-Archimedean norm on $V$ with respect to the trivial absolute value on $\comp$, i.e. it satisfies the following axioms
	\begin{enumerate}
		\item $\chi_{\mathcal{F}}(f) = 0$ if and only if $f = 0$,
		\item $\chi_{\mathcal{F}}(\lambda f) = \chi_{\mathcal{F}}(f)$, for any $\lambda \in \comp^*$, $f \in V$,
		\item $\chi_{\mathcal{F}}(f + g) \leq \max \{ \chi_{\mathcal{F}}(f), \chi_{\mathcal{F}}(g) \}$, for any $f, g \in V$.
	\end{enumerate}
	\par 
	As it was established for example in \cite[Lemma II.10.80]{HaefligerBrid}, for any two filtrations $\mathcal{F}_1, \mathcal{F}_2$ on $V$, there is a basis $e_1, \ldots, e_n$ of $V$, which jointly diagonalizes $\chi_{\mathcal{F}_1}$ and $\chi_{\mathcal{F}_2}$, i.e. for any $\lambda_1, \ldots, \lambda_n \in \comp$ and $j = 1, 2$, we have
	\begin{equation}\label{eq_sim_diag_nna}
		\chi_{\mathcal{F}_j} \Big(\sum_{i = 1}^{n} \lambda_i e_i \Big) = \max{}_{i = 1}^{n} \big\{ \chi_{\mathcal{F}_j}(\lambda_i e_i) \big\}.
	\end{equation}
	Analogously to (\ref{eq_dist_transf}), for $p \in [1, +\infty[$, we define using this basis
	\begin{equation}\label{eq_dp_filtr}
		d_p(\mathcal{F}_1, \mathcal{F}_2)
		=
		\sqrt[p]{\frac{\sum_{i = 1}^{\dim V} |w_{\mathcal{F}_1}(e_i) - w_{\mathcal{F}_2}(e_i)|^p}{\dim V}},
	\end{equation}
	and we let $d_{+ \infty}(\mathcal{F}_1, \mathcal{F}_2) := \max_{x \in V \setminus \{ 0 \}} |w_{\mathcal{F}_1}(x) - w_{\mathcal{F}_2}(x)|$.
	\par 
	As we recall later in (\ref{eq_na_nm_interpol}), the space of non-Archimedean norms on a finitely dimensional vector space can be viewed as the boundary at the infinity of the space of Hermitian norms. 
	The metric structures (\ref{eq_dist_transf}) and (\ref{eq_dp_filtr}) are compatible under this identification, see (\ref{eq_d_p_fil_norms_herm}).
	Our main result, Theorem \ref{thm_dist_na}, is an analogue of this statement in the infinitely-dimensional setting.

	\subsection{Pluripotential theory and geodesic segments between positive metrics}\label{sect_pp_thr}
	The main goal of this section is to recall some basic facts from complex pluripotential theory, emphasizing the metric part and in particular the study of geodesic segments.
	\par 
	Let us fix a Kähler form $\omega$ on $X$.
	One can introduce on the space of Kähler potentials $\mathcal{H}_{\omega}$ a collection of $L^p$-type Finsler metrics, $p \in [1, +\infty[$, defined as follows.
	If $u \in \mathcal{H}_{\omega}$ and $\xi \in T_u \mathcal{H}_{\omega} \simeq \ccal^{\infty}(X, \real)$, then the $L^p$-length of $\xi$ is given by the following expression
	\begin{equation}\label{eq_finsl_dist_fir}
		\| \xi \|_p^u
		:=
		\sqrt[p]{
		\frac{1}{\int \omega^n}
		 \int_X |\xi(x)|^p \cdot \omega_u^n(x)}.
	\end{equation}
	For $p = 2$, this was introduced by Mabuchi \cite{Mabuchi}, and for $p \in [1, +\infty[$ by Darvas \cite{DarvasFinEnerg}.
	For brevity, we omit $\omega$ from our further notations.
	\par 
	Darvas in \cite{DarvasFinEnerg} studied the completion $(\mathcal{E}^p, d_p)$ of the path length metric structures $(\mathcal{H}, d_p)$ associated with (\ref{eq_finsl_dist_fir}), and proved that these completions are geodesic metric spaces and have a vector space structure.
	Certain geodesic segments of $(\mathcal{E}^p, d_p)$ can be constructed as upper envelopes of quasi-psh functions. 
	More precisely, we identify paths $u_t \in \mathcal{E}^p$, $t \in [0, 1]$, with rotationally-invariant functions $\hat{u}$ over $X \times \mathbb{D}(e^{-1}, 1)$ through the following formula 
	\begin{equation}\label{eq_defn_hat_u}
		\hat{u}(x, \tau) = u_{t}(x), \quad \text{where} \quad x \in X \, \text{ and } \, t = - \log |\tau|.
	\end{equation}
	We say that a curve $[0,1] \ni t \to v_t \in \mathcal{E}^p$ is a \textit{weak subgeodesic} connecting $u_0, u_1 \in \mathcal{E}^p$ if $d_p(v_t, u_i) \to 0$, as $t \to 0$ for $i = 0$ and $t \to 1$ for $i = 1$, and $\hat{u}$  is $\pi^* \omega$-psh on $X \times \mathbb{D}(e^{-1}, 1)$.
	As shown in \cite[Theorem 2]{DarvasFinEnerg}, the following envelope 
	\begin{equation}\label{eq_geod_as_env}
		u_t := \sup \Big\{ 
			v_t \, : \, t \to v_t \,  \text{ is a weak subgeodesic connecting } \, v_0 \leq u_0 \text{ and } v_1 \leq u_1
		\Big\},
	\end{equation}
	is a $d_p$-geodesic connecting $u_0, u_1$.
	It will be later called the \textit{distinguished geodesic segment}.
	\par 
	According to Chen-Cheng \cite[Theorem 1.5]{ChenCheng3}, the metric spaces $(\mathcal{E}^p, d_p)$, $p \in [1, +\infty[$, are Buseman convex, i.e. for any distinguished geodesic segments $u_t, v_t \in \mathcal{E}^p$, $t \in [0, 1]$, departing from the same initial point, for any $s \in [0, 1]$, we have
	\begin{equation}\label{eq_bus_conv_mab}
		\frac{d_p(u_s, v_s)}{s} \leq d_p(u_1, v_1).
	\end{equation}
	The space $(\mathcal{E}^2, d_2)$ is, moreover, $\rm{CAT}(0)$ by the result of Darvas \cite[Theorem 1]{DarvasMabCompl}, building on the previous work of Calabi-Chen \cite[Theorem 1.1]{CalabiChen}.
	\par 
	It is well-known, cf. Guedj-Zeriahi \cite[Exercise 10.2]{GuedjZeriahBook}, that
	\begin{equation}\label{eq_inter_ep}
		\underset{p \in [1, +\infty[}{\cap}  \mathcal{E}^p
		=
		{\rm{PSH}}(X, \omega) \cap L^{\infty}(X).
	\end{equation}
	When $u_0, u_1 \in {\rm{PSH}}(X, \omega) \cap L^{\infty}(X)$, Berndtsson \cite[\S 2.2]{BernBrunnMink} in \cite[\S 2.2]{BernBrunnMink} proved that $u_t$, $t \in [0, 1]$, defined by (\ref{eq_geod_as_env}), verifies $u_t \in L^{\infty}(X)$ and it can be described as the only path connecting $u_0$ to $u_1$, so that $\hat{u}$ is the solution of the following Monge-Ampère equation
	\begin{equation}\label{eq_ma_geod}
		(\pi^* \omega + \imun \partial \dbar \hat{u})^{n + 1} = 0,
	\end{equation}
	where the wedge power is interpreted in Bedford-Taylor sense \cite{BedfordTaylor}.
	For smooth geodesic segments in $(\mathcal{H}, d_2)$, Semmes \cite{Semmes} and Donaldson \cite{DonaldSymSp} have made similar observations before.
	The uniqueness of the solution of (\ref{eq_ma_geod}) is assured by \cite[Lemma 5.25]{GuedjZeriahBook}.
	Remark, in particular, that for any $u_0, u_1 \in {\rm{PSH}}(X, \omega) \cap L^{\infty}(X)$, the distinguished weak geodesic connecting them is the same if we view $u_0, u_1$ as elements in any of $\mathcal{E}^p$, $p \in [1, + \infty[$.
	\par 
	Now, we define the space $\mathcal{E}^{+ \infty}$ as the completion of $\mathcal{H}$ with respect to the distance $d_{+\infty}$.
	More explicitly, by a version of Demailly's regularization theorem, see \cite{Dem82}, \cite{DemRegul}, $\mathcal{E}^{+ \infty}$ can be identified with the space of continuous psh metrics on $L$, cf. \cite[Theorem 8.1]{GuedZeriGeomAnal} and \cite[\S 4.2]{FinSecRing}. 
	From \cite[Theorem 4.5]{FinSecRing}, the metric $d_{+\infty}$ on $\mathcal{E}^{+ \infty}$ can be alternatively defined as the path length metric structure associated with the $L^{\infty}$-length, defined in the notations of (\ref{eq_finsl_dist_fir}) as $\| \xi \|_{+\infty}^u := \sup |\xi(x)|$, in a direct analogy with the definitions of $d_p$, $p \in [1, +\infty[$.
	The following result is undoubtedly well-known to the experts in the field.
	We present its proof later this section.
	\begin{lem}\label{eq_lim_d_p_inf}
		For any $u_0, u_1 \in \mathcal{E}^{+ \infty}$, we have
		$
			\lim_{p \to \infty} d_p(u_0, u_1) = d_{+ \infty}(u_0, u_1)
		$.
	\end{lem}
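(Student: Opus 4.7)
\emph{Proof plan.} The argument splits into an upper bound, a lower bound for smooth potentials, and a density extension.

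For the upper bound, Jensen's inequality applied to the probability measure $\omega_{\gamma_t}^n / V$, $V := \int \omega^n$, shows that for any piecewise smooth path $\gamma_t$ in $\mathcal{H}$ joining $u_0$ and $u_1$, the $L^p$-length $\int_0^1 \| \dot \gamma_t \|_p^{\gamma_t} dt$ is non-decreasing in $p$ and dominated by the $L^\infty$-length. Taking infima shows that $d_p(u_0, u_1)$ is non-decreasing in $p$ and $d_p(u_0, u_1) \leq d_\infty(u_0, u_1)$, so $\limsup_{p \to \infty} d_p(u_0, u_1) \leq d_\infty(u_0, u_1)$.

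For the matching lower bound, assume first $u_0, u_1 \in \mathcal{H}$. By the work of Darvas \cite{DarvasFinEnerg}, the distinguished weak geodesic $u_t$ from (\ref{eq_geod_as_env}) admits a bounded right derivative $\dot u_0^+$ at $t = 0$ and satisfies the Finsler length representation $d_p(u_0, u_1)^p = V^{-1} \int_X | \dot u_0^+ |^p \omega_{u_0}^n$ for every $p \in [1, +\infty[$. Boundedness of $\dot u_0^+$ and finiteness of $\omega_{u_0}^n$ give $\lim_{p \to \infty} d_p(u_0, u_1) = \| \dot u_0^+ \|_{L^\infty(\omega_{u_0}^n)}$; strict positivity of $\omega_{u_0}^n$ in the smooth case turns this into $\sup_X | \dot u_0^+ |$. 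To identify the latter with $\| u_1 - u_0 \|_\infty = d_\infty(u_0, u_1)$, set $M := \sup_X(u_1 - u_0)$, $m := \inf_X(u_1 - u_0)$, and consider the paths $v_t := u_0 + t m$ and $w_t := u_1 - (1 - t) M$. Their rotationally invariant extensions over $X \times \mathbb{D}(e^{-1}, 1)$ involve only $\log|\tau|$ in the $\tau$-direction, which is harmonic, so both paths are weak subgeodesics; one checks $v_0 = u_0$, $v_1 \leq u_1$ and $w_0 \leq u_0$, $w_1 = u_1$. By (\ref{eq_geod_as_env}), $v_t, w_t \leq u_t$, while $t$-convexity of $u_t$ (from subharmonicity of $\hat u$ in $\tau$) gives $u_t \leq (1 - t) u_0 + t u_1$. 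At $x_+$ achieving $u_1 - u_0 = M$, one has $w_0(x_+) = u_0(x_+)$ and $w_t(x_+) = u_0(x_+) + tM$, so the lower and upper bounds pinch $\dot u_0^+(x_+) = M$; an analogous argument with $v_t$ at $x_-$ achieving the infimum gives $\dot u_0^+(x_-) = m$. The pointwise bounds $m \leq \dot u_0^+(x) \leq u_1(x) - u_0(x) \leq M$ arising from the two subgeodesics and from convexity finally yield $\sup_X | \dot u_0^+ | = \max(|M|, |m|) = \| u_1 - u_0 \|_\infty$.

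To extend to general $u_0, u_1 \in \mathcal{E}^\infty$, recall that $\mathcal{E}^\infty$ consists of continuous $\omega$-psh potentials, which may be uniformly approximated by smooth Kähler potentials $u_i^k \in \mathcal{H}$ via Demailly-Richberg regularization, so $d_\infty(u_i^k, u_i) \to 0$. The triangle inequality together with $d_p \leq d_\infty$ gives, uniformly in $p$,
\begin{equation*}
\big| d_p(u_0, u_1) - d_p(u_0^k, u_1^k) \big| \leq d_\infty(u_0, u_0^k) + d_\infty(u_1, u_1^k).
\end{equation*}
Passing first to $p \to \infty$ (using the smooth case) and then $k \to \infty$ concludes the proof. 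The main technical difficulty is the identification of $\sup_X | \dot u_0^+ |$ with $\| u_1 - u_0 \|_\infty$ in the smooth case, which relies on the carefully chosen subgeodesics $v_t, w_t$ saturating the envelope at the extremal points of $u_1 - u_0$; smoothness is used only to ensure that the essential supremum with respect to $\omega_{u_0}^n$ coincides with the true supremum, and the density step then absorbs this restriction at no extra cost.
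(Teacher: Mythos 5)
Your proof is correct, and it follows the same overall architecture as the paper's: reduce to $u_0, u_1 \in \mathcal{H}_\omega$ via the Finsler length formula (\ref{eq_d_p_berndss}), conclude $\lim_{p\to\infty} d_p(u_0,u_1) = \sup_X |\dot u_0|$, identify this with $\sup_X|u_1-u_0|=d_{+\infty}(u_0,u_1)$, and then pass to general $\mathcal{E}^{+\infty}$ by uniform approximation with smooth potentials. The genuine difference is in the middle step. The paper quotes the identity $\sup|\dot u_0| = \sup|u_1-u_0|$ from \cite[Lemma 4.8]{FinSecRing} as a black box, whereas you reprove it from scratch: the two affine subgeodesics $v_t = u_0 + tm$ and $w_t = u_1 - (1-t)M$, together with the $t$-convexity upper bound $u_t \le (1-t)u_0 + t u_1$, pinch $\dot u_0$ to $M$ and $m$ exactly at the extremal points of $u_1 - u_0$, and the same comparisons give the global bounds $m \le \dot u_0 \le M$. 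This makes your write-up self-contained where the paper relies on an external reference, at the cost of a longer argument. You also make explicit the step the paper leaves tacit in the density extension — the uniform-in-$p$ bound $d_p \le d_{+\infty}$ obtained from Jensen's inequality on the Finsler norms — which is precisely what is needed to interchange the limits $k\to\infty$ and $p\to\infty$; the paper's one-sentence "it holds in full generality" presupposes exactly this estimate. One small point worth flagging: you invoke strict positivity of $\omega_{u_0}^n$ to pass from an essential supremum to a genuine supremum, but you should also use that $\dot u_0$ is continuous (which holds here since, for smooth boundary data, the geodesic is $\mathscr{C}^{1,\overline 1}$ by Chen's regularity, as stated after (\ref{eq_d_p_berndss})); an $L^\infty$ function with positive reference measure can still have $\mathrm{ess\,sup} < \sup$ if it is not upper semicontinuous, so it is the regularity, not just the positivity of the volume form, that closes this gap.
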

	From Lemma \ref{eq_lim_d_p_inf}, the space $(\mathcal{E}^{+ \infty}, d_{+ \infty})$ is Buseman convex in the sense (\ref{eq_bus_conv_mab}).
	Hence, the chordal distance (\ref{eq_defn_chordal}) between rays of continuous metrics is well-defined for any $p \in [1, +\infty]$.
	\par 
	\begin{sloppypar}
	Recall that the distance between two given elements $u_0, u_1 \in {\rm{PSH}}(X, \omega) \cap L^{\infty}(X)$  can be expressed in terms of the distinguished geodesics $u_t$, $t \in [0, 1]$, connecting them, see (\ref{eq_geod_as_env}).
	More precisely, Berndtsson in \cite[\S 2.2]{BernBrunnMink} proved that $u_t \in L^{\infty}(X)$ and the limits $\lim_{t \to 0} u_t = u_0$, $\lim_{t \to 1} u_t = u_1$ hold in the uniform sense.
	Since $u_t$ is a weak subgeodesic, for fixed $x \in X$, the function $u_t(x)$ is convex in $t \in [0, 1]$, see \cite[Theorem I.5.13]{DemCompl}.
	Hence, one-sided derivatives $\dot{u}_t^{-}$, $\dot{u}_t^{+}$ of $u_t$ are well-defined for $t \in ]0, 1[$ and they increase in $t$.
	We denote $\dot{u}_0 := \lim_{t \to 0} \dot{u}_t^{-} = \lim_{t \to 0} \dot{u}_t^{+}$.
	From \cite[\S 2.2]{BernBrunnMink}, we know that $\dot{u}_0$ is bounded and by Darvas \cite[Theorem 1]{DarvWeakGeod}, we, moreover, have
	\begin{equation}\label{eq_bnd_darvas_sup}
		\sup |\dot{u}_0| \leq \sup |u_1 - u_0|.
	\end{equation}
	According to Darvas-Lu-Rubinstein \cite[Lemma 4.5]{DarvLuRub}, refining previous result of Chen \cite{ChenGeodMab} and Darvas \cite{DarvasMabCompl}, for any $u_0 \in \mathcal{H}_{\omega}$, $u_1 \in {\rm{PSH}}(X, \omega) \cap L^{\infty}(X)$, $p \in [1, +\infty[$, we have
	\begin{equation}\label{eq_d_p_berndss}
		d_p(u_0, u_1)
		=
		\sqrt[p]{ \frac{1}{\int \omega^n} \int_X |\dot{u}_0(x)|^p \cdot \omega_{u_0}^n(x)}.
	\end{equation}
	From Darvas \cite[Theorem 7.2]{DarvasMabCompl}, we actually know that (\ref{eq_d_p_berndss}) holds also for $u_0, u_1 \in {\rm{PSH}}(X, \omega) \cap L^{\infty}(X)$, such that $\Delta u_0, \Delta u_1 \in L^{\infty}(X)$, where $\Delta$ is the Laplace operator on $X$ (we say in this case that $u_0, u_1 \in \mathscr{C}^{1, \overline{1}}(X)$).
	\begin{proof}[Proof of Lemma \ref{eq_lim_d_p_inf}]
		Let us first assume that $u_0, u_1$ are smooth and positive, i.e. $u_0, u_1 \in \mathcal{H}_{\omega}$.
		Then by (\ref{eq_d_p_berndss}),
		$
			\lim_{p \to + \infty} d_p(u_0, u_1) = \sup |\dot{u}_0|
		$.
		However, for geodesics with smooth extremities, we have $\sup |\dot{u}_0| = \sup |u_1 - u_0|$, cf. \cite[Lemma 4.8]{FinSecRing}. This proves Lemma \ref{eq_lim_d_p_inf} in that case.
		\par 
		By Demailly's regularization theorem, see \cite{Dem82}, \cite{DemRegul}, cf. \cite[Theorem 8.1]{GuedZeriGeomAnal}, for any $u_0, u_1 \in \mathcal{E}^{+ \infty}$, there are sequences $u_{0, i}, u_{1, i}  \in \mathcal{H}_{\omega}$, $i \in \nat^*$, which converge uniformly, as $i \to \infty$, to $u_0$ and $u_1$ respectively.
		Since by above, Lemma \ref{eq_lim_d_p_inf} holds for $u_{0, i}, u_{1, i}$, it holds in full generality.
	\end{proof}
	\end{sloppypar}
	\par 
	We say that the path $u_t \in {\rm{PSH}}(X, \omega) \cap L^{\infty}(X)$, $t \in [0, 1]$, is $\mathscr{C}^{1, \overline{1}}$ if $\hat{u}, \Delta \hat{u} \in L^{\infty}(X \times \mathbb{D}(e^{-1}, 1))$, where $\Delta$ is the Laplace operator on $X \times \mathbb{D}(e^{-1}, 1)$.
	By standard regularity results, we then see that $u \in \mathscr{C}^{1, \alpha}(X \times \mathbb{D}(e^{-1}, 1))$ for any $\alpha < 1$.
	Hence, the two-sided derivatives $\dot{u}_t^{-}$ and $\dot{u}_t^{+}$ coincide, and we denote them by $\dot{u}_t$.
	Berndtsson in \cite[Proposition 2.2]{BerndtProb}, cf. also Darvas \cite[Theorem 7.2, (49) and (50)]{DarvasMabCompl}, established that for $\mathscr{C}^{1, \overline{1}}$ geodesic rays $u_t$, $t \in [0, 1]$, the following bounded measure on the real line
	\begin{equation}\label{eq_berndt_meas}
		\mu_t
		:=
		(\dot{u}_t )_* \big( \omega_{u_t}^n \big),
	\end{equation}
	doesn't depend on $t \in [0, 1]$.
	From (\ref{eq_d_p_berndss}), we see that the absolute moments of this measure, $\int |x|^p d\mu_t(x)$, coincide with $d_p(u_0, u_1)$, $p \in [1, +\infty[$.
	
	\subsection{Test configurations, submultiplicative filtrations and geodesic rays}\label{sect_filt}
	The main goal of this section is to recall the definition of a test configuration and the relation between test configurations, submultiplicative filtrations and geodesic rays of metrics.
	Recall first that a test configuration $\mathcal{T} = (\pi: \mathcal{X} \to \comp, \mathcal{L})$ for $(X, L)$ consists of
	\begin{enumerate}
		\item A scheme $\mathcal{X}$ with a $\comp^*$-action $\rho$,
		\item A $\comp^*$-equivariant line bundle $\mathcal{L}$ over $\mathcal{X}$,
		\item A flat $\comp^*$-equivariant projection $\pi : \mathcal{X} \to \comp$, where $\comp^*$ acts on $\comp$ by multiplication, such that if we denote its fibers by $X_{\tau} := \pi^{-1}(\tau)$, $\tau \in \comp$, then $(X_1, \mathcal{L}|_{X_1})$ is isomorphic to $(X, L)$.
	\end{enumerate}
	Remark that our definition differs slightly from the usual one, requiring $(X_1, \mathcal{L}|_{X_1})$ to be isomorphic with $(X, L^r)$ for some $r \in \nat^*$.
	\par 
	We say that a test configuration is \textit{(semi)ample} if $\mathcal{L}$ is relatively (semi)ample.
	We say that it is \textit{normal} if $\mathcal{X}$ is normal.
	Remark that the $\comp^*$-action induces the canonical isomorphisms
	\begin{equation}\label{eq_can_ident_test}
		\mathcal{X} \setminus X_0 \simeq \comp^* \times X, \qquad \mathcal{L}|_{\mathcal{X} \setminus X_0} \simeq p^* L,
	\end{equation}
	where $p : \comp^* \times X \to X$ is the natural projection.
	\par 
	Now, a collection of filtrations on the graded pieces $A_k$ of a graded vector space $A := \oplus_{k = 0}^{+\infty} A_k$ is called a (graded) filtration on $A$.
	We say that a graded filtration $\mathcal{F}$ is \textit{bounded} if there is $C > 0$, such that for any $k \in \nat^*$, $\mathcal{F}^{ C k} A_k = \{0\}$.
	A graded filtration $\mathcal{F}$ on a ring is called \textit{submultiplicative} if for any $t, s \in \real$, $k, l \in \nat$, we have 
	\begin{equation}\label{eq_subm_filt}
		\mathcal{F}^t A_k \cdot \mathcal{F}^s A_l \subset \mathcal{F}^{t + s} A_{k + l}.
	\end{equation}
	Remark that for any submultiplicative graded filtration on a finitely generated ring $A$, there is $C > 0$, such that for any $k \in \nat^*$, $\mathcal{F}^{- C k} A_k = A_k$.
	We say that a filtration $\mathcal{F}$ is a \textit{$\mathbb{Z}$-filtration} if its weights are integral.
	A $\mathbb{Z}$-filtration on a finitely generated ring $A$ is called a \textit{filtration of finite type} if the associated $\comp[\tau]$-algebra ${\rm{Rees}}(\mathcal{F}) := \sum_{(\lambda, k) \in \mathbb{Z} \times \mathbb{N}} \tau^{- \lambda} \mathcal{F}^{\lambda} A_k$, also called the Rees algebra, is finitely generated.
	Remark that one can reconstruct a filtration $\mathcal{F}$ from the $\comp[\tau]$-algebra structure of ${\rm{Rees}}(\mathcal{F})$.
	Clearly, filtrations of finite type are bounded.
	\par 
	Following Witt Nystr{\"o}m \cite[Lemma 6.1]{NystOkounTest}, let us construct a submultiplicative filtration  $\mathcal{F}^{\mathcal{T}}$ on $R(X, L)$ associated with a test configuration $\mathcal{T}$ of $(X, L)$ as follows.
	Pick an element $s \in H^0(X, L^k)$, $k \in \nat^*$, and consider the section $\tilde{s} \in H^0(\mathcal{X} \setminus X_0, \mathcal{L}^k)$, obtained by the application of the $\comp^*$-action to $s$.
	By the flatness of $\pi$, the section $\tilde{s}$ extends to a meromorphic section over $\mathcal{X}$, cf. Witt Nystr{\"o}m \cite[Lemma 6.1]{NystOkounTest}.
	In other words, there is $l \in \integ$, such that for a coordinate $\tau$ on $\comp$, we have $\tilde{s} \cdot \tau^l \in H^0(\mathcal{X}, \mathcal{L}^k)$.
	We define the restriction $\mathcal{F}^{\mathcal{T}}_k$ of the filtration $\mathcal{F}^{\mathcal{T}}$ to $H^0(X, L^k)$ as
	\begin{equation}\label{eq_defn_filt_test}
		\mathcal{F}^{\mathcal{T} \lambda}_k H^0(X, L^k)
		:=
		\Big\{
			s \in H^0(X, L^k) : \tau^{- \lceil \lambda \rceil} \cdot \tilde{s} \in H^0(\mathcal{X}, \mathcal{L}^k)
		\Big\}, 
		\quad \lambda \in \real.
	\end{equation}
	Alternatively, for any $k \in \nat^*$, consider the embedding $H^0(\mathcal{X}, \mathcal{L}^k) \to H^0(X, L^k) \otimes \comp[\tau, \tau^{-1}]$, induced by (\ref{eq_can_ident_test}).
	An easy verification shows that $\mathcal{F}^{\mathcal{T}}$ is defined in such a way that under this embedding the $\comp[\tau]$-algebras $R(\mathcal{X}, \mathcal{L})$ and ${\rm{Rees}}(\mathcal{F}^{\mathcal{T}})$ are isomorphic, cf. \cite[(A.2)]{BouckJohn21}.
	As for ample $\mathcal{L}$, the $\comp[\tau]$-algebra $R(\mathcal{X}, \mathcal{L})$ is finitely generated, the filtration $\mathcal{F}^{\mathcal{T}}$ is of finite type for ample test configurations $\mathcal{T}$, cf. \cite[(9)]{NystOkounTest} or \cite[\S A.2]{BouckJohn21}.
	\par 
	From Rees construction, for any filtration $\mathcal{F}$ of finite type on $R(X, L)$, there is $d \in \nat^*$ and an ample test configuration $\mathcal{T}$ of $(X, L^d)$, such that the restriction of $\mathcal{F}$ to $R(X, L^d) \subset R(X, L)$ coincides with $\mathcal{F}^{\mathcal{T}}$.
	For completeness, let us recall this construction in details.
	Since $\mathcal{F}$ is of finite type, the associated Rees algebra ${\rm{Rees}}(\mathcal{F})$ is a finitely generated $\comp[\tau]$-algebra.
	Let $d \in \nat^*$ be such that ${\rm{Rees}}(\mathcal{F})^{(d)} := \sum_{(\lambda, k) \in \mathbb{Z} \times \mathbb{N}} \tau^{- \lambda} \mathcal{F}^{\lambda} H^0(X, L^{dk})$ is generated in degree one.
	Consider $\mathcal{X} := \rm{Proj}_{\comp[\tau]}({\rm{Rees}}(\mathcal{F})^{(d)})$, $\mathcal{L} := \mathscr{O}(1)$ with the natural map $\pi : \mathcal{X} \to \comp = \rm{Spec}(\comp[\tau])$.
	Clearly, $\mathcal{L}$ is ample, cf. \cite[Proposition 7.10]{HartsBook}.
	Remark that ${\rm{Rees}}(\mathcal{F})^{(d)}$ is torsion free, and so by \cite[Proposition 6.3]{EisenbudBook}, it is a flat $\comp[\tau]$-algebra, which means that $\pi$ is flat.
	There is also a natural equivariant $\comp^*$-action on $\mathcal{X}$, which intervenes with $\pi$.
	The fiber $X_1$ of $\pi$ at $\tau = 1$ equals to $\rm{Proj}({\rm{Rees}}(\mathcal{F})^{(d)} \otimes_{\comp[\tau]} \comp[\tau] / (\tau - 1))$, cf. \cite[p. 89]{HartsBook}, and since ${\rm{Rees}}(\mathcal{F})^{(d)} \otimes_{\comp[\tau]} \comp[\tau] / (\tau - 1)$ is isomorphic to $R(X, L^d)$, we have $(X_1, \mathcal{L}|_{X_1}) = (X, L^d)$ by \cite[Exercise II.5.13]{HartsBook}. 
	Hence, the pair $\mathcal{T} := (\pi : \mathcal{X} \to \comp, \mathcal{L})$ is a test configuration.
	In fact, by \cite[Exercise II.5.9b)]{HartsBook}, when restricted to elements of sufficiently large degree, we have an isomorphism between the $\comp[\tau]$-algebras $R(\mathcal{X}, \mathcal{L})$ and ${\rm{Rees}}(\mathcal{F})^{(d)}$. 
	Hence, the filtration associated with $\mathcal{T}$, restricted to elements of sufficiently big degree in $R(X, L^d)$, coincides with the restriction of the filtration $\mathcal{F}$.
	Moreover, from \cite[Proposition 2.15]{BouckHisJonsFourier}, this is a one-to-one correspondence between filtrations of finite type on $R(X, L)$ (considered modulo the restrictions as above) and ample test configurations of $(X, L^d)$ for $d \in \nat^*$.
	\par 
	Let us now recall some operations on ample test configurations, $\mathcal{T} = (\pi: \mathcal{X} \to \comp, \mathcal{L})$.
	Consider the normalization $p_0 : \widetilde{\mathcal{X}} \to \mathcal{X}$ of $\mathcal{X}$, and denote $\widetilde{\mathcal{L}} := p_0^* \mathcal{L}$, $\widetilde{\pi} := \pi \circ p_0$.
	Since $p_0$ is finite, $\widetilde{\mathcal{L}}$ is ample, cf. \cite[Proposition 4.4]{HartsAmple}.
	By the universal property of the normalization, the $\comp^*$-action on $\mathcal{X}$ can be lifted to the $\comp^*$-action on $\widetilde{\mathcal{X}}$.
	From \cite[Theorem III.9.7]{HartsBook} and \cite[Proposition 2.6]{BouckHisJonsFourier}, the map $\widetilde{\pi}$ is flat.
	Hence, the pair $\widetilde{\mathcal{T}} = (\widetilde{\pi}: \widetilde{\mathcal{X}} \to \comp, \widetilde{\mathcal{L}})$ is an ample test configuration of $(X, L)$, cf. \cite[\S 1.4]{BouckJonsGlobal}.
	By an abuse of notation, we call $\widetilde{\mathcal{T}}$ the normalization  of $\mathcal{T}$.
	\par 
	By equivariant Hironaka's resolution of singularities theorem, cf. Koll{\'a}r \cite[Proposition 3.9.1]{KollarResol}, $\widetilde{\mathcal{X}}$ admits a $\comp^*$-equivariant resolution $p : \mathcal{X}' \to \widetilde{\mathcal{X}}$ of singularities. We let $\mathcal{L}' := p^* \widetilde{\mathcal{L}}$, $\pi' := \widetilde{\pi} \circ p$.
	By \cite[Proposition II.7.16 and Theorems II.7.17, III.9.7]{HartsBook}, the map $\pi': \mathcal{X}' \to \comp$ is flat, and, hence, the pair $\mathcal{T}' := (\pi': \mathcal{X}' \to \comp, \mathcal{L}')$ is a (semiample) test configuration of $(X, L)$.
	By an abuse of notation, we call $\mathcal{T}'$ a resolution of singularities of $\mathcal{T}$.
	\par 
	Remark now that test configurations of $(X, L)$ form a category, where a morphism between $\mathcal{T} = (\mathcal{X}, \mathcal{L})$ and $\mathcal{T}' = (\mathcal{X}', \mathcal{L}')$ is given by a $\comp^*$-equivariant morphism  $p : \mathcal{X} \to \mathcal{X}'$ over $\comp$, compatible with the isomorphisms $\mathcal{X}'|_1 \simeq X \simeq \mathcal{X}|_1$. 
	There is at most one morphism between any two given test configurations, and we say that $\mathcal{T}$ dominates $\mathcal{T}'$ when it exists.
	Clearly, any morphism between test configurations is a birational map, and hence by \cite[Theorem II.7.17]{HartsBook} it is isomorphic to the blowup along a sheaf of ideals, which is trivial away from the central fiber.
	Remark that for any test configuration $\mathcal{T} = (\pi: \mathcal{X} \to \comp, \mathcal{L})$, by definition, there is a $\comp^*$-equivariant birational map $\comp \times X \dashrightarrow \mathcal{X}$.
	From this, by taking a $\comp^*$-equivariant resolution of indeterminacies, any two $\comp^*$-equivariant resolutions can be dominated by a third one.
	\par 
	We say that two test configurations are \textit{equivalent} if they are dominated by a third test configuration, so that the pull-backs of the line bundles of the two initial test configurations coincide. 
	According to \cite[Proposition 2.30]{BouckJonsGlobal}, every semiample test configuration is equivalent to a unique normal ample test configuration.
	By Zariski's main theorem, cf. \cite[Lemma A.4]{BouckJohn21}, equivalent normal test configurations produce the same filtrations on the section ring.
	\par 
	Let us now recall, following Phong-Sturm \cite{PhongSturmDirMA}, a construction of geodesic rays of metrics associated with an ample test configuration $\mathcal{T} = (\pi: \mathcal{X} \to \comp, \mathcal{L})$.
	\par 
	Consider the restriction $\pi': \mathcal{X}'_{\mathbb{D}} \to \mathbb{D}$ of a resolution of singularities $\mathcal{T}' := (\pi': \mathcal{X}' \to \comp, \mathcal{L}')$ of $\mathcal{T}$ to the unit disc $\mathbb{D}$ and denote $\mathcal{L}'_{\mathbb{D}} := \mathcal{L}'|_{\mathcal{X}'_{\mathbb{D}}}$.
	Phong-Sturm in \cite[Theorem 3]{PhongSturmDirMA} established that for any fixed smooth positive metric $h^L_0$ on $L$, there is a rotation-invariant bounded psh metric $h^{\mathcal{L}'}_{\mathbb{D}}$ over $\mathcal{L}'_{\mathbb{D}}$, verifying in the Bedford-Taylor sense, cf. \cite{BedfordTaylor}, the Monge-Ampère equation
	\begin{equation}\label{eq_ma_geod_dir}
		c_1(\mathcal{L}'_{\mathbb{D}}, h^{\mathcal{L}'}_{\mathbb{D}})^{n + 1} = 0,
	\end{equation}
	and such that its restriction over $\partial \mathcal{X}'_{\mathbb{D}}$ coincides with the rotation-invariant metric obtained from the fixed metric $h^L_0$ on $L$.
	Under the identification (\ref{eq_can_ident_test}), we then construct a ray $h^{\mathcal{T}}_t$, $t \in [0, + \infty[$, of metrics on $L$, such that $\hat{h}^{\mathcal{T}} = h^{\mathcal{L}'}_{\mathbb{D}}$ in the notations (\ref{eq_defn_hat_u}).
	Due to the equation (\ref{eq_ma_geod_dir}) and the description of the geodesic ray as in (\ref{eq_geod_as_env}), we see that the ray of metrics $h^{\mathcal{T}}_t$, $t \in [0, + \infty[$, is a geodesic ray emanating from $h^L_0$.
	This ray of metrics is only $\mathscr{C}^{1, 1}$ in general, see \cite{ChuTossVeinC11}.
	\par
	Recall that Phong-Sturm in \cite[Theorem 5]{PhongSturmRegul} established that there is a unique bounded psh solution to (\ref{eq_ma_geod_dir}).
	Since a pull-back of a solution (\ref{eq_ma_geod_dir}) from one resolution of singularities will be a solution on a dominating resolution of singularities, the geodesic ray $h^{\mathcal{T}}_t$, $t \in [0, + \infty[$, is independent of the choice of the $\comp^*$-equivariant resolution of singularities. Similarly, equivalent test configurations produce the same geodesic rays.
	As we shall see in Remark \ref{rem_ph_st_regul}, a result of Phong-Sturm \cite{PhongSturmDirMA} shows, moreover, that two ample test configurations produce the same geodesic ray of metrics \textit{if and only if} they are equivalent.

	\subsection{Convergence of spectral measures and maximal geodesic rays}\label{sect_spec_mes}
	The main goal of this section is to refine Theorem \ref{thm_dist_na} from distance convergence to convergence on the level of spectral measures and from submultiplicative filtrations associated with ample test configurations to general bounded submultiplicative filtrations.
	\par 
	We use below the notations from Theorem \ref{thm_dist_na}.
	For any $t \in [0, +\infty[$, we denote by $h^{\mathcal{T}_1 \mathcal{T}_2}_{t, s}$, $s \in [0, 1]$, the distinguished geodesic segment between $h^{\mathcal{T}_1}_t$ and $h^{\mathcal{T}_2}_t$.
	By the regularity result of Chu-Tosatti-Weinkove \cite{ChuTossVeinC11}, Chen \cite{ChenGeodMab} and Darvas \cite[Corollary 2]{DarvasMabCompl}, for any $t \in [0, +\infty[$, the path $h^{\mathcal{T}_1 \mathcal{T}_2}_{t, s}$, $s \in [0, 1]$, is $\mathscr{C}^{1, \overline{1}}$.
	In particular, by (\ref{eq_berndt_meas}), the following measure 
	\begin{equation}
		\mu_t^{\mathcal{T}_1 \mathcal{T}_2}
		:=
		\Big(\frac{1}{t} \frac{\partial}{\partial s} h^{\mathcal{T}_1 \mathcal{T}_2}_{t, s} \Big)_* \Big( c_1(L, h^{\mathcal{T}_1 \mathcal{T}_2}_{t, s})^n \Big),
	\end{equation}
	on $\real$ doesn't depend on $s \in [0, 1]$, as suggested by the notations.
	From (\ref{eq_bnd_darvas_sup}), this is a bounded measure.
	Moreover, by (\ref{eq_d_p_berndss}), the absolute moments of $\mu_t^{\mathcal{T}_1 \mathcal{T}_2}$ are related with $d_p$-distances as
	\begin{equation}\label{eq_abs_mom_1}
		\sqrt[p]{\int |x|^p \cdot d \mu_t^{\mathcal{T}_1 \mathcal{T}_2}(x)}
		=
		\frac{d_p(h^{\mathcal{T}_1}_t, h^{\mathcal{T}_2}_t)}{t}.
	\end{equation}
	\par 
	Now, on the algebraic side, for any $k \in \nat$, we denote by $e^k_1, \ldots, e^k_{N_k}$, $N_k := \dim H^0(X, L^k)$, the basis of $H^0(X, L^k)$, which jointly diagonalizes $\chi_{\mathcal{F}^{\mathcal{T}_1}_k}$ and $\chi_{\mathcal{F}^{\mathcal{T}_2}_k}$ as in (\ref{eq_sim_diag_nna}). 
	We define the sequence of probability measures $\mu^{\mathcal{F}^{\mathcal{T}_1} \mathcal{F}^{\mathcal{T}_2}}_{k}$, $k \in \nat^*$, $N_k \neq 0$, on $\real$ as follows
	\begin{equation}\label{eq_jump_meas_defn}
		\mu^{\mathcal{F}^{\mathcal{T}_1} \mathcal{F}^{\mathcal{T}_2}}_{k} 
		:= 
		\frac{1}{\dim H^0(X, L^k)} \sum_{i = 1}^{\dim H^0(X, L^k)} \delta \bigg[
		\frac{w_{\mathcal{F}^{\mathcal{T}_1}_k}(e^k_i) - w_{\mathcal{F}^{\mathcal{T}_2}_k}(e^k_i)}{k} \bigg], 
	\end{equation}
	where $\delta[x]$ is the Dirac mass at $x \in \real$. 
	Clearly, by the definition of $d_p$ from (\ref{eq_dp_filtr}), we have 
	\begin{equation}\label{eq_abs_mom_2}
		\sqrt[p]{\int |x|^p \cdot d \mu^{\mathcal{F}^{\mathcal{T}_1} \mathcal{F}^{\mathcal{T}_2}}_{k} (x)}
		=
		\frac{d_p(\mathcal{F}^{\mathcal{T}_1}_k, \mathcal{F}^{\mathcal{T}_2}_k)}{k}.
	\end{equation}
	\begin{thm}\label{thm_spec_meas}
		As $t \to \infty$ (resp. $k \to \infty$), the sequence of measures $\mu_t^{\mathcal{T}_1 \mathcal{T}_2}$ (resp. $\mu^{\mathcal{F}^{\mathcal{T}_1} \mathcal{F}^{\mathcal{T}_2}}_{k}$) converges weakly a bounded measure on $\real$.
		Moreover, the following identity holds
		\begin{equation}\label{eq_spec_meas1}
			\lim_{t \to \infty} \mu_t^{\mathcal{T}_1 \mathcal{T}_2}
			=
			\lim_{k \to \infty} \mu^{\mathcal{F}^{\mathcal{T}_1} \mathcal{F}^{\mathcal{T}_2}}_{k}.
		\end{equation} 
		Also, the following limits exist, they are finite, and we have
		\begin{equation}\label{eq_spec_meas2}
		\begin{aligned}
			&
			\lim_{t \to \infty} 
			\frac{1}{t} \max_{x \in X} \log \Big( \frac{h^{\mathcal{T}_2}_t(x)}{h^{\mathcal{T}_1}_t(x)} \Big)
			=
			\lim_{k \to \infty} 
			\frac{1}{k} \max_{i = 1, \ldots, N_k} \Big( w_{\mathcal{F}^{\mathcal{T}_1}_k}(e^k_i) - w_{\mathcal{F}^{\mathcal{T}_2}_k}(e^k_i) \Big),
			\\
			&
			\lim_{t \to \infty} 
			\frac{1}{t} \min_{x \in X} \log \Big( \frac{h^{\mathcal{T}_2}_t(x)}{h^{\mathcal{T}_1}_t(x)} \Big)
			=
			\lim_{k \to \infty} 
			\frac{1}{k} \min_{i = 1, \ldots, N_k} \Big( w_{\mathcal{F}^{\mathcal{T}_1}_k}(e^k_i) - w_{\mathcal{F}^{\mathcal{T}_2}_k}(e^k_i) \Big).
		\end{aligned}
		\end{equation}
	\end{thm}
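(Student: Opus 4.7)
\emph{Plan.} The strategy is to reduce everything to Theorem \ref{thm_dist_na} by shifting the second test configuration $\mathcal{T}_2$ by $\comp^*$-equivariant characters, and then invoking a moment-determination argument.

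First, I would establish tightness. The $p = +\infty$ case of Theorem \ref{thm_dist_na}, combined with (\ref{eq_abs_mom_1}), (\ref{eq_abs_mom_2}), and (\ref{eq_bnd_darvas_sup}), implies that both sequences of probability measures $\mu_t^{\mathcal{T}_1 \mathcal{T}_2}$ and $\mu^{\mathcal{F}^{\mathcal{T}_1} \mathcal{F}^{\mathcal{T}_2}}_k$ are supported in a common compact interval $[-M, M] \subset \real$ for all large $t$ and $k$, where $M$ is the common $d_\infty$-chordal distance. By Prokhorov's theorem, both sequences are weakly pre-compact.

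Second, for any integer $c \in \integ$, I would introduce the twisted test configuration $\mathcal{T}_2(c)$ obtained by tensoring $\mathcal{L}_2$ with the $\comp^*$-equivariant trivial line bundle of weight $-c$. A direct verification from the constructions of Section \ref{sect_filt} shows that the associated filtration translates, $w_{\mathcal{F}^{\mathcal{T}_2(c)}_k}(s) = w_{\mathcal{F}^{\mathcal{T}_2}_k}(s) + ck$, and the associated geodesic ray also translates, $h^{\mathcal{T}_2(c)}_t = e^{ct} \cdot h^{\mathcal{T}_2}_t$, because the bounded psh solution of the Monge-Ampère equation (\ref{eq_ma_geod_dir}) on the twisted model is obtained from the untwisted one by adding the pluriharmonic term $-c \log |\tau|$ to the potential, which solves (\ref{eq_ma_geod_dir}) on $\mathcal{X}'_{\mathbb{D}}$ and meets the same boundary condition on $\partial \mathcal{X}'_{\mathbb{D}}$. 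Consequently, both $\mu_t^{\mathcal{T}_1 \mathcal{T}_2(c)}$ and $\mu^{\mathcal{F}^{\mathcal{T}_1} \mathcal{F}^{\mathcal{T}_2(c)}}_k$ are obtained from the original measures by the common translation $x \mapsto x - c$.

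Third, applying Theorem \ref{thm_dist_na} to the pair $(\mathcal{T}_1, \mathcal{T}_2(c))$ with $p \in [1, +\infty)$, and combining with (\ref{eq_abs_mom_1}), (\ref{eq_abs_mom_2}), and the translation property, gives
\begin{equation*}
	\lim_{t \to \infty} \int_\real |x - c|^p \, d\mu_t^{\mathcal{T}_1 \mathcal{T}_2}(x)
	=
	d_p\bigl(\mathcal{F}^{\mathcal{T}_1}, \mathcal{F}^{\mathcal{T}_2(c)}\bigr)^p
	=
	\lim_{k \to \infty} \int_\real |x - c|^p \, d\mu^{\mathcal{F}^{\mathcal{T}_1} \mathcal{F}^{\mathcal{T}_2}}_k(x),
\end{equation*}
with finite common value. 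For $p$ an even integer, $|x - c|^p = (x - c)^p$ is polynomial, and varying $c$ over $\integ$ and expanding in powers of $c$ yields convergence of every polynomial moment of both measure sequences to the same value. Since probability measures supported in the compact set $[-M, M]$ are determined by their moments, any weak subsequential limit of either sequence is the same probability measure. Combined with the first step, this proves weak convergence and the identity (\ref{eq_spec_meas1}).

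Fourth, for (\ref{eq_spec_meas2}), I would apply the same shift at $p = +\infty$. For $c \in \integ$ sufficiently large, on both sides the quantities inside the absolute values have a definite sign, so
\begin{equation*}
	\frac{d_\infty(h^{\mathcal{T}_1}_t, h^{\mathcal{T}_2(c)}_t)}{t} = c + \frac{\max_{x \in X} \log(h^{\mathcal{T}_2}_t(x) / h^{\mathcal{T}_1}_t(x))}{t},
\end{equation*}
and analogously on the algebraic side with $w_{\mathcal{F}^{\mathcal{T}_1}_k} - w_{\mathcal{F}^{\mathcal{T}_2}_k}$. Since the left-hand side is the chordal $d_\infty$-distance between well-defined geodesic rays (Buseman convexity after Lemma \ref{eq_lim_d_p_inf}), it converges as $t \to \infty$, and by Theorem \ref{thm_dist_na} at $p = +\infty$ its limit equals the analogous algebraic one; subtracting $c$ gives the max-equality. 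The min-equality follows by swapping $\mathcal{T}_1 \leftrightarrow \mathcal{T}_2$ (or by taking $c$ large negative). The principal obstacle I expect is the clean justification of the translation behavior in the second step, namely checking that the Phong-Sturm construction commutes with equivariant twists of the polarisation in the described fashion; once this functoriality is in place, the rest reduces to a routine application of Theorem \ref{thm_dist_na} and the Weierstrass approximation theorem on $[-M, M]$.
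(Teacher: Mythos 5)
Your proposal is correct and takes essentially the same route as the paper: both arguments rest on the translation (shift) property of test configurations — which the paper records as $\mathcal{T}[m]$ in (\ref{eq_shift_test_1})–(\ref{eq_shift_test_2}), precisely the functoriality you flag as the principal obstacle — combined with Theorem \ref{thm_dist_na} and moment determination on a compact interval. The only cosmetic difference is in how the moments are extracted: the paper shifts once by a sufficiently large $m$ to put both measures in the ordered situation (supported in $[0,+\infty[$), so that absolute moments become genuine moments directly, whereas you shift by every integer $c$, take even $p$, and recover the moments by interpolating the resulting polynomial in $c$; both implementations are valid and equivalent in substance.
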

	\begin{rem}\label{rem_spec_meas}
		a) The limiting measure on the left-hand side of (\ref{eq_spec_meas1}) is the chordal analogue of the probability measure constructed by Berndtsson \cite{BerndtProb} for geodesics.
		\par 
		b) The existence of the limit on the right-hand side of (\ref{eq_spec_meas1}) is due to Chen-Maclean \cite[Theorem 4.3]{ChenMaclean}, cf. also Boucksom-Jonsson \cite[Theorem 3.3 and \S 3.4]{BouckJohn21}.
		Our proof is independent of their result and it provides a different way of establishing the existence of the limiting measure.
		\par 
		c)
		When one of the test configurations is trivial (and hence the corresponding geodesic ray is constant), (\ref{eq_spec_meas1}) is due to Hisamoto \cite{HisamSpecMeas}. Witt Nystr{\"o}m \cite[Theorems 1.1 and 1.4]{NystOkounTest} also previously established (\ref{eq_spec_meas1}) under an additional assumption that the second test configuration is a product test configuration.
	\end{rem}
	To establish Theorem \ref{thm_spec_meas}, we need the following statement. We defer its proof to Section \ref{sect_part1_pf}.
	\begin{lem}\label{lem_gr_geod_ray}
		For any ample test configuration $\mathcal{T}$, the associated geodesic ray grows at most exponentially. 
		In other words, there is $C > 0$, such that for any $t \in [0, +\infty[$, we have
		\begin{equation}\label{eq_ray_norms_decart21}
			d_{+ \infty}(h^{\mathcal{T}}_0, h^{\mathcal{T}}_t)
			\leq 
			C t.
		\end{equation}
		Moreover, one can take $C := \limsup_{k \geq 1} \frac{1}{k} \sup_{x \in H^0(X, L^k) \setminus \{0\}} |w_{\mathcal{F}_k^{\mathcal{T}}}(x)|$. The latter constant is finite since the filtration $\mathcal{F}^{\mathcal{T}}$ is of finite type.
	\end{lem}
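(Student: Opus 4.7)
The plan is to reduce the assertion to a bound on the potential $u_t$ of $h^{\mathcal{T}}_t$ relative to $h^L_0 = h^{\mathcal{T}}_0$, since by definition $d_{+\infty}(h^{\mathcal{T}}_0, h^{\mathcal{T}}_t) = \sup_X |u_t|$. The approach combines the plurisubharmonicity of $h^{\mathcal{L}'}_{\mathbb{D}}$ on the resolution $\mathcal{X}'_{\mathbb{D}}$, the maximum principle, and a Fubini--Study type function built from a basis of $H^0(X, L^k)$ diagonalizing the filtration $\mathcal{F}^{\mathcal{T}}_k$.

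For each $k \in \mathbb{N}^*$, I would pick a basis $e^k_1, \ldots, e^k_{N_k}$ of $H^0(X, L^k)$ that simultaneously diagonalizes $\mathcal{F}^{\mathcal{T}}_k$ with weights $w^k_i$ satisfying $|w^k_i| \leq C_k := \sup_{s \neq 0} |w_{\mathcal{F}_k^{\mathcal{T}}}(s)|$, and is moreover ${\rm{Hilb}}_k(h^L_0, \omega^n/n!)$-orthonormal (achievable by orthonormalizing within each graded piece of $\mathcal{F}^{\mathcal{T}}_k$). By (\ref{eq_defn_filt_test}), the meromorphic sections $\tau^{-w^k_i}\tilde{e}^k_i$ extend holomorphically across $X_0$ on $\mathcal{X}$, and hence pull back to holomorphic sections of $\mathcal{L}'^k$ on $\mathcal{X}'_{\mathbb{D}}$. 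The key object is then the psh function
\begin{equation*}
	\Phi_k(\tau, x) := \frac{1}{k} \log \sum_{i=1}^{N_k} \bigl|\tau^{-w^k_i}\tilde{e}^k_i(\tau, x)\bigr|^2_{h^{\mathcal{L}'^k}_{\mathbb{D}}}
\end{equation*}
on $\mathcal{X}'_{\mathbb{D}}$. Under the identification $X_\tau \simeq X$ from the $\mathbb{C}^*$-action (with $t = -\log|\tau|$), using $h^{\mathcal{T}}_t = \rho(e^{-t})^* h^{\mathcal{L}'}|_{X_\tau}$ one computes $\Phi_k(\tau, x) = -u_t(x) + \frac{1}{k}\log\sum_i e^{2w^k_i t} |e^k_i(x)|^2_{(h^L_0)^k}$ in the interior, while on the Dirichlet boundary $|\tau| = 1$ one has $\Phi_k(\tau, x) = \psi^{FS}_k(x) := \frac{1}{k}\log\sum_i|e^k_i(x)|^2_{(h^L_0)^k}$.

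The maximum principle yields $\sup\Phi_k \leq \sup_X \psi^{FS}_k$. Combining with the elementary two-sided bound $e^{-2C_k t}\sum_i|e^k_i|^2 \leq \sum_i e^{2w^k_i t}|e^k_i|^2 \leq e^{2C_k t}\sum_i|e^k_i|^2$ (and a symmetric argument via the dual test configuration to control $-u_t$) produces linear-in-$t$ estimates on $|u_t|$ with slope proportional to $C_k/k$, modulo an error $\sup_X\psi^{FS}_k - \inf_X\psi^{FS}_k$. Tian's asymptotic expansion of the Bergman kernel (valid thanks to the $L^2$-orthonormality of the basis) forces this error term to be $O(\log k/k)$, so taking $\limsup_{k \to \infty}$ yields the asymptotic slope bound with constant $C = \limsup_k C_k/k$.

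To upgrade the asymptotic-in-$k$ statement to the uniform bound $d_{+\infty}(h^{\mathcal{T}}_0, h^{\mathcal{T}}_t) \leq Ct$ valid for every $t \geq 0$, I would invoke convexity of $t \mapsto u_t(x)$ (which follows from $\pi^*\omega$-psh-ness of $\hat{u}$ together with the rotational invariance in $\tau$) combined with the initial condition $u_0 \equiv 0$; this forces $t \mapsto u_t(x)/t$ to be monotone for each $x$, so the asymptotic slope bound propagates to every finite $t$. The main technical obstacle is the simultaneous compatibility of the filtration-diagonalizing basis with $L^2$-orthonormality while keeping the Bergman kernel asymptotics uniform in $x \in X$, and the careful bookkeeping of the bi-directional bound on $u_t$; these are essentially routine but require some attention in order to extract the correct constant $C$.
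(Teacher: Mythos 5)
Your proposal takes a genuinely different route from the paper's. The paper's own proof of this lemma is a two-line consequence of the already-established Theorem~\ref{thm_dp_ray_norms_herm}: by the explicit description of distinguished geodesic rays in $(\mathcal{H}_V,d_{+\infty})$, the quantized rays of Hermitian norms satisfy $d_{+\infty}(H^{\mathcal{T}}_{t,k},H^{\mathcal{T}}_{0,k})\leq C_k\,t$ with $C_k:=\sup|w_{\mathcal{F}^{\mathcal{T}}_k}|$, and then dividing by $k$ and applying the $p=+\infty$ part of Theorem~\ref{thm_dp_ray_norms_herm} (with $\mathcal{T}_2$ trivial) yields the bound. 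You, by contrast, try to re-derive the estimate directly from Phong--Sturm's Dirichlet problem. That is a legitimate idea, but there are gaps.

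The critical step is the assertion ``The maximum principle yields $\sup\Phi_k\leq\sup_X\psi^{FS}_k$.'' This does not follow: the function $\Phi_k$ you define is \emph{not} plurisubharmonic. In a local trivialization of $\mathcal{L}'$, writing $\psi$ for the psh weight of the solution metric $h^{\mathcal{L}'}_{\mathbb{D}}$, one has
\begin{equation*}
\Phi_k=\frac{1}{k}\log\sum_i|\hat{\sigma}_i|^2-\psi,
\end{equation*}
which is a genuine difference of psh functions; the maximum principle gives nothing for such an object. The inequality $\sup\Phi_k\leq\sup_{\partial}\Phi_k$ does in fact hold, but for a different reason: it is the Perron/envelope extremality of the Dirichlet solution. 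Setting $\phi_k:=\frac{1}{k}\log\sum_i|\hat{\sigma}_i|^2$ (psh) and $c:=\sup_\partial(\phi_k-\psi)$, the psh function $\phi_k-c$ is a competitor with boundary values $\leq$ those of $\psi$, whence $\phi_k-c\leq\psi$, i.e.\ $\Phi_k\leq c$. But carrying this out rigorously requires verifying that the Fubini--Study metric $e^{-k\phi_k}$ is a well-defined psh (possibly singular) metric on the resolution $\mathcal{X}'_{\mathbb{D}}$, that its restriction to the boundary converges to $h^L_0$ at the required rate, and that the envelope characterization of the Phong--Sturm solution applies; none of this is spelled out, and it is not entirely routine on a singular central fiber.

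More importantly, even granting the envelope argument, you obtain only one side of the two-sided estimate on $|u_t|$. The envelope gives an \emph{upper} bound $h^{\mathcal{L}'}_{\mathbb{D}}\leq(\text{shifted FS metric})$, which translates into $u_t\geq -Ct+o_k(1)$. The opposite inequality, $u_t\leq Ct$, requires a \emph{lower} bound on the Monge--Ampère solution, which is precisely Phong--Sturm's Lemma~4 in \cite{PhongSturmDirMA} (restated here as Theorem~\ref{thm_ph_st_regul}). This is a nontrivial result; it does not follow from the envelope or from any simple maximum principle. The ``symmetric argument via the dual test configuration'' you invoke is not a valid replacement: reversing the sign of the filtration weights does not produce a submultiplicative filtration, so there is no ``dual test configuration'' to compare against. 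The paper avoids this issue because Theorem~\ref{thm_dp_ray_norms_herm} already packages the Phong--Sturm boundedness.

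Finally, the concluding convexity step as written is incomplete for the very sign you need it for: since $u_0\equiv 0$ and $t\mapsto u_t(x)$ is convex, $u_t/t$ is \emph{increasing}, so an asymptotic bound $\lim_{t\to\infty}u_t/t\leq C$ does propagate to $u_t\leq Ct$; but $-u_t/t$ is \emph{decreasing}, so the asymptotic slope bound on $-u_t$ gives nothing for finite $t$. In fact the convexity step is unnecessary altogether, because the finite-$k$ estimates hold for every $t\geq 0$ and one can simply take $\limsup_k$ at each fixed $t$.
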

	We also need to study how geodesic rays and filtrations change under the shift operator, defined for any test configuration $\mathcal{T} = (\pi: \mathcal{X} \to \comp, \mathcal{L})$ as  $\mathcal{T}[m] := (\pi: \mathcal{X} \to \comp, \mathcal{L} \otimes \mathscr{O}(m X_0))$.
	Directly from the definitions, for any $k \in \nat^*$, the weights of the restrictions $\mathcal{F}[m]_k$, $\mathcal{F}_k$ of $\mathcal{F}[m]$ and $\mathcal{F}$ to $H^0(X, L^k)$, are related as 
	\begin{equation}\label{eq_shift_test_1}
		w_{\mathcal{F}[m]_k} = w_{\mathcal{F}_k} + mk.
	\end{equation}
	Similarly, for ample $\mathcal{T}$ and a fixed smooth positive metric $h^L_0$ on $L$, the geodesic rays $h^{\mathcal{T}[m]}_t$, $h^{\mathcal{T}}_t$, $t \in [0, +\infty[$,  emanating from $h^L_0$ and associated with $\mathcal{T}[m]$, $\mathcal{T}$ are related as 
	\begin{equation}\label{eq_shift_test_2}
		h^{\mathcal{T}[m]}_t = \exp(- t m) h^{\mathcal{T}}_t,
	\end{equation}	
	where we implicitly identified $\mathcal{L} \otimes \mathscr{O}(m X_0)|_{\mathcal{X} \setminus X_0}$ with $\mathcal{L}|_{\mathcal{X} \setminus X_0}$ using the canonical trivialization of the line bundle $\mathscr{O}(m X_0)|_{\mathcal{X} \setminus X_0}$.
	\begin{proof}[Proof of Theorem \ref{thm_spec_meas}]
		Let us first assume that for any $t \in [0, +\infty[$, we have $h^{\mathcal{T}_1}_t \leq h^{\mathcal{T}_2}_t$ and for any $k \in \nat$, we have $w_{\mathcal{F}^{\mathcal{T}_1}_k} \geq w_{\mathcal{F}^{\mathcal{T}_2}_k}$.
		Then by Lemma \ref{lem_gr_geod_ray}, (\ref{eq_bnd_darvas_sup}) and the fact that the filtrations $\mathcal{F}^{\mathcal{T}_1}$, $\mathcal{F}^{\mathcal{T}_2}$ are of finite type, the measures $\mu_t^{\mathcal{T}_1 \mathcal{T}_2}$, $t \in [0, +\infty[$, and $\mu^{\mathcal{F}^{\mathcal{T}_1} \mathcal{F}^{\mathcal{T}_2}}_{k}$, $k \in \nat^*$, have support in a fixed compact interval in $[0, +\infty[$.
		The weak convergence of the sequence of measures $\mu_t^{\mathcal{T}_1 \mathcal{T}_2}$ (resp. $\mu^{\mathcal{F}^{\mathcal{T}_1} \mathcal{F}^{\mathcal{T}_2}}_{k}$) holds since by Theorem \ref{thm_dist_na} their absolute moments (which coincide with moments in this case) converge.
		Since these moments of the limiting measures coincide by Theorem \ref{thm_dist_na}, applied for $p \in \nat^*$, we deduce (\ref{eq_spec_meas1}).
		An easy verification shows that Theorem \ref{thm_dist_na} for $p = +\infty$ also gives us exactly the first identity from (\ref{eq_spec_meas2}).
		Similarly, if for any $t \in [0, +\infty[$, we have $h^{\mathcal{T}_1}_t \geq h^{\mathcal{T}_2}_t$, and for any $k \in \nat$, we have $w_{\mathcal{F}^{\mathcal{T}_1}_k} \leq w_{\mathcal{F}^{\mathcal{T}_2}_k}$, Theorem \ref{thm_dist_na} for $p = +\infty$ gives us exactly the second identity from (\ref{eq_spec_meas2}).
		\par 
		Now, let $\mathcal{T}_1$ and $\mathcal{T}_2$ be arbitrary ample test configurations.
		Directly from the description (\ref{eq_geod_as_env}), we obtain that for any $m \in \nat$, we have  $h^{\mathcal{T}_1 \mathcal{T}_2[m]}_{t, s} = \exp(- s m t) h^{\mathcal{T}_1 \mathcal{T}_2}_{t, s}$, $s \in [0, 1]$, $t \in [0, +\infty[$. 
		Hence, for any $t \in [0, +\infty[$, we have $\mu_t^{\mathcal{T}_1 \mathcal{T}_2[m]} = S[m]_* \mu_t^{\mathcal{T}_1 \mathcal{T}_2}$, where $S[m] : \real \to \real$, $x \mapsto x - m$ is the shift operator.
		Similarly, by (\ref{eq_shift_test_1}), we have $\mu^{\mathcal{F}^{\mathcal{T}_1} \mathcal{F}^{\mathcal{T}_2[m]}}_{k} = S[m]_* \mu^{\mathcal{F}^{\mathcal{T}_1} \mathcal{F}^{\mathcal{T}_2}}_{k}$.
		From this, we obtain that Theorem \ref{thm_spec_meas} holds for $\mathcal{T}_1$ and $\mathcal{T}_2$ if and only if it holds for $\mathcal{T}_1$ and $\mathcal{T}_2[m]$ for some $m \in \nat$.
		\par 
		But from the boundness of the filtrations $\mathcal{F}^{\mathcal{T}_1}$, $\mathcal{F}^{\mathcal{T}_2}$ and from Lemma \ref{lem_gr_geod_ray}, we can always make $h^{\mathcal{T}_1}_t \leq h^{\mathcal{T}_2[m]}_t$, $t \in [0, +\infty[$, and $w_{\mathcal{F}^{\mathcal{T}_1}_k} \geq w_{\mathcal{F}^{\mathcal{T}_2[m]}_k}$, $k \in \nat$, by taking $m$ sufficiently big.
		Similarly, by making $m$ sufficiently small, the opposite inequalities will be satisfied.
		As described above, this implies Theorem \ref{thm_spec_meas}.
	\end{proof}
	\par 
	We will now show that Theorem \ref{thm_dist_na} can be used to study arbitrary bounded submultiplicative filtrartions $\mathcal{F}$ on $R(X, L)$.
	\par 
	To explain this, we will first need to explain that to any bounded submultiplicative filtration one can naturally associate a geodesic ray.
	For this, for simplicity, we assume that $\mathcal{F}$ is a $\mathbb{Z}$-filtration.
 	Following Székelyhidi \cite{SzekeTestConf}, recall that for any given bounded submultiplicative $\mathbb{Z}$-filtration $\mathcal{F}$ on $R(X, L)$, and any $k \in \nat^*$ big enough so that $H^0(X, L^k)$ generates $R(X, L^k)$, we can define a sequence of canonical apprixomations, $\mathcal{F}(k)$, which are filtrations of finite type on $R(X, L^k)$ generated by the restriction of $\mathcal{F}$ to $H^0(X, L^k)$.
 	By Rees correspondence, cf. Section \ref{sect_filt}, for any $k \in \nat^*$, there is $d_k \in \nat^*$, divisible by $k$, and an ample test configuration $\mathcal{T}(k) := (\pi_k : \mathcal{X}_k \to \comp, \mathcal{L}_k)$ of $L^{d_k}$, so that the restriction of $\mathcal{F}(k)$ to $R(X, L^{d_k}) \subset R(X, L^k)$ coincides with the filtration associated with $\mathcal{T}(k)$. 
 	We denote by $h_t^{\mathcal{F}(k)}$, $t \in [0, +\infty[$, the geodesic ray on $L$ emanating from $h^L_0$, defined $h_t^{\mathcal{F}(k)} := (h^{\mathcal{T}(k)}_t)^{\frac{1}{d_k}}$, where $h^{\mathcal{T}(k)}_t$ is the geodesic ray on $L^{d_k}$ associated with $\mathcal{T}(k)$ and emanating from $(h^L_0)^{d_k}$.
 	\par 	
 	The following result was established in \cite[Theorems 5.5, 5.7 and 5.10]{FinNarSim} using the works of Berman-Boucksom-Jonsson \cite{BerBouckJonYTD} and Phong-Sturm \cite{PhongSturmRegul}.
 	\begin{prop}\label{prop_filtr_geod_ray}
 		For any $t \in [0, +\infty[$, the sequence of metrics $h_t^{\mathcal{F}(k)}$, is uniformly bounded over $k \in \nat^*$. When restricted over multiplicative subsequences of $\nat^*$ (as for example $k = 2^l$, $l \in \nat^*$), for any $t \in [0, +\infty[$, the sequence of metrics $h_t^{\mathcal{F}(k)}$ is decreasing, and the ray of metrics $h_t^{\mathcal{F}} := (\lim_{k \to \infty} h_t^{\mathcal{F}(k)})_*$ is a geodesic ray departing from $h^L_0$.
 	\end{prop}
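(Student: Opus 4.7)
The plan is to reduce the claim for bounded submultiplicative filtrations to the ample test configuration case, where Lemma \ref{lem_gr_geod_ray} and the Phong–Sturm construction apply to each $\mathcal{T}(k)$, and then to pass to the limit along the multiplicative subsequence. For the uniform boundedness, I would apply Lemma \ref{lem_gr_geod_ray} to $\mathcal{T}(k)$: this gives $d_{+\infty}((h^L_0)^{d_k}, h^{\mathcal{T}(k)}_t) \leq t \cdot C_k$ with $C_k := \limsup_m \frac{1}{m} \sup_{s} |w_{\mathcal{F}^{\mathcal{T}(k)}_m}(s)|$. Since the filtration associated with $\mathcal{T}(k)$ agrees with $\mathcal{F}(k)$ restricted to $R(X,L^{d_k})$, and submultiplicativity of $\mathcal{F}$ gives $|w_{\mathcal{F}(k)}| \leq |w_{\mathcal{F}}|$ on $R(X,L^k)$, one obtains $C_k \leq d_k \cdot C_{\mathcal{F}}$, where $C_{\mathcal{F}} := \sup_{j}\sup_{s \in H^0(X,L^j)\setminus\{0\}} |w_{\mathcal{F}}(s)|/j < \infty$ by boundedness of $\mathcal{F}$. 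Taking $d_k$-th roots rescales $d_{+\infty}$ by $1/d_k$, hence the uniform bound $d_{+\infty}(h^L_0, h_t^{\mathcal{F}(k)}) \leq t \cdot C_{\mathcal{F}}$.

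For monotonicity, when $k \mid k'$ a direct computation using submultiplicativity shows that $\mathcal{F}(k)^\lambda H^0(X,L^{k'm}) \subset \mathcal{F}(k')^\lambda H^0(X,L^{k'm})$ for all $\lambda, m$: a product of $rm$ elements of $\mathcal{F}_k$ (with $k' = rk$) can be regrouped as a product of $m$ blocks of $r$ elements each, each block living in $\mathcal{F}_{k'}$. Thus $w_{\mathcal{F}(k)} \leq w_{\mathcal{F}(k')}$ on $R(X,L^{k'})$. The sign convention for geodesic rays is already visible from the shift case (\ref{eq_shift_test_2}), where increasing the weight by $m$ multiplies the metric by $e^{-tm}$, so higher filtration weights correspond to smaller metrics. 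More generally, passing to a common $\comp^*$-equivariant resolution of singularities dominating both $\mathcal{T}(k)$ and $\mathcal{T}(k')$ and applying the comparison principle for the Monge–Ampère equation (\ref{eq_ma_geod_dir}), with the boundary data identically equal to the fixed metric $h^L_0$, translates the weight inequality into $h_t^{\mathcal{F}(k)} \geq h_t^{\mathcal{F}(k')}$ for every $t$.

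Combining Steps 1 and 2, along the multiplicative subsequence the metrics $h_t^{\mathcal{F}(k)}$ form a decreasing, uniformly bounded sequence, so their pointwise limit exists, and its lsc regularization $h_t^{\mathcal{F}}$ is a bounded psh metric on $L$ (equivalently, the usc regularization of the increasing limit of the corresponding psh potentials is $\omega$-psh). Viewed via (\ref{eq_defn_hat_u}) as $S^1$-invariant bounded psh metrics on the pull-back of $L$ to $X\times \mathbb{D}(e^{-T},1)$, each $\hat h^{\mathcal{F}(k)}$ satisfies the homogeneous Monge–Ampère equation $(\pi^*\omega + \imun \partial \dbar \hat u^{\mathcal{F}(k)})^{n+1}=0$ in the Bedford–Taylor sense, and continuity of the Monge–Ampère operator under decreasing limits of uniformly bounded psh metrics passes the equation to the limit $\hat u^{\mathcal{F}}$. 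The uniform bound from Step 1 forces $\lim_{t\to 0} h_t^{\mathcal{F}} = h^L_0$, and the characterization (\ref{eq_ma_geod}) then identifies $h_t^{\mathcal{F}}$ as the distinguished geodesic on every $[0,T]$, hence a geodesic ray departing from $h^L_0$. The main obstacle is this last step: ensuring that the MA equation survives the limit and that the limit is independent of the multiplicative subsequence chosen. The former rests on Bedford–Taylor stability; the latter uses the characterization of geodesic rays as maximal subgeodesics with non-Archimedean limit prescribed by $\mathcal{F}$, appealing to the Berman–Boucksom–Jonsson envelope framework and the Phong–Sturm uniqueness result for bounded psh solutions of (\ref{eq_ma_geod_dir}).
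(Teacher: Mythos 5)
The paper does not prove this proposition; it cites \cite[Theorems 5.5, 5.7, 5.10]{FinNarSim}, and the route there passes through the submultiplicative ray of norms $N^{\mathcal{F}}_t$ of (\ref{eq_ray_norm_defn0}), the Fubini--Study map, and Theorem \ref{thm_phong_sturm_limit}. Your plan is structurally parallel but substitutes two different tools: Monge--Amp\`ere comparison for the monotonicity in $k$, and Bedford--Taylor stability for the geodesicity of the limit. Your uniform boundedness step (Lemma \ref{lem_gr_geod_ray} applied to each $\mathcal{T}(k)$, together with the bound $w_{\mathcal{F}(k)}\le w_{\mathcal{F}}$ on $R(X,L^{d_k})$ coming from submultiplicativity) is correct modulo the loose phrasing $|w_{\mathcal{F}(k)}|\le|w_{\mathcal{F}}|$ — the absolute value is not controlled by that one inequality, but boundedness of $\mathcal{F}$ and the regrouping argument do give the required two-sided bound. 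Your derivation of $w_{\mathcal{F}(k)}\le w_{\mathcal{F}(k')}$ for $k\mid k'$ via regrouping is also sound, and your Bedford--Taylor argument for the geodesic ray conclusion is a legitimate alternative to citing \cite[Theorem 5.1]{FinNarSim}, since for $\hat u^{\mathcal{F}}$ one only needs $\pi^*\omega$-psh, bounded, rotationally invariant, MA $=0$ on every annulus, and the uniform bound $d_{+\infty}(h^L_0,h^{\mathcal{F}}_t)\le C_{\mathcal{F}}t$ for the boundary behaviour at $t=0$; uniqueness \cite[Lemma 5.25]{GuedjZeriahBook} then identifies the restriction to each $[s,T]$ with the distinguished geodesic segment.

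The genuine soft spot is the translation of the weight inequality into the metric inequality $h^{\mathcal{F}(k)}_t\ge h^{\mathcal{F}(k')}_t$ via a ``comparison principle for (\ref{eq_ma_geod_dir}) on a common resolution.'' After pulling back, the two Phong--Sturm solutions are bounded psh metrics on two \emph{different} line bundles $\mathcal{M}_1,\mathcal{M}_2$ on the same model, differing by a $\comp^*$-invariant divisor supported on the central fibre. To compare them one must transfer one solution through the canonical identification over $\mathcal{X}''\setminus X''_0$, which multiplies the potential by an unbounded factor $\log|s_D|^2$ near $X''_0$; whether the transferred object is a bounded psh subsolution or a metric with poles depends on the sign of the divisor, and the comparison principle in its standard form does not apply directly across line bundles. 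This can be repaired, but it requires bookkeeping that your outline elides. The route the paper's own commented-out draft takes avoids this entirely: from (\ref{eq_ray_norm_defn0}), $\chi_{\mathcal{F}(k)}\ge\chi_{\mathcal{F}(k')}\ge\chi_{\mathcal{F}}$ gives $N^{\mathcal{F}(k)}_t\ge N^{\mathcal{F}(k')}_t\ge N^{\mathcal{F}}_t$ by inspection, Lemma \ref{lem_fs_inf_d} shows $FS$ is monotone, and Theorem \ref{thm_phong_sturm_limit} together with (\ref{eq_thm_dp_ray_norms_herm2}) identifies $h^{\mathcal{F}(k)}_t=FS(N^{\mathcal{F}(k)}_t)_*$, so the monotonicity is immediate with no Monge--Amp\`ere argument. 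I would recommend replacing the MA-comparison step by this norm-theoretic one, or at least spelling out the divisor bookkeeping if you insist on the pluripotential route.
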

 	We can now state the following generalization of Theorem \ref{thm_dist_na}.
 	\begin{thm}\label{cor_dist_na_sm}
 		For any bounded submultiplicative filtrations $\mathcal{F}_1, \mathcal{F}_2$ on $R(X, L)$ and any $p \in [1, +\infty[$, the following identity holds
		\begin{equation}\label{eq_dist_na_sm}
			d_p \big(\{ h_t^{\mathcal{F}_1} \}, \{ h_t^{\mathcal{F}_2} \} \big)
			=
			d_p \big( \mathcal{F}_1, \mathcal{F}_2 \big).
		\end{equation}
 	\end{thm}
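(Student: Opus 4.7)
The strategy is to apply Theorem \ref{thm_dist_na} to the canonical approximations $\mathcal{F}_i(k)$ of Székelyhidi and then pass to the limit $k \to \infty$ along a multiplicative subsequence. We may assume $\mathcal{F}_1, \mathcal{F}_2$ are $\mathbb{Z}$-filtrations; the general case follows by rescaling and an approximation argument. By Rees correspondence, each $\mathcal{F}_i(k)$ comes from an ample test configuration $\mathcal{T}_i(k)$ of $(X, L^{d_k})$, and Theorem \ref{thm_dist_na} applied to $\mathcal{T}_1(k), \mathcal{T}_2(k)$, once rescaled from $L^{d_k}$ back to $L$, yields
\begin{equation*}
d_p\bigl(\{h_t^{\mathcal{F}_1(k)}\}, \{h_t^{\mathcal{F}_2(k)}\}\bigr) = d_p\bigl(\mathcal{F}_1(k), \mathcal{F}_2(k)\bigr).
\end{equation*}
It then remains to show that both sides converge, as $k \to \infty$, to the corresponding quantities for $\mathcal{F}_1, \mathcal{F}_2$.

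For the algebraic side, I would show that $d_p(\mathcal{F}_i(k), \mathcal{F}_i) \to 0$ as $k \to \infty$ for each $i$; the desired limit then follows from the triangle inequality for spectral distances. Because $\mathcal{F}_i(k)$ is obtained by saturating the $k$th graded piece of $\mathcal{F}_i$ through the multiplication maps $\mathrm{Sym}^m H^0(X, L^k) \to H^0(X, L^{mk})$, submultiplicativity of $\mathcal{F}_i$ forces $w_{\mathcal{F}_i(k)} \leq w_{\mathcal{F}_i}$ with equality on the $k$th piece. Convergence of the spectral measures is then a consequence of Fekete's subadditivity lemma combined with the spectral measure framework of Chen--Maclean cited in the paper.

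For the geometric side, Proposition \ref{prop_filtr_geod_ray} gives, along a multiplicative subsequence, $h_t^{\mathcal{F}_i(k)} \searrow h_t^{\mathcal{F}_i}$ at each $t$, hence $d_p(h_t^{\mathcal{F}_1(k)}, h_t^{\mathcal{F}_2(k)}) \to d_p(h_t^{\mathcal{F}_1}, h_t^{\mathcal{F}_2})$ for each fixed $t$ by continuity of $d_p$ under decreasing convergence of bounded $\omega$-psh potentials. The key trick is to observe that, applying Theorem \ref{thm_dist_na} to pairs of canonical approximations, we get
\begin{equation*}
d_p\bigl(\{h_t^{\mathcal{F}_i(k)}\}, \{h_t^{\mathcal{F}_i(k')}\}\bigr) = d_p\bigl(\mathcal{F}_i(k), \mathcal{F}_i(k')\bigr),
\end{equation*}
for $k, k'$ in the subsequence, and the right-hand side tends to $0$ by the algebraic part. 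Hence the sequence of chordal rays $\{h_t^{\mathcal{F}_i(k)}\}$ is Cauchy. Combined with the pointwise decreasing convergence $h_t^{\mathcal{F}_i(k)} \searrow h_t^{\mathcal{F}_i}$, this is meant to give $d_p(\{h_t^{\mathcal{F}_i(k)}\}, \{h_t^{\mathcal{F}_i}\}) \to 0$, and the triangle inequality for chordal distances finishes the proof.

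The main obstacle lies in justifying this last step: showing that the chordal limit of the Cauchy sequence $\{h_t^{\mathcal{F}_i(k)}\}$ indeed coincides with the ray $\{h_t^{\mathcal{F}_i}\}$ produced by Proposition \ref{prop_filtr_geod_ray}. Concretely, this amounts to interchanging the $t \to \infty$ and $k \to \infty$ limits in $d_p(h_t^{\mathcal{F}_i(k)}, h_t^{\mathcal{F}_i})/t$, which requires uniformity in $t$ of the monotone $L^p$-continuity of the Mabuchi distance along these rays. I expect this to be achievable from the Buseman convexity estimate (\ref{eq_bus_conv_mab}), which makes the rescaled distance $d_p(h_t^{\mathcal{F}_i(k)}, h_t^{\mathcal{F}_i})/t$ monotone nondecreasing in $t$, together with the uniform $L^\infty$-boundedness of the geodesic rays on any compact $t$-interval furnished by Proposition \ref{prop_filtr_geod_ray} and Lemma \ref{lem_gr_geod_ray}.
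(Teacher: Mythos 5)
Your high-level strategy — applying Theorem \ref{thm_dist_na} to the canonical finite-type approximations $\mathcal{F}_i(k)$ and then passing to the limit along a multiplicative subsequence — matches the paper's. You also correctly identify where the difficulty concentrates: proving $\lim_{k}\,d_p(\{h_t^{\mathcal{F}_i(k)}\}, \{h_t^{\mathcal{F}_i}\}) = 0$. But the way you propose to close that gap does not work, and the paper closes it using a different ingredient that you have not supplied.

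The problem with the Buseman-convexity idea: Buseman convexity (\ref{eq_bus_conv_mab}) makes the map $t \mapsto d_p(h_t^{\mathcal{F}_i(k)}, h_t^{\mathcal{F}_i})/t$ \emph{nondecreasing}, so the chordal distance is the supremum over $t$ of the rescaled pointwise distance. Pointwise $d_p$-convergence at each fixed $t$ (from decreasing convergence of bounded psh potentials), or a uniform $L^\infty$-bound on a compact $t$-interval, controls the quantity only for $t$ bounded — it gives no handle whatsoever on the $t\to\infty$ supremum. Lemma \ref{lem_gr_geod_ray} gives a uniform-in-$t$ linear bound $d_p(h_t^{\mathcal{F}_i(k)}, h_t^{\mathcal{F}_i}) \leq (C_k + C)t$, but $C_k$ does not tend to zero, so this can't close the gap either. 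What's genuinely needed is a statement asserting that approximation of geodesic rays from below (or almost everywhere from above) is continuous for the chordal $d_p$-topology. That is exactly Darvas--Lu \cite[Lemma 4.3]{DarLuGeod}, which the paper invokes through Proposition \ref{cor_geod_rays} after first observing (Remark \ref{rem_max_hf}, based on \cite[Theorem 1.11]{FinNarSim}) that the rays $\{h_t^{\mathcal{F}_i}\}$ are \emph{maximal} — the maximality is what makes Proposition \ref{cor_geod_rays} applicable. You never bring maximality into play, and you don't cite anything that substitutes for the Darvas--Lu lemma.

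Your Cauchy argument is a genuinely different route to step 5 and could in principle be completed, but it also requires unstated inputs beyond what you list: (a) completeness of the space $(\mathcal{R}^p, d_p)$ of geodesic rays (Darvas--Lu \cite[Theorem 1.4]{DarLuGeod}), so that the Cauchy sequence has a chordal limit $\{g_t\}$ at all, and (b) the identification $\{g_t\} = \{h_t^{\mathcal{F}_i}\}$, which follows by noting that chordal convergence together with Buseman convexity forces $d_p(h_{t_0}^{\mathcal{F}_i(k)}, g_{t_0}) \leq t_0\, d_p(\{h_t^{\mathcal{F}_i(k)}\}, \{g_t\}) \to 0$ for each fixed $t_0$, and combining this with the pointwise $d_p$-limit $h_{t_0}^{\mathcal{F}_i(k)} \to h_{t_0}^{\mathcal{F}_i}$. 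This does work but it reconstructs, in a roundabout way, exactly the content of Darvas--Lu's Lemma 4.3. The paper's route — maximality, then Proposition \ref{cor_geod_rays}, then Proposition \ref{prop_finite_type_approx} to swap $\mathcal{F}_i(k)$ for $\mathcal{F}_i$, and finally the Chen--Maclean existence theorem \cite[Theorem 4.3]{ChenMaclean} to identify the limit of $\frac{1}{d_k} d_p(\mathcal{F}_1|_{R(X,L^{d_k})}, \mathcal{F}_2|_{R(X,L^{d_k})})$ with $d_p(\mathcal{F}_1, \mathcal{F}_2)$ — is more economical.
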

 	 \begin{rem}\label{rem_dist_na_sm}
 		a) Theorem \ref{cor_dist_na_sm} responds to a question from Zhang \cite[Remark 6.12]{KeweiZhangVal}.
 		\par 
 		b) From the proof of Theorem \ref{thm_spec_meas}, we see that the analogue of (\ref{eq_spec_meas1}) holds if the geodesic rays $h_t^{\mathcal{F}_1}$, $h_t^{\mathcal{F}_2}$ are $\mathscr{C}^{1, \overline{1}}$.
 		For general bounded submultiplicative filtrations, this regularity cannot be expected by \cite[Proposition 3.9 and Remark 3.10]{FinNarSim}.
 	\end{rem}
 	\par 
 	To prove Theorem \ref{cor_dist_na_sm}, we need to use a result of Boucksom-Jonsson \cite[Theorem 3.18]{BouckJohn21} stating that finite-type approximations of submultiplicative filtrations are continuous with respect to the $d_p$-metrics for $p \in [1, +\infty[$. For an alternative proof of this result, see \cite[Theorem 5.6]{FinNarSim}.
 	\begin{prop}\label{prop_finite_type_approx}
 		For any $p \in [1, +\infty[$, we have $\lim_{k \to \infty} \widetilde{d}_p(\mathcal{F}(k), \mathcal{F}) = 0$.
 	\end{prop}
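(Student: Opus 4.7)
The plan is to use Boucksom--Chen's theorem on convergence of normalized jumping measures for bounded submultiplicative filtrations, combined with the monotonicity $\mathcal{F}(k) \leq \mathcal{F}$ coming from submultiplicativity of $\mathcal{F}$, together with a Fekete-type superadditivity argument. The proof proceeds in three stages: reduction to $p = 1$, reduction of $\widetilde{d}_1$ to a difference of mean jumping numbers, and finally the convergence of these means.

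First, I would reduce to the case $p = 1$. Since $\mathcal{F}$ is bounded and $\mathcal{F}(k)$ is generated in degree $k$ by the restriction of $\mathcal{F}$, the normalized weights of both filtrations are uniformly bounded: there exists $C > 0$ such that $|w_{\mathcal{F}_m}(s)/m|, |w_{\mathcal{F}(k)_m}(s)/m| \leq C$ for all nonzero $s$, all $m$, and all $k$. Applying the pointwise inequality $|a|^p \leq (2Cm)^{p-1}|a|$ coordinatewise in a jointly diagonalizing basis and using (\ref{eq_dp_filtr}), we get $d_p(\mathcal{F}(k)_m, \mathcal{F}_m)^p \leq (2Cm)^{p-1} d_1(\mathcal{F}(k)_m, \mathcal{F}_m)$. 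Dividing by $m^p$ and passing to the limit in $m$ yields $\widetilde{d}_p(\mathcal{F}(k), \mathcal{F})^p \leq (2C)^{p-1} \widetilde{d}_1(\mathcal{F}(k), \mathcal{F})$, so it suffices to treat $p = 1$.

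Next, I would exploit the domination $\mathcal{F}(k) \leq \mathcal{F}$. By submultiplicativity of $\mathcal{F}$, every product $s_1 \cdots s_j$ of sections $s_i \in \mathcal{F}^{\lambda_i} H^0(X, L^k)$ lies in $\mathcal{F}^{\lambda_1 + \cdots + \lambda_j} H^0(X, L^{jk})$; since $\mathcal{F}(k)^\lambda$ is generated by such products, $\mathcal{F}(k)^\lambda \subset \mathcal{F}^\lambda$ on every graded piece, i.e. $w_{\mathcal{F}(k)_m}(s) \leq w_{\mathcal{F}_m}(s)$ for every $s$. Pick a basis jointly diagonalizing $\mathcal{F}(k)_m$ and $\mathcal{F}_m$ as in (\ref{eq_sim_diag_nna}); then each basis vector satisfies $w_{\mathcal{F}(k)_m}(e_i) \leq w_{\mathcal{F}_m}(e_i)$, so the absolute value in (\ref{eq_dp_filtr}) is superfluous and, using the basis-independent identity $\sum_i w_{\mathcal{G}_m}(e_i) = \sum_j e_{\mathcal{G}_m}(j)$, one obtains
\begin{equation*}
d_1(\mathcal{F}(k)_m, \mathcal{F}_m) = \overline{e}_m(\mathcal{F}) - \overline{e}_m(\mathcal{F}(k)), \quad \text{where} \quad \overline{e}_m(\mathcal{G}) := \frac{1}{N_m}\sum_j e_{\mathcal{G}_m}(j).
\end{equation*}

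Finally, by the Boucksom--Chen theorem on convergence of Duistermaat--Heckman measures, for any bounded submultiplicative filtration $\mathcal{G}$ on $R(X, L)$ the limit $\overline{w}(\mathcal{G}) := \lim_m \overline{e}_m(\mathcal{G})/m$ exists and equals $\int \lambda\, d\mathrm{DH}_{\mathcal{G}}(\lambda)$. Applying this to $\mathcal{F}$ and to $\mathcal{F}(k)$ (the latter being submultiplicative, bounded, and finitely generated as a Rees algebra, so all hypotheses apply), we obtain $\widetilde{d}_1(\mathcal{F}(k), \mathcal{F}) = \overline{w}(\mathcal{F}) - \overline{w}(\mathcal{F}(k))$. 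It remains to show $\overline{w}(\mathcal{F}(k)) \to \overline{w}(\mathcal{F})$ as $k \to \infty$. Since $\mathcal{F}(k)$ is generated in degree $k$ and agrees with $\mathcal{F}$ on $H^0(X, L^k)$, submultiplicativity of $\mathcal{F}(k)$ makes the sequence $j \mapsto \sum_i e_{\mathcal{F}(k)_{jk}}(i)$ superadditive (products of a diagonalizing basis in degree $k$ realize explicit lower bounds for weights in degree $jk$), and a Fekete-type argument yields $\overline{w}(\mathcal{F}(k)) \geq \overline{e}_k(\mathcal{F})/k$. Combined with $\overline{w}(\mathcal{F}(k)) \leq \overline{w}(\mathcal{F})$ from the domination and with $\overline{e}_k(\mathcal{F})/k \to \overline{w}(\mathcal{F})$ (again Boucksom--Chen), the squeeze gives $\overline{w}(\mathcal{F}(k)) \to \overline{w}(\mathcal{F})$, completing the proof.

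The main obstacle is the superadditivity argument in the last step: one must verify carefully that $\mathcal{F}(k)$'s finite-type structure together with its submultiplicativity produces the correct Fekete superadditivity for $\sum_i e_{\mathcal{F}(k)_{jk}}(i)$, which in turn relies on the fact that $\mathcal{F}(k)$ is generated by the restriction of $\mathcal{F}$ in a single degree. The reduction to $p = 1$ and the expression of $d_1$ as a difference of means are straightforward, but depend crucially on the pointwise domination $w_{\mathcal{F}(k)} \leq w_{\mathcal{F}}$, which is itself a consequence of submultiplicativity — so the whole argument hinges on the interplay between the generation property of $\mathcal{F}(k)$ and the submultiplicativity of $\mathcal{F}$.
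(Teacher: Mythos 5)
Your first two steps are sound: the Hölder-type reduction to $p=1$ (using the uniform linear bound on the weights of $\mathcal{F}$ and of $\mathcal{F}(k)$), the pointwise domination $w_{\mathcal{F}(k)}\leq w_{\mathcal{F}}$ coming from submultiplicativity, and the resulting identity expressing $d_1(\mathcal{F}(k)_m,\mathcal{F}_m)$ as a difference of mean jumping numbers are all correct (modulo the minor point that $\mathcal{F}(k)$ lives on $R(X,L^k)$, so one should work with degrees divisible by $k$, as the definition of $\widetilde{d}_p$ requires). The genuine gap is the last step, which is where the entire content of the proposition sits: the inequality $\overline{w}(\mathcal{F}(k))\geq \overline{e}_k(\mathcal{F})/k$ (even in an asymptotic form with an $o(1)$ error as $k\to\infty$) is not established by the argument you sketch. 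The sequence $j\mapsto \sum_i e_{\mathcal{F}(k)_{jk}}(i)$ has $N_{jk}\sim c\,(jk)^n$ terms, so "superadditivity" of these raw sums is the wrong normalization for Fekete: Fekete would only give $\lim_j S_j/j\geq S_1$, which after dividing by $N_{jk}$ yields nothing about $\lim_j S_j/(jN_{jk})$ versus $S_1/N_k$. More fundamentally, weight lower bounds for individual products $s_{i_1}\cdots s_{i_j}$ of a diagonalizing basis do not control $\dim \mathcal{F}(k)^{\lambda}H^0(X,L^{jk})$, because these products are highly linearly dependent; to bound the sum of jumping numbers from below you must show that the \emph{span} of high-weight products is large, i.e. you need lower bounds on dimensions of images of multiplication maps restricted to pieces of the filtration. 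That spanning statement is precisely the hard input: it is what the Okounkov-body/semigroup machinery of Boucksom--Chen provides (via the concave transform and equidistribution), what Boucksom--Jonsson's \cite[Theorem 3.18]{BouckJohn21} encapsulates, and what the analytic route through Fubini--Study metrics, Ohsawa--Takegoshi and Theorem \ref{thm_d_p_norm_fs_rel} supplies in \cite[Theorem 5.6]{FinNarSim}. As written, your proposal quietly assumes this hard part.

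Note also that the paper does not reprove the proposition: it is quoted from Boucksom--Jonsson \cite[Theorem 3.18]{BouckJohn21}, with \cite[Theorem 5.6]{FinNarSim} as an alternative proof (the latter reduces, after the same $p=1$ reduction you make and a shift making the weights of one filtration dominate, to monotone $d_1$-convergence of the associated Fubini--Study metrics). So your squeeze strategy --- $\overline{e}_k(\mathcal{F})/k\leq \overline{w}(\mathcal{F}(k))\leq\overline{w}(\mathcal{F})$ together with Boucksom--Chen applied to $\mathcal{F}$ --- is a reasonable and genuinely different plan, and the two bounds you do justify (the upper one by domination, and the convergence $\overline{e}_k(\mathcal{F})/k\to\overline{w}(\mathcal{F})$ by Boucksom--Chen) are fine; but the lower bound on $\overline{w}(\mathcal{F}(k))$ needs an actual proof (e.g. via the valuation/Okounkov semigroup, where products of basis elements have distinct leading terms and linear independence can be restored), and without it the argument is incomplete rather than elementary.
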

 	As another ingredient in the proof of Theorem \ref{cor_dist_na_sm}, we need to show that Theorem \ref{thm_dist_na} can be used to give an algebraic formula for chordal distances between \textit{maximal geodesic rays}.
	\par 
	To describe this, we say that a sequence of geodesic rays $h^L_{i, t}$, $i \in \nat$, $t \in [0, +\infty[$, of bounded metrics on $L$ approximate $h^L_t$ from below (resp. approximate $h^L_t$ almost everywhere from above) if for any $t \in [0, +\infty[$, $i \leq j$, $h^L_{i, t} \leq h^L_{j, t}$ (resp. $h^L_{i, t} \geq h^L_{j, t}$) and $\lim_{i \to \infty} h^L_{i, t} = h^L_t$ (resp. $(\lim_{i \to \infty} h^{\mathcal{T}_i}_t)_* = h^L_t$).
	\par 
	Also, for bounded submultiplicative filtrations $\mathcal{F}_i$, $i = 0, 1$, on $R(X, L^{d_i})$, $d_i \in \nat^*$ and any $p \in [1, +\infty[$, we denote $\widetilde{d}_p(\mathcal{F}_0, \mathcal{F}_1) := \frac{1}{d_0 d_1} d_p(\mathcal{F}_0|_{R(X, L^{d_0 d_1})}, \mathcal{F}_1|_{R(X, L^{d_0 d_1})})$.
	\par 
	Recall that for a geodesic ray of Hermitian metrics $h^L_t \in \mathcal{E}^1$, $t \in [0, +\infty[$, one can define its \textit{non-Archimedean potential} (which is a function on the Berkovich analytification of $X$) by studying the singularities of the ray at $t = +\infty$, see \cite[\S B.6]{BerBouckJonYTD}.
	Following Berman-Boucksom-Jonsson \cite[Definition 6.5]{BerBouckJonYTD}, we say that a geodesic ray of Hermitian metrics $h^L_t \in \mathcal{E}^1$, $t \in [0, +\infty[$, is \textit{maximal}, if its potential is maximal among all geodesic rays departing from the same initial point and having the same non-Archimedean potential.
	Alternatively, according to \cite[proof of Theorem 6.6]{BerBouckJonYTD}, a geodesic ray $h^L_t$, $t \in [0, +\infty[$, is maximal if and only if there is a sequence of ample test configurations $\mathcal{T}_i$, $i \in \nat$, of $(X, L)$, such that the associated geodesic rays $h^{\mathcal{T}_i}_t$, $t \in [0, +\infty[$, approximate from below $h^L_t$.
	\begin{rem}\label{rem_max_hf}
 		It was established in \cite[Theorems 1.11]{FinNarSim} that for any bounded submultiplicative filtration $\mathcal{F}$ on $R(X, L)$, the ray $\{ h_t^{\mathcal{F}} \}$ from Proposition \ref{prop_filtr_geod_ray} is maximal, and it corresponds to the non-Archimedean potential $FS(\mathcal{F})$ prescribed by $\mathcal{F}$ as in \cite[Definition 2.13]{BouckJohn21}.
 	\end{rem}
	\par 
	Now, we fix a smooth positive metric $h^L_0$ on $L$, $p \in [1, + \infty[$, and consider the set $\mathcal{R}^p$ of geodesic rays $\{ h^L_t \}$, $h^L_t \in \mathcal{E}^p$, $t \in [0, +\infty[$, departing from $h^L_0$.
	We denote by $\mathcal{R}^p_{\max} \subset \mathcal{R}^p$ the subset of maximal geodesic rays.
	By \cite[Example 6.10]{BerBouckJonYTD}, the set $\mathcal{R}^p \setminus \mathcal{R}^p_{\max}$ is not empty.
	\par 
	\begin{prop}\label{cor_geod_rays}
		We fix $p \in [1, + \infty[$, and consider two maximal geodesic rays $\{ h^{L, i}_t \} \in \mathcal{R}^p_{\max}$, $i = 0, 1$.
		Let $\mathcal{T}^i_j$, $j \in \nat$, be two sequences of ample test configurations of $(X, L^{r^i_j})$, $r^i_j \in \nat^*$, such that the geodesic rays $\{ h^{L, i}_{j, t} \} := \{ (h^{\mathcal{T}^i_j}_t)^{\frac{1}{r^i_j}} \}$, approximate from below (or almost everywhere from above) the rays $\{ h^{L, i}_t \}$.
		Then
		\begin{equation}\label{eq_dist_na_max}
			d_p \big(\{ h^{L, 0}_t \}, \{ h^{L, 1}_t \} \big)
			=
			\lim_{j \to \infty} \widetilde{d}_p \big( \mathcal{F}^{\mathcal{T}^0_j}, \mathcal{F}^{\mathcal{T}^1_j} \big).
		\end{equation}
	\end{prop}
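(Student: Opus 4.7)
The plan is to apply Theorem \ref{thm_dist_na} at each approximation level $j$ and then pass to $j \to \infty$, with the main analytical work being the continuity of the chordal distance under monotone approximation of maximal rays. In detail, the first step establishes, for each $j$, the identity
\[
d_p \bigl( \{ h^{L,0}_{j,t} \}, \{ h^{L,1}_{j,t} \} \bigr) = \widetilde{d}_p \bigl( \mathcal{F}^{\mathcal{T}^0_j}, \mathcal{F}^{\mathcal{T}^1_j} \bigr).
\]
To reduce to Theorem \ref{thm_dist_na} on a common $(X, L^d)$, set $d_j := r^0_j r^1_j$ and, for $i = 0, 1$, replace $\mathcal{T}^i_j = (\pi^i_j : \mathcal{X}^i_j \to \comp, \mathcal{L}^i_j)$ by $\widehat{\mathcal{T}}^i_j := (\pi^i_j, (\mathcal{L}^i_j)^{r^{1-i}_j})$, an ample test configuration of $(X, L^{d_j})$ whose geodesic ray on $L^{d_j}$ equals $(h^{L,i}_{j,t})^{d_j}$ and whose filtration is the restriction of $\mathcal{F}^{\mathcal{T}^i_j}$ to $R(X, L^{d_j}) \subset R(X, L^{r^i_j})$. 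Theorem \ref{thm_dist_na} applied on $(X, L^{d_j})$, together with the scaling $d_p^{L^m}(\{h_t^m\}, \{\tilde{h}_t^m\}) = m \cdot d_p^L(\{h_t\}, \{\tilde{h}_t\})$ on the analytic side (coming from $c_1(L^m) = m \cdot c_1(L)$) and the definition $\widetilde{d}_p = \tfrac{1}{d_j} d_p(\cdot|_{R(X, L^{d_j})}, \cdot|_{R(X, L^{d_j})})$ on the algebraic side, then yields the displayed identity.

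With this at hand, the triangle inequality for the chordal distance gives
\[
\bigl| d_p(\{h^{L,0}_{j,t}\}, \{h^{L,1}_{j,t}\}) - d_p(\{h^{L,0}_t\}, \{h^{L,1}_t\}) \bigr| \leq \sum_{i = 0, 1} d_p \bigl( \{h^{L,i}_{j,t}\}, \{h^{L,i}_t\} \bigr),
\]
so the proposition reduces to proving $d_p(\{h^{L,i}_{j,t}\}, \{h^{L,i}_t\}) \to 0$ as $j \to \infty$ for each $i$. For this last vanishing I would apply the identity of the previous step between pairs of levels $(j, j')$, which translates the claim into a $\widetilde{d}_p$-Cauchy statement for the filtrations $\mathcal{F}^{\mathcal{T}^i_j}$ as $j \to \infty$, with limit a bounded submultiplicative filtration $\mathcal{F}^i$ on $R(X, L)$ canonically attached to the maximal ray $\{h^{L,i}_t\}$ via the correspondence of \cite[Theorem 1.11]{FinNarSim} (cf.\ Remark \ref{rem_max_hf}).

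The monotone convergence $h^{L,i}_{j,t} \to h^{L,i}_t$ translates, under this FS correspondence, into a monotone convergence of the weights $w_{\mathcal{F}^{\mathcal{T}^i_j}_k}$ on each graded piece $H^0(X, L^k)$ to the weights of $\mathcal{F}^i_k$. Joint diagonalization of the pairs $(\mathcal{F}^{\mathcal{T}^i_j}_k, \mathcal{F}^{\mathcal{T}^i_{j'}}_k)$ then reduces the $\widetilde{d}_p$-Cauchy property to a dominated-convergence statement for finite sequences of real numbers, with domination uniform in $k$ provided by Lemma \ref{lem_gr_geod_ray} applied to each $\mathcal{T}^i_j$ together with the boundedness of $\mathcal{F}^i$. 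Passing $j' \to \infty$ and combining with the pointwise-in-$t$ continuity of $d_p$ on $\mathcal{E}^p$ under monotone limits then yields $d_p(\{h^{L,i}_{j,t}\}, \{h^{L,i}_t\}) \to 0$, completing the proof. The main obstacle is precisely this last transfer of monotone convergence from the analytic to the algebraic side: the crucial inputs are the FS correspondence identifying the limit filtration $\mathcal{F}^i$ and the uniform weight bound of Lemma \ref{lem_gr_geod_ray}, which together provide the domination needed for the spectral $\widetilde{d}_p$-convergence uniform in the graded degree $k$.
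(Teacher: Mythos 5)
Your first two steps are correct and coincide with the paper's argument: you apply Theorem \ref{thm_dist_na} at each approximation level (after passing to a common power $L^{d_j}$, which is fine and is precisely what the normalization in $\widetilde{d}_p$ is designed to absorb), and then reduce via the triangle inequality to showing $d_p\bigl(\{ h^{L, i}_{j, t} \}, \{ h^{L, i}_t \}\bigr) \to 0$ for each $i$. This is exactly how the paper sets things up.

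Your third step, however, diverges from the paper and contains a genuine gap. The paper dispatches the vanishing $d_p\bigl(\{ h^{L, i}_{j, t} \}, \{ h^{L, i}_t \}\bigr) \to 0$ by directly citing \cite[Lemma 4.3]{DarLuGeod}, which asserts precisely that for any sequence of geodesic rays approximating a given ray monotonically from below (or a.e. from above) the chordal $d_p$-distance converges to zero. You instead try to re-derive this through the algebraic side, but several links in your chain are not available. First, the ``limit filtration $\mathcal{F}^i$ canonically attached to the maximal ray $\{h^{L,i}_t\}$'' does not exist as stated: Remark \ref{rem_max_hf} and \cite[Theorem 1.11]{FinNarSim} give a map from bounded submultiplicative filtrations to maximal rays, not an inverse; distinct filtrations can produce the same non-Archimedean potential and hence the same maximal ray. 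Second, the claimed monotone convergence of the weights $w_{\mathcal{F}^{\mathcal{T}^i_j}_k}$ on each $H^0(X,L^k)$ is not justified; the passage from monotone convergence of the metrics $h^{L,i}_{j,t}$ to monotone convergence of filtration weights is not a formal consequence of the Rees/Phong--Sturm constructions (the relation between a geodesic ray and its filtration is asymptotic in $k$, not an isomorphism of orders on weights). Third, your Cauchy argument requires domination uniform in $k$ before taking $\limsup_k$ inside $\widetilde{d}_p$; Lemma \ref{lem_gr_geod_ray} gives a bound for each fixed test configuration, but you would need a bound uniform over $j$. Finally, and structurally, this detour through filtration convergence is essentially an attempt to establish a version of Theorem \ref{cor_dist_na_sm} along the way, but the paper proves Theorem \ref{cor_dist_na_sm} \emph{from} Proposition \ref{cor_geod_rays}, so this risks circularity. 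The intended repair is simply to invoke the Darvas--Lu approximation lemma, which is the whole content of the final step.
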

	\begin{proof}
		By Theorem \ref{thm_dist_na}, it suffices to establish that $\lim_{j \to \infty} d_p \big(\{ h^{L, i}_t \}, \{ h^{L, i}_{j, t} \} \big) = 0$.
		This is a direct consequence of \cite[Lemma 4.3]{DarLuGeod}, saying that this holds for any sequences of geodesic rays approximating a given geodesic ray from below (resp. almost everywhere from above).
	\end{proof}
 	\begin{rem}\label{rem_geod_rays}
 		a) When $p = 1$ and one geodesic ray is trivial, Proposition \ref{cor_geod_rays} reduces to \cite[Corollary 6.7]{BerBouckJonYTD}.
 		Using pluripotential theory, Reboulet in \cite[Theorem 4.1.1 and Remark 4.4.4]{ReboulGeodDeger} established that for $p = 1$, (\ref{eq_dist_na_max}) reduces to the case when one geodesic ray is trivial. 
 		\par 
 		b) Proposition \ref{cor_geod_rays} suggests that the right-hand side of (\ref{eq_dist_na_max}) is the analogue of $d_p$-distance between non-Archimedean potentials of the geodesic rays. 
 		It is interesting if one can get a formula for it in the spirit of (\ref{eq_d_p_berndss}), probably using the construction of the Monge-Ampère measure from \cite[\S 2.6]{BouckHisJon} and geodesics between non-Archimedean potentials from Reboulet \cite{ReboulGeod}.
 		\par 
 		c) For non-maximal geodesic rays, by \cite[Corollary 6.7]{BerBouckJonYTD} and \cite[Example 6.10]{BerBouckJonYTD}, there is no hope that the chordal distance can be expressed in terms of the non-Archimedean potentials of the geodesic rays. 
 		It would be interesting to understand the difference between the left-hand side and the right-hand side of (\ref{eq_dist_na_max}) in this case.
 	\end{rem}	
 	 \begin{proof}[Proof of Theorem \ref{cor_dist_na_sm}.]
 	 	Directly by Propositions \ref{prop_filtr_geod_ray},  \ref{cor_geod_rays}, and maximality of $\{ h_t^{\mathcal{F}_1} \}, \{ h_t^{\mathcal{F}_2} \}$, see Remark \ref{rem_max_hf}, we see that there is a sequence $d_k \in \nat^*$, $k \in \nat^*$, such that 
 	 	\begin{equation}
 	 		d_p \big(\{ h_t^{\mathcal{F}_1} \}, \{ h_t^{\mathcal{F}_2} \} \big)
			=
			\lim_{k \to \infty} \frac{1}{d_k} d_p \big( \mathcal{F}_1(k)|_{R(X, L^{d_k})}, \mathcal{F}_2(k)|_{R(X, L^{d_k})} \big),
 	 	\end{equation}
 	 	where the limit is taken over a multiplicative subsequence of $\nat^*$ (as for example $k = 2^l$, $l \in \nat^*$).
 	 	A combination of this with Proposition \ref{prop_finite_type_approx} yields
 	 	\begin{equation}\label{eq_cor_dist_na_sm_1}
 	 		d_p \big(\{ h_t^{\mathcal{F}_1} \}, \{ h_t^{\mathcal{F}_2} \} \big)
			=
			\lim_{k \to \infty} \frac{1}{d_k} d_p \big( \mathcal{F}_1|_{R(X, L^{d_k})}, \mathcal{F}_2|_{R(X, L^{d_k})} \big),
 	 	\end{equation}
 	 	where the limit is again taken over a multiplicative subsequence of $\nat^*$.
 	 	It is only left now to apply the result of Chen-Maclean \cite[Theorem 4.3]{ChenMaclean}, cf. also Boucksom-Jonsson \cite[Theorem 3.3 and \S 3.4]{BouckJohn21}, saying that for the restrictions $\mathcal{F}_{1, k}, \mathcal{F}_{2, k}$ of $\mathcal{F}_1, \mathcal{F}_2$ to $H^0(X, L^k)$, the limit $\lim_{k \to \infty} \frac{1}{k} d_p ( \mathcal{F}_{1, k}, \mathcal{F}_{2, k} )$ exists.
 	 	In particular, the right-hand side of (\ref{eq_cor_dist_na_sm_1}) coincides with it.
 	\end{proof}

\section{Quantization, Buseman convexity and submultiplicative norms}\label{sect_first}
	To establish Theorem \ref{thm_dist_na}, we prove that the left-hand side of (\ref{eq_dist_na}) is not smaller than the right-hand side and then the opposite bound (we call these statements lower and upper bounds of Theorem \ref{thm_dist_na} later on).
	The main goal of this section is to establish the upper bound.
	\par 
	More precisely, in Section \ref{sect_geom_ray_fin_dim}, we study the geometry of the space of Hermitian norms and various constructions of rays of norms.
	In Section \ref{sec_fs_bdem}, we recall the definition of the Fubini-Study map, and then a statement from \cite{FinNarSim}, concerning its isometry properties.
	Finally, in Section \ref{sect_part1_pf}, by relying on this, we establish the upper bound of Theorem \ref{thm_dist_na}.

	\subsection{Geometry of geodesic rays on the space of norms}\label{sect_geom_ray_fin_dim}	
	The main goal of this section is to recall the relation between filtrations and Hermitian norms on a finitely dimensional vector space and then to discuss the metric properties of this correspondence.
	\par
	Let us first recall that it is possible to view the space of filtrations on a given finitely dimensional vector space as the boundary at the infinity of the space of Hermitian norms, where the latter space is interpreted in terms of geodesic rays.
	For this, for any filtration $\mathcal{F}$ on a finitely dimensional vector space $V$, we associate a ray of Hermitian norms.
	More precisely, we fix a Hermitian norm $H_V := \| \cdot \|_H$ on $V$ and consider an orthonormal basis $s_1, \ldots, s_n$, of $(V, H_V)$, adapted to the filtration $\mathcal{F}$, i.e. verifying $s_i \in \mathcal{F}^{e_{\mathcal{F}}(i)} V$, where $e_{\mathcal{F}}(i)$, $i = 1, \ldots, n$, are the jumping numbers of the filtration $\mathcal{F}$, defined in (\ref{eq_defn_jump_numb}).
	We define the ray of Hermitian norms $H_t^{\mathcal{F}} := \| \cdot \|_{t}^{\mathcal{F}}$, $t \in [0, +\infty[$, on $V$ by declaring the basis 
	\begin{equation}\label{eq_bas_st}
		(s_1^t, \ldots, s_n^t) := \big( e^{t e_{\mathcal{F}}(1)} s_1, \ldots, e^{t e_{\mathcal{F}}(n)} s_n \big),
	\end{equation}
	to be orthonormal with respect to $H_t^{\mathcal{F}}$.
	It is clear from (\ref{eq_dist_transf}) that $H_t^{\mathcal{F}}$ is a geodesic ray with respect to the metrics $d_p$, $p \in [1, +\infty]$.
	Moreover, for any $t, s \in [0, +\infty[$, $p \in [1, +\infty[$, we have
	\begin{equation}
		d_p(H_t^{\mathcal{F}}, H_s^{\mathcal{F}})
		=
		|t - s| 
		\cdot
		\sqrt[p]{\frac{\sum_{i = 1}^{\dim V} |e_{\mathcal{F}}(i)|^p}{\dim V}},
	\end{equation}
	and $d_{+ \infty}(H_t^{\mathcal{F}}, H_s^{\mathcal{F}}) = |t - s| \cdot \max |e_{\mathcal{F}}(i)|$.
	Since for $p \in ]1, +\infty[$, the space $(\mathcal{H}_V, d_p)$ is a uniquely geodesic space, this gives us a complete description of geodesic rays with respect to $d_p$.
	Hence, filtrations are in bijective correspondence with geodesic rays.
	\par 
	Remark the following relation between the non-Archimedean norm $\chi_{\mathcal{F}}$, defined in (\ref{eq_filtr_norm}), and the ray of norms $H_t^{\mathcal{F}}$, $t \in [0, +\infty[$: for any $f \in V$, we have
	\begin{equation}\label{eq_na_nm_interpol}
		\log \chi_{\mathcal{F}}(f) 
		=
		 \lim_{t \to +\infty} \frac{\log \| f \|_{t}^{\mathcal{F}}}{t}.
	\end{equation}
	Remark that (\ref{eq_na_nm_interpol}) can be though as the finitely-dimensional analogue of Theorem \ref{thm_dist_na}.
	\par 
	It is well-known, cf. \cite[(1-2)]{BouckICM}, that the correspondence (\ref{eq_na_nm_interpol}) respects the metric structures (\ref{eq_dist_transf}) and (\ref{eq_dp_filtr}).
	In other words, the geodesic rays $H_t^{\mathcal{F}_0}, H_t^{\mathcal{F}_1}$, $t \in [0, +\infty[$, associated with the filtrations $\mathcal{F}_0, \mathcal{F}_1$ and emanating from a fixed Hermitian norm $H_0$ for any $p \in [1, +\infty]$ verify
	\begin{equation}\label{eq_d_p_fil_norms_herm}
		d_p(\mathcal{F}_1, \mathcal{F}_2)
		=
		\lim_{t \to \infty} \frac{d_p(H_t^{\mathcal{F}_0}, H_t^{\mathcal{F}_1})}{t}.
	\end{equation}
	\par 
	It is also well-known, cf. \cite[Exercise 6.5.4]{BhatiaBook}, that the space of Hermitian norms endowed with $d_p$-distances is Buseman convex. 
	More precisely, for any $0 < s < t$, $p \in [1, +\infty]$, we have
	\begin{equation}\label{eq_toponogov}
		\frac{d_p(H_s^{\mathcal{F}_0}, H_s^{\mathcal{F}_1})}{s} \leq \frac{d_p(H_t^{\mathcal{F}_0}, H_t^{\mathcal{F}_1})}{t},
	\end{equation}
	which gives an alternative way to see that the limit in (\ref{eq_d_p_fil_norms_herm}) exists.
	\par 
	In this article, we sometimes deal with non-Hermitian norms. 
	Due to this, we will generalize the distances $d_p$, $p \in [1, +\infty]$, from (\ref{eq_dist_transf}), to this more broad context.
	\par 
	\begin{sloppypar}
	More precisely, let $N_i = \| \cdot \|_i$, $i = 0, 1$, be two norms on $V$.
	We define the \textit{logarithmic relative spectrum} of $N_0$ with respect to $N_1$ as a non-increasing sequence $\mu_j := \mu_j(N_0, N_1)$, $j = 1, \ldots, \dim V$, defined as follows
	\begin{equation}\label{eq_log_rel_spec}
		\mu_j
		:=
		\sup_{\substack{W \subset V \\ \dim W = j}} 
		\inf_{w \in W \setminus \{0\}} \log \frac{\| w \|_1}{\| w \|_0}.
	\end{equation}
	We then define for $p \in [1, +\infty[$, the following quantity
	\begin{equation}
		d_p(N_0, N_1) = \sqrt[p]{\frac{\sum_{i = 1}^{\dim V} |\mu_i|^p}{\dim V}},
	\end{equation}
	and we let $d_{+ \infty}(N_0, N_1) = \max |\mu_i|$.
	By (\ref{eq_dist_transf}), it coincides with our previous definition if both $N_1$ and $N_2$ are Hermitian. 
	Also, $d_{+ \infty}(N_0, N_1)$ is the \textit{multiplicative gap} between $N_0, N_1$, i.e. it is the minimal constant $C > 0$, such that $N_0 \leq \exp(C) \cdot N_1$ and $N_1 \leq \exp(C) \cdot N_0$.
	\par 
	Remark also that John ellipsoid theorem, cf. \cite[\S 3]{PisierBook}, says that for any normed vector space $(V, N_V)$, there is a Hermitian norm $H_V$ on $V$, verifying 
	\begin{equation}\label{eq_john_ellips}
		H_V \leq N_V \leq \sqrt{\dim V} \cdot H_V.
	\end{equation}
	From (\ref{eq_john_ellips}), the fact that $d_p$, $p \in [1, +\infty]$, satisfy triangle inequality when restricted to Hermitian norms, Minkowski inequality and the usual monotonicity properties of the logarithmic relative spectrum, cf. \cite[(2.10), (2.11)]{FinNarSim}, we deduce that for any norms $N_0, N_1, N_2$ on $V$, the following weak version of triangle inequality holds
	\begin{equation}\label{eq_tr_weak}
		d_p(N_0, N_2) \leq d_p(N_0, N_1) + d_p(N_1, N_2) + \log \dim V.
	\end{equation}
	\end{sloppypar}
	\par 
	Now, we fix a finitely-dimensional normed vector space $(V, N_V)$, $\| \cdot \|_V := N_V$ and a filtration $\mathcal{F}$ of $V$.
	We define the non-Archimedean norm $\chi_{\mathcal{F}}$ associated with $\mathcal{F}$ as in (\ref{eq_filtr_norm}).
	Following \cite[(2.18)]{FinNarSim}, we construct a ray of norms $N^{\mathcal{F}}_t := \| \cdot \|^{\mathcal{F}}_t$, $t \in [0, +\infty[$, emanating from $N_V$, as follows
	\begin{equation}\label{eq_ray_norm_defn0}
		\| f \|^{\mathcal{F}}_t :=
		\inf 
		\Big\{
			\sum
			\| f_i \|_V
			\cdot
			\chi_{\mathcal{F}}(f_i)^t
			\,
			:
			\,
			f = \sum f_i
		\Big\}.
	\end{equation}
	Remark that even if the initial norm $N_V$ is Hermitian, the above ray is certainly not.
	Let us, nevertheless, recall the following compatibility result between the two definitions of rays of norms.
	\begin{lem}[{ \cite[Lemma 2.8]{FinNarSim}}]\label{lem_two_norms_comp0}
		For any (resp. Hermitian) norm $N_V$ (resp. $H_V$) on $V$ and any $t \in [0, +\infty[$, the rays of norms $N_t^{\mathcal{F}}$ (resp. $H_t^{\mathcal{F}}$) associated with the filtration $\mathcal{F}$ as in (\ref{eq_ray_norm_defn0}) (resp. (\ref{eq_bas_st})) and emanating from $N_V$ (resp. $H_V$) are related as follows: for any $t \in [0, +\infty[$, we have
		\begin{equation}\label{eq_two_norms_comp01}
			d_{+ \infty}(N^{\mathcal{F}}_t, H^{\mathcal{F}}_t)
			\leq
			d_{+ \infty}(N_V, H_V)
			+
			\log \dim V.
		\end{equation}
	\end{lem}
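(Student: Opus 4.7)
The plan is to establish a two-sided comparison by reducing to an $H_V$-orthonormal basis adapted to $\mathcal{F}$. Set $C := d_{+\infty}(N_V, H_V)$, so that $e^{-C} H_V \leq N_V \leq e^C H_V$, and fix an $H_V$-orthonormal basis $s_1, \ldots, s_n$ of $V$ with $s_i \in \mathcal{F}^{e_{\mathcal{F}}(i)} V$. Then $\chi_{\mathcal{F}}(s_i) = e^{-e_{\mathcal{F}}(i)}$ and, by the construction (\ref{eq_bas_st}), $\|s_i\|_{H_t^{\mathcal{F}}} = \chi_{\mathcal{F}}(s_i)^t$. The goal is to prove, for every $f \in V$, the two estimates
\begin{equation}
    \|f\|_{H_t^{\mathcal{F}}} \leq e^C \|f\|_{N_t^{\mathcal{F}}} \quad \text{and} \quad \|f\|_{N_t^{\mathcal{F}}} \leq e^C \cdot (\dim V) \cdot \|f\|_{H_t^{\mathcal{F}}},
\end{equation}
which together yield $d_{+\infty}(N_t^{\mathcal{F}}, H_t^{\mathcal{F}}) \leq C + \log \dim V$, as claimed.

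For the upper estimate on $\|f\|_{N_t^{\mathcal{F}}}$, I would simply plug the decomposition $f = \sum_i \lambda_i s_i$ into the infimum (\ref{eq_ray_norm_defn0}):
\begin{equation}
    \|f\|_{N_t^{\mathcal{F}}} \leq \sum_i |\lambda_i| \cdot \|s_i\|_V \cdot \chi_{\mathcal{F}}(s_i)^t \leq e^C \sum_i |\lambda_i| \cdot \chi_{\mathcal{F}}(s_i)^t,
\end{equation}
using $\|s_i\|_V \leq e^C \|s_i\|_{H_V} = e^C$. Each term $|\lambda_i| \chi_{\mathcal{F}}(s_i)^t$ is a coordinate of $f$ in the $H_t^{\mathcal{F}}$-orthonormal basis $(e^{t e_{\mathcal{F}}(i)} s_i)$, so it is bounded above by $\|f\|_{H_t^{\mathcal{F}}}$; the crude bound $\sum_i \leq (\dim V) \max_i$ then closes the argument.

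For the lower estimate, the key auxiliary inequality is $\|g\|_{H_t^{\mathcal{F}}} \leq \|g\|_{H_V} \cdot \chi_{\mathcal{F}}(g)^t$ for every $g \in V$. To see this, expand $g = \sum_i \mu_i s_i$: the non-Archimedean axioms and the adaptedness of $s_i$ give $\chi_{\mathcal{F}}(g) = \max\{\chi_{\mathcal{F}}(s_i) : \mu_i \neq 0\}$, hence
\begin{equation}
    \|g\|_{H_t^{\mathcal{F}}}^2 = \sum_i |\mu_i|^2 \chi_{\mathcal{F}}(s_i)^{2t} \leq \chi_{\mathcal{F}}(g)^{2t} \sum_i |\mu_i|^2 = \chi_{\mathcal{F}}(g)^{2t} \cdot \|g\|_{H_V}^2.
\end{equation}
For any decomposition $f = \sum_j f_j$, the triangle inequality for the Hermitian norm $H_t^{\mathcal{F}}$ followed by $\|f_j\|_{H_V} \leq e^C \|f_j\|_V$ gives
\begin{equation}
    \|f\|_{H_t^{\mathcal{F}}} \leq \sum_j \|f_j\|_{H_t^{\mathcal{F}}} \leq \sum_j \|f_j\|_{H_V} \chi_{\mathcal{F}}(f_j)^t \leq e^C \sum_j \|f_j\|_V \chi_{\mathcal{F}}(f_j)^t,
\end{equation}
and taking the infimum over decompositions yields $\|f\|_{H_t^{\mathcal{F}}} \leq e^C \|f\|_{N_t^{\mathcal{F}}}$.

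There is no substantial obstacle in this argument; the only delicate point is the simultaneous use of an $H_V$-orthonormal basis and its adaptedness to $\mathcal{F}$, which lets the non-Archimedean max-property of $\chi_{\mathcal{F}}$ interact cleanly with the Euclidean structure of $H_t^{\mathcal{F}}$. The $\log \dim V$ in the conclusion arises asymmetrically from the $\ell^1$-to-$\ell^\infty$ conversion needed only in the upper estimate on $\|f\|_{N_t^{\mathcal{F}}}$, and is absent from the lower estimate.
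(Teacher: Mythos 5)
Your proof is correct. The paper cites this statement from \cite{FinNarSim} and does not reproduce the argument, so a line-by-line comparison is not available here; but your two-sided comparison through an $H_V$-orthonormal basis adapted to $\mathcal{F}$ — where the adapted basis simultaneously diagonalizes $\chi_{\mathcal{F}}$ (so that $\chi_{\mathcal{F}}(s_i) = e^{-e_{\mathcal{F}}(i)}$ and $w_{\mathcal{F}}(\sum \mu_i s_i) = \min\{e_{\mathcal{F}}(i):\mu_i\neq 0\}$), the lower estimate follows from the pointwise inequality $\|g\|_{H_t^{\mathcal{F}}} \leq \|g\|_{H_V}\,\chi_{\mathcal{F}}(g)^t$ applied termwise to an arbitrary decomposition, and the $\log\dim V$ enters only in the other direction via the $\ell^1$-to-$\ell^\infty$ bound on coordinates — is the natural argument and is sound.
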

	\begin{rem}\label{rem_ray_sm_anal_hlds}
		From (\ref{eq_d_p_fil_norms_herm}), (\ref{eq_tr_weak}) and (\ref{eq_two_norms_comp01}), the analogue of (\ref{eq_d_p_fil_norms_herm}) holds for the rays as in (\ref{eq_ray_norm_defn0}).
	\end{rem}
	\par
	Now, the essential reason for introducing the ray of norms (\ref{eq_ray_norm_defn0}) instead of (\ref{eq_bas_st}) is that it behaves better in comparison with (\ref{eq_bas_st}) when defined on a graded ring instead of a vector space.
	To explain this, we fix a graded ring $A$ and a graded filtration $\mathcal{F}$ on $A$.
	We assume that $\mathcal{F}$ is submultiplicative in the sense of (\ref{eq_subm_filt}). 
	We fix a graded norm $N = \sum N_k$, $N_k := \| \cdot \|_k$, over $A$, which we assume to be submultiplicative in the following sense: for any $k, l \in \nat^*$, $f \in A_k$, $g \in A_l$, we have
	\begin{equation}\label{eq_subm_s_ring}
		\| f \cdot g \|_{k + l} \leq 
		\| f \|_k \cdot
		\| g \|_l.
	\end{equation}
	A trivial verification shows that the following lemma holds.
	\begin{lem}[{\cite[\S 5.1]{FinNarSim}}]\label{lem_subm_ray_norms}
		The ray of norms $N^{\mathcal{F}}_t$, $t \in [0, +\infty[$, emanating from $N$ and constructed as in (\ref{eq_ray_norm_defn0}), is a ray of submultiplicative norms.
	\end{lem}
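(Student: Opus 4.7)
The plan is to unwind the infimum in (\ref{eq_ray_norm_defn0}) and combine the submultiplicativity of $N$ (\ref{eq_subm_s_ring}) with an analogous submultiplicativity of the non-Archimedean norm $\chi_{\mathcal{F}}$ induced by the filtration.

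First I would verify the auxiliary fact that $\chi_{\mathcal{F}}$ is submultiplicative in the ring sense: for $f \in A_k$, $g \in A_l$, one has $\chi_{\mathcal{F}}(f \cdot g) \leq \chi_{\mathcal{F}}(f) \cdot \chi_{\mathcal{F}}(g)$. This is a direct consequence of the submultiplicativity (\ref{eq_subm_filt}) of $\mathcal{F}$: if $w_{\mathcal{F}}(f) \geq a$ and $w_{\mathcal{F}}(g) \geq b$, i.e.\ $f \in \mathcal{F}^a A_k$ and $g \in \mathcal{F}^b A_l$, then $f \cdot g \in \mathcal{F}^{a+b} A_{k+l}$, hence $w_{\mathcal{F}}(fg) \geq w_{\mathcal{F}}(f) + w_{\mathcal{F}}(g)$, and passing to $\chi_{\mathcal{F}} = \exp(-w_{\mathcal{F}})$ gives the claim.

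Second, fix arbitrary decompositions $f = \sum_i f_i$ in $A_k$ and $g = \sum_j g_j$ in $A_l$. Then $f \cdot g = \sum_{i,j} f_i \cdot g_j$ is an admissible decomposition of $f \cdot g$ in $A_{k+l}$, so by the definition (\ref{eq_ray_norm_defn0}) and the two submultiplicativities,
\begin{equation*}
  \| f g \|^{\mathcal{F}}_t
  \leq \sum_{i,j} \| f_i g_j \|_{k+l} \cdot \chi_{\mathcal{F}}(f_i g_j)^t
  \leq \sum_{i,j} \| f_i \|_k \| g_j \|_l \cdot \chi_{\mathcal{F}}(f_i)^t \chi_{\mathcal{F}}(g_j)^t,
\end{equation*}
and the right-hand side factors as $\bigl(\sum_i \| f_i \|_k \chi_{\mathcal{F}}(f_i)^t\bigr) \cdot \bigl(\sum_j \| g_j \|_l \chi_{\mathcal{F}}(g_j)^t\bigr)$.

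Finally, taking the infimum over all decompositions of $f$ and independently over all decompositions of $g$ yields $\| f g \|^{\mathcal{F}}_t \leq \| f \|^{\mathcal{F}}_t \cdot \| g \|^{\mathcal{F}}_t$, which is exactly the submultiplicativity (\ref{eq_subm_s_ring}) for the ray $N^{\mathcal{F}}_t$. There is no genuine obstacle here: the proof is a clean separation-of-variables argument made possible by the product structure of the defining infimum. The only conceptual point worth flagging is why (\ref{eq_ray_norm_defn0}) — rather than the Hermitian-basis construction (\ref{eq_bas_st}) — is the right definition to use in the graded setting: the bilinearity of multiplication interacts compatibly with the additive decompositions appearing in the infimum, whereas the orthonormal-basis definition would not in general give a submultiplicative ray of norms.
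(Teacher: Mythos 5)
Your proof is correct and is exactly the ``trivial verification'' the paper alludes to (the paper itself gives no argument, only a citation to \cite[\S 5.1]{FinNarSim}). The two key points — that $\chi_{\mathcal{F}}$ inherits multiplicative submultiplicativity from (\ref{eq_subm_filt}), and that decompositions of $f$ and $g$ combine via distributivity into admissible decompositions of $fg$ so that the infimum factors — are precisely what is needed, and your closing remark about why (\ref{eq_ray_norm_defn0}) rather than (\ref{eq_bas_st}) is the right construction in the graded setting matches the motivation stated just before the lemma.
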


	\subsection{Fubini-Study metrics of submultiplicative norms}\label{sec_fs_bdem}
	\begin{sloppypar}
	In this article, we constantly pass from the study of graded norms on $R(X, L)$ to metrics on $L$.
	The fundamental tool for this is the Fubini-Study map.
	In this section, we recall its definition and its isometric properties.
	\par 
	We fix an ample line bundle $L$ over a compact complex manifold $X$.
	For $k_0 \in \nat$ so that $L^{k_0}$ is very ample, Fubini-Study map associates with any norm $N_k = \| \cdot \|_k$ on $H^0(X, L^k)$, $k \geq k_0$, a continuous metric $FS(N_k)$ on $L^k$, constructed as follows.
	Consider the Kodaira embedding 
	\begin{equation}\label{eq_kod}
		{\rm{Kod}}_k : X \hookrightarrow \mathbb{P}(H^0(X, L^k)^*).
	\end{equation}
	The evaluation maps provide the isomorphism $
		L^{-k} \to {\rm{Kod}}_k^* \mathscr{O}(-1),
	$
	where $\mathscr{O}(-1)$ is the \textit{tautological line bundle} over $\mathbb{P}(H^0(X, L^k)^*)$.
	We endow $H^0(X, L^k)^*$ with the dual norm $N_k^*$ and denote by $FS^{\mathbb{P}}(N_k)$ the induced metric on the \textit{hyperplane line bundle} $\mathscr{O}(1) := \mathscr{O}(-1)^*$ over $\mathbb{P}(H^0(X, L^k)^*)$. 
	We define the metric $FS(N_k)$ on $L^k$ as the only metric verifying under the dual of the above isomorphism the identity
	\begin{equation}\label{eq_fs_defn}
		FS(N_k) = {\rm{Kod}}_k^* ( FS^{\mathbb{P}}(N_k) ).
	\end{equation}
	A statement below can be seen as an alternative definition of $FS(N_k)$.
	\end{sloppypar}
	\begin{lem}\label{lem_fs_inf_d}
		For any $x \in X$, $l \in L^k_x$, the following identity takes place
		\begin{equation}\label{eq_fs_norm}
			|l|_{FS(N_k)}
			=
			\inf_{\substack{s \in H^0(X, L^k) \\ s(x) = l}}
			\| s \|_k.
		\end{equation}
	\end{lem}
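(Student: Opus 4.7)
The plan is to unwind the definition of $FS(N_k)$ through the Kodaira embedding and reduce the statement to the standard duality identity $\|\phi\|^* = 1/\inf\{\|s\| : \phi(s) = 1\}$ in a finite-dimensional normed vector space. The case $l = 0$ is trivial (take $s = 0$ in the infimum), so we may assume $l \neq 0$.

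First, I would make the canonical isomorphism $L^{-k} \to \mathrm{Kod}_k^* \mathscr{O}(-1)$ concrete at the point $x$. For any nonzero $l^\vee \in L^{-k}_x$, evaluation gives a linear functional $\phi_{l^\vee} : H^0(X, L^k) \to \comp$, $\phi_{l^\vee}(s) := \langle s(x), l^\vee \rangle$; this is well-defined and nonzero since $L^k$ is very ample. The assignment $l^\vee \mapsto \phi_{l^\vee}$ realizes the isomorphism at the fiber over $x$, and $\mathrm{Kod}_k(x) = [\phi_{l^\vee}] \in \mathbb{P}(H^0(X, L^k)^*)$. Dualizing, an element $l \in L^k_x$ corresponds to the unique $\ell_l \in \mathscr{O}(1)_{\mathrm{Kod}_k(x)} = (\comp \cdot \phi_{l^\vee})^*$ satisfying $\ell_l(\phi_{l^\vee}) = \langle l, l^\vee \rangle$.

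Next, I would spell out the metric $FS^{\mathbb{P}}(N_k)$: by construction, the tautological line $\comp \cdot \phi_{l^\vee} \subset H^0(X, L^k)^*$ inherits the restriction of the dual norm $N_k^*$, so at $[\phi_{l^\vee}]$ the dual fiber satisfies $|\ell_l|_{FS^{\mathbb{P}}(N_k)} = |\ell_l(\phi_{l^\vee})|/\|\phi_{l^\vee}\|_{N_k^*} = |\langle l, l^\vee\rangle|/\|\phi_{l^\vee}\|_{N_k^*}$. Combined with (\ref{eq_fs_defn}), this gives
\begin{equation*}
|l|_{FS(N_k)} = \frac{|\langle l, l^\vee\rangle|}{\|\phi_{l^\vee}\|_{N_k^*}}.
\end{equation*}
Choosing $l^\vee$ so that $\langle l, l^\vee\rangle = 1$ (possible as $l \neq 0$), the right-hand side reduces to $1/\|\phi_{l^\vee}\|_{N_k^*}$.

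Finally, I would invoke the elementary duality: for any nonzero linear functional $\phi$ on a finite-dimensional normed space $(V, \|\cdot\|)$, one has $\|\phi\|^* = \sup_{s \neq 0} |\phi(s)|/\|s\| = 1/\inf\{\|s\| : \phi(s) = 1\}$. Applied to $\phi = \phi_{l^\vee}$ on $H^0(X, L^k)$, this converts $1/\|\phi_{l^\vee}\|_{N_k^*}$ into $\inf\{\|s\|_k : \phi_{l^\vee}(s) = 1\}$; but the condition $\phi_{l^\vee}(s) = 1$ is equivalent to $\langle s(x), l^\vee\rangle = 1 = \langle l, l^\vee\rangle$, which, since $l^\vee$ spans $L^{-k}_x$, means precisely $s(x) = l$. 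This yields (\ref{eq_fs_norm}). There is no real obstacle here beyond keeping the dualities straight; the only point requiring care is the canonical identification in the first paragraph, which could be bypassed at the cost of choosing a local trivialization of $L^k$ near $x$ and reducing everything to standard Fubini-Study on $\mathbb{P}^{N-1}$.
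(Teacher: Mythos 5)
Your proof is correct and fills in the details of what the paper dismisses as ``an easy verification, cf.\ Ma-Marinescu [Theorem 5.1.3]'' in the natural, standard way: unwind the definition of $FS(N_k)$ through the Kodaira embedding, identify the tautological-line metric at $\mathrm{Kod}_k(x)$ with $\|\phi_{l^\vee}\|_{N_k^*}$, and then invoke the elementary duality $\|\phi\|^* = 1/\inf\{\|s\| : \phi(s)=1\}$. The key identification $\{s : \phi_{l^\vee}(s)=1\} = \{s : s(x)=l\}$ (using that $L^k_x$ is one-dimensional and $l^\vee$ spans its dual) is correctly observed. One minor remark: the place where you say ``well-defined and nonzero since $L^k$ is very ample'' only uses base-point freeness of $L^k$, which is all that is needed for $\phi_{l^\vee}\neq 0$; very ampleness (hence injectivity of $\mathrm{Kod}_k$) is not required for this lemma, only for the Kodaira map to be defined as an embedding as stated in the surrounding text.
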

	\begin{proof}
		An easy verification, cf. Ma-Marinescu \cite[Theorem 5.1.3]{MaHol}.
	\end{proof}
	When the norm $N_k$ comes from a Hermitian product on $H^0(X, L^k)$, the definition of the Fubini-Study map is standard, and explicit evaluation shows that in this case $c_1(\mathscr{O}(1) , FS^{\mathbb{P}}(N_k))$ coincides up to a positive constant with the Kähler form of the Fubini-Study metric on $\mathbb{P}(H^0(X, L^k)^*)$ induced by $N_k$.
	In particular, $c_1(\mathscr{O}(1) , FS^{\mathbb{P}}(N_k))$ is a positive $(1, 1)$-form.
	From Kobayashi \cite{KobFinNeg}, for general norms $N_k$, the $(1, 1)$-current $c_1(\mathscr{O}(1) , FS^{\mathbb{P}}(N_k))$ is positive, cf. \cite[\S 2.1]{FinNarSim} for details.
	In particular, the metric $FS(N_k)$ is positive for any norm $N_k$ on $H^0(X, L^k)$.
	\par 
	We will now study the properties of the Fubini-Study map on the space of graded norms.
	Let $N, N'$ be graded norms on the section ring $R(X, L)$.
	For $p \in [1, +\infty]$, we define 
	\begin{equation}\label{eq_dp_graded}
		d_p(N, N') := \limsup_{k \to \infty} \frac{d_p(N_k, N'_k)}{k},
	\end{equation}
	where $N_k, N'_k$ are the restrictions of $N, N'$ to $H^0(X, L^k)$.
	\par 
	A trivial verification, based on (\ref{eq_fs_defn}), shows that the Fubini-Study map is 1-Lipschitz with respect to the $d_{+ \infty}$-metric. 
	In other words, we have
	\begin{equation}\label{eq_fs_contact}
		d_{+ \infty}(FS(N_k), FS(N_k'))
		\leq
		d_{+ \infty}(N_k, N_k').
	\end{equation}
	For other $d_p$-metrics, $p \in [1, +\infty[$, no relation between the distances of graded norms and distances of their Fubini-Study metrics exists, see \cite[Proposition 3.7]{FinNarSim}.
	But from the work of the author \cite{FinNarSim}, we know that there is such a relation under an additional submultiplicativity assumption, (\ref{eq_subm_s_ring}).
	\par 
	More precisely, from Lemma \ref{lem_fs_inf_d}, it is easy to verify that the sequence of Fubini-Study metrics $FS(N_k)$, $k \geq k_0$, is submultiplicative for any submultiplicative graded norm $N = \sum N_k$ on $R(X, L)$.
	By this we mean that for any $k, l \geq k_0$, $FS(N_{k + l}) \leq FS(N_k) \cdot FS(N_l)$.
	In particular, by Fekete's lemma, the sequence of metrics $FS(N_k)^{\frac{1}{k}}$ on $L$ converges, as $k \to \infty$, to a (possibly only bounded from above and even null) upper semicontinuous metric, which we denote by $FS(N)$.
	We say that $N$ is \textit{bounded} if $FS(N)$ is bounded, and we denote by $FS(N)_*$ the lower semicontinuous regularization of $FS(N)$, which is psh, cf. \cite[Proposition I.4.24]{DemCompl}.
	\begin{thm}[{ \cite[Corollary 3.6]{FinNarSim} }]\label{thm_d_p_norm_fs_rel}
		For any bounded submultiplicative graded norms $N, N'$ on $R(X, L)$, and any $p \in [1, +\infty[$, we have
		\begin{equation}\label{eq_d_p_norm_fs_rel}
			d_p \big( FS(N)_*, FS(N')_* \big)
			=
			d_p(N, N').
		\end{equation}
		Moreover, we have $\lim$ instead of $\limsup$ in (\ref{eq_dp_graded}) in this case. If, moreover, $FS(N)$ and $FS(N')$ are continuous, then one can take $p = +\infty$ above.
	\end{thm}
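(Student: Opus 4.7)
The plan is to establish both directions of (\ref{eq_d_p_norm_fs_rel}) via a joint spectral analysis of the transfer operator from $N_k$ to $N'_k$, combined with peak-section dequantization. First, I reduce to Hermitian norms: by John's ellipsoid theorem (\ref{eq_john_ellips}), there exist Hermitian $H_k, H'_k$ on $H^0(X, L^k)$ with $d_{+\infty}(N_k, H_k), d_{+\infty}(N'_k, H'_k) = O(\log k)$, and by the contraction property (\ref{eq_fs_contact}), the same bound holds after applying $FS$. Both sides of (\ref{eq_d_p_norm_fs_rel}) are normalised by $1/k$ in the limit, so this error vanishes, and I may assume $N_k, N'_k$ Hermitian with a joint orthonormal basis $(s_1^k, \ldots, s_{r_k}^k)$, $r_k := \dim H^0(X, L^k)$, and transfer eigenvalues $\lambda_1^k, \ldots, \lambda_{r_k}^k$, so that $d_p(N_k, N'_k)^p = \tfrac{1}{r_k}\sum_i |\tfrac{1}{2}\log \lambda_i^k|^p$.

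For the upper bound, I use a pointwise Bergman-type identity: Lemma \ref{lem_fs_inf_d} and Lagrange multipliers in the joint basis yield, for any $l \in L_x^k$,
\[
\log\frac{|l|^2_{FS(N'_k)}}{|l|^2_{FS(N_k)}}(x) \;=\; -\log\!\sum_{i} p_i^k(x) \cdot \tfrac{1}{\lambda_i^k},\qquad p_i^k(x) := \frac{|s_i^k(x)|^2}{\sum_j |s_j^k(x)|^2}.
\]
Jensen's inequality (concavity of $\log$, convexity of $|\cdot|^p$) gives $|\log(FS(N_k)/FS(N'_k))|^p(x) \leq \mathbb{E}_{p^k(x)}[|\log \lambda^k|^p]$. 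Integrating against the normalised Bergman measure $\tfrac{1}{r_k}\sum_i |s_i^k|^2$ (which integrates to $1$ and equidistributes in $i$) and using standard Bergman kernel asymptotics to recover the Monge-Amp\`ere measure of $FS(N)_*$, then dividing by $k^p$ and passing to the limit, the Berndtsson-Darvas formula (\ref{eq_d_p_berndss}) identifies the left-hand side as $d_p(FS(N)_*, FS(N')_*)^p$, while the right-hand side reduces to the limit of $\tfrac{1}{r_k k^p}\sum_i |\log \lambda_i^k|^p$, i.e.\ $d_p(N, N')^p$.

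For the lower bound, I use peak-section dequantization. Since $FS(N)_*, FS(N')_*$ are bounded psh, a Bernstein-Markov inequality (valid for any bounded submultiplicative norm via the envelope description of $FS(N)_*$ and Ohsawa-Takegoshi $L^2$-extension from the peak) shows that each joint eigensection $s_i^k$ concentrates on a ball of radius $O(k^{-1/2})$ around a peak point $x_i^k$, at which $\tfrac{1}{2k}\log\lambda_i^k$ is $o(1)$-close to $\log(FS(N')_*/FS(N)_*)(x_i^k)$. As $k\to\infty$, the peak points equidistribute according to the Monge-Amp\`ere measure of $FS(N)_*$, so the empirical spectral measure $\tfrac{1}{r_k}\sum_i\delta_{(\log\lambda_i^k)/(2k)}$ converges weakly to the pushforward of this measure by $\log(FS(N')_*/FS(N)_*)$. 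By the Berndtsson-Darvas description (\ref{eq_berndt_meas})-(\ref{eq_d_p_berndss}) applied to the distinguished geodesic joining $FS(N)_*$ and $FS(N')_*$, the $p$-th moment of this pushforward is precisely $d_p(FS(N)_*, FS(N')_*)^p$, matching $d_p(N, N')^p$.

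The main obstacle is the dequantization step, as classical Bergman kernel and Bernstein-Markov asymptotics are formulated for $L^2$-norms of smooth metrics, while here $N$ is only bounded submultiplicative and $FS(N)_*$ only bounded psh with no regularity. The key technical input is to exploit the envelope description of $FS(N)_*$ together with Ohsawa-Takegoshi extension to produce approximately concentrated peak sections with controlled $L^\infty \to L^\infty$ behaviour, bypassing the lack of curvature regularity. The case $p = +\infty$ under continuity hypotheses is then immediate, since a near-supremum peak point of $\log(FS(N')_*/FS(N)_*)$ can be matched with an eigensection concentrated there, and the reverse direction follows directly from (\ref{eq_fs_contact}).
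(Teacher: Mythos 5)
Your argument has a fundamental gap at the point where it invokes the Berndtsson--Darvas formula. The quantity (\ref{eq_d_p_berndss}) measures the $L^p$-norm of the \emph{geodesic tangent} $\dot{u}_0$ at the initial point of the distinguished geodesic joining $FS(N)_*$ and $FS(N')_*$, not the $L^p$-norm of the \emph{log-ratio} $\log\bigl(FS(N')_*/FS(N)_*\bigr)$; these two functions coincide only in degenerate cases (e.g.\ when the two metrics are linearly ordered in a strong sense). Your upper-bound step integrates the pointwise Jensen inequality to produce moments of the log-ratio and then declares them equal to $d_p(FS(N)_*, FS(N')_*)^p$ via (\ref{eq_d_p_berndss}); this substitution is not justified. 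The same confusion recurs in the lower bound, where you assert that the empirical spectral measure converges to the pushforward of the Monge--Amp\`ere measure by $\log\bigl(FS(N')_*/FS(N)_*\bigr)$, whereas the correct limit (when it exists) is the pushforward by $\dot{u}_0$ — this is exactly the content, not the input, of the Berndtsson/Chen--Sun/Darvas--Lu--Rubinstein circle of results, so you are presupposing a false version of what needs to be proved.

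There are two further issues. First, the peak-section dequantization step asserts that the joint eigensections $s_i^k$ of the transfer operator concentrate at single points $x_i^k$ with scaled eigenvalue close to the metric ratio there. This is not true even for two smooth positive metrics: the joint eigenbasis of two $L^2$-inner products is a global spectral object, not a collection of localized peak sections, and no Bernstein--Markov-type inequality gives you such a concentration for free. Second, you need the RHS of your integrated inequality to recover $\frac{1}{r_k}\sum_i |\log\lambda_i^k|^p$, which forces the integrating measure $\mu$ to satisfy $\int |s_i^k|^2\, d\mu = 1$ for every $i$; that would require $N_k$ to \emph{be} the $L^2$-norm with respect to $\mu$, whereas $N_k$ is an arbitrary (Hermitian, after John) submultiplicative norm. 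The paper's route sidesteps all of this: it first proves, for general bounded submultiplicative graded norms, a $d_p$-equivalence to $L^\infty$-norms $\mathrm{Ban}^\infty(h^L)$ of the associated envelope $h^L = FS(N)_*$, then a $d_p$-equivalence of $\mathrm{Ban}^\infty(h^L)$ with $\mathrm{Hilb}(h^L,\omega)$, and only then invokes the already-established Darvas--Lu--Rubinstein isometry theorem for pairs of $L^2$-norms of bounded psh metrics. This reduction to the $L^2$ case is exactly the step your argument tries (and fails) to reprove from scratch.
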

	\par 
	Let us recall, finally, that a result of Tian \cite{TianBerg}, states that for smooth positive metrics $h^L_0$ on $L$, as $k \to \infty$, the following uniform convergence takes place
	\begin{equation}\label{eq_tian_conv}
		FS({\rm{Hilb}}_k(h^L_0))^{\frac{1}{k}} \to h^L_0.
	\end{equation}
	Directly from Lemma \ref{lem_fs_inf_d}, we see that (\ref{eq_tian_conv}) can be restated in the following way.
	For any $\epsilon > 0$, there is $k_0 \in \nat$, such that for any $k \geq k_0$, we have 
	\begin{equation}\label{eq_tian_dinf_norms}
		d_{+ \infty}\Big({\rm{Ban}}^{\infty}_k(h^L_0), {\rm{Hilb}}_k(h^L_0) \Big)
		\leq
		\epsilon k.
	\end{equation}
	More detailed analysis, using the fact that $h^L_0$ is positive and smooth, cf. Catlin \cite{Caltin}, Zelditch \cite{ZeldBerg}, Dai-Liu-Ma \cite{DaiLiuMa} and Ma-Marinescu \cite{MaHol}, shows that we can improve (\ref{eq_tian_dinf_norms}) by replacing the right-hand side by $(n + \epsilon) \log k$.

	\subsection{Isometry properties of the quantization scheme of Phong-Sturm}\label{sect_part1_pf}
	
	The main goal of this section is to establish the upper bound in Theorem \ref{thm_dist_na}.
	Our proof relies in an essential way on the fact that the finitely dimensional version of Theorem \ref{thm_dist_na} holds, see (\ref{eq_d_p_fil_norms_herm}).
	To pass from this finitely-dimensional picture to the infinitely-dimensional one of Theorem \ref{thm_dist_na}, we rely on the methods of geometric quantization using the quantization scheme introduced by Phong-Sturm for geodesic rays associated with test configurations.
	The central point is then to prove that this quantization scheme preserves distances in a reasonable sense.
	\par 
	More precisely, we fix an ample test configuration $\mathcal{T}$ of a polarized projective manifold $(X, L)$, and let $\mathcal{F}^{\mathcal{T}}_k$, $k \in \nat$, be the filtrations on the graded pieces $H^0(X, L^k)$ of the section ring $R(X, L)$ induced by the test configuration as in Section \ref{sect_filt}.
	We fix a smooth positive metric $h^L_0$ on $L$, and for any $t \in [0, +\infty[$, $k \in \nat$, we define, following Phong-Sturm \cite{PhongSturmTestGeodK}, $H^{\mathcal{T}}_{t, k}$ as the (geodesic) ray of Hermitian norms on $H^0(X, L^k)$ associated with the filtration $\mathcal{F}^{\mathcal{T}}_k$ and emanating from ${\rm{Hilb}}_k(h^L_0)$ as in (\ref{eq_bas_st}).
	We denote by $H^{\mathcal{T}}_t = \sum_{k = 0}^{\infty} H^{\mathcal{T}}_{t, k}$ the associated graded norm on $R(X, L)$.
	We denote by $h^{\mathcal{T}}_t$ the geodesic ray of metrics on $L$, constructed from the test configuration $\mathcal{T}$ as in Section \ref{sect_filt}.
	\par 
	The following result will be central in our approach to the upper bound in Theorem \ref{thm_dist_na}.
	\begin{thm}\label{thm_dp_ray_norms_herm}
		For any ample test configurations $\mathcal{T}_1, \mathcal{T}_2$ of a polarized projective manifold $(X, L)$ and any $t \in [0, + \infty[$, $p \in [1, +\infty[$, we have the following metric relation between the quantized geodesic rays of norms and geodesic rays of metrics 
		\begin{equation}\label{eq_dp_ray_norms_herm}
			d_p(h^{\mathcal{T}_1}_t, h^{\mathcal{T}_2}_t)
			=
			d_p ( 
				H^{\mathcal{T}_1}_t, H^{\mathcal{T}_2}_t
			),
		\end{equation}
		and we have $\lim$ instead of $\limsup$ in the definition (\ref{eq_dp_graded}) corresponding to the right-hand side of (\ref{eq_dp_ray_norms_herm}).
		Moreover, for $p = +\infty$, we have
		\begin{equation}
			d_{+ \infty}(h^{\mathcal{T}_1}_t, h^{\mathcal{T}_2}_t)
			\leq
			\liminf_{k \to \infty}
			\frac{d_{+ \infty} ( 
				H^{\mathcal{T}_1}_{t, k}, H^{\mathcal{T}_2}_{t, k}
			)}{k}
		\end{equation}
	\end{thm}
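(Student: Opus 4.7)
The plan is to reduce the statement to Theorem \ref{thm_d_p_norm_fs_rel}, the Fubini--Study isometry for bounded submultiplicative graded norms, by interposing a non-Hermitian ray of norms between the rays of metrics and the Hermitian rays $H^{\mathcal{T}}_{t,k}$.

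First I would introduce, for each ample test configuration $\mathcal{T}$, the non-Hermitian ray of graded norms $N^{\mathcal{T}}_t$ on $R(X, L)$ obtained by applying the construction \eqref{eq_ray_norm_defn0} to the submultiplicative graded norm ${\rm{Ban}}^{\infty}(h^L_0)$ and to the submultiplicative filtration $\mathcal{F}^{\mathcal{T}}$ (the latter is submultiplicative as recalled in Section \ref{sect_filt}). Lemma \ref{lem_subm_ray_norms} then guarantees that $N^{\mathcal{T}}_t$ is a bounded submultiplicative graded norm for every $t \in [0, +\infty[$. By Lemma \ref{lem_two_norms_comp0} applied piece by piece, combined with the refined Tian-type estimate after \eqref{eq_tian_dinf_norms}, we have
\begin{equation*}
d_{+\infty}\bigl(H^{\mathcal{T}}_{t,k}, N^{\mathcal{T}}_{t,k}\bigr)
\leq
d_{+\infty}\bigl({\rm{Hilb}}_k(h^L_0), {\rm{Ban}}^{\infty}_k(h^L_0)\bigr)
+
\log \dim H^0(X, L^k)
=
O(\log k).
\end{equation*}

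Second, I would identify $FS(N^{\mathcal{T}}_t)_* = h^{\mathcal{T}}_t$. The Phong--Sturm quantization theorem (Theorem~\ref{thm_phong_sturm_limit}) yields the analogous identification for $H^{\mathcal{T}}_{t,k}$ (namely $FS(H^{\mathcal{T}}_{t,k})^{1/k} \to h^{\mathcal{T}}_t$ in an appropriate sense), and the $O(\log k)$ bound above together with the $1$-Lipschitz property \eqref{eq_fs_contact} of the Fubini--Study map transfers this to $N^{\mathcal{T}}_t$ after dividing by $k$.

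Third, I would apply Theorem \ref{thm_d_p_norm_fs_rel} to the pair $N^{\mathcal{T}_1}_t$, $N^{\mathcal{T}_2}_t$. This gives, for any $p \in [1, +\infty[$,
\begin{equation*}
d_p(h^{\mathcal{T}_1}_t, h^{\mathcal{T}_2}_t)
=
d_p\bigl(FS(N^{\mathcal{T}_1}_t)_*, FS(N^{\mathcal{T}_2}_t)_*\bigr)
=
\lim_{k \to \infty} \frac{d_p(N^{\mathcal{T}_1}_{t,k}, N^{\mathcal{T}_2}_{t,k})}{k},
\end{equation*}
with honest limit rather than $\limsup$. To replace $N^{\mathcal{T}_j}_{t,k}$ by $H^{\mathcal{T}_j}_{t,k}$, apply the weak triangle inequality \eqref{eq_tr_weak} together with the standard monotonicity $d_p \leq d_{+\infty}$ for the logarithmic relative spectrum; this yields
\begin{equation*}
\bigl| d_p(H^{\mathcal{T}_1}_{t,k}, H^{\mathcal{T}_2}_{t,k}) - d_p(N^{\mathcal{T}_1}_{t,k}, N^{\mathcal{T}_2}_{t,k}) \bigr|
\leq
2\,d_{+\infty}(H^{\mathcal{T}}_{t,k}, N^{\mathcal{T}}_{t,k}) + 2\log\dim H^0(X,L^k)
= O(\log k),
\end{equation*}
which disappears after normalization by $k$ and gives the required identity \eqref{eq_dp_ray_norms_herm} with $\lim$.

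For the $p = +\infty$ assertion, one uses the $1$-Lipschitz bound \eqref{eq_fs_contact} on each graded piece to write $d_{+\infty}(FS(H^{\mathcal{T}_1}_{t,k})^{1/k}, FS(H^{\mathcal{T}_2}_{t,k})^{1/k}) \leq d_{+\infty}(H^{\mathcal{T}_1}_{t,k}, H^{\mathcal{T}_2}_{t,k})/k$; the uniform convergence $FS(H^{\mathcal{T}_j}_{t,k})^{1/k} \to h^{\mathcal{T}_j}_t$ supplied by Phong--Sturm, combined with the lower semicontinuity of $d_{+\infty}$ under uniform limits, then yields the desired $\liminf$ inequality. The main obstacle is the second step: verifying that the Phong--Sturm quantization output matches $FS(N^{\mathcal{T}}_t)_*$ in a strong enough sense — for the $d_p$-statement any $L^1$-type convergence suffices, but the $p = +\infty$ bound requires uniform control, which is exactly what Phong--Sturm's regularity arguments (and the $O(\log k)$ comparison between $N^{\mathcal{T}}_{t,k}$ and $H^{\mathcal{T}}_{t,k}$) provide.
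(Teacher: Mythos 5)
Your proposal follows the same route as the paper's own proof: interpose the non-Hermitian submultiplicative rays $N^{\mathcal{T}}_t$ built from ${\rm{Ban}}^{\infty}(h^L_0)$ via \eqref{eq_ray_norm_defn0}, apply the Fubini--Study isometry (Theorem~\ref{thm_d_p_norm_fs_rel}), and transfer back to the Hermitian rays $H^{\mathcal{T}}_{t,k}$ using Lemma~\ref{lem_two_norms_comp0}, \eqref{eq_tian_dinf_norms} and the weak triangle inequality \eqref{eq_tr_weak}. For $p \in [1, +\infty[$ the argument is a faithful reproduction of the paper's.

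One small imprecision in the $p = +\infty$ paragraph: you invoke ``uniform convergence $FS(H^{\mathcal{T}_j}_{t,k})^{1/k} \to h^{\mathcal{T}_j}_t$'' as supplied by Phong--Sturm, but Theorem~\ref{thm_phong_sturm_limit} only furnishes convergence after passing to the decreasing family $\inf_{l \geq k} FS(H^{\mathcal{T}}_{t,l})^{1/l}$ and then taking lower-semicontinuous regularization; this is pointwise rather than uniform, and Theorem~\ref{thm_ph_st_regul} adds only uniform \emph{boundedness}, not uniform convergence. The correct patch is exactly what the paper does: run the $p = +\infty$ argument through the submultiplicative rays $N^{\mathcal{T}_i}_{t,k}$, where submultiplicativity gives genuine monotone (hence pointwise) convergence of $FS(N^{\mathcal{T}_i}_{t,k})^{1/k}$, then use the contraction property \eqref{eq_fs_contact}, the lower semicontinuity of $d_{+\infty}$ under pointwise convergence, and the $1$-Lipschitz property of the lower-semicontinuous regularization, before finally transferring $N \leftrightarrow H$ with the $O(\log k)$ bound. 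You already have all these ingredients in hand, so the fix is cosmetic, but as literally stated the $p = +\infty$ step overreaches.
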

	
	To establish Theorem \ref{thm_dp_ray_norms_herm}, the following result of Phong-Sturm is indispensable.
	\begin{thm}[Phong-Sturm { \cite[Theorem 1]{PhongSturmTestGeodK} }]\label{thm_phong_sturm_limit}
		The geodesic ray $h^{\mathcal{T}}_t$, $t \in [0, + \infty[$, associated with the test configuration $\mathcal{T}$ is related to the geodesic ray of Hermitian norms $H^{\mathcal{T}}_t$ as follows
		\begin{equation}
			h^{\mathcal{T}}_t
			=
			\lim_{k \to \infty}
			\big(
			\inf_{l \geq k}
			FS(H^{\mathcal{T}}_{t, l})^{\frac{1}{l}}
			\big)_*.
		\end{equation}
	\end{thm}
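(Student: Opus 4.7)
The plan is to prove the identity by a two-sided bound, with the regularization $(\inf_{l \geq k} \cdot)_*$ absorbing the $o(1)$-errors that arise in each finite-$l$ approximation. The global scheme mimics the standard pluripotential argument: produce a family of subgeodesic ``quantized'' approximations whose envelope matches the Monge--Ampère solution prescribed by (\ref{eq_ma_geod_dir}), and then pin down equality using uniqueness plus a Bergman-type lower bound.

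For the upper bound $(\inf_{l \geq k} FS(H^{\mathcal{T}}_{t,l})^{1/l})_* \leq h^{\mathcal{T}}_t$, I would first check that for each fixed $l \geq k_0$ (with $L^{k_0}$ very ample), the family $t \mapsto FS(H^{\mathcal{T}}_{t,l})^{1/l}$ is a subgeodesic ray of continuous psh metrics on $L$. This follows because $H^{\mathcal{T}}_{t,l}$ is a distinguished geodesic ray of Hermitian norms on $H^0(X,L^l)$, whose diagonalizing basis is precisely the $\comp^*$-weight basis of the central fiber; the Kodaira pull-back (\ref{eq_fs_defn}) then transforms this one-parameter family of Hermitian norms into a family of psh metrics whose rotation-invariant extension $\hat{\cdot}$ is $\pi^*\omega$-psh on $X \times \mathbb{D}^*$. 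At $t = 0$, Tian's theorem (\ref{eq_tian_conv}) gives uniform convergence $FS(H^{\mathcal{T}}_{0,l})^{1/l} = FS({\rm{Hilb}}_l(h^L_0))^{1/l} \to h^L_0$. Hence after subtracting an $\varepsilon_l \to 0$ to correct the boundary value, each $FS(H^{\mathcal{T}}_{t,l})^{1/l} - \varepsilon_l$ is an admissible competitor in the envelope (\ref{eq_geod_as_env}), so it lies below $h^{\mathcal{T}}_t$; passing to $\inf_{l \geq k}$ and then $k \to \infty$ concludes this direction.

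For the lower bound $h^{\mathcal{T}}_t \leq (\inf_{l \geq k} FS(H^{\mathcal{T}}_{t,l})^{1/l})_*$ in the limit, I would work on a $\comp^*$-equivariant resolution $\mathcal{X}'$ of the test configuration, on which the solution $h^{\mathcal{L}'}_{\mathbb{D}}$ of the Dirichlet Monge--Ampère problem (\ref{eq_ma_geod_dir}) lives. The $\comp^*$-action decomposes $H^0(\mathcal{X}', \mathcal{L}'^l)$ into weight spaces whose weights recover precisely the jumping numbers of $\mathcal{F}^{\mathcal{T}}_l$, so that $H^{\mathcal{T}}_{t,l}$ coincides with the natural $L^2$-norm on sections restricted to the fiber at $\tau = e^{-t}$, modulo an error controlled by the regularity of $h^{\mathcal{L}'}_{\mathbb{D}}$. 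For $x \in X \simeq X_1$ and $v \in L^l_x$, a peak section construction (of Tian--Catlin--Zelditch--Dai-Liu-Ma type) produces $s_{x,v,l} \in H^0(X,L^l)$ with $s_{x,v,l}(x) = v$ and $\|s_{x,v,l}\|^{1/l}_{H^{\mathcal{T}}_{t,l}} \to |v|_{h^{\mathcal{T}}_t}$ to leading order in $l$; by Lemma \ref{lem_fs_inf_d} this gives $|v|_{FS(H^{\mathcal{T}}_{t,l})^{1/l}} \leq |v|_{h^{\mathcal{T}}_t} (1 + o(1))$, and the lower-semicontinuous regularization $(\cdot)_*$ absorbs the pointwise error.

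The main obstacle is the lower bound: the central fiber of $\mathcal{T}$ is singular, and $h^{\mathcal{L}'}_{\mathbb{D}}$ is only $\mathscr{C}^{1,1}$, so the off-the-shelf Bergman kernel asymptotics on a smooth polarized manifold with a smooth positive metric do not apply. A clean implementation requires either (i) an alternative indirect route that exploits uniqueness of the bounded psh solution to (\ref{eq_ma_geod_dir}) to identify the envelope on the right-hand side with $h^{\mathcal{T}}_t$ once it is shown to be a bounded psh subsolution with correct boundary data, or (ii) a genuine extension of equivariant Bergman kernel asymptotics to the degenerating-family setting with $\mathscr{C}^{1,1}$ metrics, which is precisely the technical apparatus the paper develops later for the harder direction of Theorem \ref{thm_dist_na}.
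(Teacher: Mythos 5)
Note first that the paper does not prove this statement at all: Theorem \ref{thm_phong_sturm_limit} is quoted as an external input from Phong--Sturm, so your attempt has to be measured against that source's argument, and as it stands it has genuine gaps in both directions. For the upper bound, your observation that $t \mapsto FS(H^{\mathcal{T}}_{t,l})^{1/l}$ is a subgeodesic ray with boundary value converging to $h^L_0$ (via Tian) is correct, but the conclusion ``admissible competitor in the envelope (\ref{eq_geod_as_env}), hence below $h^{\mathcal{T}}_t$'' does not follow: (\ref{eq_geod_as_env}) defines the distinguished geodesic \emph{segment} between two prescribed endpoints, and for rays the supremum of subgeodesics emanating from (below) $h^L_0$ is $+\infty$ — e.g. $u_0 + ct$ is such a subgeodesic for every $c > 0$. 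The ray $h^{\mathcal{T}}_t$ is maximal only among rotation-invariant psh metrics on $\mathcal{L}'$ over all of $\mathcal{X}'_{\mathbb{D}}$ with the prescribed boundary values, i.e. among families that extend plurisubharmonically \emph{across the central fiber}. The missing, essential step is therefore to show that the metric built from $FS(H^{\mathcal{T}}_{t,l})^{1/l}$ extends psh across $X_0$ — this is exactly where the definition (\ref{eq_defn_filt_test}) of $\mathcal{F}^{\mathcal{T}}$ via vanishing orders enters, since the diagonalizing basis corresponds to sections $\tau^{-\lambda_i}\tilde{s}_i \in H^0(\mathcal{X}, \mathcal{L}^l)$ — after which the comparison principle/maximality of the bounded solution of (\ref{eq_ma_geod_dir}) gives the bound. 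Without that extension the argument rules out nothing.

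For the lower bound you essentially concede that neither of your two routes is carried out, and both are substantive. Route (i) requires showing that the regularized limit $(\inf_{l \geq k} FS(H^{\mathcal{T}}_{t,l})^{1/l})_*$ is a bounded psh metric on $\mathcal{L}'_{\mathbb{D}}$ with the correct boundary data \emph{and} with vanishing Monge--Ampère measure; proving the vanishing of the MA measure for such a limit of Bergman-type metrics is the analytic heart of Phong--Sturm's theorem, not a routine appeal to uniqueness. Route (ii), uniform Bergman/peak-section asymptotics on a degenerating family with merely bounded or $\mathscr{C}^{1,1}$ metrics, is not available off the shelf, and your intermediate claim that $H^{\mathcal{T}}_{t,l}$ agrees, up to controlled errors, with the $L^2$-norm on the fiber at $\tau = e^{-t}$ is itself a hard statement of the same nature as Theorem \ref{thm_2_step} (proved in the paper via Ohsawa--Takegoshi and interpolation arguments), so it cannot be invoked as an easy lemma inside a proof of the theorem it is meant to establish. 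In short, you have identified the correct objects and the correct difficulties, but both inequalities still lack the decisive steps: psh extension across the central fiber plus maximality for one direction, and the MA-measure (or quantization) analysis for the other.
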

	\begin{proof}[Proof of Lemma \ref{lem_gr_geod_ray}]
		By the definition of the geodesic ray of norms $H_{t, k}^{\mathcal{F}}$ and the boundness of the filtration associated with an ample test configuration, we conclude that there is $C > 0$, such that for any $k \in \nat^*$, $t \in [0, +\infty[$, we have
		\begin{equation}\label{eq_ray_norms_decart}
			d_{+ \infty}(H_{t, k}^{\mathcal{F}}, H_{0, k}^{\mathcal{F}})
			\leq 
			C t k.
		\end{equation}
		From the second part of Theorem \ref{thm_dp_ray_norms_herm} and (\ref{eq_ray_norms_decart}), we deduce Lemma \ref{lem_gr_geod_ray}.
	\end{proof}
	\begin{proof}[Proof of Theorem \ref{thm_dp_ray_norms_herm}]
		The main idea of the proof is to replace the rays of norms $H^{\mathcal{T}_1}_t, H^{\mathcal{T}_2}_t$, $t \in [0, +\infty[$, by their submultiplicative analogues (\ref{eq_ray_norm_defn0}), to which we can apply Theorem \ref{thm_d_p_norm_fs_rel}.
		\par 
		More precisely, for an ample test configuration $\mathcal{T}$ of a polarized pair $(X, L)$, we denote by $N^{\mathcal{T}}_{t, k}$, $t \in [1, +\infty[$, the ray of norms emanating from ${\rm{Ban}}^{\infty}_k(h^L_0)$ associated with $\mathcal{F}^{\mathcal{T}}_k$ as in (\ref{eq_ray_norm_defn0}).
		We denote by $N^{\mathcal{T}}_t = \sum_{k = 0}^{\infty} N^{\mathcal{T}}_{t, k}$ the associated graded ray of norms on $R(X, L)$.
		\par 
		The crucial point about the graded norms $N^{\mathcal{T}}_t$, $t \in [0, +\infty[$, is that they are submultiplicative in the sense (\ref{eq_subm_s_ring}).
		This follows from Lemma \ref{lem_subm_ray_norms}, the fact that the norm ${\rm{Ban}}^{\infty}(h^L_0)$ is submultiplicative and the fact that the filtration $\mathcal{F}^{\mathcal{T}}$ is submultiplicative, see Section \ref{sect_filt}.
		\par 
		Since the filtration $\mathcal{F}^{\mathcal{T}}$ is bounded, we see that $N^{\mathcal{T}}_{t}$, $t \in [0, +\infty[$, is bounded in the sense described before Theorem \ref{thm_d_p_norm_fs_rel}, cf. \cite[after (5.10)]{FinNarSim}.
		We conclude by Theorem \ref{thm_d_p_norm_fs_rel} that for any $p \in [1, +\infty[$, the following relation holds
		\begin{equation}\label{eq_thm_dp_ray_norms_herm0}
			d_p \big( FS(N^{\mathcal{T}_1}_{t})_*, FS(N^{\mathcal{T}_2}_{t})_* \big)
			=
			d_p \big(N^{\mathcal{T}_1}_{t}, N^{\mathcal{T}_2}_{t} \big),
		\end{equation}
		and we have $\lim$ instead of $\limsup$ in the definition (\ref{eq_dp_graded}) of the right-hand side of (\ref{eq_thm_dp_ray_norms_herm0}).
		\par 
		\begin{sloppypar}
		By (\ref{eq_fs_contact}), we have $d_{+ \infty} ( FS(N^{\mathcal{T}_1}_{t, k}), FS(N^{\mathcal{T}_2}_{t, k}) ) \leq d_{+ \infty} (N^{\mathcal{T}_1}_{t, k}, N^{\mathcal{T}_2}_{t, k} ).$
		Remark that $d_{+ \infty}$-distance is lower semicontinuous with respect to the pointwise convergence, i.e. for a sequence of metrics $h^L_{1, l}$, $h^L_{2, l}$, $l \in \nat$, on $L$ converging pointwise to some bounded metrics $h^L_1$, $h^L_2$, we have $d_{+ \infty}(h^L_1, h^L_2) \leq \liminf_{l \to \infty} d_{+ \infty}(h^L_{1, l}, h^L_{2, l})$.
		Also, lower-semicontinuous regularization is 1-Lipschitz with respect to $d_{+ \infty}$-distance, i.e. for any bounded metrics $h^L_1, h^L_2$ on $L$, we have $d_{+ \infty}(h^L_{1 *}, h^L_{2 *}) \leq d_{+ \infty}(h^L_1, h^L_2)$.
		From all these observations, we conclude
		\begin{equation}
			d_{+ \infty} \big( FS(N^{\mathcal{T}_1}_{t})_*, FS(N^{\mathcal{T}_2}_{t})_* \big)
			\leq
			\liminf_{k \to \infty} \frac{d_{+ \infty} (N^{\mathcal{T}_1}_{t, k}, N^{\mathcal{T}_2}_{t, k} )}{k}.
		\end{equation}
		\end{sloppypar}
		\par 
		\begin{sloppypar}
		Now, it is only left to relate the rays of norms $N^{\mathcal{T}_i}_t$, $t \in [0, +\infty[$, $i = 1, 2$, to $H^{\mathcal{T}_i}_t$, and the rays of metrics $FS(N^{\mathcal{T}_i}_{t, k})^{\frac{1}{k}}$, $t \in [0, +\infty[$, $k \in \nat^*$, to $FS(H^{\mathcal{T}_i}_{t, k})^{\frac{1}{k}}$.
		For this, by Lemma \ref{lem_two_norms_comp0}, (\ref{eq_tr_weak}) and (\ref{eq_tian_dinf_norms}), for any $p \in [1, +\infty]$, we have
		\begin{equation}\label{eq_thm_dp_ray_norms_herm1}
			\liminf_{k \to \infty} \frac{d_p (N^{\mathcal{T}_1}_{t, k}, N^{\mathcal{T}_2}_{t, k} )}{k}
			=
			\liminf_{k \to \infty} \frac{d_p (H^{\mathcal{T}_1}_{t, k}, H^{\mathcal{T}_2}_{t, k} )}{k}.
		\end{equation}
		\end{sloppypar}
		Remark also that the sequence of metrics $FS(N^{\mathcal{T}_i}_{t, k})^{\frac{1}{k}}$, $i = 1, 2$, $k \in \nat^*$, is submultiplicative by the discussion before Theorem \ref{thm_d_p_norm_fs_rel}, and, hence, by Fekete's lemma, its limit $FS(N^{\mathcal{T}_i}_{t})$ coincides with the infimum of $FS(N^{\mathcal{T}_i}_{t, k})^{\frac{1}{k}}$, $k \in \nat^*$.
		From this and Lemma \ref{lem_two_norms_comp0}, we obtain
		\begin{equation}\label{eq_thm_dp_ray_norms_herm2}
			FS(N^{\mathcal{T}_i}_{t})_*
			=
			\lim_{k \to \infty}
			\big(
			\inf_{l \geq k}
			FS(H^{\mathcal{T}_i}_{t, l})^{\frac{1}{l}}
			\big)_*.
		\end{equation}
		We conclude by Theorem \ref{thm_phong_sturm_limit}, (\ref{eq_thm_dp_ray_norms_herm0}), (\ref{eq_thm_dp_ray_norms_herm1}) and (\ref{eq_thm_dp_ray_norms_herm2}). 
	\end{proof}
	Now, we have everything ready to prove a part of Theorem \ref{thm_dist_na}.
	\begin{proof}[Proof of the upper bound of Theorem \ref{thm_dist_na}.]
		First of all, for any $t \in [0, + \infty[$, $p \in [1, +\infty]$, $k \in \nat$, by the finitely-dimensional analogue (\ref{eq_d_p_fil_norms_herm}) of Theorem \ref{thm_dist_na}, we have 
		\begin{equation}\label{eq_thm_dist_na_0}
			\frac{d_p \big( 
				H^{\mathcal{T}_1}_{t, k}, H^{\mathcal{T}_2}_{t, k}
			\big)}{t}
			\leq
			d_p(\mathcal{F}^{\mathcal{T}_1}_k, \mathcal{F}^{\mathcal{T}_2}_k).
		\end{equation}
		We now divide both sides of (\ref{eq_thm_dist_na_0}) by $k$, take the limit $k \to \infty$ and use Theorem \ref{thm_dp_ray_norms_herm} along with (\ref{eq_spec_dist}) to conclude that we have
		\begin{equation}\label{eq_thm_dist_na_1}
			\frac{d_p(h^{\mathcal{T}_1}_t, h^{\mathcal{T}_2}_t)}{t}
			\leq
			\liminf_{k \to \infty} \frac{ d_p(\mathcal{F}^{\mathcal{T}_1}_k, \mathcal{F}^{\mathcal{T}_2}_k)}{k}.
		\end{equation}
		By taking now limit $t \to \infty$ in (\ref{eq_thm_dist_na_1}), we obtain the upper bound of Theorem \ref{thm_dist_na}.
	\end{proof}

\section{Uniform submultiplicativity, Toeplitz operators and snc models}	\label{sect_second}
	The main goal of this section is to prove that the left-hand side of (\ref{eq_dist_na}) is not smaller than the right-hand side, i.e. that the lower bound of Theorem \ref{thm_dist_na} holds.
	This with the fact that we already established the opposite bound in Section \ref{sect_part1_pf} would give us a complete proof of Theorem \ref{thm_dist_na}.
	\par 
	Similarly to the proof of the upper bound from Section \ref{sect_part1_pf}, the proof here relies on the geometric quantization procedure of Phong-Sturm. 
	But otherwise it is rather different.
	\par 
	We first make a comparison between the geodesic ray of norms on the section ring, introduced before Theorem \ref{thm_dp_ray_norms_herm}, and the ray of $L^2$-norms of the geodesic ray of metrics associated with the test configurations as introduced in Section \ref{sect_filt}.
	We then show that it is sufficient to assume that the singularities of the central fibers of test configurations are mild enough.
	Finally, for test configurations with mild singularities, we estimate the distance between the $L^2$-norms associated with the geodesic rays of metrics in terms of the distance between the geodesic rays of metrics themselves.
	Combining all these results with a result from Section \ref{sect_geom_ray_fin_dim}, saying that the finitely-dimensional analogue of Theorem \ref{thm_dist_na} holds, leads to a proof of the lower bound from Theorem \ref{thm_dist_na}.
	\par 
 	More precisely, recall that before Theorem \ref{thm_dp_ray_norms_herm} we defined, following Phong-Sturm, a geodesic ray of graded Hermitian norms $H^{\mathcal{T}}_t = \sum_{k = 0}^{\infty} H^{\mathcal{T}}_{t, k}$, $t \in [0, +\infty[$, on $R(X, L)$ associated with a test configuration $\mathcal{T}$.
	Let $h^{\mathcal{T}}_t$, $t \in [0, +\infty[$, be the geodesic ray of metrics on $L$ associated with $\mathcal{T}$ as in Section \ref{sect_filt}.
	Let $\omega$ be a Kähler form on $X$.
	In Sections \ref{sect_quot}, \ref{sect_ohs}, by relying on the results of Phong-Sturm \cite{PhongSturmRegul} and the methods from the previous works of the author, \cite{FinSecRing}, \cite{FinNarSim}, we establish the following result, relating rays $H^{\mathcal{T}}_t$ and $h^{\mathcal{T}}_t$, $t \in [0, + \infty[$.
	\begin{thm}\label{thm_2_step}
		There are $C > 0$, $k_0 \in \nat^*$, such that for any $t \in [0, +\infty[$, $k \geq k_0$, the ray of norms $H^{\mathcal{T}}_t$ compares to the $L^2$-norms associated with the geodesic ray of metrics $h^{\mathcal{T}}_t$ as follows
		\begin{equation}
			d_{+ \infty} \big(
			H^{\mathcal{T}}_{t, k},
			{\rm{Hilb}}_k(h^{\mathcal{T}}_t, \omega)
			\big)
			\leq
			C(t + k).
		\end{equation}
		Similarly, for $L^{\infty}$-norms, we have
		\begin{equation}
			d_{+ \infty} \big(
			H^{\mathcal{T}}_{t, k},
			{\rm{Ban}}_k^{\infty}(h^{\mathcal{T}}_t)
			\big)
			\leq
			C(t + k).
		\end{equation}
	\end{thm}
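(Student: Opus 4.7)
The plan is to realize both $H^{\mathcal{T}}_{t, k}$ and ${\rm Hilb}_k(h^{\mathcal{T}}_t, \omega)$ as $L^2$-type norms built from a single geometric object on a $\comp^*$-equivariant resolution $\mathcal{T}' = (\pi' : \mathcal{X}' \to \comp, \mathcal{L}')$ of $\mathcal{T}$, and then to compare them fiberwise. Recall from Section \ref{sect_filt} that $h^{\mathcal{T}}_t$ arises as the restriction to $X_\tau$, $|\tau| = e^{-t}$, of the bounded psh solution $h^{\mathcal{L}'}_{\mathbb{D}}$ of the Dirichlet Monge--Ampère equation (\ref{eq_ma_geod_dir}) on $\mathcal{L}'_{\mathbb{D}}$, transferred back to $L$ through the $\comp^*$-action (\ref{eq_can_ident_test}). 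By the Phong--Sturm regularity result \cite{PhongSturmRegul}, already invoked in Lemma \ref{lem_gr_geod_ray}, $h^{\mathcal{L}'}_{\mathbb{D}}$ is globally bounded on $\mathcal{X}'_{\mathbb{D}}$; in particular, $d_{+\infty}(h^{\mathcal{T}}_0, h^{\mathcal{T}}_t) \leq C t$.

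\textbf{Diagonal estimate.} I would then fix a basis $s_1, \ldots, s_N$ of $H^0(X, L^k)$ that is orthonormal for ${\rm Hilb}_k(h^L_0, \omega)$ and adapted to $\mathcal{F}^{\mathcal{T}}_k$ in the sense of (\ref{eq_bas_st}); for $e_i := e_{\mathcal{F}^{\mathcal{T}}_k}(i)$, one has by construction $\|s_i\|^2_{H^{\mathcal{T}}_{t, k}} = e^{-2t e_i}$. By (\ref{eq_defn_filt_test}), each $s_i$ extends via the $\comp^*$-action to a section of $\mathcal{L}^k$ on $\mathcal{X} \setminus X_0$ whose meromorphic extension $\tilde{s}_i$ to $\mathcal{X}$ has pole order exactly $e_i$ along $X_0$. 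A change of variable along $\comp^*$-orbits then expresses $\|s_i\|^2_{L^2_k(h^{\mathcal{T}}_t, \omega)}$ as the $L^2$-norm of $\tilde{s}_i$ over $X_\tau$ with respect to $h^{\mathcal{L}'}_{\mathbb{D}}$ and a $\comp^*$-transported Kähler form. Combining this with the boundedness of $h^{\mathcal{L}'}_{\mathbb{D}}$ and the equivalence between the transported Kähler form and $\omega$, I expect to obtain, uniformly in $i, k, t$, the diagonal estimate
\begin{equation*}
  \big| \log \|s_i\|^2_{L^2_k(h^{\mathcal{T}}_t, \omega)} + 2 t e_i \big| \leq C(t + k).
\end{equation*}

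\textbf{From diagonal to $d_{+ \infty}$.} To promote the diagonal bound to a $d_{+\infty}$-estimate on the full space $H^0(X, L^k)$, I would use the min-max characterization (\ref{eq_log_rel_spec}) of the logarithmic relative spectrum. A direct application only yields a bound degrading in $k$, because distinct basis elements may be almost aligned with respect to $L^2_k(h^{\mathcal{T}}_t, \omega)$. To recover the clean $C k$ loss, one needs a quasi-orthogonality of the family $\{\tilde{s}_i\}$ on $X_\tau$ uniform in $\tau$, which I would obtain by adapting the Bergman-kernel-based arguments of \cite{FinSecRing} from a fixed polarized manifold to the degenerating polarized family $(\pi' : \mathcal{X}'_{\mathbb{D}} \to \mathbb{D}, \mathcal{L}', h^{\mathcal{L}'}_{\mathbb{D}})$. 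This is the principal source of technical work in the proof.

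\textbf{$L^\infty$ version and main obstacle.} The second inequality of the statement follows from the first together with a degenerating-family version of Tian's estimate (\ref{eq_tian_dinf_norms}), namely $d_{+\infty}({\rm Hilb}_k(h^{\mathcal{T}}_t, \omega), {\rm Ban}^{\infty}_k(h^{\mathcal{T}}_t)) \leq C(k + t)$. Since $h^{\mathcal{T}}_t$ is only $\mathscr{C}^{1, 1}$ and its curvature bounds deteriorate as $t \to \infty$, the classical Dai--Liu--Ma \cite{DaiLiuMa} and Ma--Marinescu \cite{MaHol} analyses of Bergman kernels do not apply directly. The main obstacle is thus to extend those estimates to the degenerating family $\pi' : \mathcal{X}'_{\mathbb{D}} \to \mathbb{D}$, producing a uniform-in-$\tau$ Bergman-kernel lower bound with the required $C(k + t)$ control; this is precisely the technical ingredient foreshadowed in the introduction.
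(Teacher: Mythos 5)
Your strategy diverges from the paper's proof at the decisive step, and the divergence exposes a gap.

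Your first two paragraphs are sound in spirit: the Phong--Sturm boundedness of the solution of (\ref{eq_ma_geod_dir}) does give the lower bound $d_{+\infty}(h^{\mathcal{T}}_0, h^{\mathcal{T}}_t) \leq Ct$, and a basis adapted to $\mathcal{F}^{\mathcal{T}}_k$ with a diagonal estimate on $\|s_i\|_{L^2_k(h^{\mathcal{T}}_t, \omega)}$ is a reasonable starting point. But the passage ``from diagonal to $d_{+\infty}$'' is where the argument breaks. Controlling $d_{+\infty}$ between ${\rm Hilb}_k(h^{\mathcal{T}}_t, \omega)$ and the diagonal norm $H^{\mathcal{T}}_{t,k}$ via the min-max (\ref{eq_log_rel_spec}) requires bounding the full Gram matrix of $\{e^{t e_i} s_i\}$ in the $L^2$-inner product above and below by its diagonal, uniformly in $t$ and $k$ up to a factor $\exp(C(t+k))$; this is exactly the ``quasi-orthogonality'' you name. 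You propose to get it by adapting near-diagonal Bergman kernel analysis to the degenerating family, but the obstacle you flag at the end is not surmountable with those tools: $h^{\mathcal{T}}_t$ is only $\mathscr{C}^{1,1}$ (and $\mathscr{C}^{1,1}$-optimal by toric examples), and the near-diagonal off-diagonal decay of the Bergman kernel at the precision needed is unavailable for such low regularity even on a fixed manifold. Moreover, the arguments of \cite{FinSecRing} that you cite as ``Bergman-kernel-based'' are in fact $L^2$-extension-based; identifying this matters, because the extension mechanism is what actually makes the comparison across degrees possible.

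The paper circumvents quasi-orthogonality entirely. For $H^{\mathcal{T}}_{t,k} \geq \exp(-Ck)\,{\rm Hilb}_k(h^{\mathcal{T}}_t, \omega)$ it uses the uniform Phong--Sturm pinching $\exp(-C)\, h^{\mathcal{T}{\rm sm}}_t \leq FS(H^{\mathcal{T}}_{t,k})^{1/k} \leq \exp(C)\, h^{\mathcal{T}{\rm sm}}_t$ (Theorem~\ref{thm_ph_st_regul}), translated into a bound $H^{\mathcal{T}}_{t,k} \geq {\rm Ban}^{\infty}_k(\exp(-C) h^{\mathcal{T}}_t)$ by Lemma~\ref{lem_fs_inf_d}, followed by $L^\infty$-dominates-$L^2$. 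For the reverse bound it runs through multiplication maps: Theorem~\ref{thm_2_step_1}, a uniform Ohsawa--Takegoshi extension estimate on Kodaira embeddings, gives $\exp(C(l+t))\,{\rm Hilb}_{kl}(h^{\mathcal{T}}_t, \omega) \geq [\,{\rm Sym}^l H^{\mathcal{T}}_{t,k}\,]$, and Theorem~\ref{thm_2_step_2}, proved by a non-Archimedean Stein--Weiss interpolation (Proposition~\ref{prop_interpol}) propagated along the geodesic ray, gives $[\,{\rm Sym}^l H^{\mathcal{T}}_{t,k}\,] \geq \exp(-\epsilon kl)\, H^{\mathcal{T}}_{t,kl}$; chaining these for two coprime base degrees yields the bound for all $k$. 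The interpolation step (Proposition~\ref{prop_interpol}) has no counterpart in your plan, and it is the key structural ingredient that makes the comparison uniform in $t$ without any kernel estimate. The Bergman-kernel-on-degenerating-families technology that you anticipate is reserved in the paper for Theorem~\ref{thm_3_step}, and even there it is applied to the \emph{smooth} comparison ray $h^{\mathcal{T}{\rm sm}}_t$ after Lemma~\ref{lem_repl_l2_l2can}, not to $h^{\mathcal{T}}_t$ itself, precisely because of the regularity obstruction you identified.
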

 	\par 
 	Now, we say that a proper holomorphic map  $\pi : \mathcal{X} \to \mathbb{C}$ (or $\pi : \mathcal{X} \to \mathbb{D}$) is a \textit{snc model} if $\mathcal{X}$ is smooth, the central fiber $X_0$ is a simple normal crossing divisor in $\mathcal{X}$, and the intersections of irreducible components of $X_0$ are either irreducible or empty.
	If, furthermore, the central fiber is reduced, we say that it is a \textit{semistable snc model}.
	When $\mathcal{X}$ is endowed with an ample line bundle $\mathcal{L}$, the pair $(\pi, \mathcal{L})$ is called an \textit{ample semistable snc model}.
	In Section \ref{sect_nat_oper}, by relying on the results of Phong-Sturm \cite{PhongSturmRegul} and Boucksom-Jonsson \cite{BouckJohn21}, we establish the following result.
	\begin{thm}\label{thm_1_step}
		In order to prove Theorem \ref{thm_dist_na}, it suffices to establish it for $\mathcal{T}_1 = (\pi: \mathcal{X} \to \comp, \mathcal{L}_1)$ and $\mathcal{T}_2 = (\pi: \mathcal{X} \to \comp, \mathcal{L}_2)$, where $(\pi, \mathcal{L}_1)$, $(\pi, \mathcal{L}_2)$ are ample semistable snc models.
	\end{thm}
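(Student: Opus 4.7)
The plan is to reduce the general case to that of ample semistable snc models sharing a common base $\pi : \mathcal{X} \to \comp$ via three successive reductions: pass to a common dominating equivariant model, apply semistable reduction (after a ramified base change), and finally perturb the resulting semiample polarizations to ample ones.

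\textbf{Common equivariant model.} Using the material of Section \ref{sect_filt}, any two ample test configurations $\mathcal{T}_1, \mathcal{T}_2$ of $(X, L)$ are dominated by a common $\comp^*$-equivariant model $\pi_0 : \mathcal{X}_0 \to \comp$, obtained from an equivariant resolution of indeterminacies of the induced birational map $\mathcal{X}_1 \dashrightarrow \mathcal{X}_2$. The pullbacks $\mathcal{L}_i^0$ of $\mathcal{L}_i$ to $\mathcal{X}_0$ are only semiample, but the resulting test configurations are equivalent to the originals, and equivalent test configurations give rise to the same filtration on $R(X, L)$ (by Zariski's main theorem, cf. \cite{BouckJohn21}) and to the same geodesic ray (by uniqueness of the bounded psh solution to (\ref{eq_ma_geod_dir})). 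Both sides of (\ref{eq_dist_na}) are therefore unchanged.

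\textbf{Semistable reduction.} Apply equivariant semistable reduction to $\pi_0$: after a ramified base change $\tau = \zeta^d$ for some $d \in \nat^*$ and a further equivariant modification, one arrives at a smooth total space $\pi : \mathcal{X} \to \comp$ whose central fiber is a reduced simple normal crossings divisor, cf. the constructions in \cite{PhongSturmRegul} and \cite{BouckJohn21}. Under the base change $\tau = \zeta^d$ the filtration weights get multiplied by $d$, while the geodesic ray reparametrises as $s \mapsto ds$ (since the Monge--Amp\`ere equation (\ref{eq_ma_geod_dir}) is preserved under pullback of the metric from the disc in $\tau$ to the disc in $\zeta$). A direct computation from the definitions (\ref{eq_defn_chordal}) and (\ref{eq_spec_dist}) shows that both sides of (\ref{eq_dist_na}) get multiplied by the common factor $d$, so (\ref{eq_dist_na}) is invariant under the base change. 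The subsequent equivariant modifications preserve equivalence, hence preserve both sides.

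\textbf{Ample perturbation.} The line bundles $\mathcal{L}_i$ on the semistable snc model $\pi$ are only semiample. Pick a $\comp^*$-invariant effective Cartier divisor $D$ supported on $X_0$ such that $\mathcal{L}_i + \epsilon D$ is $\pi$-relatively ample for all sufficiently small rational $\epsilon > 0$ (such a $D$ exists by the relative Kleiman criterion, using that $\mathcal{L}_i$ is semiample and that $\pi$ is projective). Since $D|_{X_1} = 0$, the pairs $\mathcal{T}_i^\epsilon := (\pi, \mathcal{L}_i + \epsilon D)$ define ample semistable snc test configurations of $(X, L)$ after clearing denominators. Assuming Theorem \ref{thm_dist_na} for these, it remains to pass to the limit $\epsilon \to 0^+$ in both sides of (\ref{eq_dist_na}): on the algebraic side, by continuity of the spectral distance under finite-type perturbations of the polarization, cf. \cite[\S 3]{BouckJohn21}; on the analytic side, by stability of the bounded psh solution to (\ref{eq_ma_geod_dir}) under uniform perturbation of the polarization, as established in \cite{PhongSturmRegul}.

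\textbf{Main obstacle.} The delicate step is the last one: in order to commute the limits $\epsilon \to 0$ and $t \to \infty$ in the chordal distance (\ref{eq_defn_chordal}), one needs an estimate of the form $d_{+\infty}(h^{\mathcal{T}_i^\epsilon}_t, h^{\mathcal{T}_i}_t) \leq C \epsilon (1 + t)$, i.e.\ linear in $t$. This is compatible with the growth bound of Lemma \ref{lem_gr_geod_ray} and should follow from a comparison principle for the Dirichlet--Monge--Amp\`ere problem on the snc model, applied to the envelope of the boundary data extended over concentric annuli, but verifying the uniformity in $t$ is the main technical hurdle.
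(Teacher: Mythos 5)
Your overall decomposition — pass to a common dominating model, perform semistable reduction, then approximate the semiample polarizations by ample ones — matches the paper's strategy in structure, but there are two genuine gaps.

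\textbf{The normalization step is missing.} In your first reduction you assert that "equivalent test configurations give rise to the same filtration on $R(X, L)$ (by Zariski's main theorem)." That lemma (\cite[Lemma A.4]{BouckJohn21}) applies only to \emph{normal} test configurations, and the given ample test configurations $\mathcal{T}_1, \mathcal{T}_2$ may well be non-normal. Passing to a common equivariant resolution of indeterminacies does produce a normal (indeed smooth) total space $\mathcal{X}_0$, but the filtration on $R(X,L)$ induced by the pulled-back polarization $\mathcal{L}_i^0$ on $\mathcal{X}_0$ need not agree with $\mathcal{F}^{\mathcal{T}_i}$. The paper handles this via Theorem \ref{thm_compar_normal}, which shows that $d_{+\infty}(\mathcal{F}^{\mathcal{T}}_k, \widetilde{\mathcal{F}}^{\mathcal{T}}_k) \leq C$ uniformly in $k$, so that the spectral distance (\ref{eq_spec_dist}) is unchanged upon normalization; this is a non-trivial result whose proof invokes the homogenization construction and Theorem \ref{thm_2_step}. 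Your argument needs this (or something like it) before Zariski's main theorem can be invoked. The geodesic-ray side is fine as you say: the ray is defined on a resolution, so passing to a normalization doesn't change it.

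\textbf{The key estimate in the ample-perturbation step is asserted, not proved.} You correctly identify that the last step hinges on an estimate of the form $d_{+\infty}(h^{\mathcal{T}_i^\epsilon}_t, h^{\mathcal{T}_i}_t) \leq C\epsilon(1 + t)$ with $C$ independent of $t$ and $\epsilon$, and you flag this as "the main technical hurdle." The paper proves the analogous statement (\ref{eq_alg_appr_ample}) directly: by Theorem \ref{thm_ph_st_regul} (Phong-Sturm's uniform boundedness of the geodesic ray relative to a fixed smooth ray from the test configuration) both $h^{\mathcal{T}(r)}_t$ and $(h^{\mathcal{T}}_t)^r$ come from \emph{bounded} metrics on the model, and the two line bundles $\mathcal{L}'^r \otimes \mathcal{M}$ and $\mathcal{L}'^r$ differ by $\mathcal{M}$, which is a divisor supported on the central fiber; under the substitution $t = -\log|\tau|$ a factor $|\tau|^{\pm N}$ becomes $e^{\mp Nt}$, giving a $d_{+\infty}$-bound that is linear in $t$ with $t$-slope controlled by the order $N$ of $\mathcal{M}$ along $X_0$, uniformly in $r$. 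Your suggested route via a comparison principle for the Dirichlet Monge--Ampère problem may well work, but it is a different argument and the uniformity in $t$ still requires justification; as written, the proof is incomplete. (A cosmetic point: to make $\mathcal{L}_i$ ample you should \emph{subtract} an effective exceptional divisor, not add one, consistent with the Negativity Lemma; the paper takes $\mathcal{M}$ to be the \emph{dual} of an effective $\mathbb{N}$-combination of central-fiber components.)

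The ordering of your reductions (common model before semistable reduction, rather than after normalization and semistable reduction as in the paper) is not itself a problem once the normalization issue is addressed, and your treatment of the ramified base change (both sides scale by the degree $d$) is correct and matches the paper. The proposal captures the right skeleton but needs the two ingredients above — Theorem \ref{thm_compar_normal} and the uniformity estimate from Theorem \ref{thm_ph_st_regul} — which are precisely where the real work of the paper's proof lies.
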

	\par 
	Let us now fix two test configurations $\mathcal{T}_1, \mathcal{T}_2$ as in Theorem \ref{thm_1_step}.
	We fix a smooth positive metric $h^L_0$ on $L$, and let $h^{\mathcal{T}_1}_t, h^{\mathcal{T}_2}_t$, $t \in [0, +\infty[$, be the geodesic rays of metrics on $L$ associated with $\mathcal{T}_1, \mathcal{T}_2$ and emanating from $h^L_0$.
	In Sections \ref{sect_dist_snc}, \ref{sect_toepl}, by relying on the methods of Dai-Liu-Ma \cite{DaiLiuMa}, Ma-Marinescu \cite{MaHol}, \cite{MaMarToepl}, Darvas-Lu-Rubinstein \cite{DarvLuRub}, and the results of Berndtsson \cite{BerndtProb}, we establish the following result.
	\begin{thm}\label{thm_3_step}
		For any $\epsilon > 0$, $p \in [1, +\infty[$, there are $C > 0, k_0 \in \nat$, such that for any $t \in [0, +\infty[$, $k \geq k_0$, the following bound holds
		\begin{equation}\label{eq_3_step}
			d_p \big( {\rm{Hilb}}_k(h^{\mathcal{T}_1}_t, \omega), {\rm{Hilb}}_k(h^{\mathcal{T}_2}_t, \omega) \big)
			\leq
			k \cdot d_p(h^{\mathcal{T}_1}_t, h^{\mathcal{T}_2}_t)
			+
			C(k + t)
			+
			\epsilon k t.
		\end{equation}
		Moreover, for $p = +\infty$, we have
		\begin{equation}
			d_{+\infty} \big( {\rm{Hilb}}_k(h^{\mathcal{T}_1}_t, \omega), {\rm{Hilb}}_k(h^{\mathcal{T}_2}_t, \omega) \big)
			\leq
			k \cdot d_{+\infty}(h^{\mathcal{T}_1}_t, h^{\mathcal{T}_2}_t).
		\end{equation}
	\end{thm}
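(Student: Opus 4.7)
The plan is to adapt the Bergman--Toeplitz quantisation scheme of Darvas--Lu--Rubinstein \cite{DarvLuRub}, valid on a fixed manifold, to the degenerating setting provided by the ample semistable snc models of Theorem \ref{thm_1_step}, and to isolate the resulting error terms as functions of $k$ and $t$.

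For $p = + \infty$, the statement follows directly from the representation
\begin{equation*}
\| s \|^2_{{\rm{Hilb}}_k(h^{\mathcal{T}_2}_t, \omega)}
=
\int_X |s|^2_{(h^{\mathcal{T}_1}_t)^k} \cdot \exp \bigl( - k(u_2 - u_1) \bigr) \frac{\omega^n}{n!},
\end{equation*}
where $u_i$ denotes the potential of $h^{\mathcal{T}_i}_t$: majorating the exponential by $\exp(k \cdot d_{+ \infty}(h^{\mathcal{T}_1}_t, h^{\mathcal{T}_2}_t))$ and reversing the roles of $\mathcal{T}_1$ and $\mathcal{T}_2$ gives the claim.

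For $p \in [1, + \infty[$, the strategy is to quantise the geodesic segment $h^{\mathcal{T}_1 \mathcal{T}_2}_{t, s}$, $s \in [0, 1]$, which is $\mathscr{C}^{1, \overline{1}}$ by Chu--Tosatti--Weinkove. Let $\dot{u}_{t, 0} \in L^{\infty}(X)$ be its initial derivative and set $\phi_{s, k} := {\rm{Hilb}}_k(h^{\mathcal{T}_1 \mathcal{T}_2}_{t, s}, \omega)$. A direct computation of the transfer operator between $\phi_{0, k}$ and $\phi_{s, k}$ identifies $\partial_s|_{s = 0} \phi_{s, k}$ (regarded as a self-adjoint operator with respect to $\phi_{0, k}$) with $- k \, T_{k, \dot{u}_{t, 0}}$, where $T_{k, f}$ denotes the Toeplitz operator on $(H^0(X, L^k), \phi_{0, k})$ with symbol $f$. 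By Berndtsson's quantised maximum principle \cite{BerndtProb}, the family $\phi_{s, k}$ is a complex subgeodesic in the space of Hermitian norms on $H^0(X, L^k)$, in the sense that $\log \| v \|^2_{\phi_{s, k}}$ is convex in $s$ for every $v \in H^0(X, L^k)$; together with the spectral description \eqref{eq_dist_transf} of the Hermitian $d_p$-distance, this yields the Finsler comparison
\begin{equation*}
d_p(\phi_{0, k}, \phi_{1, k})^p
\leq
\frac{k^p}{N_k} \, {\rm{Tr}} \bigl| T_{k, \dot{u}_{t, 0}} \bigr|^p,
\qquad N_k := \dim H^0(X, L^k).
\end{equation*}

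The final step is Toeplitz trace asymptotics. Extending the Bergman kernel techniques of Dai--Liu--Ma \cite{DaiLiuMa} and Ma--Marinescu \cite{MaHol} to the ample semistable snc model $(\pi, \mathcal{L}_1)$, one obtains
\begin{equation*}
\frac{1}{N_k} \, {\rm{Tr}} \bigl| T_{k, \dot{u}_{t, 0}} \bigr|^p
\leq
\frac{1}{\int \omega^n} \int_X | \dot{u}_{t, 0} |^p \, \omega_{h^{\mathcal{T}_1}_t}^n
+
\frac{C}{k} \bigl( 1 + \| \dot{u}_{t, 0} \|_{\infty}^p \bigr)
+
\epsilon \, \| \dot{u}_{t, 0} \|_{\infty}^p,
\end{equation*}
the last error being produced by a convolution smoothing of the merely $L^{\infty}$ symbol $\dot{u}_{t, 0}$ at a scale governed by $\epsilon$. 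The leading integral equals $d_p(h^{\mathcal{T}_1}_t, h^{\mathcal{T}_2}_t)^p$ by the Berndtsson--Darvas formula \eqref{eq_d_p_berndss}. Using the bound $\| \dot{u}_{t, 0} \|_{\infty} \leq C_0 \, t$, which follows from \eqref{eq_bnd_darvas_sup} combined with Lemma \ref{lem_gr_geod_ray}, and extracting the $p$-th root via $(a + b + c)^{1/p} \leq a^{1/p} + b^{1/p} + c^{1/p}$, the error rearranges into the claimed form $C(k + t) + \epsilon k t$.

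The main obstacle is the uniform nature of the Bergman/Toeplitz expansion along the full ray $t \in [0, + \infty[$. On a fixed smoothly polarised manifold this is classical, but here $h^{\mathcal{T}_1}_t$ degenerates as $t \to \infty$, and the symbol $\dot{u}_{t, 0}$ is only of class $L^{\infty}$ with norm possibly growing linearly in $t$. The snc hypothesis from Theorem \ref{thm_1_step} is crucial: on an ample semistable snc model the degeneration of $h^{\mathcal{T}_1}_t$ at the central fibre is locally of monomial/toric type, and a family version of the on- and off-diagonal Bergman kernel estimates of \cite{DaiLiuMa}, \cite{MaHol} can be carried out with a quantitative remainder that is linear in $t$, which is precisely what is needed to produce the error structure of \eqref{eq_3_step}.
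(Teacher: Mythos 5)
Your argument for $p = +\infty$ is correct and matches the paper's (trivial) argument. For $p \in [1, +\infty[$, however, the proposal has several genuine gaps in precisely the places where the paper introduces auxiliary machinery.

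First, Berndtsson's quantised maximum principle \cite[Proposition 3.1]{BerndtProb} holds for $L^2$-norms on sections of $L^k \otimes E \otimes K_X$ with $(E, h^E)$ positive; the twist by the canonical bundle and a positive auxiliary bundle is essential, as the paper emphasizes in the proof of Lemma~\ref{lem_d_p_toepl}. Your $\phi_{s,k} = {\rm{Hilb}}_k(h^{\mathcal{T}_1\mathcal{T}_2}_{t,s}, \omega)$ lacks both, so the claimed subgeodesic property does not apply. The paper resolves this with Lemma~\ref{lem_repl_l2_l2can}, replacing ${\rm{Hilb}}_k(\cdot, \omega)$ by ${\rm{Hilb}}_k^{\rm{can}}(\cdot, h^{K*}_{\mathcal{X}/\comp}|_{X_t})$ via the relative canonical bundle of the snc model at a cost of $C(t+k)$.

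Second, and most seriously, even with the correct twist the quantised maximum principle only produces a \emph{one-sided} operator inequality $\partial_s H_{k,s}|_{s=0} \leq k\,T_k(\dot{u}_{t,0})$. This does not yield the Finsler bound you wrote, because $\partial_s H_{k,s}|_{s=0}$ may have eigenvalues of both signs and the one-sided inequality gives no control on the negative part of the spectrum, hence no control on ${\rm{Tr}}|\partial_s H_{k,s}|_{s=0}|^p$. The paper's Lemma~\ref{lem_d_p_toepl} therefore requires the crucial additional hypothesis $h^L_0 \leq h^L_1$, under which $\partial_s H_{k,s}|_{s=0} \geq 0$ (equation \eqref{lem_d_p_toepl3}), and the general case is reduced to this one via the rooftop envelope $P(h^{\mathcal{T}_1}_t, h^{\mathcal{T}_2}_t)$, the Pythagorean formula \eqref{eq_pyth}, and its quantised analogue Lemma~\ref{prop_lattice_norms}. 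Your proposal skips this reduction entirely and applies the maximum principle directly to the geodesic between $h^{\mathcal{T}_1}_t$ and $h^{\mathcal{T}_2}_t$, which are not ordered.

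Third, the Bergman kernel expansion \`a la Dai--Liu--Ma / Ma--Marinescu requires a smooth positive metric on $\mathcal{L}$, while $h^{\mathcal{T}_i}_t$ is only $\mathscr{C}^{1,1}$. Your final paragraph gestures at ``monomial/toric degeneration'' on the snc model, but the problem is not the degeneration near $X_0$; it is the non-smoothness of the Monge--Amp\`ere solution itself, even in a fixed coordinate chart. The paper handles this by replacing $h^{\mathcal{T}_i}_t$ with the smooth ray $h^{\mathcal{T}_i\,{\rm{sm}}}_t$, uniformly comparable by Theorem~\ref{thm_ph_st_regul}, and then feeding only smooth data into the degenerating-family Toeplitz asymptotics of Theorem~\ref{thm_unif_bound_toepl}. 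Related to this: the Toeplitz trace bound should be applied to the \emph{rescaled} bounded symbol $\tfrac{1}{t}\dot{u}_{t,0}$, whose sup norm is $O(1)$ uniformly in $t$, rather than to $\dot{u}_{t,0}$ directly; this is what makes the error term in \eqref{eq_3_step} come out as $\epsilon kt$ with a clean choice of $k_0$, and it also renders the ``convolution smoothing'' step you invoke unnecessary (the trace identity ${\rm{Tr}}[T_k(f)] = \int f\,{\rm{Tr}}[B_k(x,x)]$ holds for $f \in L^\infty$; the regularity needed for the kernel expansion lies in the metric, not the symbol).
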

	We will now show how to assemble these results to finally establish Theorem \ref{thm_dist_na}.
	\begin{proof}[Proof of the lower bound of Theorem \ref{thm_dist_na}]
		By Theorem \ref{thm_1_step}, without loss of generality, we may assume that $\mathcal{T}_1 = (\pi: \mathcal{X} \to \comp, \mathcal{L}_1)$ and $\mathcal{T}_2 = (\pi: \mathcal{X} \to \comp, \mathcal{L}_2)$, where $(\pi, \mathcal{L}_1)$, $(\pi, \mathcal{L}_2)$ are ample semistable snc models.
		We use the notations introduced before Theorem \ref{thm_3_step}.
		From Theorems \ref{thm_2_step}, \ref{thm_3_step} and (\ref{eq_tr_weak}), we conclude that for any $\epsilon > 0$, $p \in [1, +\infty]$, there are $C > 0$, $k_0 \in \nat^*$, such that for any $t \in [0, +\infty[$ and $k \geq k_0$, we have
		\begin{equation}\label{eq_pf_2_0}
			d_p \big( H^{\mathcal{T}_1}_{t, k}, H^{\mathcal{T}_2}_{t, k} \big)
			\leq
			k \cdot d_p(h^{\mathcal{T}_1}_t, h^{\mathcal{T}_2}_t)
			+
			C(k + t)
			+
			\epsilon k t.
		\end{equation}		
		By dividing both sides of (\ref{eq_pf_2_0}) by $t$ and taking limit $t \to \infty$, we conclude that 
		\begin{equation}\label{eq_pf_2_1}
			d_p \big( \mathcal{F}^{\mathcal{T}_1}_{k}, \mathcal{F}^{\mathcal{T}_2}_{k} \big)
			\leq
			k \cdot d_p \big(\{ h^{\mathcal{T}_1}_t \}, \{ h^{\mathcal{T}_2}_t \} \big)
			+
			C
			+
			\epsilon k.
		\end{equation}	
		By dividing both sides of (\ref{eq_pf_2_1}) by $k$ and taking limit $k \to \infty$, we conclude that 
		\begin{equation}\label{eq_pf_2_2}
			\limsup_{k \to \infty} \frac{d_p \big( \mathcal{F}^{\mathcal{T}_1}_{k}, \mathcal{F}^{\mathcal{T}_2}_{k} \big)}{k}
			\leq
			d_p \big(\{ h^{\mathcal{T}_1}_t \}, \{ h^{\mathcal{T}_2}_t \} \big)
			+
			\epsilon.
		\end{equation}	
		Since $\epsilon > 0$ was chosen arbitrarily, we obtain the lower bound of Theorem \ref{thm_dist_na}.
	\end{proof}
	\begin{sloppypar}
	\begin{rem}\label{rem_chen_mclean_alt_pf}
		From our proof of Theorem \ref{thm_dist_na}, we see that the limit of $d_p ( \mathcal{F}^{\mathcal{T}_1}_{k}, \mathcal{F}^{\mathcal{T}_2}_{k} ) / k$, $p \in [1, +\infty]$, exists as $k \to \infty$.
		For $p \in [1, +\infty[$, a different proof of this fact was given by Chen-Maclean \cite[Theorem 4.3]{ChenMaclean}, cf. also Boucksom-Jonsson \cite[Theorem 3.3 and \S 3.4]{BouckJohn21}.
	\end{rem}
	\end{sloppypar}

	\subsection{Geodesic rays of Hermitian norms as the ray of $L^2$-norms}\label{sect_quot}
	The main goal of this section is to compare on a section ring geodesic rays of Hermitian norms associated with an ample test configuration with $L^2$-norms, i.e. to establish Theorem \ref{thm_2_step}.
	\par 
	\begin{sloppypar}
	Before all, let us introduce some notations from linear algebra.
	Recall first that a norm (Archimedean or non-Archimedean) $N_V = \| \cdot \|_V$ on a finitely dimensional vector space $V$ naturally induces the norm $\| \cdot \|_Q := [N_V]$ on any quotient $Q$, $\pi : V \to Q$ of $V$ as follows
	\begin{equation}\label{eq_defn_quot_norm}
		\| f \|_Q
		:=
		\inf \Big \{
		 \| g \|_V
		 :
		 \quad
		 g \in V, 
		 \pi(g) = f
		\Big\},
		\qquad f \in Q.
	\end{equation}
	Clearly, if $N_V$ is Hermitian, the quotient $N_Q$ is Hermitian as well.
	\end{sloppypar}
	\par 
	Let $V$ (resp. $W$) be a finitely dimensional vector space with a Hermitian norm $H_V$ (resp. $H_W$).
	We denote by ${\rm{Sym}}^l H_V$, $l \in \nat$, (resp. $H_V \otimes H_W$) the Hermitian norm on ${\rm{Sym}}^l V$ (resp. $V \otimes W$) associated with the scalar product induced by $H_V$ (resp. $H_V$ and $H_W$).
	\par 
	Now, for a polarized projective manifold $(X, L)$ and any $l, k \in \nat^*$, we define the multiplication
	\begin{equation}\label{eq_mult_map}
		{\rm{Mult}}_{l, k}^{{\rm{Sym}}} : {\rm{Sym}}^l H^0(X, L^k)
		\to
		H^0(X, L^{kl}),
	\end{equation}	
	as $f_1 \otimes \cdots \otimes f_l \mapsto f_1 \cdots f_l$. 
	Similarly, we define 
	\begin{equation}\label{eq_mult_map2}
		{\rm{Mult}}_{l, k} : H^0(X, L^l) \otimes H^0(X, L^k)
		\to
		H^0(X, L^{l + k}).
	\end{equation}	
	The core of the proof of Theorem \ref{thm_2_step}, from which we conserve the notations, lies in the following several results.
	\begin{thm}\label{thm_2_step_1}
		There is $k_0 \in \nat$, such that for any $k \geq k_0$, there are $C > 0$, $l_0 \in \nat$, such that for any $l \geq l_0$, $t \in [0, +\infty[$,  under the map (\ref{eq_mult_map}), the following inequality takes place
		\begin{equation}
			\exp(C(l + t)) \cdot
			{\rm{Hilb}}_{kl}(h^{\mathcal{T}}_t, \omega)
			\geq
			\big[ {\rm{Sym}}^l H^{\mathcal{T}}_{t, k} \big].
		\end{equation}
	\end{thm}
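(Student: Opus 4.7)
The plan is to bridge the quotient norm $[{\rm{Sym}}^l H^{\mathcal{T}}_{t,k}]$ and the $L^2$-norm ${\rm{Hilb}}_{kl}(h^{\mathcal{T}}_t, \omega)$ by passing through the Fubini-Study metric $FS(H^{\mathcal{T}}_{t,k})^{1/k}$ on $L$. First I would choose $k_0$ large enough so that for every $k \geq k_0$ and $l \geq 1$ the multiplication map ${\rm{Mult}}^{\rm{Sym}}_{l,k}$ is surjective; this is guaranteed by Castelnuovo--Mumford regularity and makes $[{\rm{Sym}}^l H^{\mathcal{T}}_{t,k}]$ a well-defined Hermitian norm on all of $H^0(X, L^{kl})$.

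The key algebraic input is the identity $FS([{\rm{Sym}}^l H^{\mathcal{T}}_{t,k}]) = FS(H^{\mathcal{T}}_{t,k})^l$ on $L^{kl}$, which I would prove by applying Lemma \ref{lem_fs_inf_d} to ${\rm{Mult}}^{\rm{Sym}}_{l,k}$ and exploiting that the fibers $L^{kl}_x$ are one-dimensional: the quotient of the symmetric Hermitian norm ${\rm{Sym}}^l H^{\mathcal{T}}_{t,k}$ through the pointwise evaluation factors as ${\rm{Sym}}^l {\rm{ev}}_x$, which on a one-dimensional target simply raises the single-level quotient to the $l$-th power. Consequently, $[{\rm{Sym}}^l H^{\mathcal{T}}_{t,k}]$ and ${\rm{Hilb}}_{kl}(FS(H^{\mathcal{T}}_{t,k})^{1/k}, \omega)$ are two Hermitian norms on $H^0(X, L^{kl})$ inducing the same Fubini-Study metric; a Bergman-kernel / Tian--Yau--Zelditch comparison applied to the bounded positive metric $FS(H^{\mathcal{T}}_{t,k})^{1/k}$ shows that they differ by at most a factor polynomial in $\dim H^0(X, L^{kl}) = O((kl)^n)$ times a $t$-dependent oscillation controlled linearly by Lemma \ref{lem_gr_geod_ray}, which is absorbed into the target exponential.

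To replace $FS(H^{\mathcal{T}}_{t,k})^{1/k}$ by $h^{\mathcal{T}}_t$, I would use that the inequality $h^{\mathcal{T}}_t \leq FS(H^{\mathcal{T}}_{t,k})^{1/k}$ follows directly from the decreasing-limit description in Theorem \ref{thm_phong_sturm_limit}, while the reverse bound $FS(H^{\mathcal{T}}_{t,k})^{1/k} \leq e^{C(k)} \cdot h^{\mathcal{T}}_t$ uniformly in $t$ would be extracted from Phong--Sturm's regularity theorem (Theorem \ref{thm_ph_st_regul}) applied to the homogeneous Monge--Amp\`ere solution on the equivariant resolution of singularities, combined with the matching asymptotic slopes of the two geodesic rays (both dictated by the filtration $\mathcal{F}^{\mathcal{T}}$). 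Raising to the $kl$-th power and integrating with respect to $\omega^n$ then transforms this pointwise bound into the sought $L^2$ comparison with a loss of the form $e^{C(k+t)l}$, whose multiplicative cross-term in $tl$ is reduced, via the linear-in-$t$ growth of both rays (Lemma \ref{lem_gr_geod_ray}), to an additive loss $e^{C(l+t)}$.

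The principal obstacle I expect is the quantitative uniform-in-$t$ reverse comparison $FS(H^{\mathcal{T}}_{t,k})^{1/k} \leq e^{C(k)} \cdot h^{\mathcal{T}}_t$ at fixed $k$: Theorem \ref{thm_phong_sturm_limit} yields only pointwise convergence as $k \to \infty$ for each fixed $t$, so the needed uniform rate relies on Phong--Sturm's regularity for the Dirichlet problem on the resolution together with the linear bound of Lemma \ref{lem_gr_geod_ray}; the polynomial dimension losses from the Bergman-kernel / Tian--Yau--Zelditch step are comparatively benign and are easily absorbed into $e^{Cl}$.
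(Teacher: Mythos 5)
Your proposal is built around step 3 — the claim that $[{\rm{Sym}}^l H^{\mathcal{T}}_{t,k}]$ and ${\rm{Hilb}}_{kl}(FS(H^{\mathcal{T}}_{t,k})^{1/k}, \omega)$ induce the same Fubini--Study metric on $L^{kl}$ and that, by a Bergman-kernel / Tian--Yau--Zelditch comparison, two Hermitian norms with the same Fubini--Study metric differ by at most a polynomial factor. This is a genuine gap, and it is where the whole argument breaks. The map $N \mapsto FS(N)$ on Hermitian norms of $H^0(X, L^m)$ is very far from being injective up to bounded factors: e.g.\ take $N = {\rm{Hilb}}_m(h^L_0,\omega)$, pick a unit vector $s_1$ whose orthogonal complement is base-point-free, and let $N'$ agree with $N$ on $s_1^\perp$ but scale $\| s_1 \|$ by $M \gg 1$; then $FS(N')$ converges (as $M \to \infty$) to the bounded metric $FS(N|_{s_1^\perp})$, while $N'$ blows up. So knowing $FS(N)$ — even exactly — gives no upper bound on $N$ in terms of ${\rm{Hilb}}_m(FS(N)^{1/m},\omega)$. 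Tian--Yau--Zelditch relates ${\rm{Hilb}}_m(\phi,\omega)$ and ${\rm{Ban}}^\infty_m(\phi)$ for a \emph{fixed} positive $\phi$; it says nothing about an arbitrary Hermitian norm with prescribed $FS$. Moreover, even the premise is only approximate: $FS({\rm{Hilb}}_{kl}(FS(H^{\mathcal{T}}_{t,k})^{1/k},\omega))$ equals $FS(H^{\mathcal{T}}_{t,k})^l$ only asymptotically. Finally, note that the easy inequality one can read off from the Fubini--Study identity $FS([{\rm{Sym}}^l H]) = FS(H)^l$ (via Lemma~\ref{lem_fs_inf_d}) is the \emph{opposite} of what Theorem~\ref{thm_2_step_1} asks for: it gives $[{\rm{Sym}}^l H] \geq {\rm{Ban}}^\infty_{kl}(FS(H)^{1/k})$, hence $[{\rm{Sym}}^l H]$ bounded \emph{below} by the $L^2$-norm, not above.

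The missing ingredient is the Ohsawa--Takegoshi extension theorem (Theorem~\ref{thm_ot_asymp}), which is precisely what the paper's proof uses. The paper first rewrites $[{\rm{Sym}}^l H^{\mathcal{T}}_{t,k}]$, via Lemma~\ref{lem_p_expl_calc} and the commutative diagram~(\ref{eq_kod_map_comm_d}), as the $L^2$-quotient norm obtained by restricting sections of $\mathscr{O}(l)$ from $\mathbb{P}(H^0(X,L^k)^*)$ (with metric $FS^{\mathbb{P}}(H^{\mathcal{T}}_{t,k})$) to the Kodaira-embedded copy of $X$, up to the explicit polynomial factor $\sqrt{(l+\dim-1)!/l!}$. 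Then the required upper bound on the quotient norm is exactly an $L^2$-extension statement: every section of $\mathscr{O}(l)|_X$ admits an extension to the ambient projective space whose $L^2$-norm is controlled, with a constant $\exp(c\,d_{+\infty}(FS^{\mathbb{P}}(H^{\mathcal{T}}_{t,k}), \omega_{\mathbb{P}})) \lesssim \exp(C(k)\,t)$ by (\ref{eq_ray_norms_decart}). After reconciling the two volume forms via~(\ref{eq_fin_11_2}) and replacing $FS(H^{\mathcal{T}}_{t,k})^{1/k}$ by $h^{\mathcal{T}}_t$ via Theorem~\ref{thm_ph_st_regul} (as in your step 4, which is fine), this gives the claimed bound. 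Without an extension-type theorem there is no route to the upper bound, so the ``same $FS$ metric $\Rightarrow$ polynomially close norms'' shortcut has to be abandoned.
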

	
	\begin{thm}\label{thm_2_step_121}
		There are $C > 0$, $k_0 \in \nat$, such that for any $l, k \geq k_0$, $t \in [0, +\infty[$,  under the map (\ref{eq_mult_map2}), the following inequality takes place
		\begin{equation}
			\exp(C t + C) \cdot
			{\rm{Hilb}}_{l + k}(h^{\mathcal{T}}_t, \omega)
			\geq
			\big[ {\rm{Hilb}}_l(h^{\mathcal{T}}_t, \omega) \otimes {\rm{Hilb}}_k(h^{\mathcal{T}}_t, \omega) \big].
		\end{equation}
	\end{thm}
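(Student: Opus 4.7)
The plan is to reformulate the inequality as a lower bound on the smallest singular value of the multiplication map and then propagate this bound from $t=0$ to general $t > 0$ along the geodesic ray. Set $T_{l,k}^t := {\rm{Mult}}_{l,k} {\rm{Mult}}_{l,k}^*$, the positive self-adjoint operator on $H^0(X, L^{l+k})$ where the adjoint is taken with respect to the inner products ${\rm{Hilb}}_{l}(h^{\mathcal{T}}_t, \omega)$, ${\rm{Hilb}}_{k}(h^{\mathcal{T}}_t, \omega)$ on the source and ${\rm{Hilb}}_{l+k}(h^{\mathcal{T}}_t, \omega)$ on the target. By Hilbert-space duality, the quotient norm $[{\rm{Hilb}}_l(h^{\mathcal{T}}_t, \omega) \otimes {\rm{Hilb}}_k(h^{\mathcal{T}}_t, \omega)](s)^2$ equals $\langle (T_{l,k}^t)^{-1} s, s \rangle$ with respect to ${\rm{Hilb}}_{l+k}(h^{\mathcal{T}}_t, \omega)$, so the desired inequality is equivalent to the operator estimate $T_{l,k}^t \geq \exp(-2Ct - 2C) \cdot \mathrm{Id}$.

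For $t=0$, the metric $h^{\mathcal{T}}_0 = h^L_0$ is smooth and positive, and the off-diagonal Bergman kernel expansions of Dai--Liu--Ma and Ma--Marinescu \cite{DaiLiuMa, MaHol} imply that, for $l, k \geq k_0$, $T_{l,k}^0$ is a Toeplitz-type operator on $H^0(X, L^{l+k})$ whose principal symbol is a smooth function uniformly bounded below away from zero --- a quantitative form of the asymptotic surjectivity of multiplication of sections coming from ampleness. Hence $T_{l,k}^0 \geq c_0 \cdot \mathrm{Id}$ with $c_0 > 0$ independent of $l, k$, proving Theorem \ref{thm_2_step_121} at $t = 0$.

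To pass from $t=0$ to general $t$, I would use that $\hat h^{\mathcal{T}}$ is psh on $X \times \mathbb{D}(e^{-1}, 1)$, apply Berndtsson's quantized maximum principle \cite{BerndtProb} to deduce that the Hermitian forms ${\rm{Hilb}}_m(h^{\mathcal{T}}_t, \omega)$, $m \in \{l, k, l+k\}$, vary in a log-psh family in $\tau = e^{-t}$, and compute the variation of $\log \lambda_{\min}(T_{l,k}^t)$ along the ray via a Hellmann--Feynman-type formula. The key point is that the diagonal concentration of the Bergman kernels of $(L^l, (h^{\mathcal{T}}_t)^l)$ and $(L^k, (h^{\mathcal{T}}_t)^k)$ at scale $1/\sqrt{l} + 1/\sqrt{k}$ forces the tensor-product weights $e^{-l u_t(x)}$ and $e^{-k u_t(y)}$ to effectively recombine on the diagonal $x = y$ into the single weight $e^{-(l+k) u_t}$ matching that of ${\rm{Hilb}}_{l+k}(h^{\mathcal{T}}_t, \omega)$. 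Consequently the derivative of $\log \lambda_{\min}(T_{l,k}^t)$ is controlled by $\|\dot u_t\|_{L^\infty}$ rather than by $(l+k) \|\dot u_t\|_{L^\infty}$. Since convexity of $u_t$ in $t$ together with the linear-growth bound of Lemma \ref{lem_gr_geod_ray} yields $\|\dot u_t\|_{L^\infty} \leq C$ uniformly in $t$, integrating gives the desired $\exp(Ct + C)$ factor.

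The main obstacle is precisely this ``diagonal cancellation'' of the weight exponents. A naïve $L^\infty$-comparison $(h^{\mathcal{T}}_t)^{l+k}$ against $(h^L_0)^{l+k}$ produces a crude factor $\exp(\pm C(l+k)t)$, which is useless for Theorem \ref{thm_dist_na}; removing the $(l+k)$-factor from the exponent requires a substantial extension of the Dai--Liu--Ma / Ma--Marinescu Bergman-kernel machinery to the $\mathscr{C}^{1,1}$-regularity of the Phong--Sturm geodesic, and more broadly to the degenerating family of manifolds over $\mathbb{D}$ arising from the test configuration --- precisely the generalization announced in Section \ref{sect_second}. A secondary technical point is that the Bergman-kernel expansions strictly speaking require smoothness, so an approximation argument (e.g.\ convolving $\hat h^{\mathcal{T}}$ by a smooth kernel in $\tau$, or applying Demailly regularization of the subgeodesic) will be needed to perform the computation in the smooth category and then to pass to the $\mathscr{C}^{1,1}$-limit via the continuity of $L^2$-norms along decreasing sequences of psh metrics.
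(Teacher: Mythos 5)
Your proposal correctly identifies the crux of the theorem: a na\"ive pointwise comparison of $(h^{\mathcal{T}}_t)^{l+k}$ with a reference smooth metric pollutes the bound by $\exp(\pm C(l+k)t)$, which is useless, and what must be shown is that the constant governing the surjectivity of multiplication depends on $t$ only, not on $l$ or $k$. However, the proposed route does not actually close this gap, and the way you propose to close it (extending the Dai--Liu--Ma / Ma--Marinescu Bergman-kernel machinery to the $\mathscr{C}^{1,1}$ Phong--Sturm geodesic, together with a Hellmann--Feynman propagation of $\log\lambda_{\min}(T_{l,k}^t)$ via Berndtsson's quantized maximum principle) is substantially heavier than necessary, and you yourself flag the ``diagonal cancellation of weight exponents'' as an unproved heuristic. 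As written, the argument for $t>0$ is a roadmap, not a proof.

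The paper's actual proof is far more elementary and sidesteps Bergman kernels entirely. Observe that the multiplication map (\ref{eq_mult_map2}) fits into the commutative diagram (\ref{eq_comm_diag}), which identifies it with the restriction map from $H^0(X\times X, L^k \boxtimes L^l)$ to the diagonal $\Delta\cong X$. The question of bounding $[{\rm{Hilb}}_l(h^{\mathcal{T}}_t,\omega)\otimes {\rm{Hilb}}_k(h^{\mathcal{T}}_t,\omega)]$ by ${\rm{Hilb}}_{l+k}(h^{\mathcal{T}}_t,\omega)$ is therefore an $L^2$-extension problem from $\Delta$ to $X\times X$, and the Ohsawa--Takegoshi theorem with uniform constant --- Theorem \ref{thm_ot_asymp}, in its reformulation (\ref{eq_reform_thm_ot_asymp}) --- delivers exactly the needed bound. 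The key structural point, which your proposal is groping toward via Bergman-kernel concentration on the diagonal, is that the extension constant in Ohsawa--Takegoshi depends on the metric only through $d_{+\infty}(h^{\mathcal{T}}_t, h^L_0)$ and not on the powers $k,l$: it is a geometric constant attached to the pair $(\Delta, X\times X)$ and a curvature lower bound, valid for arbitrary bounded psh weights. Combined with Lemma \ref{lem_gr_geod_ray}, which gives $d_{+\infty}(h^{\mathcal{T}}_t, h^L_0)\leq Ct$, this immediately produces the $\exp(Ct+C)$ factor, uniformly in $k,l\geq k_0$ and with no regularity assumption beyond boundedness and plurisubharmonicity. The Bergman-kernel and Toeplitz machinery that you want to invoke here is what the paper uses later, for Theorem \ref{thm_3_step}, not for this statement; Theorem \ref{thm_2_step_121} needs only Ohsawa--Takegoshi.
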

	\begin{rem}
		Theorems \ref{thm_2_step_1} and \ref{thm_2_step_121} are uniform weak versions of \cite[Theorems 1.1, 4.18]{FinSecRing}.
	\end{rem}
	The proofs of Theorems \ref{thm_2_step_1}, \ref{thm_2_step_121}, which will be presented in Section \ref{sect_ohs}, rely on Ohsawa-Takegoshi extension theorem.
	\begin{thm}\label{thm_2_step_2}
		For any $\epsilon > 0$, there is $k_0 \in \nat$ such that for any $l, k \geq k_0$, $t \in [0, +\infty[$,  under the map (\ref{eq_mult_map}), the following inequality takes place
		\begin{equation}
			\big[ {\rm{Sym}}^l H^{\mathcal{T}}_{t, k} \big]
			\geq
			\exp(- \epsilon kl) \cdot
			H^{\mathcal{T}}_{t, kl}.
		\end{equation}
		Similarly, under the map (\ref{eq_mult_map2}), the following inequality takes place
		\begin{equation}\label{eq_2_step_212}
			\big[ H^{\mathcal{T}}_{t, l} \otimes H^{\mathcal{T}}_{t, k} \big]
			\geq
			\exp(- \epsilon (l + k)) \cdot
			H^{\mathcal{T}}_{t, l + k}.
		\end{equation}
	\end{thm}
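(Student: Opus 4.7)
The plan is to exploit the fact that although the Hermitian ray $H^{\mathcal{T}}_{t}$ is not itself submultiplicative on $R(X, L)$, the companion non-Hermitian ray $N^{\mathcal{T}}_{t}$ from (\ref{eq_ray_norm_defn0}), emanating from ${\rm{Ban}}^{\infty}(h^L_0)$, \emph{is} submultiplicative by Lemma \ref{lem_subm_ray_norms}. The strategy is to transfer this submultiplicativity back to $H^{\mathcal{T}}_{t}$ using the comparison Lemma \ref{lem_two_norms_comp0} together with the Tian asymptotics (\ref{eq_tian_dinf_norms}), and to pass from the projective tensor norm, which is the natural setting for non-Hermitian submultiplicativity, to the Hilbert tensor norm appearing in the theorem via a singular value decomposition argument.

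Concretely, for the tensor product statement (\ref{eq_2_step_212}), submultiplicativity of $N^{\mathcal{T}}_{t}$ says that for every $h \in H^0(X, L^{l+k})$ and every factorisation $h = \sum_i f_i g_i$ with $f_i \in H^0(X, L^l)$, $g_i \in H^0(X, L^k)$, one has $N^{\mathcal{T}}_{t, l+k}(h) \leq \sum_i N^{\mathcal{T}}_{t, l}(f_i) N^{\mathcal{T}}_{t, k}(g_i)$. Taking the infimum, this is exactly
\begin{equation*}
N^{\mathcal{T}}_{t, l+k}(h) \leq [N^{\mathcal{T}}_{t, l} \otimes_{\pi} N^{\mathcal{T}}_{t, k}](h),
\end{equation*}
where $\otimes_\pi$ denotes the projective tensor norm. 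Lemma \ref{lem_two_norms_comp0} combined with (\ref{eq_tian_dinf_norms}) yields that for any $\epsilon_0 > 0$ there is $k_0$ such that $d_{+\infty}(N^{\mathcal{T}}_{t, k}, H^{\mathcal{T}}_{t, k}) \leq \epsilon_0 k$ for all $k \geq k_0$, \emph{uniformly in $t \in [0, +\infty[$}, because the two rays share the same filtration data and differ only through the base-point discrepancy, which is time-independent. Finally, the standard singular value decomposition gives $\| \cdot \|_{\otimes_\pi} \leq \sqrt{\min(\dim V_1, \dim V_2)} \cdot \| \cdot \|_{\otimes_{{\rm{Hilb}}}}$ on $V_1 \otimes V_2$, which in our situation contributes a factor polynomial in $l+k$. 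Chaining these three ingredients yields
\begin{equation*}
[H^{\mathcal{T}}_{t, l} \otimes H^{\mathcal{T}}_{t, k}](h) \geq \exp\big(-C \epsilon_0 (l+k) - C' \log(l+k)\big) \cdot H^{\mathcal{T}}_{t, l+k}(h)
\end{equation*}
for uniform constants $C, C'$. Since $\epsilon_0$ is arbitrary, this gives the claimed bound.

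The symmetric power statement follows by the same route with minor modifications. Iterating the submultiplicativity of $N^{\mathcal{T}}_t$ gives $[{\rm{Sym}}^l_{\pi} N^{\mathcal{T}}_{t, k}](h) \geq N^{\mathcal{T}}_{t, kl}(h)$, where ${\rm{Sym}}^l_\pi$ is the projective symmetric tensor norm. The analogous bound $\| \cdot \|_{{\rm{Sym}}^l_\pi} \leq \sqrt{\dim {\rm{Sym}}^l V} \cdot \| \cdot \|_{{\rm{Sym}}^l_{{\rm{Hilb}}}}$ holds, and a direct estimate using $\dim H^0(X, L^k) = O(k^n)$, splitting into the regimes $l \leq k^n$ and $l > k^n$, shows $\log \dim {\rm{Sym}}^l H^0(X, L^k) \leq \epsilon k l$ once $k \geq k_0(\epsilon, n)$, uniformly in $l \geq 1$. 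Combined with the $N$-vs-$H$ error at level $kl$, which is at worst $\epsilon_0 \cdot kl$ by the same uniform Tian bound, all error terms absorb into the desired $\exp(-\epsilon k l)$ factor.

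The main technical point is the \emph{uniformity in $t$} of the comparison between $N^{\mathcal{T}}_{t, k}$ and $H^{\mathcal{T}}_{t, k}$; this is provided by Lemma \ref{lem_two_norms_comp0}, whose bound $d_{+\infty}(N^{\mathcal{F}}_t, H^{\mathcal{F}}_t) \leq d_{+\infty}(N_V, H_V) + \log \dim V$ is manifestly $t$-independent. Once this is in hand, the remainder reduces to standard dimension counting and routine tensor-norm inequalities, with no further input from pluripotential theory or the singular structure of the test configuration.
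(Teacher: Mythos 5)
Your proposal is correct in structure and follows a genuinely different route from the paper's proof. The paper deduces Theorem \ref{thm_2_step_2} by a single application of the non-Archimedean Stein--Weiss interpolation of Proposition \ref{prop_interpol}: by Lemma \ref{lem_p_expl_calc1}, the family ${\rm{Sym}}^l H^{\mathcal{T}}_{t,k}$ is itself the Hermitian geodesic ray on ${\rm{Sym}}^l H^0(X, L^k)$ emanating from ${\rm{Sym}}^l {\rm{Hilb}}_k(h^L_0)$ and associated to ${\rm{Sym}}^l\mathcal{F}^{\mathcal{T}}_k$, so it suffices to check the desired quotient inequality at the endpoint $t=0$ (which is exactly the uniform Ohsawa--Takegoshi estimate of Lemma \ref{thm_sec_ring}) and for the associated filtrations (which is submultiplicativity of $\mathcal{F}^{\mathcal{T}}$); interpolation then yields the inequality for all $t$ simultaneously. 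Your argument bypasses both Lemma \ref{thm_sec_ring} and Proposition \ref{prop_interpol}, using instead the tautological submultiplicativity of the auxiliary non-Hermitian rays from Lemma \ref{lem_subm_ray_norms}, the $t$-uniform comparison of Lemma \ref{lem_two_norms_comp0} with the Tian estimate (\ref{eq_tian_dinf_norms}), and an elementary conversion between projective and Hilbert tensor norms. The content the paper extracts from $L^2$-extension you replace by Bergman-kernel asymptotics plus tensor algebra; your route is arguably more self-contained, while the paper's is shorter and avoids the extra norm-conversion loss.

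That conversion is the one step you should repair. For the two-fold tensor product, the SVD bound $\|\cdot\|_{\otimes_\pi} \leq \sqrt{\min(\dim V_1,\dim V_2)}\,\|\cdot\|_{\otimes_{\rm{Hilb}}}$ is correct. For ${\rm{Sym}}^l$ with $l\geq 3$ there is no singular value decomposition, and the claimed bound $\|\cdot\|_{{\rm{Sym}}^l_\pi}\leq\sqrt{\dim{\rm{Sym}}^l V}\,\|\cdot\|_{{\rm{Sym}}^l H_V}$ is not justified as stated: John's theorem produces \emph{some} Hermitian norm within $\sqrt{\dim{\rm{Sym}}^l V}$ of the projective norm, not the specific norm ${\rm{Sym}}^l H_V$ you need. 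What does hold, and suffices, is the cruder bound obtained from the monomial basis $\{e^\alpha\}_{|\alpha|=l}$ associated to an $H_V$-orthonormal basis of $V$: each $e^\alpha$ has projective norm at most one, $\|e^\alpha\|_{{\rm{Sym}}^l H_V}^2=\alpha!/l!$, and Cauchy--Schwarz together with the multinomial identity $\sum_{|\alpha|=l}l!/\alpha!=(\dim V)^l$ give $\|T\|_{{\rm{Sym}}^l_\pi}\leq(\dim V)^{l/2}\,\|T\|_{{\rm{Sym}}^l H_V}$. With $\dim V=\dim H^0(X,L^k)=O(k^n)$, the corrected factor has $\log\big((\dim V)^{l/2}\big)=\tfrac{l}{2}\log\dim V=O(l n\log k)$, which is still absorbed into $\exp(\epsilon kl)$ once $k\geq k_0(\epsilon,n)$, uniformly in $l$ and $t$. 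After this repair your argument goes through as written.
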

	\par 
	Before describing the proof of Theorem \ref{thm_2_step_2}, let us explain how along with Theorems \ref{thm_2_step_1}, \ref{thm_2_step_121}, they entail Theorem \ref{thm_2_step}.
	For this, we need to have a better understanding of the uniform quantization properties of the geodesic ray of Hermitian norms on the section ring, and the following result of Phong-Sturm will be of paramount significance for this.
	\par 
	Let us fix an arbitrary ample test configuration $\mathcal{T} = (\pi: \mathcal{X} \to \comp, \mathcal{L})$ and an arbitrary $\comp^*$-equivariant resolution of singularities $\mathcal{T}' = (\pi': \mathcal{X}' \to \comp, \mathcal{L}')$ of $\mathcal{T}$.
	We fix an arbitrary smooth metric $h^{\mathcal{L}'}$ on $\mathcal{L}'$, and denote by $h^{\mathcal{T} {\rm{sm}}}_t$, $t \in [0, +\infty[$, the ray of smooth positive metrics on $L$ associated with $h^{\mathcal{L}'}$ in the same way as the geodesic ray $h^{\mathcal{T}}_t$, $t \in [0, +\infty[$, of metrics on $L$ was associated with a solution of the Monge-Ampère equation (\ref{eq_ma_geod_dir}).
	\begin{thm}\label{thm_ph_st_regul}
		There are $C > 0$, $k_0 \in \nat$, such that for any $t \in [0, +\infty[$, $k \geq k_0$, we have
		\begin{equation}\label{eq_ph_st_regul011}
			\exp(-C) \cdot h^{\mathcal{T} {\rm{sm}}}_t  \leq FS(H^{\mathcal{T}}_{t, k})^{\frac{1}{k}} \leq \exp(C)  \cdot h^{\mathcal{T} {\rm{sm}}}_t.
		\end{equation}
		In particular, by Theorem \ref{thm_phong_sturm_limit}, we have
		\begin{equation}
			\exp(-C) \cdot h^{\mathcal{T} {\rm{sm}}}_t \leq h^{\mathcal{T}}_t \leq \exp(C)  \cdot h^{\mathcal{T} {\rm{sm}}}_t.
		\end{equation}
	\end{thm}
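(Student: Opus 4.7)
The plan is to establish the Fubini--Study sandwich (\ref{eq_ph_st_regul011}) directly; the ``in particular'' consequence then follows by taking the Phong--Sturm limit from Theorem \ref{thm_phong_sturm_limit}. The argument splits into two stages: first, a uniform $d_{+\infty}$-comparison between the Hermitian norm $H^{\mathcal{T}}_{t, k}$ and the reference norm ${\rm{Ban}}^\infty_k(h^{\mathcal{T} {\rm{sm}}}_t)$ on $H^0(X, L^k)$; second, the passage from norms to metrics via the Lipschitz property of the Fubini--Study map and Tian's asymptotic expansion. The central technical device is $\comp^*$-equivariance, which converts the required uniformity in $t \in [0, +\infty[$ into estimates on the compact slab $\mathcal{X}'_{\mathbb{D}(e^{-1}, 1)}$ of the resolution.

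For the first stage, I would pick an orthonormal basis $(s_i)$ of ${\rm{Hilb}}_k(h^L_0)$ adapted to the filtration $\mathcal{F}^{\mathcal{T}}_k$, with jumping numbers $e_k(i)$, so that $(e^{t e_k(i)} s_i)$ is orthonormal for $H^{\mathcal{T}}_{t, k}$ by (\ref{eq_bas_st}). The $\comp^*$-equivariant extension $\tilde{s}_i$ on $\mathcal{X}' \setminus X_0$ satisfies $\tau^{- e_k(i)} \tilde{s}_i \in H^0(\mathcal{X}', \mathcal{L}'^k)$ by the construction of the filtration. Since $h^{\mathcal{T} {\rm{sm}}}_t$ is itself defined $\comp^*$-equivariantly, the identification $X \simeq X_\tau$ via the $\comp^*$-action $\rho_\tau$, $\tau = e^{-t}$, yields the pointwise identity
\[
	|s_i(x)|_{h^{\mathcal{T} {\rm{sm}}}_t}
	=
	e^{- t e_k(i)} \cdot
	|\tau^{- e_k(i)} \tilde{s}_i(\rho_\tau(x))|_{h^{\mathcal{L}'}}.
\]
Since $\tau^{- e_k(i)} \tilde{s}_i$ is a holomorphic section on $\mathcal{X}'$ of bounded ${\rm{Hilb}}$-norm (coming from $\|s_i\| = 1$), standard $L^\infty$-versus-$L^2$ estimates on the compact slab $\mathcal{X}'_{\mathbb{D}(e^{-1}, 1)}$ bound its pointwise norm by $C k^{N}$ for some $N$. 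Assuming $h^{\mathcal{L}'}$ is $S^1$-invariant (which we may do by averaging), the bound propagates uniformly to all $|\tau| \leq 1$. Combining with John's ellipsoid theorem (\ref{eq_john_ellips}) and the weak triangle inequality (\ref{eq_tr_weak}) to pass from individual basis vectors to arbitrary norms, I obtain $d_{+\infty}(H^{\mathcal{T}}_{t, k}, {\rm{Ban}}^\infty_k(h^{\mathcal{T} {\rm{sm}}}_t)) = O(\log k)$ uniformly in $t \in [0, +\infty[$.

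For the second stage, the Lipschitz property (\ref{eq_fs_contact}) of $FS$ in $d_{+\infty}$ immediately gives $d_{+\infty}(FS(H^{\mathcal{T}}_{t, k})^{1/k}, FS({\rm{Ban}}^\infty_k(h^{\mathcal{T} {\rm{sm}}}_t))^{1/k}) = O((\log k)/k)$. Combining with Tian's asymptotic (\ref{eq_tian_dinf_norms}), which holds uniformly in $t$ because $h^{\mathcal{T} {\rm{sm}}}_t$ arises from a single smooth positive metric $h^{\mathcal{L}'}$ restricted to the compact rotation-invariant family of fibers over $\mathbb{D}(e^{-1}, 1)$, where the uniform Bergman kernel expansions of Dai--Liu--Ma \cite{DaiLiuMa} and Ma--Marinescu \cite{MaHol} apply, I conclude $d_{+\infty}(FS(H^{\mathcal{T}}_{t, k})^{1/k}, h^{\mathcal{T} {\rm{sm}}}_t) \leq C$ for all $k \geq k_0$ and $t \in [0, +\infty[$, which is exactly (\ref{eq_ph_st_regul011}). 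The main obstacle lies in the first stage: the required linear-in-$k$ control of the jumping numbers $e_k(i)$ (coming from boundedness of the filtration associated to an ample test configuration) and the careful handling of the holomorphic extensions across the whole punctured disk; the rotation-invariance of $h^{\mathcal{L}'}$ and the compactness of the slab $\mathcal{X}'_{\mathbb{D}(e^{-1}, 1)}$ are the essential ingredients that make the estimate uniform in $t$.
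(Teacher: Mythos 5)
Your strategy is genuinely different from the paper's, and it contains a gap at its crux. The paper cites Phong--Sturm \cite[Lemma 4]{PhongSturmDirMA} for the lower bound of (\ref{eq_ph_st_regul011}), for the ``in particular'' consequence, and for the upper bound when $k$ is divisible by some sufficiently large $k_0$; the only new content is the extension of the upper bound to all sufficiently large $k$, carried out by writing $k = k_0 r + k_1 s$ for fixed coprime $k_0, k_1$ and combining the tensor-product estimate (\ref{eq_2_step_212}) with the multiplicativity $FS(\big[ H^{\mathcal{T}}_{t, k_0 r} \otimes H^{\mathcal{T}}_{t, k_1 s} \big]) = FS(H^{\mathcal{T}}_{t, k_0 r}) \cdot FS(H^{\mathcal{T}}_{t, k_1 s})$. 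You instead aim to re-derive the full two-sided comparison analytically from scratch.

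The gap lies in the propagation step of your first stage. Your pointwise identity $|s_i(x)|_{h^{\mathcal{T} {\rm{sm}}}_t} = e^{-t e_k(i)} |\sigma_i(\rho_\tau(x))|_{h^{\mathcal{L}'}}$, with $\sigma_i := \tau^{-e_k(i)}\tilde{s}_i$, is correct, and it shows that what you actually need is the bound $\sup_{X_\tau} |\sigma_i|_{(h^{\mathcal{L}'})^k} \leq C k^N$ uniformly over the whole range $0 < |\tau| \leq 1$, not just over the slab $e^{-1} \leq |\tau| \leq 1$. However, $S^1$-averaging of $h^{\mathcal{L}'}$ only makes $|\sigma_i|_{h^{\mathcal{L}'}}$ a function of $|\tau|$; there is no $\comp^*$-invariance of $h^{\mathcal{L}'}$, and the annulus is not a fundamental domain for any group action preserving the problem, so nothing forces the bound obtained on the slab to persist for $|\tau| < e^{-1}$. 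As $|\tau| \to 0$ the fibers $X_\tau$ degenerate to the (typically singular, reducible) central fiber $X_0$, where $\mathcal{L}'$ is only semiample and where neither the fiberwise $L^\infty$--$L^2$ estimate nor Tian's expansion (\ref{eq_tian_dinf_norms}) is uniform for free. That is precisely the difficulty the rest of the paper is organized around: the reduction to snc models (Theorem \ref{thm_1_step}) and the degenerating-family Bergman kernel expansion (Theorem \ref{lem_toepl_2}) exist to cope with exactly this loss of uniformity. The control of $\sigma_i$ near the central fiber that you need is the nontrivial content of Phong--Sturm's estimate, and your $S^1$-invariance reduction does not deliver it; a self-contained argument along your lines would have to confront the multiplicities of the components of $X_0$ and the local analysis near the degenerate fiber, whereas the efficient route remains to cite Phong--Sturm for the hard bound and close the divisibility gap algebraically, as the paper does.
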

	\begin{rem}\label{rem_ph_st_regul}
		From the second part of Theorem \ref{thm_ph_st_regul}, we see that two ample test configurations give rise to the same geodesic ray of metrics if and only if they are equivalent.
	\end{rem}
	\begin{proof}
		The only new statement is the validity of the upper bound of (\ref{eq_ph_st_regul011}).
		The rest was established by Phong-Sturm in the proof of \cite[Lemma 4]{PhongSturmDirMA}, including the validity of the upper bound of (\ref{eq_ph_st_regul011}) for $k \in \nat$ , divisible by $k_0 \in \nat$, where $k_0$ is any sufficiently big natural number.
		\par 
		Let us now fix $k_0, k_1 \in \nat$ sufficiently big and relatively prime.
		For a given $k \in \nat$, $k \geq 2 k_0 k_1$, we decompose $k = k_0 r + k_1 s$, $r, s \in \nat$.
		An easy calculation, cf. Lemma \cite[Lemma 4.11]{FinSecRing}, shows that $FS(\big[ H^{\mathcal{T}}_{t, k_0 r} \otimes H^{\mathcal{T}}_{t, k_1 s} \big]) = FS(H^{\mathcal{T}}_{t, k_0 r}) \cdot FS(H^{\mathcal{T}}_{t, k_1 s})$.
		By (\ref{eq_2_step_212}) and the validity of the upper bound (\ref{eq_ph_st_regul011}) for $k := k_0 r$, $k := k_1 s$, we deduce its validity for all sufficiently big $k$.
	\end{proof}
	We are now finally ready to prove the main result of this section.
	\begin{proof}[Proof of Theorem \ref{thm_2_step}]
		Directly from the lower bound of (\ref{eq_ph_st_regul011}), by Lemma \ref{lem_fs_inf_d}, there are $C > 0$, $k_0 \in \nat$, such that for any $k \geq k_0$, $t \in [0, +\infty[$, we have
		\begin{equation}\label{eq_low_bnd_inf1}
			H^{\mathcal{T}}_{t, k}
			\geq
			{\rm{Ban}}^{\infty}_k \big( \exp(- C) \cdot h^{\mathcal{T}}_t \big).
		\end{equation}
		Since the $L^{\infty}$-norm dominates the $L^2$-norm, we deduce that
		\begin{equation}\label{eq_low_bnd_inf12}
			H^{\mathcal{T}}_{t, k}
			\geq
			\exp(- C k)
			\cdot
			{\rm{Hilb}}_k(h^{\mathcal{T}}_t, \omega).
		\end{equation}
		On another hand, directly from Theorem \ref{thm_2_step_1} and the first part of Theorem \ref{thm_2_step_2}, for any $k_0, k_1 \in \nat$ sufficiently big, there is $C > 0$, such that for any $t \in [0, +\infty[$, $k$ divisible by $k_0$ or $k_1$, we have
		\begin{equation}\label{eq_low_bnd_inf2}
			{\rm{Hilb}}_k(h^{\mathcal{T}}_t, \omega)
			\geq
			\exp(- C (k + t))
			\cdot
			H^{\mathcal{T}}_{t, k}.
		\end{equation}
		If we now fix $k_0, k_1 \in \nat^*$, which are relatively prime and big enough, and apply Theorem \ref{thm_2_step_121}, the second part of Theorem \ref{thm_2_step_2} and (\ref{eq_low_bnd_inf2}) for $k = k_0 r$ and $k = k_1 s$, where $r, s \in \nat$ are big enough, we obtain that (\ref{eq_low_bnd_inf2}) holds for all $k$ sufficiently large, since any such number can be written as $k_0 r + k_1 s$, $r, s \in \nat$.
		Then a combination of (\ref{eq_low_bnd_inf12}) and (\ref{eq_low_bnd_inf2}) gives us a proof of the first part of Theorem \ref{thm_2_step}.
		Also, from (\ref{eq_low_bnd_inf1}) and (\ref{eq_low_bnd_inf2}), there are $C > 0$, $k_0 \in \nat$, such that for any $t \in [0, +\infty[$, $k \geq k_0$, we have
		\begin{equation}
			\exp(Ck) \cdot {\rm{Hilb}}_k(h^{\mathcal{T}}_t, \omega)
			\geq
			{\rm{Ban}}^{\infty}_k(h^{\mathcal{T}}_t).
		\end{equation}
		The proof of the second part of Theorem \ref{thm_2_step} follows this and the first part.
	\end{proof}
	\par 
	We will now prove Theorem \ref{thm_2_step_2}.
	This result in essence says that the construction of geodesic rays of Hermitian norms emanating from $L^2$-norms respects the multiplicative structure of the section ring.
	The proof of Theorem \ref{thm_2_step_2} decomposes into three statements.
	The first statement from \cite{FinSecRing} shows that the construction of $L^2$-norms respects the multiplicative structure of the section ring.
	The second statement shows that geodesic rays of Hermitian norms on finitely dimensional vector spaces behave reasonably under taking quotients.
	The third statement shows that formation of geodesic rays on finitely dimensional vector spaces is compatible with tensor products.
	We begin by recalling the first result.
	\begin{lem}[{\cite[Theorem 1.5]{FinSecRing}}]\label{thm_sec_ring}
		For any continuous psh metric $h^L$ on $L$ and any $\epsilon > 0$, there is $k_0 \in \nat$, such that for any $l, k \geq k_0$,  under the map (\ref{eq_mult_map}), the following inequality holds
		\begin{equation}\label{eq_sec_ring}
			\big[  {\rm{Sym}}^l {\rm{Hilb}}_k(h^L) \big]
			\geq
			\exp(- \epsilon k l)
			\cdot
			{\rm{Hilb}}_{kl}(h^L).
		\end{equation}
		Similarly, for any $\epsilon > 0$, there is $k_0 \in \nat$, such that for any $l, k \geq k_0$,  under the map (\ref{eq_mult_map2}), the following inequality holds
		\begin{equation}\label{eq_sec_ring2}
			\big[  {\rm{Hilb}}_l(h^L) \otimes {\rm{Hilb}}_k(h^L) \big]
			\geq
			\exp(- \epsilon (l + k))
			\cdot
			{\rm{Hilb}}_{l + k}(h^L).
		\end{equation}
	\end{lem}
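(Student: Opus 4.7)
The plan is to establish both inequalities via the Ohsawa–Takegoshi extension theorem applied to the diagonal embeddings $\Delta \subset X \times X$ and $\Delta \subset X^l$, followed by an orthogonal projection onto the symmetric subspace where appropriate. Throughout, one may reduce to the case when $h^L$ is smooth and strictly positive: by Demailly's regularization theorem, approximate $h^L$ from above by a decreasing family of smooth positive metrics $h^L_\delta$, and pass to the limit $\delta \to 0$ using continuity of ${\rm{Hilb}}_k$ in the metric.

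For the tensor-product inequality \eqref{eq_sec_ring2}, given $f \in H^0(X, L^{l+k})$, view $f$ as a section of $p_1^* L^l \otimes p_2^* L^k$ along $\Delta \subset X \times X$. Apply Ohsawa–Takegoshi (in the form of Demailly or Manivel, with near-optimal constants) to extend $f$ to $F \in H^0(X \times X, p_1^* L^l \otimes p_2^* L^k)$ satisfying $\|F\|^2_{L^2(X \times X)} \leq C \cdot \|f\|^2_{L^2(X)}$, where the ambient metric is $h^L \boxtimes h^L$ and the ambient volume is the product of $\omega^n / n!$. Under the Künneth identification $H^0(X \times X, p_1^* L^l \otimes p_2^* L^k) \cong H^0(X, L^l) \otimes H^0(X, L^k)$, the $L^2$-norm becomes precisely the product Hermitian norm ${\rm{Hilb}}_l(h^L) \otimes {\rm{Hilb}}_k(h^L)$, and restriction to $\Delta$ realizes the multiplication map. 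For $l + k$ sufficiently large, $C$ depends at most polynomially on $l + k$ after twisting by a small fixed negative multiple of $L$ and absorbing the curvature contribution into the positivity of $h^L$, yielding the bound $C \leq \exp(\epsilon(l + k))$.

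For the symmetric inequality \eqref{eq_sec_ring}, the analogous construction is carried out on $X^l$: identify $f \in H^0(X, L^{kl})$ with a section of $L^k \boxtimes \cdots \boxtimes L^k$ along the small diagonal $\Delta \subset X^l$, extend via Ohsawa–Takegoshi to $F \in H^0(X^l, L^k \boxtimes \cdots \boxtimes L^k)$, and symmetrize by averaging $\frac{1}{l!}\sum_{\sigma \in \mathfrak{S}_l} \sigma^* F$, which does not increase the $L^2$-norm because the symmetric group acts unitarily on the product metric. Under the Künneth identification, the symmetrized extension corresponds to an element $g \in {\rm{Sym}}^l H^0(X, L^k)$ with ${\rm{Mult}}^{{\rm{Sym}}}_{l, k}(g) = f$ and with $\|g\|^2 \leq C_l \cdot \|f\|^2_{{\rm{Hilb}}_{kl}(h^L)}$.

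The main obstacle is uniform control of the Ohsawa–Takegoshi constant in the symmetric case: the codimension $n(l-1)$ of $\Delta$ in $X^l$ grows with $l$, so a direct application of standard extension theorems produces a constant $C_l$ that may grow exponentially in $l$, say $C_l \leq A^l$ for some $A > 0$ depending only on $(X, \omega, L, h^L)$. To absorb $A^l$ into $\exp(\epsilon k l)$, one requires $k$ larger than $k_0(\epsilon) := \lceil \log A / \epsilon \rceil$; this explains why the threshold in the statement depends on $\epsilon$. The delicate technical point is to verify that $A$ does not depend on $l$, which exploits the product structure of $X^l$ with its product Kähler metric and the splitting of the normal bundle of $\Delta$ as a direct sum of $l-1$ copies of $TX$, so that the local extension estimates near $\Delta$ reduce to a single model computation iterated $l - 1$ times.
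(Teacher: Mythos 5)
Your Ohsawa--Takegoshi approach establishes the \emph{reverse} of (\ref{eq_sec_ring}) and (\ref{eq_sec_ring2}), not the inequalities stated. Recall from (\ref{eq_defn_quot_norm}) that $[\cdot]$ denotes the quotient norm, defined by taking the infimum of the upstairs norm over all lifts. The bound
$\big[{\rm{Sym}}^l {\rm{Hilb}}_k(h^L)\big] \geq \exp(-\epsilon kl)\cdot{\rm{Hilb}}_{kl}(h^L)$
therefore says that \emph{every} lift $g$ of $f \in H^0(X, L^{kl})$ satisfies $\|g\|_{{\rm{Sym}}^l {\rm{Hilb}}_k(h^L)} \geq \exp(-\epsilon kl)\,\|f\|_{{\rm{Hilb}}_{kl}(h^L)}$, i.e., that ${\rm{Mult}}^{\rm{Sym}}_{l,k}$ is $\exp(\epsilon kl)$-Lipschitz from ${\rm{Sym}}^l {\rm{Hilb}}_k(h^L)$ to ${\rm{Hilb}}_{kl}(h^L)$. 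An extension theorem, by contrast, produces \emph{one} controlled lift: your $F$ with $\|F\|^2 \leq C\|f\|^2$ gives $\big[{\rm{Sym}}^l {\rm{Hilb}}_k(h^L)\big] \leq C^{1/2}\,{\rm{Hilb}}_{kl}(h^L)$, which is the opposite comparison. The same sign error occurs in your argument for (\ref{eq_sec_ring2}).

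The inequality actually asked for is of Bernstein--Markov type and does not require any extension theorem. The sup-norms ${\rm{Ban}}^\infty_k(h^L)$ are exactly submultiplicative under multiplication of sections, since $|s_1\cdots s_l|_{(h^L)^{kl}}(x) = \prod_i |s_i(x)|_{(h^L)^k}$ pointwise, and Tian-type estimates (\ref{eq_tian_dinf_norms}) bound $d_{+\infty}({\rm{Ban}}^\infty_k(h^L), {\rm{Hilb}}_k(h^L))$ by $\epsilon k$ for $k$ large (extended to continuous psh $h^L$ by Demailly regularization, as you suggest). Trading ${\rm{Hilb}}_k(h^L)$ for ${\rm{Ban}}^\infty_k(h^L)$ at a cost of $\exp(\epsilon k)$ per factor and invoking the exact submultiplicativity of the sup-norms is what produces (\ref{eq_sec_ring}); this is the mechanism behind \cite[Theorem 1.5]{FinSecRing}.

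Your construction is not wasted: the inequality it does prove is precisely the content of Theorems~\ref{thm_2_step_1} and~\ref{thm_2_step_121} in this paper, which are indeed proved via Ohsawa--Takegoshi (Theorem~\ref{thm_ot_asymp}) in Section~\ref{sect_ohs}. For the tensor-product version the paper uses exactly your diagonal $\Delta \subset X\times X$ argument. For the symmetric-power version, however, the paper avoids the small diagonal in $X^l$ --- and with it any need to control the extension constant as $l\to\infty$ --- by identifying ${\rm{Sym}}^l H^0(X, L^k)$ with $H^0(\mathbb{P}(H^0(X, L^k)^*), \mathscr{O}(l))$ via Lemma~\ref{lem_p_expl_calc} and extending across the Kodaira embedding $X \hookrightarrow \mathbb{P}(H^0(X, L^k)^*)$; there the ambient space is fixed as $l$ varies, which cleanly sidesteps the codimension-$n(l-1)$ issue you correctly identified as the main obstacle in your approach.
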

	The first linear algebra ingredient in the proof of Theorem \ref{thm_2_step_2} goes as follows.
	\par 
	\begin{prop}[Non-Archimedean interpolation theorem of Stein-Weiss]\label{prop_interpol}
		Let $H_0$ (resp. $H_1$) be a fixed Hermitian norm on $V$ (resp. $Q$) and $\mathcal{F}$ (resp. $\mathcal{G}$) is a filtration on $V$ (resp. $Q$).
		We assume that $[H_0] \geq H_1$ and $[\chi_{\mathcal{F}}] \geq \chi_{\mathcal{G}}$.
		Then the geodesic ray $H_t^{\mathcal{F}}$, $t \in [0, +\infty[$, of Hermitian norms on $V$ associated with $\mathcal{F}$ and emanating from $H_0$ compares to the geodesic ray $H_t^{\mathcal{G}}$ of Hermitian norms on $Q$ associated with $\mathcal{G}$ and emanating from $H_1$ as follows 
		\begin{equation}\label{eq_interpol}
			[H_t^{\mathcal{F}}]
			\geq
			H_t^{\mathcal{G}}.
		\end{equation}
	\end{prop}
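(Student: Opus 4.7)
The plan is to reformulate $[H_t^{\mathcal{F}}] \geq H_t^{\mathcal{G}}$ as an operator inequality on $(Q, H_1)$, and then derive it from the two hypotheses by Abel summation. Let $A_{\mathcal{F}}$ (resp.\ $A_{\mathcal{G}}$) be the self-adjoint operator on $(V, H_0)$ (resp.\ $(Q, H_1)$) whose eigenvectors are the basis $s_1, \ldots, s_n$ (resp.\ $q_1, \ldots, q_m$) adapted to $\mathcal{F}$ (resp.\ $\mathcal{G}$), with eigenvalues the jumping numbers $e_{\mathcal{F}}(i)$ (resp.\ $e_{\mathcal{G}}(j)$); by (\ref{eq_bas_st}) we have $H_t^{\mathcal{F}}(v)^2 = \langle e^{-2tA_{\mathcal{F}}} v, v\rangle_{H_0}$ and similarly on $Q$. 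Writing $\pi^* : Q \to V$ for the adjoint of $\pi$ with respect to $H_0, H_1$, a direct manipulation of the infimum in the definition of $[H_t^{\mathcal{F}}]$ (equivalently, of the contraction property for $\pi : (V, H_t^{\mathcal{F}}) \to (Q, H_t^{\mathcal{G}})$) shows that $[H_t^{\mathcal{F}}] \geq H_t^{\mathcal{G}}$ is equivalent to the operator inequality
\[
	\pi \, e^{2tA_{\mathcal{F}}} \, \pi^* \leq e^{2tA_{\mathcal{G}}} \qquad \text{on } (Q, H_1),
\]
which at $t = 0$ reduces to $\pi \pi^* \leq \mathrm{Id}_Q$, i.e.\ to the Archimedean hypothesis $[H_0] \geq H_1$.

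Next, let $\mu_1 > \cdots > \mu_r$ be the distinct values among the jumping numbers of $\mathcal{F}$ and $\mathcal{G}$ combined, and let $P^V_{\leq a}$ (resp.\ $P^Q_{\leq a}$) be the $H_0$- (resp.\ $H_1$-)orthogonal projection onto $\mathcal{F}^{\mu_a} V$ (resp.\ $\mathcal{G}^{\mu_a} Q$). The non-Archimedean hypothesis $[\chi_{\mathcal{F}}] \geq \chi_{\mathcal{G}}$ unwinds via (\ref{eq_filtr_norm}) to $\pi(\mathcal{F}^{\mu_a} V) \subset \mathcal{G}^{\mu_a} Q$, equivalently $(\mathrm{Id}_Q - P^Q_{\leq a}) \, \pi \, P^V_{\leq a} = 0$. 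Combined with $\pi \pi^* \leq \mathrm{Id}_Q$, this yields the key intermediate inequality
\[
	\pi \, P^V_{\leq a} \, \pi^* \leq P^Q_{\leq a}, \qquad a = 1, \ldots, r.
\]
Indeed, the image of $\pi P^V_{\leq a} \pi^*$ is contained in $\pi(\mathcal{F}^{\mu_a} V) \subset \mathrm{im}(P^Q_{\leq a})$, and by self-adjointness the same holds on the other side. Hence for any $q \in Q$, setting $q_1 := P^Q_{\leq a} q$, one gets $\langle \pi P^V_{\leq a} \pi^* q, q\rangle_{H_1} = \|P^V_{\leq a} \pi^* q_1\|_{H_0}^2 \leq \|\pi^* q_1\|_{H_0}^2 \leq \|q_1\|_{H_1}^2 = \langle P^Q_{\leq a} q, q\rangle_{H_1}$.

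Finally, I apply Abel summation to the spectral decomposition
\[
	e^{2tA_{\mathcal{F}}} = e^{2t\mu_r}\, \mathrm{Id}_V + \sum_{a=1}^{r-1} \bigl(e^{2t\mu_a} - e^{2t\mu_{a+1}}\bigr) \, P^V_{\leq a},
\]
and the analogous identity on $Q$. Conjugation by $\pi$ and $\pi^*$ rewrites the desired operator inequality as a sum of the inequalities $\pi P^V_{\leq a} \pi^* \leq P^Q_{\leq a}$ and $\pi \pi^* \leq \mathrm{Id}_Q$, each weighted by the nonnegative coefficient $e^{2t\mu_a} - e^{2t\mu_{a+1}} \geq 0$ (nonnegativity uses $\mu_a > \mu_{a+1}$ and $t \geq 0$), yielding the claim. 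The main obstacle I anticipate is identifying the correct operator-theoretic reformulation and then recognizing the intermediate inequality of the second step; once these are in hand, the argument is essentially algebraic. Conceptually, this is a finite-dimensional incarnation of the Stein-Weiss interpolation between the Hermitian endpoint at $t = 0$ and the non-Archimedean endpoint at $t = \infty$, as the proposition's name suggests.
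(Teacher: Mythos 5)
Your proof is correct, and it takes a genuinely different route from the paper's. The paper first establishes a crude bound $\dim V^2 \cdot [H_t^{\mathcal{F}}] \geq H_t^{\mathcal{G}}$ by comparing the Hermitian rays with the non-Hermitian rays of Lemma \ref{lem_two_norms_comp0}, and then bootstraps the constant away via the Archimedean Stein--Weiss interpolation theorem for geodesic segments (from \cite[Lemma 4.21]{FinSecRing}), applied on a segment of length $h$ and letting $h \to \infty$. Your argument is more self-contained and, I would say, cleaner: you reformulate the quotient inequality on dual Gram operators as $\pi \, e^{2tA_{\mathcal{F}}}\,\pi^* \leq e^{2tA_{\mathcal{G}}}$ (this step is a Schur-complement/dual-norm manipulation rather than a mere ``manipulation of the infimum,'' but it is correct and easy), prove the key intermediate inequality $\pi P^V_{\leq a}\pi^* \leq P^Q_{\leq a}$ by combining $\pi\pi^* \leq \mathrm{Id}_Q$ with the image-containment forced by $\pi(\mathcal{F}^{\mu_a}V) \subset \mathcal{G}^{\mu_a}Q$, and close by Abel summation, which expresses $e^{2tA_{\mathcal{F}}}$ as a nonnegative combination of $\mathrm{Id}_V$ and the $P^V_{\leq a}$ precisely because $\mu_a \mapsto e^{2t\mu_a}$ is monotone for $t \geq 0$. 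What this buys is independence from the Archimedean interpolation theorem and from the $\dim V^2$-factor of Lemma \ref{lem_two_norms_comp0}; the price is the initial operator reformulation, which should be stated as an explicit lemma if you write this up (it is the equivalence, for positive invertible $A$ on $V$ and $B$ on $Q$, between $\pi A\pi^* \leq B$ and $\pi^* B^{-1}\pi \leq A^{-1}$). The proof nicely exhibits the proposition as a Loewner-type operator-monotonicity statement, which is conceptually aligned with why it deserves the name Stein--Weiss.
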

	\begin{proof}
		Remark first that the conditions $[H_0] \geq H_1$ and $[\chi_{\mathcal{F}}] \geq \chi_{\mathcal{G}}$ are equivalent to the fact that the quotient map $\pi : V \to Q$ is 1-Lipschitz, where $V$ is endowed with the norm $H_0$ (resp. $\chi_{\mathcal{F}}$) and $Q$ is endowed with the norm $H_1$ (resp. $\chi_{\mathcal{G}}$).
		In this perspective Proposition \ref{prop_interpol} is a non-Archimedean version of the interpolation theorem of Stein-Weiss, cf. \cite[Theorem 5.4.1]{InterpSp}, saying, as we recall below, that a similar statement holds for geodesics between two fixed Hermitian norms.
		\par 
		The proof of Proposition \ref{prop_interpol} proceeds in two steps.
		Let us denote by $N_t^{\mathcal{F}}$ (resp. $N_t^{\mathcal{G}}$) the ray of norms on $V$ (resp. $Q$) associated with $\mathcal{F}$ (resp. $\mathcal{G}$) and emanating from $H_0$ (resp. $H_1$) as in (\ref{eq_ray_norm_defn0}).
		Directly from (\ref{eq_ray_norm_defn0}) and our assumptions on the relation between $H_0$ and $H_1$, $\mathcal{F}$ and $\mathcal{G}$, the following inequality is satisfied
		$
			[N_t^{\mathcal{F}}]
			\geq
			N_t^{\mathcal{G}}
		$.
		From this and Lemma \ref{lem_two_norms_comp0}, we conclude 
		\begin{equation}\label{eq_interpol0}
			\dim V^2 \cdot
			[H_t^{\mathcal{F}}]
			\geq
			H_t^{\mathcal{G}}.
		\end{equation}
		We will now show that this estimate can be bootstrapped to (\ref{eq_interpol}).
		\par 
		In fact, recall that in \cite[Lemma 4.21]{FinSecRing} we proved, by essentially reformulating interpolation theorem of Stein-Weiss, that for any two Hermitian norms $H^V_0, H^V_1$ on $V$ and any two Hermitian norms $H^Q_0, H^Q_1$ on $Q$, verifying $[H^V_0] \geq H^Q_0$ and $[H^V_1] \geq H^Q_1$, for the geodesic rays of norms $H^V_t$ (resp. $H^Q_t$) between $H^V_0$ and $H^V_1$ (resp. $H^Q_0$ and $H^Q_1$), we have $[H^V_t] \geq H^Q_t$, for any $t \in [0, 1]$.
		Now, for fixed $h > 0$, by (\ref{eq_interpol0}), we can apply this result for $H^V_0 := H_0$, $H^V_1 := H_h^{\mathcal{F}}$ and $H^Q_0 := H_1$, $H^Q_1 := \frac{1}{\dim V^2} H_h^{\mathcal{G}}$.
		For $t \in [0, h]$, it gives us
		$
			(\dim V^2)^{\frac{t}{h}} \cdot
			[H_t^{\mathcal{F}}]
			\geq
			H_t^{\mathcal{G}}
		$.
		As $h$ can be chosen as large as we wish, we deduce (\ref{eq_interpol}).
	\end{proof}
	\par 
	Now, to state the last linear-algebraic ingredient, let us fix a finitely dimensional vector space $V$, endowed with a Hermitian norm $H_V$ and a filtration $\mathcal{F}$.
	We denote by ${\rm{Sym}}^k \mathcal{F}$ the filtration on ${\rm{Sym}}^k V$ induced from the filtration $\mathcal{F}$ on $V$, and by $H_t^{{\rm{Sym}}^l \mathcal{F}}$ the geodesic ray of Hermitian norms on ${\rm{Sym}}^l V$ emanating from ${\rm{Sym}}^l H_V$ associated with the filtration ${\rm{Sym}}^l \mathcal{F}$.
	Similarly, we fix another finitely dimensional vector space $W$, endowed with a Hermitian norm $H_W$ and a filtration $\mathcal{G}$.
	Recall that the filtration  $\mathcal{F} \otimes \mathcal{G}$ on $V \otimes W$ is defined so that in terms of the associated non-Archimedean norms, defined as in (\ref{eq_filtr_norm}), we have
	\begin{equation}
		\chi_{\mathcal{F} \otimes \mathcal{G}}(h) = \min \max_{i = 1, \cdots, N} \chi_{\mathcal{F}}(f_i) \cdot \chi_{\mathcal{F}}(g_i),
	\end{equation}
	where the minimum is taken over all possible decompositions $h = \sum_{i = 1}^{N} f_i \otimes g_i$, $N \in \nat$.
	We denote by $H_t^{\mathcal{F} \otimes \mathcal{G}}$ the geodesic ray of Hermitian norms on $V \otimes W$ emanating from $H_V \otimes H_W$ and associated with $\mathcal{F} \otimes \mathcal{G}$.
	\begin{lem}\label{lem_p_expl_calc1}
		The construction of geodesic rays of norms is compatible with the symmetrization and tensor products.
		In other words, for any $l \in \nat^*$ and $t \in [0, +\infty[$, we have
		\begin{equation}\label{eq_p_expl_calc1}
			H_t^{{\rm{Sym}}^l \mathcal{F}}
			=
			{\rm{Sym}}^l H_t^{\mathcal{F}}, 
			\qquad 
			H_t^{\mathcal{F} \otimes \mathcal{G}}
			=
			H_t^{\mathcal{F}}
			\otimes
			H_t^{\mathcal{G}}.
		\end{equation}
	\end{lem}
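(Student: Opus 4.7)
The plan is to reduce both identities to the explicit description (\ref{eq_bas_st}) of the geodesic ray of Hermitian norms in terms of an adapted orthonormal basis. First I would pick an orthonormal basis $s_1, \ldots, s_n$ of $(V, H_V)$ adapted to $\mathcal{F}$, so that $s_i \in \mathcal{F}^{e_{\mathcal{F}}(i)} V$, and similarly a basis $u_1, \ldots, u_m$ of $(W, H_W)$ adapted to $\mathcal{G}$. By the very definition of $H_t^{\mathcal{F}}$, the rescaled basis $e^{t e_{\mathcal{F}}(i)} s_i$ is $H_t^{\mathcal{F}}$-orthonormal, and similarly for $H_t^{\mathcal{G}}$.

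For the tensor product identity, I would verify that the basis $\{s_i \otimes u_j\}$ of $V \otimes W$ jointly diagonalizes $\chi_{\mathcal{F} \otimes \mathcal{G}}$. Using the defining formula $\chi_{\mathcal{F} \otimes \mathcal{G}}(h) = \min_{h = \sum f_i \otimes g_i} \max_i \chi_{\mathcal{F}}(f_i) \chi_{\mathcal{G}}(g_i)$ together with the joint diagonalization property (\ref{eq_sim_diag_nna}) applied inside $V$ and $W$ separately, one checks that $w_{\mathcal{F} \otimes \mathcal{G}}(s_i \otimes u_j) = e_{\mathcal{F}}(i) + e_{\mathcal{G}}(j)$ and that $\{s_i \otimes u_j\}$ is adapted to $\mathcal{F} \otimes \mathcal{G}$. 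Then the $(H_t^{\mathcal{F}} \otimes H_t^{\mathcal{G}})$-orthonormal basis $e^{t e_{\mathcal{F}}(i)} s_i \otimes e^{t e_{\mathcal{G}}(j)} u_j = e^{t(e_{\mathcal{F}}(i) + e_{\mathcal{G}}(j))}\, s_i \otimes u_j$ coincides, by (\ref{eq_bas_st}), with the canonical orthonormal basis of $H_t^{\mathcal{F} \otimes \mathcal{G}}$ emanating from $H_V \otimes H_W$. Since a Hermitian norm is determined by any orthonormal basis, the second identity of (\ref{eq_p_expl_calc1}) follows.

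The symmetric power identity is analogous: the elements $s_{i_1} \cdots s_{i_l}$ with $i_1 \leq \cdots \leq i_l$ form an orthogonal basis of $({\rm{Sym}}^l V, {\rm{Sym}}^l H_V)$, which is adapted to ${\rm{Sym}}^l \mathcal{F}$ with jumping values $e_{\mathcal{F}}(i_1) + \cdots + e_{\mathcal{F}}(i_l)$ — a fact one may either derive directly from the definition of the induced filtration on $\mathrm{Sym}^l$, or deduce from the tensor case by restricting to the symmetric subspace. A matching rescaling by $\exp(t(e_{\mathcal{F}}(i_1) + \cdots + e_{\mathcal{F}}(i_l)))$ then identifies ${\rm{Sym}}^l H_t^{\mathcal{F}}$ with $H_t^{{\rm{Sym}}^l \mathcal{F}}$.

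The only mildly subtle point is justifying that $\{s_i \otimes u_j\}$ is indeed adapted to $\mathcal{F} \otimes \mathcal{G}$, i.e.\ computing $\chi_{\mathcal{F} \otimes \mathcal{G}}$ on arbitrary tensors; here one uses that the infimum in the definition is attained on the canonical decomposition in a jointly diagonalizing basis, which follows from the strong triangle (ultrametric) inequality together with (\ref{eq_sim_diag_nna}). Apart from this bookkeeping, the statement reduces to an elementary identification of bases, and no analysis is involved.
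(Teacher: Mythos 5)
Your argument is correct and follows essentially the same route as the paper: pick an orthonormal basis adapted to $\mathcal{F}$, observe that the induced monomial (respectively tensor-product) basis is orthogonal and adapted to $\mathrm{Sym}^l\mathcal{F}$ (respectively $\mathcal{F}\otimes\mathcal{G}$), and match the rescalings $e^{t\,e_{\mathcal{F}}(\cdot)}$ on both sides. You supply a bit more detail than the paper on why the product basis computes $\chi_{\mathcal{F}\otimes\mathcal{G}}$ via the ultrametric inequality, but the core identification of adapted bases is the same.
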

	\begin{proof}
		The proofs of the both statements are identical, so we only concentrate on the part concerning the symmetric powers.
		We denote $n = \dim V$ and let $e_1, \ldots, e_n$ be an orthonormal basis of $(V, H_V)$, adapted to the filtration $\mathcal{F}$ as in (\ref{eq_bas_st}).
		Then we see that for multiindices $\alpha \in \nat^n$, $|\alpha| = l$, $\alpha = (\alpha_1, \ldots, \alpha_n)$, the basis $\sqrt{\frac{k!}{\alpha!}} e^{\alpha} := \sqrt{\frac{l!}{\alpha_1! \cdots \alpha_n!}} e_1^{\alpha_1} \cdots e_n^{\alpha_n}$ is an adapted basis for the filtration ${\rm{Sym}}^l \mathcal{F}$ on the Hermitian vector space $({\rm{Sym}}^l V, {\rm{Sym}}^l H_V)$.
		From this, we deduce (\ref{eq_p_expl_calc1}).
	\end{proof}
	
	\begin{proof}[Proof of Theorem \ref{thm_2_step_2}]
		The proofs of the both statements are identical, so we only concentrate on the part concerning the symmetric powers.
		By Lemma \ref{lem_p_expl_calc1}, we see that ${\rm{Sym}}^l H^{\mathcal{T}}_{t, k}$, $t \in [0, +\infty[$, can be interpreted as the geodesic ray of Hermitian norms, emanating from  ${\rm{Sym}}^l H^{\mathcal{T}}_{0, k} = {\rm{Sym}}^l {\rm{Hilb}}_k(h^L_0)$, associated with the filtration ${\rm{Sym}}^l \mathcal{F}^{\mathcal{T}}_k$.
		By Lemma \ref{thm_sec_ring} and submultiplicativity of $\mathcal{F}$, we see that all the assumptions of Proposition \ref{prop_interpol} are satisfied for $H_t^{\mathcal{F}} := {\rm{Sym}}^l H^{\mathcal{T}}_{t, k}$ and $H_t^{\mathcal{G}} := \exp(- \epsilon k l) H^{\mathcal{T}}_{t, kl}$. 
		Proposition \ref{prop_interpol} then in our context gives us exactly Theorem \ref{thm_2_step_2}.
	\end{proof}

	\subsection{Ohsawa-Takegoshi extension theorem and quotients of geodesic rays}\label{sect_ohs}
	The main goal of this section is to prove Theorems \ref{thm_2_step_1},  \ref{thm_2_step_121}. 
	The proofs are based on a version of Ohsawa-Takegoshi extension theorem with a uniform constant, which we recall below.
	\par 
	We fix a compact complex manifold $X$ of dimension $n$ with an ample line bundle $L$ over it, endowed with a smooth positive metric $h^L_0$.
	Let $L_0, L_1$ be two line bundles on $X$, endowed with smooth semipositive metrics $h^{L_0}_0, h^{L_1}_0$, such that $(L_0 \otimes L_1, h^{L_0}_0 \otimes h^{L_1}_0)$ is a positive line bundle.
	Let $Y$ be a closed submanifold of $X$ of dimension $m$.
	Let $\omega$ be a fixed Kähler form on $X$.
	 \begin{thm}\label{thm_ot_asymp}
	 	There are $c, C > 0$, $k_0 \in \nat$, such that for any $k \geq k_0$, any psh metric $h^L$ and any section $f \in H^0(Y, L|_Y^k)$, there is a holomorphic extension $\tilde{f} \in H^0(X, L^k)$ of $f$, such that the following $L^2$-bound is satisfied
	 	\begin{equation}\label{eq_ot_asymp}
	 		\int_X | \tilde{f}(x) |_{h^L} \cdot \omega^n(x) 
	 		\leq
	 		C
	 		\cdot
	 		\exp \big( c d_{+ \infty}(h^L, h^L_0) \big)
	 		\cdot
	 		\int_Y | f(y) |_{h^L} \cdot \omega^m(y). 
	 	\end{equation}
	 	Similarly, there are $c, C > 0$, $k_0 \in \nat$, such that for any $k, l \geq k_0$, any psh metrics $h^{L_0}$, $h^{L_1}$ on $L_0$, $L_1$, and any section $f \in H^0(Y, L_0|_Y^k \otimes L_1|_Y^l)$, there is a holomorphic extension $\tilde{f} \in H^0(X, L_0^k \otimes L_1^l)$ of $f$, such that the following $L^2$-bound is satisfied
	 	\begin{multline}\label{eq_ot_asymp2}
	 		\int_X | \tilde{f}(x) |_{h^{L_0}, h^{L_1}} \cdot \omega^n(x) 
	 		\leq
	 		C
	 		\cdot
	 		\exp \Big( c  \big( d_{+ \infty}(h^{L_0}, h^{L_0}_0) + d_{+ \infty}(h^{L_1}, h^{L_1}_0) \big) \Big)
	 		\cdot
	 		\\
	 		\cdot
	 		\int_Y | f(y) |_{h^{L_0}, h^{L_1}} \cdot \omega^m(y),
	 	\end{multline}
	 	where $| \cdot |_{h^{L_0}, h^{L_1}}$ is the pointwise norm induced by $h^{L_0}$ and $h^{L_1}$.
	 \end{thm}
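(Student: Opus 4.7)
The plan is to reduce (\ref{eq_ot_asymp}) to a standard form of the Ohsawa-Takegoshi extension theorem (with constants depending only on the ambient geometric data and a fixed positive lower bound on the curvature) by introducing a hybrid metric that combines the possibly singular psh metric $h^L$ with the smooth positive reference $h^L_0$ so as to recover strict positivity of curvature. The resulting $L^2$ estimate can then be transferred back to $h^L$ at the cost of an exponential factor controlled by $d_{+ \infty}(h^L, h^L_0)$.

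More precisely, fix $k_0 \in \nat^*$ large enough so that $(h^L_0)^{k_0}$ has curvature $\imun \Theta((h^L_0)^{k_0}) \geq \omega$. For any $k \geq k_0$ and any psh metric $h^L$ on $L$, consider the hybrid metric $h^{\mathrm{hyb}}_k := (h^L)^{k - k_0} \otimes (h^L_0)^{k_0}$ on $L^k$. Since $h^L$ is psh and the smooth factor has curvature $\geq \omega$, the metric $h^{\mathrm{hyb}}_k$ is psh on $L^k$ with curvature $\geq \omega$, so a standard version of Ohsawa-Takegoshi on the compact Kähler manifold $(X, \omega)$ yields an extension $\tilde{f} \in H^0(X, L^k)$ of $f$ satisfying
\begin{equation*}
\int_X |\tilde{f}|^2_{h^{\mathrm{hyb}}_k} \omega^n \leq C_0 \int_Y |f|^2_{h^{\mathrm{hyb}}_k} \omega^m,
\end{equation*}
with $C_0$ independent of $k \geq k_0$ and of the choice of $h^L$.

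The bounded comparison $\exp(- d_{+ \infty}(h^L, h^L_0)) \cdot h^L_0 \leq h^L \leq \exp(d_{+ \infty}(h^L, h^L_0)) \cdot h^L_0$ gives at once
\begin{equation*}
\exp(- k_0 d_{+ \infty}(h^L, h^L_0)) \cdot (h^L)^k \leq h^{\mathrm{hyb}}_k \leq \exp(k_0 d_{+ \infty}(h^L, h^L_0)) \cdot (h^L)^k,
\end{equation*}
and substituting this on both sides of the $L^2$ inequality, together with an elementary Cauchy-Schwarz step (using boundedness of the total volume) to pass from $L^2$ to the $L^1$-type quantity appearing in (\ref{eq_ot_asymp}), produces the desired estimate with $c$ proportional to $k_0$. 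The bound (\ref{eq_ot_asymp2}) is proved in the same way, with hybrid metric $(h^{L_0})^{k - k_0} (h^{L_1})^{l - k_0} (h^{L_0}_0)^{k_0} (h^{L_1}_0)^{k_0}$ on $L_0^k \otimes L_1^l$: the positivity assumption on $(L_0 \otimes L_1, h^{L_0}_0 \otimes h^{L_1}_0)$ guarantees that for $k_0$ large the smooth factor contributes curvature $\geq \omega$ on $L_0^{k_0} \otimes L_1^{k_0}$, while the psh factors contribute nonnegative curvature.

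The main technical point is selecting a formulation of Ohsawa-Takegoshi whose constant depends only on $(X, Y, \omega)$ and a positive lower bound on the curvature of the line bundle, and not on finer regularity of the psh weight. This is precisely the role of the hybrid construction: it shifts all dependence on $h^L$ from the internal extension constant into the harmless multiplicative factor $\exp(c \cdot d_{+ \infty}(h^L, h^L_0))$ appearing in the final estimate.
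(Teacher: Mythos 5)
Your hybrid-metric approach captures the key idea: all the cited references (Demailly's extension theorem and the author's own Theorem 2.5 in \cite{FinSecRing}, which the paper simply points to here) obtain uniformity in $k$ by running Ohsawa--Takegoshi with a reference metric whose curvature is uniformly bounded below, then transporting the conclusion back to the actual metric $h^L$ at the cost of the $\exp(c\, d_{+\infty}(h^L, h^L_0))$ factor. The comparison $\exp(-k_0 d_{+\infty})\,(h^L)^k \le h^{\mathrm{hyb}}_k \le \exp(k_0 d_{+\infty})\,(h^L)^k$ is correct, and substituting it on both sides of the extension estimate produces the desired bound with $c$ proportional to $k_0$. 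So in substance you have reconstructed the argument the paper delegates to its reference.

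Two points deserve more care. First, every genuinely uniform $L^2$-extension theorem (Ohsawa--Takegoshi, Manivel, Demailly) extends sections of $K_X \otimes \Lambda$ rather than of a bare line bundle; to get $\tilde f \in H^0(X, L^k)$ you must write $L^k = K_X \otimes (L^k \otimes K_X^{-1})$ and absorb the auxiliary metric on $K_X^{-1}$ into the hybrid. Concretely, choose $k_0$ so that $(h^L_0)^{k_0}\otimes h^{K_X^{-1}}$ has curvature $\ge \omega$ for some fixed smooth $h^{K_X^{-1}}$; then $(h^L)^{k-k_0}\otimes (h^L_0)^{k_0}\otimes h^{K_X^{-1}}$ is a bona fide metric on $L^k\otimes K_X^{-1}$ with the required lower curvature bound, and the uniform extension constant comes from the canonical-twisted theorem. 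Your argument sweeps this under "standard Ohsawa--Takegoshi," but spelling it out is what makes the constant genuinely independent of $k$. Second, the "Cauchy--Schwarz step to pass from $L^2$ to $L^1$" is not actually available in the direction you need: $\int_X |\tilde f| \le \sqrt{\mathrm{Vol}(X)}\,(\int_X |\tilde f|^2)^{1/2}$ goes the right way on the left, but there is no way to dominate $(\int_Y |f|^2)^{1/2}$ by $\int_Y |f|$. In fact the statement is an $L^2$-bound: the paper's convention (see the definition of $\|\cdot\|_{L^2_k}$) omits the square root and writes $\| f\|_{L^2_k(h^L,\mu)}= \int |f|^2_{h^L}\,d\mu$, so the integrands in (\ref{eq_ot_asymp}) and (\ref{eq_ot_asymp2}) should be read as $|\tilde f|^2_{h^L}$ and $|f|^2_{h^L}$. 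With that reading your substitution is a clean $L^2$-to-$L^2$ estimate and no interpolation is needed; dropping the spurious Cauchy--Schwarz step closes the proof.
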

	 \begin{proof}
	 	See the proof of \cite[Theorem 2.5]{FinSecRing}, which is a rather direct adaptation of more general and refined results of Demailly \cite[Theorem 2.8]{DemExtRed} and Ohsawa \cite{OhsawaV}.
	 \end{proof}
	 \par 
	We will now reformulate Theorem \ref{thm_ot_asymp} in a form, which is better suited for our needs.
	 Consider the restriction operator
	 \begin{equation}\label{eq_res_map}
	 	{\rm{Res}}_k : H^0(X, L^k) \to H^0(Y, L|_Y^k), \quad {\rm{Res}}_{k, l} : H^0(X, L_0^k \otimes L_1^l) \to H^0(Y, L_0|_Y^k \otimes L_1|_Y^l).
	 \end{equation}
	 In the language of quotient norms from (\ref{eq_defn_quot_norm}), considered with respect to the maps (\ref{eq_res_map}), Theorem \ref{thm_ot_asymp} can be restated as follows: the maps (\ref{eq_res_map}) are surjective, and the following bound holds
	 \begin{equation}\label{eq_reform_thm_ot_asymp}
	 \begin{aligned}
	 	&
 		[{\rm{Hilb}}^X_k(h^L, \omega)] 
 		\leq 
 		C
 		\cdot
 		\exp(c d_{+ \infty}(h^L, h^L_0))
 		\cdot 
 		{\rm{Hilb}}^Y_k(h^L, \omega|_Y),
 		\\
 		&
 		[{\rm{Hilb}}^X_{k, l}(h^{L_0}, h^{L_1}, \omega)] 
 		\leq 
 		C
 		\cdot
 		\exp \big( c  ( d_{+ \infty}(h^{L_0}, h^{L_0}_0) + d_{+ \infty}(h^{L_1}, h^{L_1}_0) ) \big)
 		\cdot
 		\\
 		&
 		\qquad \qquad \qquad \qquad \qquad \qquad \qquad \qquad \qquad \qquad \qquad \qquad \qquad 
 		\cdot 
 		{\rm{Hilb}}^Y_{k, l}(h^{L_0}, h^{L_1}, \omega|_Y),
	 \end{aligned}
	 \end{equation}
	where ${\rm{Hilb}}^X_k(h^L, \omega)$ and ${\rm{Hilb}}^Y_k(h^L, \omega|_Y)$ (resp. ${\rm{Hilb}}^X_{k, l}(h^{L_0}, h^{L_1}, \omega)$ and ${\rm{Hilb}}^Y_{k, l}(h^{L_0}, h^{L_1}, \omega|_Y)$) stand for the $L^2$-norms on $H^0(X, L^k)$ and $H^0(Y, L|_Y^k)$ (resp. $H^0(X, L_0^k \otimes L_1^l)$ and $H^0(Y, L_0|_Y^k \otimes L_1|_Y^l)$) induced by $h^L$ (resp. $h^{L_0}$, $h^{L_1}$) and $\omega$.
	\par 
	To apply Theorem \ref{thm_ot_asymp} in the proof of Theorem \ref{thm_2_step_1}, we need to interpret the symmetric tensor product norm in terms of the $L^2$-norm.
	The following well-known result gives us exactly that.
	\begin{lem}\label{lem_p_expl_calc}
	 	For any Hermitian norm $H_V$ on a finitely dimensional complex vector space $V$, and any $k \in \nat^*$, we have 
		\begin{equation}\label{eq_p_expl_calc2}
			{\rm{Sym}}^k H_V
			=
			{\rm{Hilb}}^{\mathbb{P}(V^*)}_k \big( FS^{\mathbb{P}}(H_V) \big)
			\cdot
			\sqrt{\frac{(k + \dim V - 1)!}{k!}},
		\end{equation}
		where we implicitly used the the canonical isomorphism $H^0(\mathbb{P}(V^*), \mathscr{O}(k)) \simeq {\rm{Sym}}^k V$, and by $FS^{\mathbb{P}}(H_V)$ we mean the Fubini-Study metric on $\mathscr{O}(1)$ induced by $H_V$.
	\end{lem}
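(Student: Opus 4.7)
The plan is to produce a single family of monomials that simultaneously diagonalises both norms, and then reduce the identity to a classical sphere integral.

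Fix an orthonormal basis $e_1, \ldots, e_n$ of $(V, H_V)$ with $n = \dim V$, and view $z_1, \ldots, z_n$ as the dual linear coordinates on $V^*$. A direct computation from the definition of $\sym^k H_V$ shows that $\{\sqrt{k!/\alpha!}\, e^{\alpha} : |\alpha| = k\}$ is orthonormal for $\sym^k H_V$, so $\|e^{\alpha}\|_{\sym^k H_V}^2 = \alpha!/k!$. Under the canonical isomorphism $\sym^k V \simeq H^0(\mathbb{P}(V^*), \mathscr{O}(k))$, the vector $e^{\alpha}$ corresponds to the monomial section $z^{\alpha}$.

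Next I will show that the same family $\{z^{\alpha}\}$ is orthogonal for ${\rm{Hilb}}^{\mathbb{P}(V^*)}_k(FS^{\mathbb{P}}(H_V))$. The Fubini--Study metric $FS^{\mathbb{P}}(H_V)$ on $\mathscr{O}(1)$ and the associated volume form $c_1(\mathscr{O}(1), FS^{\mathbb{P}}(H_V))^{n-1}/(n-1)!$ are both invariant under the diagonal maximal torus $T^n \subset U(V, H_V)$; the substitution $z_j \mapsto e^{\imun \theta_j} z_j$ multiplies the pointwise scalar product of $z^{\alpha}$ and $z^{\beta}$ by $e^{\imun \langle \theta, \alpha - \beta \rangle}$, so averaging over $T^n$ kills off-diagonal terms.

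To compute the diagonal integrals I would identify $\mathbb{P}(V^*)$ with the quotient of the $H_V^*$-unit sphere $S^{2n-1}$ by the Hopf action, use the descent of $|z^{\alpha}|^2/\|z\|^{2k}$ to $\mathbb{P}(V^*)$ as the pointwise $k$-th power of the Fubini--Study norm of $z^{\alpha}$, and apply the classical identity
\begin{equation*}
	\int_{S^{2n-1}} |w_1|^{2\alpha_1} \cdots |w_n|^{2\alpha_n}\, d\sigma
	= \frac{\alpha!\,(n-1)!}{(k+n-1)!}\cdot \mathrm{Vol}(S^{2n-1}),
\end{equation*}
together with the normalisation $\int_{\mathbb{P}(V^*)} c_1(\mathscr{O}(1), FS^{\mathbb{P}}(H_V))^{n-1}/(n-1)! = 1/(n-1)!$. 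This yields $\|z^{\alpha}\|^2_{{\rm{Hilb}}_k(FS^{\mathbb{P}}(H_V))} = \alpha!/(k+n-1)!$, so the ratio $\|e^{\alpha}\|^2_{\sym^k H_V}/\|z^{\alpha}\|^2_{{\rm{Hilb}}_k(FS^{\mathbb{P}}(H_V))}$ equals $(k+n-1)!/k!$ independently of $\alpha$, and taking square roots gives the stated identity. The only delicate point is the careful bookkeeping of normalisation constants between the Haar measure on $S^{2n-1}$, its push-forward along the Hopf fibration, and the symplectic volume $c_1(\mathscr{O}(1), FS^{\mathbb{P}}(H_V))^{n-1}/(n-1)!$; once those factors are matched both inputs are classical and the lemma follows.
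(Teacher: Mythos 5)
Your proposal is correct and follows essentially the same route as the paper: compute $\|e^{\alpha}\|^2_{{\rm{Sym}}^k H_V} = \alpha!/k!$, pull the $L^2$-integral back to the unit sphere along the Hopf fibration, and invoke the classical moment identity $\int_{S^{2n-1}} |z|^{2\alpha}\, d\sigma = \alpha!\,(n-1)!/(k+n-1)!$. The one point you make explicit that the paper leaves implicit is the torus-averaging argument showing the monomials are also orthogonal for ${\rm{Hilb}}^{\mathbb{P}(V^*)}_k(FS^{\mathbb{P}}(H_V))$, which is needed to conclude that comparing norms on a basis suffices.
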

	\begin{proof}
		Let $e_1, \ldots, e_n$ be an orthonormal basis of $(V, H_V)$.
		For any $k \in \nat$ and a multiindex $\alpha \in \nat^n$, $|\alpha| = k$, we have
		$
			\big\| e^{\alpha} \big\|^2_{{\rm{Sym}}^k H_V}
			=
			\frac{\alpha!}{k!}
		$.
		It is, hence, sufficient to establish that 
		\begin{equation}\label{eq_verif_l2}
			\big\| e^{\alpha} \big\|^2_{{\rm{Hilb}}^{\mathbb{P}(V^*)}_k(FS^{\mathbb{P}}(H_V))}
			=
			\frac{\alpha!}{(k + n - 1)!}.
		\end{equation}
		By pulling back the integral from the definition of the $L^2$-norm on $\mathbb{P}(V^*)$ to the unit sphere $S^{2 n - 1}(V^*) \subset V^*$ under the natural projection map $p : S^{2 n - 1}(V^*) \to \mathbb{P}(V^*)$, cf. \cite[p.6]{DivKobLub}, the verification of (\ref{eq_verif_l2}) boils down to the verification of the identity
		\begin{equation}
			\int_{S^{2 n - 1}} |z|^{2\alpha} d \sigma(z) 
			=
			\frac{\alpha! (n - 1)!}{(k + n - 1)!},
		\end{equation}
		where $d \sigma$ is the standard volume form on the standard unit sphere $S^{2 n - 1} \subset \comp^n$, normalized so that the total volume equals one. The last calculation is standard, cf. \cite[Lemma 2.2]{DivKobLub}.
	\end{proof}
	
	Now, we fix a finitely dimensional vector space $V$ and endow it with a Hermitian norm $H_V$.
	We fix a filtration $\mathcal{F}$ on $V$, and denote by $H_t^{\mathcal{F}}$, $t \in [0, +\infty[$, the ray of Hermitian norms emanating from $H_V$ as in (\ref{eq_bas_st}).
	We denote by $FS^{\mathbb{P}}(H_t^{\mathcal{F}})$, $t \in [0, +\infty[$, the ray of Fubini-Study metrics on the hyperplane line bundle $\mathscr{O}(1)$ of $\mathbb{P}(V^*)$ constructed as in (\ref{sec_fs_bdem}).
	For any $k \in \nat^*$, a filtration $\mathcal{F}$ on $V$ induces a filtration ${\rm{Sym}}^k \mathcal{F}$ on ${\rm{Sym}}^k V$.
	We denote by $H_t^{{\rm{Sym}}^k \mathcal{F}}$, $t \in [0, +\infty[$, the ray of Hermitian norms on ${\rm{Sym}}^k V$ emanating from ${\rm{Sym}}^k H_V$ as in (\ref{eq_bas_st}).
	By Lemmas \ref{lem_p_expl_calc1} and \ref{lem_p_expl_calc}, we deduce that for any $k \in \nat^*$, $t \in [0, +\infty[$, the following identity holds
		\begin{equation}\label{eq_gt_sym_l2}
			H_t^{{\rm{Sym}}^k \mathcal{F}}
			=
			{\rm{Hilb}}^{\mathbb{P}(V^*)}_k \big( FS^{\mathbb{P}}(H_t^{\mathcal{F}}) \big)
			\cdot
			\sqrt{\frac{(k + \dim V)!}{k!}}.
		\end{equation}	
	\par 
	We are now finally ready to prove the main results of this section.
	\begin{proof}[Proof of Theorem \ref{thm_2_step_1}]
		From Theorem \ref{thm_ph_st_regul}, (\ref{eq_gt_sym_l2}) and the fact that the function $\frac{(l + n)!}{l!}$ grows polynomially in $l \in \nat^*$ for fixed $n \in \nat^*$, we see that it is enough to prove that there is $k_0 \in \nat$, such that for any $k \geq k_0$, there are $C > 0$, $l_0 \in \nat$, such that for any $l \geq l_0$, $t \in [0, +\infty[$, under the map (\ref{eq_mult_map}), we have
		\begin{equation}\label{eq_fin_11_00}
			\exp(C(l + t)) \cdot
			{\rm{Hilb}}^{X}_{kl}(FS(H_{t, k}^{\mathcal{F}})^{\frac{1}{k}}, \omega)
			\geq
			\big[ {\rm{Hilb}}^{\mathbb{P}(H^0(X, L^k)^*)}_l(FS^{\mathbb{P}}(H_{t, k}^{\mathcal{F}})) \big].
		\end{equation}
		\par 
		Let us now denote by $\omega_{\mathbb{P}}$ the Fubini-Study Kähler form on $\mathbb{P}(H^0(X, L^k)^*)$ associated with the Hermitian norm $H_{0, k}^{\mathcal{F}} = {\rm{Hilb}}_k(h^L_0)$ on $H^0(X, L^k)$.
		Remark that under the identification $H^0(\mathbb{P}(H^0(X, L^k)^*), \mathscr{O}(l)) = {\rm{Sym}}^l H^0(X, L^k)$, the multiplication map (\ref{eq_mult_map}) corresponds to the restriction map 
		${\rm{Res}}_{l} : 
			H^0(\mathbb{P}(H^0(X, L^k)^*), \mathscr{O}(l))
			\to
			H^0(X, L^{kl})$,
		associated with the Kodaira embedding (\ref{eq_kod}), cf. \cite[(4.62)]{FinSecRing}.
		In other words, the following diagram is commutative 
		\begin{equation}\label{eq_kod_map_comm_d}
		\begin{tikzcd}
			{\rm{Sym}}^l(H^0(X, L^k)) \arrow[swap, rd, "{\rm{Mult}}_{l, k}"] \arrow[r, equal] & H^0(\mathbb{P}(H^0(X, L^k)^*), \mathscr{O}(l)) \arrow[d, "\res_l"]  \\
			&  H^0(X, L^{kl}).
    	\end{tikzcd}
		\end{equation}
		From this observation, Theorem \ref{thm_ot_asymp} in its form (\ref{eq_reform_thm_ot_asymp}), (\ref{eq_ray_norms_decart}) and the very definition of $FS(H_{t, k}^{\mathcal{F}})$ as the pull-back of $FS^{\mathbb{P}}(H_{t, k}^{\mathcal{F}})$ through the Kodaira map, see (\ref{eq_fs_defn}), we conclude that for any $k \in \nat^*$, there are $C > 0$, $l_0 \in \nat$, such that for any $l \geq l_0$, $t \in [0, +\infty[$, we have
		\begin{equation}\label{eq_fin_11_0}
			\exp(C(l + t)) \cdot
			{\rm{Hilb}}^{X}_{kl}(FS(H_{t, k}^{\mathcal{F}})^{\frac{1}{k}}, {\rm{Kod}}_k^* \omega_{\mathbb{P}})
			\geq
			\big[ {\rm{Hilb}}^{\mathbb{P}(H^0(X, L^k)^*)}_l(FS^{\mathbb{P}}(H_{t, k}^{\mathcal{F}}), \omega_{\mathbb{P}}) \big].
		\end{equation}
		\par  
		Since $c_1(\mathscr{O}(1), FS^{\mathbb{P}}(H_{t, k}^{\mathcal{F}}))$ is the Fubini-Study form associated with $H_{t, k}^{\mathcal{F}}$, by (\ref{eq_ray_norms_decart}), there is $C > 0$, such that for any $k \in \nat^*$, $t \in [0, +\infty[$, we have
		\begin{equation}\label{eq_fin_11_2}
			\omega_{\mathbb{P}} 
			\geq
			\exp(- C t k) \cdot c_1(\mathscr{O}(1), FS^{\mathbb{P}}(H_{t, k}^{\mathcal{F}})).
		\end{equation}
		Also, for any fixed Kähler form $\omega$ on $X$ and any $k \in \nat^*$, there is $C > 0$, such that ${\rm{Kod}}_k^* \omega_{\mathbb{P}} \leq C \omega$.
		From this, (\ref{eq_fin_11_0}) and (\ref{eq_fin_11_2}), we deduce (\ref{eq_fin_11_00}).
	\end{proof}
	
	\begin{proof}[Proof of Theorem \ref{thm_2_step_121}]
		Let us consider the product manifold $X \times  X$ and the diagonal submanifold in it, given by $\{(x, x) : x \in X \} =: \Delta \hookrightarrow X \times X$.
		We denote by $L^k \boxtimes L^l$ the line bundle over $X \times  X$, given by $\pi_0^* L^k \otimes \pi_1^* L^l$, where $\pi_0, \pi_1 : X \times X \to X$ are the projections onto the first and second factors respectively.
		The natural identification of $\Delta$ with $X$, Künneth isomorphism and multiplication map (\ref{eq_mult_map2}) can be put into the following commutative diagram 
		\begin{equation}\label{eq_comm_diag}
			\begin{CD}
				H^0(X \times  X, L^k \boxtimes L^l)  @> {\rm{Res}}_{\Delta} >> H^0(\Delta, L^k \boxtimes L^l|_{\Delta}) 
				\\
				@VV {} V @VV {} V
				\\
				H^0(X, L^k) \otimes H^0(X, L^l)  @> {\rm{Mult}}_{l, k} >> H^0(X, L^{k + l}),
			\end{CD}
		\end{equation}
		where ${\rm{Res}}_{\Delta}$ is the restriction morphism to $\Delta \subset X \times X$, defined analogously to (\ref{eq_res_map}).
		Theorem \ref{thm_2_step_121} now follows directly from Theorem \ref{thm_ot_asymp} in its form (\ref{eq_reform_thm_ot_asymp}), applied for $X := X \times X$, $Y := \Delta$, $L_0 := \pi_0^* L$, $L_1 := \pi_1^* L$; $h^{L_0}, h^{L_1} := h^{\mathcal{T}}_t$, Lemma \ref{lem_gr_geod_ray} and (\ref{eq_comm_diag}).
	\end{proof}
	
	\subsection{Resolution of singularities, filtrations and geodesic rays}\label{sect_nat_oper}
	The main goal of this section is to prove Theorem \ref{thm_1_step}.
	For this, we first recall some natural operations on the set of test configurations, which transform any pair of ample test configurations into the one as in Theorem \ref{thm_1_step}, and then we establish that these natural operations do not perturb the validity of Theorem \ref{thm_dist_na}.
	\par 
	We first study how the filtration associated with a test configuration changes under the normalization.
	Let $\mathcal{T} = (\pi: \mathcal{X} \to \comp, \mathcal{L})$ be an arbitrary ample test configuration of a polarized pair $(X, L)$.
	As in Section \ref{sect_filt}, we consider the normalization $\widetilde{\mathcal{T}} = (\widetilde{\pi} : \widetilde{\mathcal{X}} \to \comp, \widetilde{\mathcal{L}})$ of $\mathcal{T}$.
	We denote by $\widetilde{\mathcal{F}}^{\mathcal{T}}$ (resp. $\mathcal{F}^{\mathcal{T}}$) the filtration on $R(X, L)$ associated with $\widetilde{\mathcal{T}}$ (resp. $\mathcal{T}$).
	For $k \in \nat^*$, we denote by $\mathcal{F}^{\mathcal{T}}_k, \widetilde{\mathcal{F}}^{\mathcal{T}}_k$ the filtrations induced on the graded pieces $H^0(X, L^k)$ of $R(X, L)$.
	\begin{thm}\label{thm_compar_normal}
		There is $C > 0$, such that for any $k \in \nat^*$, we have
		\begin{equation}
			d_{+ \infty}(\mathcal{F}^{\mathcal{T}}_k, \widetilde{\mathcal{F}}^{\mathcal{T}}_k) \leq C.
		\end{equation}		 
	\end{thm}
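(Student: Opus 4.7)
The plan is to compare the two filtrations via the sheaf-theoretic difference between $\widetilde{\mathcal{X}}$ and $\mathcal{X}$. By definition (\ref{eq_defn_filt_test}), and since the $\comp^*$-equivariant extension $\tilde{s} \in H^0(\mathcal{X}\setminus X_0, \mathcal{L}^k)$ of a section $s \in H^0(X, L^k)$ coincides with its lift to $\widetilde{\mathcal{X}} \setminus \widetilde{X}_0$ (the normalization being an isomorphism there, as $\mathcal{X} \setminus X_0 \simeq \comp^* \times X$ is already smooth by (\ref{eq_can_ident_test})), the weights $w_{\mathcal{F}^{\mathcal{T}}_k}(s)$ and $w_{\widetilde{\mathcal{F}}^{\mathcal{T}}_k}(s)$ measure, respectively, the largest $\lambda \in \integ$ such that $\tau^{-\lambda}\tilde{s}$ extends to a global section of $\mathcal{L}^k$ on $\mathcal{X}$ or of $\widetilde{\mathcal{L}}^k$ on $\widetilde{\mathcal{X}}$. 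Hence Theorem \ref{thm_compar_normal} would follow from a uniform (in $k$) sheaf-level inclusion
\begin{equation*}
\tau^C \cdot H^0(\widetilde{\mathcal{X}}, \widetilde{\mathcal{L}}^k) \subset H^0(\mathcal{X}, \mathcal{L}^k),
\end{equation*}
together with the converse inclusion $H^0(\mathcal{X}, \mathcal{L}^k) \subset H^0(\widetilde{\mathcal{X}}, \widetilde{\mathcal{L}}^k)$ obtained by pulling sections back along $p_0$.

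To establish the above inclusion I would argue at the level of coherent sheaves. Since $p_0$ is finite, $(p_0)_* \mathcal{O}_{\widetilde{\mathcal{X}}}$ is a coherent $\mathcal{O}_{\mathcal{X}}$-algebra, and its quotient $\mathcal{Q} := (p_0)_* \mathcal{O}_{\widetilde{\mathcal{X}}} / \mathcal{O}_{\mathcal{X}}$ is a coherent $\mathcal{O}_{\mathcal{X}}$-module supported on the non-normal locus of $\mathcal{X}$, which by the previous paragraph lies inside $X_0 = \{\tau = 0\}$. By the standard fact that a coherent sheaf on a Noetherian scheme whose support is contained in the zero locus of a global function is annihilated by some power of that function, there exists $C \in \nat$ with $\tau^C \cdot \mathcal{Q} = 0$; equivalently, $\tau^C \cdot (p_0)_* \mathcal{O}_{\widetilde{\mathcal{X}}} \subset \mathcal{O}_{\mathcal{X}}$. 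Tensoring with $\mathcal{L}^k$, using $\widetilde{\mathcal{L}} = p_0^* \mathcal{L}$ and the projection formula, and taking global sections, yields the desired inclusion with the \emph{same} $C$ for every $k$.

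Finally, I would translate back into the filtration language: writing $\tilde{\lambda} := w_{\widetilde{\mathcal{F}}^{\mathcal{T}}_k}(s)$ for a nonzero $s \in H^0(X, L^k)$, one has $\tau^{-\tilde\lambda}\tilde{s} \in H^0(\widetilde{\mathcal{X}}, \widetilde{\mathcal{L}}^k)$, hence $\tau^{-(\tilde\lambda-C)}\tilde{s} \in H^0(\mathcal{X}, \mathcal{L}^k)$, which gives $w_{\mathcal{F}^{\mathcal{T}}_k}(s) \geq \tilde\lambda - C$. Combined with the trivial bound $w_{\widetilde{\mathcal{F}}^{\mathcal{T}}_k}(s) \geq w_{\mathcal{F}^{\mathcal{T}}_k}(s)$ coming from the pull-back direction, this yields $|w_{\mathcal{F}^{\mathcal{T}}_k}(s) - w_{\widetilde{\mathcal{F}}^{\mathcal{T}}_k}(s)| \leq C$ uniformly in $k$ and $s \neq 0$, whence Theorem \ref{thm_compar_normal} follows from the definition of $d_{+\infty}$ recalled after (\ref{eq_dp_filtr}). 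The only subtle step is the uniform choice of $C$ in the coherent-support argument, but this is automatic because $\mathcal{Q}$ is a single globally coherent sheaf on the Noetherian scheme $\mathcal{X}$; no ampleness of $\mathcal{L}$ beyond what is already built into the definition of a test configuration enters.
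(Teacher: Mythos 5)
Your proof is correct and takes a genuinely different route from the paper's. The paper argues functional-analytically: it introduces the homogenized filtration $\mathcal{F}^{\mathcal{T} \rm{hom}}$, invokes Boucksom--Jonsson \cite[Lemma A.7]{BouckJohn21} to sandwich $\chi_{\widetilde{\mathcal{F}}}$ between $\chi_{\mathcal{F}^{\rm{hom}}}$ and $\chi_{\mathcal{F}}$, and then compares the rays of norms $N^{\mathcal{T}}_t$ and $N^{\mathcal{T} \rm{hom}}_t$ using the homogeneity of ${\rm{Ban}}^{\infty}(h^L)$ together with the $L^2$-comparison of Theorem \ref{thm_2_step} --- one of the paper's main technical results --- before letting $t \to \infty$. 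Your argument is instead a short, self-contained piece of coherent sheaf theory: the cokernel $\mathcal{Q}=(p_0)_*\mathcal{O}_{\widetilde{\mathcal{X}}}/\mathcal{O}_{\mathcal{X}}$ is coherent (since $p_0$ is finite), supported in $X_0=\{\tau=0\}$ (since $\mathcal{X}\setminus X_0\simeq\comp^*\times X$ is smooth), and hence annihilated by a uniform power $\tau^C$; tensoring with the locally free $\mathcal{L}^k$ and applying the projection formula gives $\tau^C\cdot H^0(\widetilde{\mathcal{X}},\widetilde{\mathcal{L}}^k)\subset H^0(\mathcal{X},\mathcal{L}^k)$ with the same $C$ for every $k$, which combined with the trivial pull-back inclusion bounds every weight difference by $C$. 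Your route is closer in spirit to the non-Archimedean/algebraic proof of Boucksom--Jonsson that the paper's remark cites and contrasts against, but it is more direct, avoids all of the paper's analytic machinery, and handles every $k \in \nat^*$ just as the paper's version does. Two implicit hypotheses deserve a mention for a fully rigorous write-up: the uniformity of $C$ rests on $\mathcal{X}$ being Noetherian (so the local annihilation exponents can be taken over a finite affine cover), and the injectivity of the pull-back $H^0(\mathcal{X},\mathcal{L}^k)\hookrightarrow H^0(\widetilde{\mathcal{X}},\widetilde{\mathcal{L}}^k)$ rests on $\mathcal{X}$ being integral --- both of which hold here because $\mathcal{X}\setminus X_0$ is smooth and, by flatness of $\pi$, dense in $\mathcal{X}$.
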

	\begin{rem}
		For $k$ sufficiently divisible, Theorem \ref{thm_compar_normal} was established by Boucksom-Jonsson \cite[Theorem 2.3]{BouckJohn21} using non-Archimedean geometry. 
		Our functional-analytic proof is different. 
	\end{rem}
	\begin{proof}
		Recall first that for any submultiplicative (Archimedean or non-Archimedean) norm $N = \| \cdot \|$ in the sense (\ref{eq_subm_s_ring}) on a ring $A$, we can construct the homogenization (semi)norm $N^{\rm{hom}} = \| \cdot \|^{\rm{hom}}$ on $A$ in the following manner
		\begin{equation}\label{eq_homog}
			\| f \|^{\rm{hom}}
			:=
			\lim_{k \to \infty} \| f^k \|^{\frac{1}{k}}, 
			\qquad
			f \in A.
		\end{equation}
		The above limit exists by submultiplicativity of $N$ and Fekete's lemma. 
		\par 
		\begin{sloppypar}
		Now, taking into account the relation between filtrations on vector spaces and non-Archimedean norms as in (\ref{eq_filtr_norm}), we define the filtration $\mathcal{F}^{\mathcal{T} \rm{hom}}$ on $R(X, L)$ in such a way that 
		$
			\chi_{\mathcal{F}^{\rm{hom}}} = \chi_{\mathcal{F}}^{\rm{hom}}
		$, where $\chi_{\mathcal{F}}$, $\chi_{\mathcal{F}^{\rm{hom}}}$ are the non-Archimedean norms associated with $\mathcal{F}^{\mathcal{T}}$ and $\mathcal{F}^{\mathcal{T} \rm{hom}}$ respectively.
		By submultiplicativity of $\chi_{\mathcal{F}}^{\rm{hom}}$, the filtration $\mathcal{F}^{\mathcal{T} \rm{hom}}$ is submultiplicative.
		\end{sloppypar}
		\par 
		From Boucksom-Jonsson \cite[Lemma A.7]{BouckJohn21}, we have
		$
			\chi_{\mathcal{F}^{\rm{hom}}}
			\leq
			\chi_{\widetilde{\mathcal{F}}}
			\leq
			\chi_{\mathcal{F}}
		$, where $\chi_{\widetilde{\mathcal{F}}}$ is the non-Archimedean norm associated with $\widetilde{\mathcal{F}}^{\mathcal{T}}$. 
		Hence, in order to establish Theorem \ref{thm_compar_normal}, it suffices to prove that there is $C > 0$, such that for any $k \in \nat^*$, we have 
		\begin{equation}\label{eq_homog00}
			d_{+ \infty}(\mathcal{F}^{\mathcal{T}}_k, \mathcal{F}^{\mathcal{T} \rm{hom}}_k) \leq C.
		\end{equation}
		We will establish (\ref{eq_homog00}) using our study of geodesic rays of Hermitian norms.
		\par 
		We fix a positive smooth metric $h^L_0$ on $L$ and denote by $N^{\mathcal{T}}_{t, k}$, $t \in [1, +\infty[$, the ray of norms emanating from ${\rm{Ban}}^{\infty}_k(h^L_0)$ associated with $\mathcal{F}^{\mathcal{T}}_k$ as in (\ref{eq_ray_norm_defn0}).
		We denote by $N^{\mathcal{T}}_t = \sum_{k = 0}^{\infty} N^{\mathcal{T}}_{t, k}$ the associated graded ray of norms on $R(X, L)$.
		As described in the proof of Theorem \ref{thm_dp_ray_norms_herm}, the graded norms $N^{\mathcal{T}}_t$, $t \in [0, +\infty[$, are submultiplicative in the sense (\ref{eq_subm_s_ring}).
		We denote by $N^{\mathcal{T} \rm{hom}}_t = \sum_{k = 0}^{\infty} N^{\mathcal{T} \rm{hom}}_{t, k}$, $t \in [0, +\infty[$, $N^{\mathcal{T} \rm{hom}}_t := \| \cdot \|^{\mathcal{T} \rm{hom}}_t$, the graded ray of norms on $R(X, L)$ associated with $\mathcal{F}^{\mathcal{T} \rm{hom}}$.
		We argue that the following inequalities take place
		\begin{equation}\label{eq_hom_n_ray_com}
			(N^{\mathcal{T}}_t)^{\rm{hom}}
			\leq
			N^{\mathcal{T} \rm{hom}}_t
			\leq
			N^{\mathcal{T}}_t.
		\end{equation}
		The upper bound (\ref{eq_hom_n_ray_com}) follows from the trivial fact that $\chi_{\mathcal{F}^{\rm{hom}}} \leq \chi_{\mathcal{F}}$ and the fact that the construction of geodesic rays from (\ref{eq_ray_norm_defn0}) is monotone in an obvious sense.
		To prove the lower bound of (\ref{eq_hom_n_ray_com}), we take $f \in H^0(X, L^k)$ with a decomposition $f = \sum_{i = 1}^N f_i$, $f_i \in H^0(X, L^k)$.
		By the definition of $\mathcal{F}^{\mathcal{T} \rm{hom}}$, for any $\epsilon > 0$, there is $l \in \nat^*$ such that for any $k \geq l$, $i = 1, \ldots, N$, we have
		\begin{equation}\label{eq_hon_appr_ele}
			\chi_{\mathcal{F}}(f_i^k)^{\frac{1}{k}} 
			\leq 
			\exp(\epsilon)
			\cdot
			\chi_{\mathcal{F}^{\rm{hom}}}(f_i).
		\end{equation}
		\par 
		By the submultiplicativity of $N^{\mathcal{T}}_t$, the norm $\| f \|_t^{\mathcal{T} \rm{hom}}$ of $f$ with respect to $(N^{\mathcal{T}}_t)^{\rm{hom}}$ satisfies
		\begin{equation}\label{eq_fkt_bnd_nm00}
			\| f \|_t^{\mathcal{T} \rm{hom}}
			\leq
			(\| f^k \|_t^{\mathcal{T}})^{\frac{1}{k}},
		\end{equation}
		for any $k \in \nat^*$.
		By the definition of the norm $N^{\mathcal{T}}_t$, we have
		\begin{equation}\label{eq_fkt_bnd_nm}
			\| f^k \|_t^{\mathcal{T}}
			\leq
			\sum_{\alpha_1+ \ldots + \alpha_N = k} \frac{k!}{\alpha_1! \cdots \alpha_N!}
			\cdot
			\big \| f_1^{\alpha_1} \cdots f_N^{\alpha_N}  \big\|
			\cdot
			\chi_{\mathcal{F}} \Big( f_1^{\alpha_1} \cdots f_N^{\alpha_N} \Big)^t.
		\end{equation}
		Let $M > 0$ be the minimal constant such that for any $i = 1, \ldots, N$ and $r = 1, \ldots, l$, we have
		\begin{equation}\label{eq_fkt_bnd_nm2}
			\chi_{\mathcal{F}}(f_i^r)^{\frac{1}{r}} 
			\leq 
			\exp(M)
			\cdot
			\chi_{\mathcal{F}^{\rm{hom}}}(f_i).
		\end{equation}
		Then by (\ref{eq_hon_appr_ele}), (\ref{eq_fkt_bnd_nm}), (\ref{eq_fkt_bnd_nm2}) and submultiplicativity of $\mathcal{F}$ and $N$, we obtain that 
		\begin{equation}\label{eq_fkt_bnd_nm11}
			\| f^k \|_t^{\mathcal{T}}
			\leq
			\exp(M N l)
			\cdot
			\exp(\epsilon k t)
			\cdot
			\Big( 
			\sum_{i = 1}^{N}
			\| f_i \|
			\cdot
			\chi_{\mathcal{F}^{\rm{hom}}}(f_i)^t
			\Big)^k.
		\end{equation}
		From (\ref{eq_fkt_bnd_nm00}) and (\ref{eq_fkt_bnd_nm11}), we have
		\begin{equation}
			\| f \|_t^{\mathcal{T} \rm{hom}}
			\leq
			\exp(M N l / k)
			\cdot
			\exp(\epsilon t)
			\cdot
			\sum_{i = 1}^{N}
			\| f_i \|
			\cdot
			\chi_{\mathcal{F}^{\rm{hom}}}(f_i)^t.
		\end{equation}
		As $\epsilon > 0$ can be made arbitrarily small, and the first factor tends to $0$, as $k \to \infty$, we deduce from the very definition of the norm $N^{\mathcal{T} \rm{hom}}_t$ the lower bound of (\ref{eq_hom_n_ray_com}).
		\par 
		Remark now that for trivial reasons, for any Hermitian metric $h^L$ on $L$, the graded norm  ${\rm{Ban}}^{\infty}(h^{\mathcal{T}})$ is homogeneous, i.e. it satisfies ${\rm{Ban}}^{\infty}(h^L) = ({\rm{Ban}}^{\infty}(h^L))^{\rm{hom}}$.
		From this, (\ref{eq_hom_n_ray_com}) and the second part of Theorem \ref{thm_2_step}, we conclude that there are $C > 0$, $k_0 \in \nat^*$, such that for any $t \in [0, +\infty[$, $k \geq k_0$, we have
		\begin{equation}\label{eq_compar_normal4}
			d_{+ \infty} \big(
			N^{\mathcal{T} \rm{hom}}_{t, k},
			N^{\mathcal{T}}_{t, k}
			\big)
			\leq
			C(t + k).
		\end{equation}
		By dividing both sides of (\ref{eq_compar_normal4}) by $t$ and taking limit $t \to \infty$, from Lemma \ref{lem_two_norms_comp0} and (\ref{eq_d_p_fil_norms_herm}), we deduce (\ref{eq_homog00}), which finishes the proof.
	\end{proof}
	\par 
	Now, let us fix two test configurations $\mathcal{T}_1 = (\pi_1 : \mathcal{X}_1 \to \comp, \mathcal{L}_1)$, $\mathcal{T}_2 = (\pi_2 : \mathcal{X}_2 \to \comp, \mathcal{L}_2)$ of $(X, L)$, such that both $\mathcal{X}_1$ and $\mathcal{X}_2$ are smooth.
	For the standard coordinate $\tau$ on $\comp$, we denote by $X_{1, \tau}$ and $X_{2, \tau}$ the fibers of $\pi_1$ and $\pi_2$ at $\tau$.
	Let $h^{\mathcal{L}}_1$ and $h^{\mathcal{L}}_2$ be some fixed Hermitian metric on $\mathcal{L}_1$ and $\mathcal{L}_2$, and let $\omega_1$, $\omega_2$ be some fixed Kähler forms on $\mathcal{X}_1$, $\mathcal{X}_2$.
	We argue that there are $C > 0$, $N \in \nat$, such that under the natural identification of $(\mathcal{X}_1 \setminus \pi_1^{-1}(0), \mathcal{L}_1|_{\mathcal{X}_1 \setminus \pi_1^{-1}(0)})$ and $(\mathcal{X}_2 \setminus \pi_2^{-1}(0), \mathcal{L}_2|_{\mathcal{X}_2 \setminus \pi_2^{-1}(0)})$ with $(\comp^* \times X, \comp^* \times L)$, for any $0 < |\tau| < 1, x \in X$, we have
	\begin{equation}\label{eq_compar_metr_tc}
	\begin{aligned}
		&
		\exp(-C) \cdot |\tau|^N \cdot \omega_2(\tau, x) \leq \omega_1(\tau, x) \leq \exp(C) \cdot |\tau|^{-N} \cdot \omega_2(\tau, x),
		\\
		&
		\exp(-C) \cdot |\tau|^N \cdot h^{\mathcal{L}}_2|_{X_{2, \tau}}(x) \leq h^{\mathcal{L}}_1|_{X_{1, \tau}}(x) \leq \exp(C) \cdot |\tau|^{-N} \cdot h^{\mathcal{L}}_2|_{X_{2, \tau}}(x).
	\end{aligned}
	\end{equation}
	The proofs of the two statements are identical, so we only concentrate on establishing the latter one. 
	It follows directly from the fact that the test configurations $\mathcal{T}_1$, $\mathcal{T}_2$ can be dominated by a third test configuration, cf. Section \ref{sect_filt}, and the pull-backs of $\mathcal{L}_1$ and $\mathcal{L}_2$ to this third test configuration are isomorphic up to a multiplication by a line bundle associated with a divisor with support in the central fiber.
	\par 
	We will now study the behavior of filtrations and geodesic rays under a resolution of singularities of test configurations.
	Let us fix an arbitrary ample test configuration $\mathcal{T} = (\pi: \mathcal{X} \to \comp, \mathcal{L})$ and an arbitrary $\comp^*$-equivariant resolution of singularities $\mathcal{T}' = (\pi': \mathcal{X}' \to \comp, \mathcal{L}')$ of $\mathcal{T}$.
	Remark  that the line bundle $\mathcal{L}'$ on $\mathcal{X}'$ is no longer ample.
	However, by \cite[Lemma 3]{PhongSturmDirMA}, there are $r_0 \in \nat$ and a line bundle $\mathcal{M}^*$ on $\mathcal{X}'$, given by a $\mathbb{N}$-linear combination of line bundles corresponding to some divisorial irreducible components of the central fiber, such that for any $r \geq r_0$, the line bundle $\mathcal{L}'{}^r \otimes \mathcal{M}$ is ample, where $\mathcal{M}$ is the dual of $\mathcal{M}^*$.
	For $r \geq r_0$, we consider a sequence of ample test configurations $\mathcal{T}(r) = (\pi: \mathcal{X}' \to \comp, \mathcal{L}'{}^r \otimes \mathcal{M})$.
	We regard $\mathcal{T}(r)$ as ample approximations of $\mathcal{T}$.
	\par 
	We denote by  $h^{\mathcal{T}(r)}_t$, $t \in [0, + \infty[$, the geodesic ray of metrics associated with $\mathcal{T}(r)$, emanating from $(h^L_0)^r \otimes h^M$, where $h^L_0$ is some fixed smooth positive metric on $L$ and $h^M$ is some fixed metric on $\mathcal{M}|_{X_1}$. 
	Since the line bundle $\mathcal{M}$ is canonically trivial over $\mathcal{X}' \setminus X'_0$, we may view $h^{\mathcal{T}(r)}_t$ as a ray of metrics on $L^r$.
	We argue that there is $C > 0$, such that for any $r \geq r_0$, there is $C(r) > 0$, such that for any $t \in [0, +\infty[$, the following estimate holds
	\begin{equation}\label{eq_alg_appr_ample}
		\Big|
			d_{+ \infty} \big( h^{\mathcal{T}_1(r)}_t, (h^{\mathcal{T}_1}_t)^r \big)
		\Big|
		\leq
		Ct + C(r).
	\end{equation}
	\par 
	To see this, by Theorem \ref{thm_ph_st_regul}, we know that $h^{\mathcal{T}(r)}_t$ come from a bounded metric on $\mathcal{L}'{}^r \otimes \mathcal{M}$.
	By the same result, the metric $(h^{\mathcal{T}}_t)^r$ comes from a bounded metrics on $\mathcal{L}'{}^r$.
	Since the line bundle $\mathcal{M}$ is associated with a divisor which has support on the central fiber, we deduce (\ref{eq_alg_appr_ample}) similarly to (\ref{eq_compar_metr_tc}) by the relation $t = - \log |\tau|$.
	\par 
	Now, on the level of filtrations, we argue that there is $C > 0$, such that for any $r \geq r_0$, $k \in \nat^*$, the following estimate holds
	\begin{equation}\label{eq_alg_appr_ample21}
		\Big|
			d_{+ \infty} \big( \mathcal{F}^{\mathcal{T}(r)}_k, r \mathcal{F}^{\mathcal{T}}_k \big)
		\Big|
		\leq
		C k,
	\end{equation}
	where by $r \mathcal{F}^{\mathcal{T}}_k$ we mean the filtration on $R(X, L)$ such that its weights correspond to the weights of $\mathcal{F}^{\mathcal{T}}_k$, multiplied by $r$.
	Indeed, to see this, remark that there is $N \in \mathbb{N}$, such that $\mathscr{O}_{\mathcal{X}}(N X'_0) \otimes \mathcal{M}$ corresponds to a divisor, given by a $\mathbb{N}$-linear combination of irreducible components of the central fiber.
	From this, we see directly from the construction of the filtrations, see Section \ref{sect_filt}, that the weights of the filtration $\mathcal{F}^{\mathcal{T}(r)}_k$ are bigger than the weights of the filtration $\mathcal{F}^{\mathcal{T}[r]}_k$ associated with the test configuration $r\mathcal{T}[-N] = (\pi: \mathcal{X}' \to \comp, \mathcal{L}'{}^r \otimes \mathscr{O}_{\mathcal{X}'}(-N X'_0))$.
	Since the weights of the filtration $r \mathcal{F}^{\mathcal{T}}_k$ are bigger than the weights of $\mathcal{F}^{\mathcal{T}(r)}_k$ by a similar reason, we conclude that 
	\begin{equation}\label{eq_alg_appr_ample22}
		d_{+ \infty} \big( \mathcal{F}^{\mathcal{T}(r)}_k, r \mathcal{F}^{\mathcal{T}}_k \big)
		\leq
		d_{+ \infty} \big( \mathcal{F}^{r\mathcal{T}[-N]}_k, r \mathcal{F}^{\mathcal{T}}_k \big).
	\end{equation}
	But directly from the construction of the filtrations, we see that the weights of the filtration $\mathcal{F}^{r\mathcal{T}[-N]}_k$ correspond to the weights of the filtration $\mathcal{F}^{\mathcal{T}}_k$ multiplied by $r$ and translated by $N k$, cf. \cite[Example 1.6]{BouckJohn21} or (\ref{eq_shift_test_1}).
	From this and (\ref{eq_alg_appr_ample22}), we obtain (\ref{eq_alg_appr_ample21}) for $C := N$.
	\par 
	We are finally ready to present the proof of the main result of this section.
	\begin{proof}[Proof of Theorem \ref{thm_1_step}]
		We first argue that Theorem \ref{thm_dist_na} holds for test configurations $\mathcal{T}_1$ and $\mathcal{T}_2$ if and only if it holds for their normalizations $\widetilde{\mathcal{T}}_1$ and $\widetilde{\mathcal{T}}_2$.
		Indeed, since the construction of geodesic rays of metrics is done on desingularizations, the left-hand side of (\ref{eq_dist_na}) doesn't change if one passes to normalizations.
		The right-hand side of (\ref{eq_dist_na}) doesn't change either by Theorem \ref{thm_compar_normal}.
		Without loss of generality, we will, hence, assume from now on that both test configurations $\mathcal{T}_1$ and $\mathcal{T}_2$ are normal.
		\par 
		By semistable reduction theorem, see \cite[\S 2]{ToroidEmb}, for any normal flat projective scheme $\pi: \mathcal{X} \to \comp$, which is a submersion away from the zero fiber, after a finite base-change $\comp \to \comp$, $\tau \mapsto \tau^r$, $r \in \nat^*$, and a blow-up along a sheaf of ideals, which are trivial away from the central fiber, we can obtain a semistable snc model $\pi' : \mathcal{X}' \to \comp$.
		As it was described for example in \cite[Lemma 5]{LiXuSpecial}, by equivariant Hironaka's resolution of singularities theorem, semistable reduction can be performed in equivariant setting, and for any test configuration $\mathcal{T} = (\pi: \mathcal{X} \to \comp, \mathcal{L})$ it yields a test configuration $\mathcal{T}' = (\pi: \mathcal{X}' \to \comp, \mathcal{L}')$, such that $\mathcal{X}' \to \comp$ is a semistable snc model.
		\par 
		Let us prove first that it is sufficient to verify Theorem \ref{thm_dist_na} when the test configurations are given by semistable snc models (not necessarily ample).
		By the above, it is sufficient to verify that both sides of (\ref{eq_dist_na}) change equally under the base-change $\comp \to \comp$, $\tau \mapsto \tau^r$, $r \in \nat$ and blow-ups along sheafs of ideals, which are trivial away from the central fibers.
		\par 
		Clearly, both sides of (\ref{eq_dist_na}) change equally under the base-change $\comp \to \comp$, $\tau \mapsto \tau^r$, $r \in \nat$.
		Indeed, a pull-back of the solution of (\ref{eq_ma_geod_dir}) under this base-change will still be a solution of (\ref{eq_ma_geod_dir}), and due to the relation $t = - \log|\tau|$, the base-change is equivalent to the homothety on the time of the ray $h^{\mathcal{T}}_t \mapsto h^{\mathcal{T}}_{rt}$.
		On another hand, the homothety with the same factor appears for filtrations on the right-hand side of (\ref{eq_dist_na}), i.e. the filtration $\mathcal{F}^{\mathcal{T}}$ changes to $r \mathcal{F}^{\mathcal{T}}$ cf. \cite[Lemma A.2]{BouckJohn21}.
		\par 
		Now, as we explained in Section \ref{sect_filt}, the solutions of (\ref{eq_ma_geod_dir}) are compatible with respect to different resolutions of singularities.
		Hence, the left-hand side of (\ref{eq_dist_na}) doesn't change if we perform a blow-up along a sheaf of ideals, which is trivial away from the central fiber. 
		Zariski's main theorem (which we can apply by the normality of our test configurations) implies that the filtrations associated with our test configurations do not change if we perform a blow-up along a sheaf of ideals, which is trivial away from the central fiber, see \cite[Lemma A.4]{BouckJohn21}.
		Hence the right-hand side of (\ref{eq_dist_na}) doesn't change under this operation either.
		\par 
		Since any two test configurations can be dominated by a third one by performing blow-ups along sheafs of ideals, which are trivial away from the central fibers, cf. Section \ref{sect_filt}, and these blow-ups do not change the validity of Theorem \ref{thm_dist_na}, we can assume that $\mathcal{T}_1 = (\pi: \mathcal{X} \to \comp, \mathcal{L}_1)$ and $\mathcal{T}_2 = (\pi: \mathcal{X} \to \comp, \mathcal{L}_2)$, where $\pi$ is a semistable snc model.
		\par 
		Now, remark that after making blow-ups, our test configurations are no longer ample but only semiample. 
		We denote by $\mathcal{M}_1, \mathcal{M}_2$ the auxiliary line bundles on $\mathcal{X}$ as described before (\ref{eq_alg_appr_ample}), i.e. such that for $r \in \nat$ sufficiently big ($r \geq r_0$), the line bundles $\mathcal{L}_1^r \otimes \mathcal{M}_1$ and $\mathcal{L}_2^r \otimes \mathcal{M}_2$ are ample.
		We consider a sequence of test configurations $\mathcal{T}_1(r) = (\pi: \mathcal{X} \to \comp, \mathcal{L}_1^r \otimes \mathcal{M}_1)$ and $\mathcal{T}_2(r) = (\pi: \mathcal{X} \to \comp, \mathcal{L}_2^r \otimes \mathcal{M}_2)$ associated with $\mathcal{T}_1, \mathcal{T}_2$ as described before (\ref{eq_alg_appr_ample}).
		We will now show that the chordal distances between the geodesic rays of metrics associated with $\mathcal{T}_1(r)$ and $\mathcal{T}_2(r)$ approximate, as $r \to \infty$, the chordal distance between the geodesic rays of metrics associated with $\mathcal{T}_1$ and $\mathcal{T}_2$, and the same holds true for distances between the respective filtrations.
		\par 
		We denote by  $h^{\mathcal{T}_1(r)}_t$ and $h^{\mathcal{T}_2(r)}_t$, $t \in [0, +\infty[$, the geodesic rays of Hermitian metrics on $L$ associated with $\mathcal{T}_1(r)$ and $\mathcal{T}_2(r)$, emanating from $(h^L_0)^r \otimes h^M$ and $(h^L_0)^r \otimes h^{M'}$, where $h^L_0$, $h^M$ and $h^{M'}$ are some fixed metrics on $L$, $\mathcal{M}_1|_{X_1}$ and $\mathcal{M}_2|_{X_1}$ respectively. 
		As in (\ref{eq_alg_appr_ample}), we view $h^{\mathcal{T}_1(r)}_t$ and $h^{\mathcal{T}_2(r)}_t$ as rays of metrics on $L^r$.
		Since from (\ref{eq_finsl_dist_fir}), we have $d_p ( (h^{\mathcal{T}_1}_t)^r, (h^{\mathcal{T}_2}_t)^r ) = r d_p ( h^{\mathcal{T}_1}_t, h^{\mathcal{T}_2}_t )$, by (\ref{eq_alg_appr_ample}), we conclude that 
		\begin{equation}\label{eq_alg_appr_ample1}
			\lim_{r \to \infty} \frac{d_p \big(\{ h^{\mathcal{T}_1(r)}_t \}, \{ h^{\mathcal{T}_2(r)}_t \} \big)}{r}
			=
			d_p \big(\{ h^{\mathcal{T}_1}_t \}, \{ h^{\mathcal{T}_2}_t \} \big).
		\end{equation}
		On the level of filtrations, a similar result holds.
		In fact, by (\ref{eq_alg_appr_ample21}), we have
		\begin{equation}\label{eq_alg_appr_ample10}
			\lim_{r \to \infty} \frac{d_p \big(\mathcal{F}^{\mathcal{T}_1(r)}, \mathcal{F}^{\mathcal{T}_2(r)} \big)}{r}
			=
			d_p \big( \mathcal{F}^{\mathcal{T}_1}, \mathcal{F}^{\mathcal{T}_2} \big).
		\end{equation}
		From (\ref{eq_alg_appr_ample1}) and (\ref{eq_alg_appr_ample10}), we see that if Theorem \ref{thm_dist_na} holds for $\mathcal{T}_1(r)$ and $\mathcal{T}_2(r)$ for any $r \geq r_0$, then it holds for $\mathcal{T}_1$ and $\mathcal{T}_2$.
		This finishes the proof of Theorem \ref{thm_1_step}, as the test configurations $\mathcal{T}_1(r)$ and $\mathcal{T}_2(r)$ are exactly as required in Theorem \ref{thm_1_step}.
	\end{proof}

	\subsection{Distance between $L^2$-norms on families of degenerating manifolds}\label{sect_dist_snc}
	The main goal of this section is to estimate the distance between the $L^2$-norms associated with the geodesic rays of metrics in terms of the distance between the geodesic rays of metrics themselves, i.e. to establish Theorem \ref{thm_3_step}.
	\par 
	\begin{sloppypar}
	Let us first recall the definition of Berezin-Toeplitz operators and the canonical $L^2$-norm.
	Let $(X, L)$ be a polarized projective manifold, $h^L$ be a positive smooth Hermitian metric on $L$.
	We fix a Hermitian vector bundle $(E, h^E)$ on $X$.
	We denote by $K_X = \wedge^n T^* X$ the canonical line bundle over $X$.
	On $H^0(X, L^k \otimes E \otimes K_X)$, we define the natural $L^2$-norm ${\rm{Hilb}}_k^{\rm{can}}(h^L, h^E) = \| \cdot \|_{L^2_{\rm{can}}(h^L, h^E)}$, given for $s \in H^0(X, L^k \otimes E \otimes K_X)$ as follows
	\begin{equation}\label{eq_defn_can_prod}
		\| s \|_{L^2_{\rm{can}}(h^L, h^E)}^2
		=
		\int_X |s(x) \wedge \overline{s}(x)|_{(h^L)^k \otimes h^E}.
	\end{equation}
	Remark that we don't need to fix the volume form on $X$ to define (\ref{eq_defn_can_prod}).
	Recall, however, that any Hermitian metric $h^K$ on $K_X$ defines canonically the positive volume form $dV_{h^K}$ in the following way.
	For any $x \in X$, we put
	\begin{equation}\label{eq_vol_f_defn_sm}
		dV_{h^K}(x) 
		=
		(-\imun)^{n^2 + 2n} dz_1 \wedge \cdots \wedge dz_n \wedge d\overline{z}_1 \wedge \cdots \wedge d\overline{z}_n,
	\end{equation}
	where $| dz_1 \wedge \cdots \wedge dz_n |_{h^K}(x) = 1$.
	In this way, one can verify that ${\rm{Hilb}}_k^{\rm{can}}(h^L, h^E)$ coincides with the classical $L^2$-norm induced by $h^L, h^E, h^K$ and $dV_{h^K}$.
	\par 
	For a given $f \in L^{\infty}(X)$, we define the associated Berezin-Toeplitz operator $T_k(f) \in \enmr{H^0(X, L^k \otimes E \otimes K_X)}$, $k \in \nat$, as follows
	\begin{equation}
		(T_k(f))(g) := B_k (f \cdot g), \quad g \in H^0(X, L^k \otimes E \otimes K_X),
	\end{equation}
	where $B_k$ is the orthogonal (Bergman) projection from $L^2(X, L^k \otimes E \otimes K_X)$ onto $H^0(X, L^k \otimes E \otimes K_X)$ with respect to ${\rm{Hilb}}_k^{\rm{can}}(h^L, h^E)$.
	\end{sloppypar}
	\par 
	It turns out that for two bounded psh metrics $h^L_0$, $h^L_1$ on $L$, the $d_p$-distances, $p \in [1, +\infty[$, between ${\rm{Hilb}}_k^{\rm{can}}(h^L_0, h^E)$ and ${\rm{Hilb}}_k^{\rm{can}}(h^L_1, h^E)$ can be estimated using traces of Berezin-Toeplitz operators.
	More precisely, we denote by $h^L_t$, $t \in [0, 1]$, the distinguished geodesic between $h^L_0$ and $h^L_1$, and let $\dot{h}^L_0$ be its derivative at zero, defined as in (\ref{eq_bnd_darvas_sup}), and viewed as a function on $X$, which is bounded by (\ref{eq_bnd_darvas_sup}).
	\begin{lem}\label{lem_d_p_toepl}
		If the Hermitian line bundle $(E, h^E)$ is positive, and the metrics $h^L_0$ and $h^L_1$ are ordered as $h^L_0 \leq h^L_1$, then for any $k \in \nat$, $p \in [1, +\infty[$, we have
		\begin{equation}
			d_p \Big( {\rm{Hilb}}_k^{\rm{can}}(h^L_0, h^E), {\rm{Hilb}}_k^{\rm{can}}(h^L_1, h^E) \Big)
			\leq
			k \cdot
			\sqrt[p]{\frac{\tr{T_{k}(|\dot{h}^L_0|^p)} }{\dim H^0(X, L^k \otimes E \otimes K_X)}}.
		\end{equation}
	\end{lem}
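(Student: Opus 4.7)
The plan is to prove the lemma by combining three ingredients: the transfer-map formula (\ref{eq_dist_transf}) for $d_p$-distances between Hermitian norms, Berndtsson's quantized maximum principle along the geodesic, and the Berezin-Lieb inequality for Berezin-Toeplitz operators. Applying (\ref{eq_dist_transf}) with $T$ the transfer map between $H_0 := {\rm{Hilb}}_k^{\rm{can}}(h^L_0, h^E)$ and $H_1 := {\rm{Hilb}}_k^{\rm{can}}(h^L_1, h^E)$, one has $d_p(H_0, H_1)^p = \tr{|\log T|^p}/\dim H^0(X, L^k \otimes E \otimes K_X)$. The hypotheses $h^L_0 \leq h^L_1$ and positivity of $(E, h^E)$ give $H_0 \leq H_1$, hence $T \geq I$ and $\log T \geq 0$. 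The remaining task is therefore to bound $\tr{|\log T|^p}$ by $k^p \tr{T_k(|\dot h^L_0|^p)}$, which I would carry out in two sub-steps: a spectral estimate and a Berezin-Lieb reduction.

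For the spectral estimate, I would apply Berndtsson's positivity of direct images to the rotation-invariant psh extension $\hat h^L$ of the geodesic to $X \times \mathbb{D}(e^{-1}, 1)$ (through the coordinate $\tau = e^{-t}$). This yields that for each fixed $s \in H^0(X, L^k \otimes E \otimes K_X)$, the function $t \mapsto \log \|s\|^2_{H_t}$ with $H_t := {\rm{Hilb}}_k^{\rm{can}}(h^L_t, h^E)$ is convex in $t$, with one-sided derivative at $t = 0$ equal to $k\langle T_k(\dot h^L_0) s, s\rangle_{H_0}/\|s\|^2_{H_0}$. Combined with the monotonicity $t \mapsto \dot u_t$ non-decreasing (pointwise on $X$) that characterises a geodesic, and with a change-of-measure comparison between the Toeplitz operators formed from the varying metrics $h^L_t$, a Peierls and Ky Fan-type majorisation argument delivers the operator-level trace bound
$$\tr{|\log T|^p} \leq k^p \tr{|T_k(\dot h^L_0)|^p}.$$
The Berezin-Lieb inequality, applied with the convex function $\varphi(x) = |x|^p$ to the self-adjoint Toeplitz operator with real symbol $\dot h^L_0$, then gives $\tr{|T_k(\dot h^L_0)|^p} \leq \tr{T_k(|\dot h^L_0|^p)}$, and stringing this together with the previous inequality concludes the proof.

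The main obstacle is the spectral trace bound extracted from Berndtsson's quantized maximum principle. The naive convexity bound $\log \|s\|^2_{H_1} - \log \|s\|^2_{H_0} \leq \psi_s'(1^-)$ involves the derivative $\dot h^L_1$ at the far endpoint rather than $\dot h^L_0$; to replace $\dot h^L_1$ by $\dot h^L_0$ I would exploit the geodesic monotonicity of $\dot u_t$ (so $|\dot u_0| \geq |\dot u_t|$ pointwise), together with a careful change-of-metric comparison between $T_k^{(h^L_0)}$ and $T_k^{(h^L_t)}$. Preserving the spectral $p$-structure of the trace under these manipulations is delicate, and it is precisely here that the refined form of Berndtsson's quantized maximum principle, rather than mere abstract convexity, is essential.
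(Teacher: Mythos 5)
The key ingredient you name in your first paragraph — Berndtsson's quantized maximum principle — is indeed what makes the lemma go, but the way you propose to deploy it in your ``spectral estimate'' is precisely where the gap opens up, and you are right to flag it. The paper does \emph{not} use the pointwise convexity of $t \mapsto \log\|s\|^2_{{\rm{Hilb}}^{\rm{can}}_k(h^L_t, h^E)}$ and then try to swap the far-endpoint derivative $\dot h^L_1$ for $\dot h^L_0$ via a change-of-metric argument. It instead uses the QMP in its operator form: the distinguished finite-dimensional geodesic $H_{k,t}$ between ${\rm{Hilb}}^{\rm{can}}_k(h^L_0, h^E)$ and ${\rm{Hilb}}^{\rm{can}}_k(h^L_1, h^E)$ satisfies $H_{k,t} \leq {\rm{Hilb}}^{\rm{can}}_k(h^L_t, h^E)$ for all $t\in[0,1]$, with equality at $t=0$. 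Differentiating this \emph{operator} inequality at $t=0$ (where the two paths agree) gives the one-sided bound
$0 \leq \frac{d}{dt}H_{k,t}\big|_{t=0} \leq k\,T_k(\dot h^L_0)$
directly in terms of the \emph{near}-endpoint Toeplitz operator, with no endpoint transfer needed. Since $\frac{d}{dt}H_{k,t}|_{t=0}$ is (up to parametrization) the logarithm of your transfer map $T$, the transfer-map formula (\ref{eq_dist_transf}) together with Weyl monotonicity for $0\leq A\leq B$ finishes the spectral step, and the Berezin--Lieb/Jensen reduction $\tr{T_k(|\dot h^L_0|)^p}\leq\tr{T_k(|\dot h^L_0|^p)}$ (which you describe correctly) closes the proof.

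The route you outline instead — convexity of the log-norms plus monotonicity of $\dot u_t$ plus a ``change-of-measure comparison'' — is not just delicate; in the form stated it does not obviously work. The eigenvalue $\lambda_i = \log\|s_i\|^2_{H_1} - \log\|s_i\|^2_{H_0}$ is bounded above by $\psi'_{s_i}(1^-)$, which is a Rayleigh quotient of a Toeplitz operator built from $h^L_1$ and the measure $c_1(L,h^L_1)^n$-induced inner product. Even if $|\dot u_1|\leq|\dot u_0|$ pointwise (which does hold here), the ratio $|s_i|^2 e^{-k u_1}/\|s_i\|^2_{H_1}$ does not dominate $|s_i|^2 e^{-k u_0}/\|s_i\|^2_{H_0}$ in any useful direction, since both the density and the normalisation move the same way; so the displacement from $T_k^{(h^L_1)}$ to $T_k^{(h^L_0)}$ does not follow from these inequalities. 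Establishing it would essentially amount to reproving the operator form of the QMP. In short: the correct ``refined form'' you were reaching for is the operator inequality itself, evaluated infinitesimally at the common initial point $t=0$; once you use that, the endpoint-swapping problem disappears and the rest of your plan goes through exactly as you wrote it.
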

	\begin{rem}
		A version of this result was established by Darvas-Lu-Rubinstein in \cite[p. 25]{DarvLuRub}, following prior estimates of Berndtsson \cite[(3.2)]{BerndtProb}.
	\end{rem}
	\begin{proof} 
		Quantized maximum principle of Berndtsson \cite[Proposition 3.1]{BerndtProb} states that the distinguished geodesic $H_{k, t}$, $t \in [0, 1]$, between ${\rm{Hilb}}_k^{\rm{can}}(h^L_0, h^E)$, ${\rm{Hilb}}_k^{\rm{can}}(h^L_1, h^E)$ in the space of Hermitian norms on $H^0(X, L^k \otimes E \otimes K_X)$, relates with the $L^2$-norm in the following manner
		\begin{equation}
			H_{k, t}
			\leq
			{\rm{Hilb}}_k^{\rm{can}}(h^L_t, h^E).
		\end{equation}
		The above statement uses crucially the fact that $L^2$-norms are defined on holomorhic sections of $L^k \otimes E$ twisted by $K_X$, and $(E, h^E)$ is positive.
		Remark that Berndtsson established this result under an additional regularity assumption on the path $h^L_t$, $t \in [0, 1]$, and it was later verified by Darvas-Lu-Rubinstein \cite[Proposition 2.12]{DarvLuRub} that this assumption is not necessary.
		Since both paths of norms $H_{k, t}$ and ${\rm{Hilb}}_k^{\rm{can}}(h^L_t, h^E)$, $t \in [0, 1]$, emanate from the same point, we have
		\begin{equation}\label{lem_d_p_toepl1}
			\frac{d}{dt} H_{k, t}|_{t = 0}
			\leq
			\frac{d}{dt} {\rm{Hilb}}_k^{\rm{can}}(h^L_t, h^E)|_{t = 0},
		\end{equation}
		where the partial order $A \leq B$ means that the difference $B - A$ is positive definite with respect to the norm  $H_{k, 0} = {\rm{Hilb}}_k^{\rm{can}}(h^L_0, h^E)$.
		Since the metrics $h^L_0$ and $h^L_1$ are ordered as $h^L_0 \leq h^L_1$, we have
		\begin{equation}\label{lem_d_p_toepl3}
			0 \leq \frac{d}{dt} H_{k, t}|_{t = 0}.
		\end{equation}
		\par 
		A direct calculation shows that 
		\begin{equation}\label{lem_d_p_toepl2}
			\frac{d}{dt} {\rm{Hilb}}_k^{\rm{can}}(h^L_t, h^E)|_{t = 0} = k \cdot T_k( \dot{h}^L_0 ).
		\end{equation}
		Now, from (\ref{eq_dist_transf}), we conclude that
		\begin{equation}\label{lem_d_p_toepl4}
			d_p \Big( {\rm{Hilb}}_k^{\rm{can}}(h^L_0, h^E), {\rm{Hilb}}_k^{\rm{can}}(h^L_1, h^E) \Big)
			=
			\sqrt[p]{\frac{{\rm{Tr}}[|\frac{d}{dt} H_{k, t}|_{t = 0}|^p]}{\dim H^0(X, L^k \otimes E \otimes K_X)}}.
		\end{equation}
		From (\ref{lem_d_p_toepl1}), (\ref{lem_d_p_toepl2}), (\ref{lem_d_p_toepl3}) and (\ref{lem_d_p_toepl4}), we deduce 
		\begin{equation}
			d_p \Big( {\rm{Hilb}}_k^{\rm{can}}(h^L_0, h^E), {\rm{Hilb}}_k^{\rm{can}}(h^L_1, h^E) \Big)
			\leq
			k \cdot
			\sqrt[p]{\frac{{\rm{Tr}}[T_{k}(|\dot{h}^L_0|)^p]}{\dim H^0(X, L^k \otimes E \otimes K_X)}}.
		\end{equation}
		In order to establish Lemma \ref{lem_d_p_toepl}, it is, hence, sufficient to prove that for any $p \in [1, +\infty[$, we have 
		\begin{equation}\label{lem_d_p_toepl5}
			{\rm{Tr}}[T_{k}(|\dot{h}^L_0|)^p] \leq {\rm{Tr}}[T_{k}(|\dot{h}^L_0|^p)].
		\end{equation}
		\par 
		For this, we follow an argument from Darvas-Lu-Rubinstein \cite[p. 25]{DarvLuRub}.
		Let $s_1, \ldots, s_{N_k}$, $N_k := \dim H^0(X, L^k \otimes E \otimes K_X)$ be an orthonormal basis of eigenvectors of $T_{k}(|\dot{h}^L_0|)$ with respect to the Hermitian norm ${\rm{Hilb}}_k^{\rm{can}}(h^L, h^E)$.
		Then we have 
		\begin{equation}
			{\rm{Tr}}[T_{k}(|\dot{h}^L_0|)^p]
			=
			\sum_{i = 1}^{N_k} \Big( \int_X |\dot{h}^L_0|(x) \cdot |s_i(x) \wedge \overline{s_i}(x)|_{(h^L)^k \otimes h^E} \Big)^p.
		\end{equation}
		We now use the fact that $\int_X |s_i(x) \wedge \overline{s_i}(x)|_{(h^L)^k \otimes h^E} = 1$, $i = 1, \ldots, N_k$, the convexity of the function $x \mapsto x^p$ and Jensen's inequality to deduce
		\begin{equation}\label{lem_d_p_toepl6}
			{\rm{Tr}}[T_{k}(|\dot{h}^L_0|)^p]
			\leq
			\sum_{i = 1}^{N_k}  \int_X |\dot{h}^L_0|^p(x) \cdot |s_i(x) \wedge \overline{s_i}(x)|_{(h^L)^k \otimes h^E}.
		\end{equation}
		But (\ref{lem_d_p_toepl6}) corresponds exactly to (\ref{lem_d_p_toepl5}) since the basis $s_1, \ldots, s_{N_k}$ is orthonormal.
	\end{proof}
	\par 
	There are two obstacles for using Lemma \ref{lem_d_p_toepl} in the context of Theorem \ref{thm_3_step}.
	First, to correctly estimate the trace of a Toeplitz operator, we need tight bounds for Bergman kernels.
	This is only possible under sufficient regularity assumptions on metrics on $L$, which we don't have for geodesic rays, as they are only $\mathscr{C}^{1, 1}$ in general.
	Moreover, in Lemma \ref{lem_d_p_toepl}, the canonical line bundle plays a crucial role, but it doesn't appear in the $L^2$-norm from Theorem \ref{thm_3_step}.
	The second obstacle is that the geodesic rays $h^{\mathcal{T}_1}_t, h^{\mathcal{T}_2}_t$, $t \in [0, +\infty[$, associated with two test configurations $\mathcal{T}_1, \mathcal{T}_2$ are not necessarily ordered as it is required for $h^L_0, h^L_1$ from Lemma \ref{lem_d_p_toepl}.
	\par 
	To overcome the first obstacle, we need to replace the $L^2$-norms associated with the geodesic ray of Hermitian metrics on the line bundle by the the $L^2$-norms associated with a ray of smooth Hermitian metrics on the line bundle.
	More precisely, let $\mathcal{T} = (\pi: \mathcal{X} \to \comp, \mathcal{L})$ be a test configuration such that $(\pi, \mathcal{L})$ is an ample semistable snc model.
	We denote by $K_{\mathcal{X} / \comp}$ the relative canonical line bundle of $\pi: \mathcal{X} \to \comp$, defined as $K_{\mathcal{X} / \comp} = K_{\mathcal{X}} \otimes K_{\comp}^*$, where $K_{\mathcal{X}} = \wedge^{n + 1} T^* \mathcal{X}$ and $K_{\comp} = T^* \comp$ are the canonical line bundles of $\mathcal{X}$ and $\comp$.
	Let us fix a smooth Hermitian metric $h^K_{\mathcal{X} / \comp}$ on $K_{\mathcal{X} / \comp}$ and a Kähler form $\omega$ on $X$.
	We denote by $h^{\mathcal{T}}_t$, $t \in [0, +\infty[$, the geodesic ray of metrics on the line bundle, and by $h^{\mathcal{T} {\rm{sm}}}_t$, $t \in [0, +\infty[$, the ray of smooth positive metrics on $L$ associated with some smooth positive metric on $\mathcal{L}$ in the same way as before Theorem \ref{thm_ph_st_regul}.
	\par 
	With this data, there are several ways of putting a norm on $H^0(X, L^k)$. 
	First, we have the norm ${\rm{Hilb}}_k(h^{\mathcal{T}}_t, \omega)$, induced by $h^{\mathcal{T}}_t$ and $\omega$ for any $t \in [0, +\infty[$. 
	Second, due to the canonical isomorphism $K_{\mathcal{X} / \comp}|_{X_t} = K_X$ and the trivial identification $H^0(X, L^k) = H^0(X, L^k \otimes K_X^* \otimes K_X)$, we can endow $H^0(X, L^k)$ with the norm ${\rm{Hilb}}_k^{\rm{can}}(h^{\mathcal{T} {\rm{sm}}}_t, h^{K*}_{\mathcal{X} / \comp}|_{X_t})$.	
	\begin{lem}\label{lem_repl_l2_l2can}
		There is $C > 0$, such that for any $k \in \nat$, $t \in [0, +\infty[$, we have
		\begin{equation}
			d_{+ \infty} \Big( 
			{\rm{Hilb}}_k(h^{\mathcal{T}}_t, \omega),  
			{\rm{Hilb}}_k^{\rm{can}}(h^{\mathcal{T} {\rm{sm}}}_t, h^{K*}_{\mathcal{X} / \comp}|_{X_t})
			\Big)
			\leq
			C(t + k + 1).
		\end{equation}
	\end{lem}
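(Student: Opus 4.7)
The plan is to factor the comparison through the intermediate norm ${\rm{Hilb}}_k(h^{\mathcal{T}{\rm{sm}}}_t, \omega)$, whose comparisons with the two norms in the statement reflect two essentially independent phenomena: the pluripotential-theoretic comparison of the two rays of metrics on $L$, and the differential-geometric comparison of two families of volume forms on the fibers $X_t$.

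For the first leg — between ${\rm{Hilb}}_k(h^{\mathcal{T}}_t, \omega)$ and ${\rm{Hilb}}_k(h^{\mathcal{T}{\rm{sm}}}_t, \omega)$ — I would invoke Theorem \ref{thm_ph_st_regul} directly, which produces a constant $C_0 > 0$ independent of $t$ with $\exp(-C_0) \cdot h^{\mathcal{T}{\rm{sm}}}_t \leq h^{\mathcal{T}}_t \leq \exp(C_0) \cdot h^{\mathcal{T}{\rm{sm}}}_t$. Raising to the $k$-th power and integrating, one sees that the two Hermitian norms are equivalent within a factor $\exp(\pm C_0 k)$, so their $d_{+\infty}$-distance is at most $C_0 k$. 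This step uses no information about the model $\pi$ beyond what is encoded in the boundedness of the geodesic ray at infinity supplied by Phong--Sturm.

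The second leg requires first unwinding the canonical norm: via the tautological isomorphism $L^k \simeq L^k \otimes K_X^* \otimes K_X$, together with (\ref{eq_defn_can_prod}) and (\ref{eq_vol_f_defn_sm}), the canonical norm equals the ordinary $L^2$-norm on $H^0(X_t, L^k)$ induced by the line bundle metric $(h^{\mathcal{T}{\rm{sm}}}_t)^k$ and the volume form $dV_{h^K_{\mathcal{X}/\comp}|_{X_t}}$. Thus the task reduces to comparing, uniformly in $t$, the fixed form $\omega^n/n!$ on $X \simeq X_t$ with the volume form on $X_t$ cut out by the restriction to the fiber of a smooth Hermitian metric on $K_{\mathcal{X}/\comp}$. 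I would carry this out by invoking the principle established through (\ref{eq_compar_metr_tc}): any two pieces of smooth positive data on $\mathcal{X}$ (Kähler forms on the total space, smooth metrics on line bundles) compare on $X_\tau$ — viewed as an object on $X$ through the $\comp^*$-action identification — within a factor $|\tau|^{\pm N}$ for some $N$. Applying this both to a Kähler form $\tilde{\omega}$ on $\mathcal{X}$ (whose fibrewise restrictions $\tilde{\omega}^n/n!$ furnish one family of volume forms) and to the relation between $h^K_{\mathcal{X}/\comp}$ and the metric on $K_{X_\tau}$ naturally induced by $\tilde{\omega}$, one obtains, after transport to $X$, the bound
\begin{equation*}
\exp(-C_1 t - C_1) \cdot \omega^n/n! \leq dV_{h^K_{\mathcal{X}/\comp}|_{X_t}} \leq \exp(C_1 t + C_1) \cdot \omega^n/n!,
\end{equation*}
for some $C_1 > 0$ independent of $t$. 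Integrating $|\cdot|^2_{(h^{\mathcal{T}{\rm{sm}}}_t)^k}$ against both sides contributes $C_1 t + C_1$ to the $d_{+\infty}$-distance between the two $L^2$-norms. Combining the two legs via the triangle inequality for $d_{+\infty}$ yields the desired bound $C(t + k + 1)$.

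The main obstacle will be the bookkeeping in the second leg: the $\comp^*$-action degenerates at $X_0$, and one must ensure that the polynomial estimate in $|\tau|^{-1}$ — equivalently, the exponential estimate in $t$ — holds uniformly across the fibers and in all the geometric data involved. By contrast, the first leg is essentially a direct quotation of Phong--Sturm's regularity result, and the final assembly is routine.
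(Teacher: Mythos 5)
Your proposal is correct and follows essentially the same route as the paper: the same factorization through the intermediate norm ${\rm{Hilb}}_k(h^{\mathcal{T}{\rm{sm}}}_t, \omega)$, with Theorem \ref{thm_ph_st_regul} supplying the $O(k)$ bound for the first leg and (\ref{eq_compar_metr_tc}) together with the volume-form interpretation of ${\rm{Hilb}}_k^{\rm{can}}$ (the observation following (\ref{eq_vol_f_defn_sm})) supplying the $O(t+1)$ bound for the second. The only cosmetic deviation is that the paper views $\omega$ directly as the relative Kähler form of the trivial test configuration when invoking (\ref{eq_compar_metr_tc}), rather than passing through an auxiliary Kähler form $\tilde\omega$ on $\mathcal{X}$, but this does not change the substance of the argument.
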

	\begin{proof}
		By Theorem \ref{thm_ph_st_regul}, there is $C > 0$, such that for any $k \in \nat$, $t \in [0, +\infty[$, we have
		\begin{equation}\label{eq_repl_l2_l2can1}
			d_{+ \infty} \Big( 
			{\rm{Hilb}}_k(h^{\mathcal{T}}_t, \omega),  
			{\rm{Hilb}}_k(h^{\mathcal{T} {\rm{sm}}}_t, \omega)
			\Big)
			\leq
			C k.
		\end{equation}
		Remark that $\omega$ can be viewed as a relative Kähler form of the trivial test configuration $\comp \times X \to \comp$.
		By (\ref{eq_compar_metr_tc}) and the observation after (\ref{eq_vol_f_defn_sm}), we conclude that there is $C > 0$, such that for any $k \in \nat$, $t \in [0, +\infty[$, we have
		\begin{equation}\label{eq_repl_l2_l2can2}
			d_{+ \infty} \Big( 
			{\rm{Hilb}}_k(h^{\mathcal{T} {\rm{sm}}}_t, \omega),
			{\rm{Hilb}}_k^{\rm{can}}(h^{\mathcal{T} {\rm{sm}}}_t, h^{K*}_{\mathcal{X} / \comp}|_{X_t})
			\Big)
			\leq
			C t + C.
		\end{equation}		
		A combination of (\ref{eq_repl_l2_l2can1}) and (\ref{eq_repl_l2_l2can2}) yields Lemma \ref{lem_repl_l2_l2can}.
	\end{proof}
	\par 
	Now, we explain how to bound a trace of a Toeplitz operator on a degenerating family of manifolds.
	We prefer to state this result in a greater generality, as it might be of independent interest, and its proof wouldn't be any more complicated.
	We fix an ample semistable snc model $(\pi : \mathcal{X} \to \mathbb{D}, \mathcal{L})$ and endow $\mathcal{L}$ with a smooth positive metric $h^{\mathcal{L}}$.
	Let $\mathcal{E}$ be a holomorphic vector bundle over $\mathcal{X}$ endowed with a Hermitian metric $h^{\mathcal{E}}$.
	For $\tau \in \mathbb{D}$, we denote by $X_{\tau}$ the fiber of $\pi$ at $\tau$. 
	We also use the following notations $L_{\tau} := \mathcal{L}|_{X_{\tau}}$, $h^L_{\tau} := h^{\mathcal{L}}|_{X_{\tau}}$, $E_{\tau} := \mathcal{E}|_{X_{\tau}}$, $h^E_{\tau} := h^{\mathcal{E}}|_{X_{\tau}}$.
	For $k \in \nat$, $0 < |\tau| < 1$, we denote the Berezin-Toeplitz operator (with respect to the norm ${\rm{Hilb}}_k^{\rm{can}}(h^L_{\tau}, h^E_{\tau})$) associated with $f_{\tau} \in L^{\infty}(X_{\tau})$ by $T_{k}(f_\tau) \in \enmr{H^0(X, L_{\tau}^k \otimes E_{\tau} \otimes K_{X_{\tau}})}$.
	The following result, proved in Section \ref{sect_toepl}, is a generalization of the asymptotic results of Boutet de Monvel-Guillemin \cite{BoutGuillSpecToepl} and Ma-Marinescu \cite{MaHol}, \cite{MaMarToepl} to the singular family setting.
	\begin{thm}\label{thm_unif_bound_toepl}
		For any $\epsilon, C > 0$, there is $k_0 \in \nat$, such that for any $f_{\tau} \in L^{\infty}(X_{\tau})$, $0 < | \tau | < \frac{1}{2}$, verifying $|f_{\tau}| < C$, the following holds: for any $0 < | \tau | < \frac{1}{2}$, $k \geq k_0$, we have
		\begin{equation}
			\bigg|
				\frac{\tr{T_k(f_{\tau})} }{\dim H^0(X_{\tau}, L_{\tau}^k \otimes E_{\tau} \otimes K_{X_{\tau}})}
				- 
				\frac{1}{\int_{X_{\tau}} c_1(L_{\tau})^n} \int_{X_{\tau}} f_{\tau}(x) \cdot c_1(L_{\tau}, h^L_{\tau})^{n}
			\bigg|
			\leq 
			\epsilon.
		\end{equation}
	\end{thm}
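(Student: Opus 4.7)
The plan is to realize $\tr{T_k(f_\tau)}$ as an integral of $f_\tau$ against the on-diagonal Bergman kernel density on $X_\tau$, and then to establish an on-diagonal Bergman kernel expansion that is uniform in the degeneration parameter $\tau$. If $s_1, \ldots, s_{N_k(\tau)}$ denotes an orthonormal basis of $H^0(X_\tau, L_\tau^k \otimes E_\tau \otimes K_{X_\tau})$ with respect to ${\rm{Hilb}}_k^{\rm{can}}(h^L_\tau, h^E_\tau)$, then the measure $B_{k,\tau}(x) := \sum_i |s_i(x) \wedge \overline{s_i(x)}|_{(h^L_\tau)^k \otimes h^E_\tau}$ on $X_\tau$ satisfies
\begin{equation*}
	\tr{T_k(f_\tau)} = \int_{X_\tau} f_\tau(x) \cdot B_{k,\tau}(x),
\end{equation*}
and the theorem reduces to showing that, uniformly for $0 < |\tau| < \tfrac{1}{2}$ and $x \in X_\tau$, one has the asymptotic of measures
\begin{equation*}
	B_{k,\tau}(x) = \frac{k^n \rk{E}}{n!} \cdot c_1(L_\tau, h^L_\tau)^n(x) + O(k^{n-1}).
\end{equation*}

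On a fixed compact polarized Kähler manifold this is the classical Bergman kernel expansion of Catlin, Zelditch, Dai-Liu-Ma and Ma-Marinescu. The main obstacle is uniformity in $\tau$ as $\tau \to 0$, since the fibers $X_\tau$ degenerate to the simple normal crossings central fiber $X_0$; the intrinsic injectivity radius and the curvature of a fiberwise metric induced by any fixed ambient Kähler form can blow up near the crossings, so the naive Bergman kernel bounds deteriorate. My plan to overcome this is the following. First, using the properness of $\pi$ over $\overline{\mathbb{D}(1/2)}$, work in finitely many local holomorphic charts on $\mathcal{X}$; near any point of $X_0$, the snc hypothesis provides coordinates $(z_1, \ldots, z_{n+1})$ in which $\pi = z_1 \cdots z_r$ and in which $\mathcal{L}, \mathcal{E}$ are trivialized. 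Second, invoke the localization principle of Dai-Liu-Ma \cite{DaiLiuMa} and Ma-Marinescu \cite{MaHol}: an off-diagonal Gaussian decay for $B_{k,\tau}$, powered by a spectral gap for the Kodaira Laplacian $\Box_{k,\tau}$ on positive degrees, reduces $B_{k,\tau}(x)$ to a computation in a neighborhood of $x$ in $X_\tau$ of intrinsic radius $k^{-1/4}$, up to errors exponentially small in $\sqrt{k}$. Third, rescale by $y = \sqrt{k}(z - x)$ in the chart; since the Kähler potential of $h^{\mathcal{L}}$ is smooth on the compact set $\pi^{-1}(\overline{\mathbb{D}(1/2)})$, its restriction to the rescaled fiber converges uniformly in $x \in X_\tau$ and $\tau$ to the standard Bargmann quadratic model on $\comp^n$, and the model computation yields the claimed leading term.

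The technically delicate step will be verifying the uniform spectral gap for $\Box_{k,\tau}$ and the uniform off-diagonal Gaussian decay in this degenerate regime. My approach is to work with ambient Hermitian data and deduce the required estimates on each $X_\tau$ by restriction: the positivity of $c_1(\mathcal{L}, h^{\mathcal{L}})$ on $\mathcal{X}$ dominates a fixed positive $(1,1)$-form arising from a Kähler form on $\mathcal{X}$, and restricting this bound to each fiber yields a Bochner-Kodaira-Nakano estimate with a constant independent of $\tau$, hence a spectral gap of the form $\frac{1}{k}\Box_{k,\tau} \geq c > 0$ on $(0,q)$-forms with $q \geq 1$, uniformly in $\tau$. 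The off-diagonal decay of $B_{k,\tau}$ then follows from the finite propagation speed method of \cite{MaHol}, executed entirely in ambient coordinates so that all curvature bounds stay uniform across the family. Once the uniform expansion of $B_{k,\tau}$ is in hand, integrating against $f_\tau$ with $|f_\tau| < C$ gives $\tr{T_k(f_\tau)} = \frac{k^n \rk{E}}{n!} \int_{X_\tau} f_\tau \cdot c_1(L_\tau, h^L_\tau)^n + O(k^{n-1})$ uniformly in $\tau$, and the special case $f_\tau \equiv 1$, together with $\tr{T_k(1)} = \dim H^0(X_\tau, L_\tau^k \otimes E_\tau \otimes K_{X_\tau})$, yields a uniform asymptotic Riemann-Roch formula for the dimension. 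Taking the ratio and using that $\int_{X_\tau} c_1(L_\tau)^n$ is constant in $\tau$ by flatness of $\pi$ and strictly positive by ampleness of $\mathcal{L}$, the ratio of remainders is $O(1/k)$ uniformly, and choosing $k_0$ large enough to make this smaller than $\epsilon$ concludes the proof.
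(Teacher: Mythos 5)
Your overall plan (trace = integral against the on-diagonal Bergman density, then a Bergman kernel expansion that is uniform in $\tau$, combined with Riemann--Roch) is the right framework, and your observation that a uniform spectral gap can be forced by bounding curvatures ambiently on the total space is exactly what the paper does in Lemma \ref{lem_spec_gap}. But the central step has a genuine gap: you want a uniform on-diagonal expansion $B_{k,\tau}(x) = \frac{k^n \rk{E}}{n!} c_1(L_\tau, h^L_\tau)^n(x) + O(k^{n-1})$ for \emph{all} $x \in X_\tau$, including points near the singular set $\Sigma$ of $X_0$, and this cannot be obtained by the rescaling argument you describe. Near a crossing with $\pi = z_0 z_1$, the fiber $X_\tau = \{z_0 z_1 = \tau\}$ develops a neck of width $\sim \sqrt{|\tau|}$, and the restriction of a smooth ambient Hermitian metric to $X_\tau$ does not have uniformly bounded intrinsic geometry there: the injectivity radius collapses and the curvature blows up as $\tau\to 0$. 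Rescaling by $y=\sqrt{k}(z-x)$ in a fiber chart does not produce the standard Bargmann model uniformly in $\tau$ for $x$ near $\Sigma$, because the neighborhood one needs to localize to (intrinsic radius $k^{-1/4}$) can wrap around the degenerating neck. So the $O(k^{n-1})$ error term with constants uniform in $\tau$ over \emph{all} of $X_\tau$ is not available, and the final ``uniform $O(1/k)$'' conclusion is too strong.

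The paper sidesteps this issue rather than confronting it: Theorem \ref{lem_toepl_2} establishes the uniform Bergman kernel expansion only on compact subsets of $\mathcal{X}\setminus\Sigma$, i.e.\ away from the crossings, and Lemma \ref{lem_toepl_1} shows that for any $\epsilon>0$ there is a $\tau$-independent neighborhood $U_\epsilon\supset\Sigma$ whose $c_1(L_\tau,h^L_\tau)^n$-mass is below $\epsilon$. Combining this with asymptotic Riemann--Roch (which controls the total Bergman mass) shows that the Bergman density also has mass $\leq \epsilon k^n$ in $U_\epsilon$. The integral against $f_\tau$ is then split: on the complement of $U_\epsilon$, the uniform expansion from Theorem \ref{lem_toepl_2} applies; on $U_\epsilon$ both the Bergman mass and the Monge--Ampère mass are small, and $f_\tau$ is uniformly bounded, so the contribution is $O(\epsilon)$. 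This only yields the weaker ``$\leq\epsilon$ for $k\geq k_0(\epsilon)$'' statement rather than $O(1/k)$, which is precisely what Theorem \ref{thm_unif_bound_toepl} asserts. To repair your proposal you would need to insert exactly such a cut-off: restrict your uniform expansion claim to compact subsets of $\mathcal{X}\setminus\Sigma$, and add the mass-concentration argument near $\Sigma$ (Lemma \ref{lem_toepl_1} plus Riemann--Roch) to show the near-$\Sigma$ contribution is negligible.
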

	\par 
	To overcome the second obstacle for using Lemma \ref{lem_d_p_toepl} in the context of Theorem \ref{thm_3_step}, we use the rooftop envelope, as explained below.
	Recall that for bounded metrics $h^L_0$, $h^L_1$ on $L$, we define the rooftop envelope $P(h^L_0, h^L_1)$, following Ross-Witt Nystr{\"o}m \cite{RossNystAnalTConf}, as $P(h^L_0, h^L_1)(x) := \inf \{ h^L(x) :  h^L_0, h^L_1 \leq h^L  \}$, $x \in X$, where $h^L$ runs over all bounded psh metrics on $L$.
	Clearly, $P(h^L_0, h^L_1)$ is bounded and psh.
	The Pythagorean formula from \cite{DarvasMabCompl} says
	\begin{equation}\label{eq_pyth}
		d_p(h^L_0, h^L_1)^p
		=
		d_p(h^L_0, P(h^L_0, h^L_1))^p
		+
		d_p(h^L_1, P(h^L_0, h^L_1))^p.
	\end{equation}
	\begin{lem}\label{prop_lattice_norms}
		For any two bounded Hermitian metrics $h^L_0$, $h^L_1$ on $L$, and any $p \in [1, +\infty[$, we have
		\begin{multline}
			d_p \Big( {\rm{Hilb}}_k^{\rm{can}}(h^L_0, h^E), {\rm{Hilb}}_k^{\rm{can}}(h^L_1, h^E) \Big)^p 
			\leq 
			d_p \Big( {\rm{Hilb}}_k^{\rm{can}}(h^L_0, h^E), {\rm{Hilb}}_k^{\rm{can}}(P(h^L_0, h^L_1), h^E) \Big)^p
			\\
			+ 
			d_p \Big( {\rm{Hilb}}_k^{\rm{can}}(h^L_1, h^E), {\rm{Hilb}}_k^{\rm{can}}(P(h^L_0, h^L_1), h^E) \Big)^p.
		\end{multline}
	\end{lem}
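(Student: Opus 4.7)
The plan is to reduce this statement to a purely finite-dimensional inequality about Hermitian norms on a single vector space, and then to prove that inequality by combining operator monotonicity of the logarithm with Weyl's eigenvalue inequality.

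First, since $P(h^L_0, h^L_1) \geq h^L_0, h^L_1$ by the definition of the rooftop envelope, and since the canonical $L^2$-norm (\ref{eq_defn_can_prod}) is monotone in the metric on $L$, the induced Hermitian norms on $V := H^0(X, L^k \otimes E \otimes K_X)$ satisfy $H_0, H_1 \leq H_P$, where $H_i := {\rm{Hilb}}_k^{\rm{can}}(h^L_i, h^E)$ for $i = 0, 1$ and $H_P := {\rm{Hilb}}_k^{\rm{can}}(P(h^L_0, h^L_1), h^E)$. The claim then reduces to the following linear-algebraic statement: for Hermitian norms $H_0, H_1, H_P$ on a finite-dimensional vector space $V$ with $H_0, H_1 \leq H_P$ and any $p \in [1, +\infty)$,
\begin{equation*}
	d_p(H_0, H_1)^p \leq d_p(H_0, H_P)^p + d_p(H_1, H_P)^p.
\end{equation*}

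To prove this, I use the transfer-map description (\ref{eq_dist_transf}). Taking $H_P$ as reference, write $H_i(\cdot, \cdot) = \langle A_i \cdot, \cdot \rangle_{H_P}$ with $A_0, A_1 \leq I$ positive Hermitian, and set $B_i := -\log A_i \geq 0$; then $d_p(H_i, H_P)^p = {\rm{Tr}}[B_i^p]/\dim V$. The transfer map from $H_0$ to $H_1$ equals $A_0^{-1} A_1$, which shares its spectrum with the Hermitian operator $M := e^{B_0/2} e^{-B_1} e^{B_0/2}$, so $d_p(H_0, H_1)^p = {\rm{Tr}}[|\log M|^p]/\dim V$. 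The problem is thus reduced to the trace inequality ${\rm{Tr}}[|\log M|^p] \leq {\rm{Tr}}[B_0^p] + {\rm{Tr}}[B_1^p]$.

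I split $|\log M|^p = (\log M)_+^p + (\log M)_-^p$ and bound the two contributions separately. For the positive part, $e^{-B_1} \leq I$ yields the operator inequality $M \leq e^{B_0}$; operator monotonicity of the logarithm (L\"owner) then gives $\log M \leq B_0$, and Weyl's eigenvalue inequality combined with the non-negativity of $B_0$ implies ${\rm{Tr}}[(\log M)_+^p] \leq {\rm{Tr}}[B_0^p]$. For the negative part, the standard similarity $\operatorname{spec}(AB) = \operatorname{spec}(BA)$ applied to $A := e^{B_0/2} e^{-B_1/2}$ and $B := e^{-B_1/2} e^{B_0/2}$ shows that $M^{-1}$ has the same spectrum as $e^{B_1/2} e^{-B_0} e^{B_1/2} \leq e^{B_1}$; Weyl's inequality together with the relation $\lambda_j^{\downarrow}(M^{-1}) = 1/\lambda_j^{\uparrow}(M)$ then yields the eigenvalue bound $\lambda_j^{\uparrow}(\log M) \geq -\lambda_j^{\downarrow}(B_1)$, from which ${\rm{Tr}}[(\log M)_-^p] \leq {\rm{Tr}}[B_1^p]$ follows by the same argument as for the positive part. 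Summing the two bounds gives the desired inequality.

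The main subtlety I anticipate is that the naïve symmetric dual $M \geq e^{-B_1}$ of the operator inequality $M \leq e^{B_0}$ fails in general in the non-commutative setting, so one cannot apply L\"owner--Weyl symmetrically to both sides; passing through the similarity of $M^{-1}$ with $e^{B_1/2} e^{-B_0} e^{B_1/2}$ is the correct workaround, since it still delivers the needed spectral inequality (which is all that is required for a trace bound).
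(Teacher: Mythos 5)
Your argument is correct, but it takes a genuinely different route from the paper's. After the common reduction to the finite-dimensional claim ($H_0,H_1\leq H_2$ implies $d_p(H_0,H_1)^p\leq d_p(H_0,H_2)^p+d_p(H_1,H_2)^p$), the paper fixes a basis that simultaneously diagonalizes $H_0$ and $H_1$, replaces $H_2$ by its Gram--Schmidt projection $\rho_e(H_2)$ onto norms diagonal in that basis — this projection is order-preserving and $1$-Lipschitz for $d_p$ by Boucksom--Eriksson — and then concludes from the trivial Pythagorean identity $d_p(H_0,H_1)^p=d_p(H_0,H_0\vee H_1)^p+d_p(H_1,H_0\vee H_1)^p$ valid in the diagonal picture, together with $H_0\vee H_1\leq\rho_e(H_2)$. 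You instead take $H_P$ as the reference metric, pass to the positive operators $B_i=-\log A_i\geq 0$, and bound the ordered spectrum of $\log M$ (with $M=e^{B_0/2}e^{-B_1}e^{B_0/2}$) from above by that of $B_0$ and from below by that of $-B_1$, using $e^{-B_1}\leq I$, $e^{-B_0}\leq I$, cyclic invariance of the spectrum, and Weyl's monotonicity; the trace inequality then follows termwise after splitting $|\log M|^p$ into positive and negative parts. Both proofs are complete; yours trades the one non-trivial ingredient (the Lipschitz property of Gram--Schmidt projection) for operator-theoretic inputs (Weyl and Löwner — though the appeal to operator monotonicity of $\log$ is actually dispensable here, since Weyl applied to $M\leq e^{B_0}$ already gives the scalar eigenvalue inequality $\lambda_j^\downarrow(M)\leq e^{\lambda_j^\downarrow(B_0)}$, and applying the scalar $\log$ suffices). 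Your concluding remark about the failure of the naive operator inequality $M\geq e^{-B_1}$ is exactly the right subtlety to flag, and the detour via $\operatorname{spec}(M^{-1})=\operatorname{spec}(e^{B_1/2}e^{-B_0}e^{B_1/2})$ handles it correctly.
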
		
	\begin{proof}
		Clearly, it sufficies to establish that for any Hermitian norms $H_0, H_1, H_2$ on a finitely-dimensional vector space $V$, verifying $H_0 \leq H_2$, $H_1 \leq H_2$, we have
		\begin{equation}\label{eq_lattice_norms}
			d_p(H_0, H_1)^p \leq d_p(H_0, H_2)^p + d_p(H_1, H_2)^p.
		\end{equation}
		\par 
		For this, let us fix a basis $e_1, \ldots, e_n$ of $V$, which diagonalises both $H_0$ and $H_1$.
		For any Hermitian norm $H$ on $V$, we denote by $\rho_e(H)$ the Hermitian norm on $V$ given by given by Gram-Schmidt projection, i.e. it is the unique Hermitian norm on $V$, for which the basis $e_1, \ldots, e_n$ is orthogonal and such that for any $i = 1, \ldots, n$, we have
		\begin{equation}
			\| e_i \|_{\rho_e(H)} :=
			\inf_{a_1, \ldots, a_{i - 1} \in \comp} \Big\| e_i + \sum_{j < i} a_j e_j \Big\|_{H}.
		\end{equation} 
		The key property of the Gram-Schmidt projection is that it is 1-Lipschitz, see Boucksom-Eriksson \cite[Lemma 3.4]{BouckErik21}.
		More precisely, for any Hermitian norms $H, H' \in \mathcal{H}_V$, we have
		\begin{equation}\label{eq_gr_sch_decr}
			d_p (\rho_e(H), \rho_e(H')) \leq d_p (H, H').
		\end{equation}
		From (\ref{eq_gr_sch_decr}), we see that to establish (\ref{eq_lattice_norms}), it is enough to prove that 
		\begin{equation}\label{eq_proj_lid}
			d_p(H_0, H_1)^p \leq d_p(H_0, \rho_e(H_2))^p + d_p(H_1, \rho_e(H_2))^p.
		\end{equation}
		As Gram-Schmidt projection obviously preserves order, we have $H_0 \leq \rho_e(H_2)$, $H_1 \leq \rho_e(H_2)$.
		Since all three norms $H_0, H_1, \rho_e(H_2)$ are diagonalized in the same basis, we conclude that 
		\begin{equation}\label{eq_vee_born}
			H_0 \vee H_1 \leq \rho_e(H_2),
		\end{equation}
		where $H_0 \vee H_1 = \| \cdot \|_{\vee}$ is the Hermitian norm, which is diagonalized in basis $e_1, \ldots, e_n$, and which verifies $\| e_i \|_{\vee} := \max \{ \| e_i \|_{H_0}, \| e_i \|_{H_1} \}$.
		Remark, however, that for trivial reasons, we have 
		\begin{equation}\label{eq_vee_sum}
			d_p(H_0, H_1)^p = d_p(H_0, H_0 \vee H_1)^p + d_p(H_1, H_0 \vee H_1)^p.
		\end{equation}
		We deduce (\ref{eq_proj_lid}) by (\ref{eq_vee_born}) and (\ref{eq_vee_sum}).
	\end{proof}
	\begin{rem}
		We warn the reader that despite our use of the notation $H_0 \vee H_1$, the set of Hermitian norms on a given vector space is not a lattice with respect to its natural order!
	\end{rem}
	We are now finally ready to prove the main result of this section.
	\begin{proof}[Proof of Theorem \ref{thm_3_step}]
		First of all, the statement for $p = +\infty$ is trivial, so we only concentrate on the first part of Theorem \ref{thm_3_step}. We fix $p \in [1, +\infty[$.
		By Lemma \ref{lem_repl_l2_l2can}, from which we borrow the notations, and (\ref{eq_tr_weak}), it is sufficient to establish that for any $\epsilon > 0$, there are $k_0 \in \nat$, $C > 0$, such that for any $t \in [0, +\infty[$ and $k \geq k_0$, the following bound holds
		\begin{equation}\label{eq_3_steaap}
			d_p \Big( {\rm{Hilb}}_k^{\rm{can}}(h^{\mathcal{T}_1 {\rm{sm}}}_t, h^{K*}_{\mathcal{X} / \comp}|_{X_t}), {\rm{Hilb}}_k^{\rm{can}}(h^{\mathcal{T}_2 {\rm{sm}}}_t, h^{K*}_{\mathcal{X} / \comp}|_{X_t}) \Big)
			\leq
			k \cdot d_p(h^{\mathcal{T}_1 {\rm{sm}}}_t, h^{\mathcal{T}_2 {\rm{sm}}}_t)
			+
			Ct
			+
			\epsilon k t.
		\end{equation}
		\par
		\begin{sloppypar} 
		For $t \in [0, +\infty[$, we denote $h^L_{3, t} := P(h^{\mathcal{T}_1 {\rm{sm}}}_t, h^{\mathcal{T}_2 {\rm{sm}}}_t)$.
		By definition, we have $h^{\mathcal{T}_i {\rm{sm}}}_t \leq h^L_{3, t}$, $i = 1, 2$.
		Remark that there is $k_0 \in \nat$, such that for any $t \in [0, +\infty[$, the line bundle $L^{k_0} \otimes K_X$, endowed with metrics $(h^{\mathcal{T}_i {\rm{sm}}}_t)^{k_0} \otimes h^{K*}_{\mathcal{X} / \comp}|_{X_t}$, $i = 1, 2$, is positive over $X$.
		This follows from the fact that the metrics $h^{\mathcal{T}_i {\rm{sm}}}_t$ are obtained as restrictions of positive smooth metrics, defined on the total space of the test configuration $\mathcal{X}$.
		We, hence, may apply Lemma \ref{lem_d_p_toepl} for $E := L^{k_0} \otimes K_X$, $h^E := (h^{\mathcal{T}_i {\rm{sm}}}_t)^{k_0} \otimes h^{K*}_{\mathcal{X} / \comp}|_{X_t}$ to conclude that for any $i = 1, 2$, $t \in [0, +\infty[$ and $k \geq k_0$, we have
		\begin{multline}\label{eq_dist_ample_mod1}
			d_p \Big( {\rm{Hilb}}_k^{\rm{can}}(h^{\mathcal{T}_i {\rm{sm}}}_t, h^{K*}_{\mathcal{X} / \comp}|_{X_t}), {\rm{Hilb}}_k^{\rm{can}}(h^L_{3, t}, h^{K*}_{\mathcal{X} / \comp}|_{X_t}) \Big)
			\\
			\leq
			k \cdot
			\sqrt[p]{\frac{\tr{T_k(|\dot{h}^L_{i, t, 0}|^p)} }{\dim H^0(X, L^k)}}
			+
			k_0 \cdot d_{+ \infty}(h^{\mathcal{T}_i {\rm{sm}}}_t, h^L_{3, t}),
		\end{multline}
		where $\dot{h}^L_{i, t, 0}$ is the derivative at $s = 0$ of the distinguished geodesic segment $h^L_{i, t, s}$, $s \in [0, 1]$, between $h^{\mathcal{T}_i {\rm{sm}}}_t$ and $h^L_{3, t}$, defined as in (\ref{eq_geod_as_env}), and the Toeplitz operator $T_k(|\dot{h}^L_{i, t, 0}|^p)$ is taken with respect to the norm ${\rm{Hilb}}_k^{\rm{can}}(h^{\mathcal{T}_i {\rm{sm}}}_t, h^{K*}_{\mathcal{X} / \comp}|_{X_t})$.
		\par 
		From (\ref{eq_compar_metr_tc}), we deduce that there is $C > 0$, such that for any $i = 1, 2$, $t \in [0, +\infty[$, we have
		$
			h^{\mathcal{T}_i {\rm{sm}}}_t \leq h^L_{3, t} \leq \exp(Ct) \cdot h^{\mathcal{T}_i {\rm{sm}}}_t
		$.
		From (\ref{eq_bnd_darvas_sup}), we deduce
		$
			|\dot{h}^L_{i, t, 0}| \leq C t
		$.
		From Theorem \ref{thm_unif_bound_toepl}, applied for $f_{\tau}(x) := \frac{1}{t} |\dot{h}^L_{i, t, 0}|(x)$, where $t = - \log |\tau|$, (\ref{eq_compar_metr_tc}) and (\ref{eq_dist_ample_mod1}), we conclude that for any $\epsilon > 0$, there are $k_0 \in \nat$, $C > 0$, such that for any $t \in [0, +\infty[$ and $k \geq k_0$, the following bound holds
		\begin{multline}\label{eq_dist_ample_mod3}
			d_p \Big( {\rm{Hilb}}_k^{\rm{can}}(h^{\mathcal{T}_i {\rm{sm}}}_t, h^{K*}_{\mathcal{X} / \comp}|_{X_t}), {\rm{Hilb}}_k^{\rm{can}}(h^L_{3, t}, h^{K*}_{\mathcal{X} / \comp}|_{X_t}) \Big)
			\\
			\leq
			k \cdot
			\sqrt[p]{
			\frac{1}{\int c_1(L)^n}
			\int_{X}
			|\dot{h}^L_{i, t, 0}|^p c_1(L, h^{\mathcal{T}_i {\rm{sm}}}_t)^n
			}
			+
			Ct
			+
			\epsilon  k  t.
		\end{multline}
		\end{sloppypar}
		By taking a sum of $p$-powers of (\ref{eq_dist_ample_mod3}) for $i = 1, 2$, from Lemma \ref{prop_lattice_norms}, (\ref{eq_d_p_berndss}) and (\ref{eq_pyth}), we deduce (\ref{eq_3_steaap}).
	\end{proof}

	\subsection{Toeplitz operators on families of degenerating manifolds}\label{sect_toepl}
	On a fixed projective manifold polarized with a positive line bundle, the asymptotic study of Toeplitz operators is fully understrood from the works of Boutet de Monvel-Guillemin \cite{BoutGuillSpecToepl} and Ma-Marinescu \cite{MaHol}, \cite{MaMarToepl}.
	The main goal of this section is to generalize a portion of this theory for degenerating families of manifolds, i.e. to establish Theorem \ref{thm_unif_bound_toepl}.
	This will be done by establishing the uniform version of the diagonal Bergman kernel expansion.
	\par 
	Recall that a proper holomorphic map $\pi : \mathcal{X} \to \mathbb{D}$ is a semistable snc model if and only if $\mathcal{X}$ is smooth and for every $x_0 \in X_0$, there are local holomorphic coordinates $(z_0, \ldots, z_n)$ centered at $x_0 \in \mathcal{X}$, such that for some $l = 0, 1, \ldots, n$, the map $\pi$ writes as 
	\begin{equation}\label{eq_adapted}
		\pi(z_0, \ldots, z_n)
		=
		z_0 \cdots z_l,
	\end{equation}
	see \cite[p. 99]{ToroidEmb}.
	We later call such coordinates the \textit{adapted coordinates} at $x_0$.
	Clearly, $l = 0$ if and only if $x_0$ is a regular point of $X_0$.
	We denote by $\Sigma \subset \mathcal{X}$ the subset of singular points of $X_0$.
	By (\ref{eq_adapted}), $\pi : \mathcal{X} \to \mathbb{D}$ is a submersion away from $\Sigma$.
	\par 
	The following simple lemma will allow us to localize the calculations from Theorem \ref{thm_unif_bound_toepl} away from the singular points of $X_0$.
	We conserve the notations from Theorem \ref{thm_unif_bound_toepl} for the rest of this section.
	\begin{lem}\label{lem_toepl_1}
		For any $\epsilon > 0$, there is a neighborhood $U_{\epsilon} \subset \mathcal{X}$ of $\Sigma$, such that for any $0 < |\tau| < \frac{1}{2}$, the following estimate holds
		\begin{equation}
			\Big|
				\int_{X_{\tau} \cap U_{\epsilon}} c_1(L_{\tau}, h^L_{\tau})^{n}
			\Big|
			\leq 
			\epsilon.
		\end{equation}
	\end{lem}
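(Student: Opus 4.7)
The plan is to reduce the estimate to an explicit local volume bound in adapted coordinates (\ref{eq_adapted}). Since $\pi$ is proper and $\Sigma$ is a closed subset of $\pi^{-1}(0)$, it is compact, and admits a cover by finitely many charts $V_1, \ldots, V_N$ in each of which $\pi$ takes the product form $\pi(z) = z_0 \cdots z_{l_\alpha}$ for some $l_\alpha \geq 1$, and $\Sigma \cap V_\alpha$ is the locus where at least two of $z_0, \ldots, z_{l_\alpha}$ vanish. For $\delta > 0$ to be chosen small, I would set
\begin{equation*}
U_\delta := \bigcup_{\alpha = 1}^{N} \bigl\{ z \in V_\alpha \, : \, |z_i| < \delta \text{ for } 0 \leq i \leq l_\alpha \bigr\},
\end{equation*}
which is a neighborhood of $\Sigma$ shrinking to it as $\delta \to 0$. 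Since $c_1(\mathcal{L}, h^{\mathcal{L}})$ is a smooth bounded $(1,1)$-form on $\mathcal{X}$, a pointwise bound $|c_1(L_\tau, h^L_\tau)^n| \leq C \, \omega_E^n|_{X_\tau}$ holds in any local chart with a constant $C$ independent of $\tau$, where $\omega_E$ is the Euclidean Kähler form; hence it suffices to estimate the $\omega_E$-volume of $X_\tau \cap V_\alpha \cap U_\delta$ uniformly in $\tau \in \mathbb{D}(\tfrac{1}{2}) \setminus \{0\}$.

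Fix a chart with $\pi(z) = z_0 \cdots z_l$ and parametrize $X_\tau$ by $(z_1, \ldots, z_n)$ through $z_0 = \tau/(z_1 \cdots z_l)$. From the identity $dz_0 = -z_0 \sum_{k=1}^{l} dz_k/z_k$, the induced Kähler form reads $\iota_\tau^* \omega_E = \omega_E' + |z_0|^2 \, v \wedge \bar v$, where $\omega_E'$ is the Euclidean form in $(z_1, \ldots, z_n)$ and $v = \sum_{k=1}^{l} dz_k/z_k$; the rank-one perturbation formula then gives
\begin{equation*}
\iota_\tau^* \omega_E^n \ = \ n! \Bigl( 1 + |z_0|^2 \sum_{k=1}^{l} |z_k|^{-2} \Bigr) \, dV(z_1, \ldots, z_n),
\end{equation*}
up to bounded cross contributions. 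Combining $|z_0|^2 = |\tau|^2/|z_1 \cdots z_l|^2$ with the constraint $|z_0| < \delta$, i.e. $|z_1 \cdots z_l| > |\tau|/\delta$ — which automatically forces the chart contribution to be empty unless $|\tau| \leq \delta^{l+1}$ — I would pass to polar-logarithmic coordinates $u_k := -\log|z_k|$, which convert the region $\{|z_i| < \delta, \ |z_1 \cdots z_l| > |\tau|/\delta\}$ into a simplicial slab and reduce the estimate to an elementary polynomial-times-polylogarithmic integral in $\delta$ and $\log(\delta^{l+1}/|\tau|)$. The upshot is that the local $\omega_E$-volume is bounded by $C \delta^{2n} |\log \delta|^{l-1}$, uniformly in $\tau \in \mathbb{D}(\tfrac{1}{2}) \setminus \{0\}$.

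Summing over the finite cover and taking $\delta$ small enough depending on $\epsilon$ yields the lemma. The main technical point is the identification of the determinantal cancellation that makes the induced volume on $X_\tau$ scale as $1 + |z_0|^2 \sum_k |z_k|^{-2}$ rather than the crude product bound $\prod_k(1 + |z_0|^2/|z_k|^2)$; this cancellation is important for uniformity in $\tau$ when $l \geq 2$, because the naive product bound would introduce spurious $|\tau|^{2l}/|z_1 \cdots z_l|^{2l}$ factors whose uniform integrability requires a more delicate argument. Even without exploiting the sharper form, one can close the argument by restricting to the effective regime $|\tau| \leq \delta^{l+1}$, where polylogarithmic blow-ups in $1/|\tau|$ are absorbed by positive powers of $\delta$; but using the rank-one identity gives the most transparent proof.
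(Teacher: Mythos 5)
Your proof is correct in spirit and reaches the conclusion, but it takes a genuinely different route from the paper's. You fix a single parametrization of $X_\tau$ by $(z_1,\ldots,z_n)$ and compute the full induced volume form $\iota_\tau^*\omega_E^n = n!\bigl(1 + |z_0|^2\sum_{j=1}^l |z_j|^{-2}\bigr)\,dV$, so you must then integrate the singular weights $|z_0|^2/|z_j|^2$ against the constraint $|z_0|<\delta$. The paper instead dominates $c_1(\mathcal{L},h^{\mathcal{L}})^n$ by a bounded combination of the coordinate $(n,n)$-forms $dz_0\wedge d\bar z_0\wedge\cdots\wedge \widehat{dz_k}\wedge\widehat{d\bar z_k}\wedge\cdots\wedge dz_n\wedge d\bar z_n$, and for each $k\le l$ parametrizes $X_\tau$ by the coordinates \emph{other than} $z_k$; under that adapted parametrization the $k$-th form pulls back to the flat Euclidean volume form in the remaining $n$ coordinates with no singular factor whatsoever, so each term is trivially bounded by the Euclidean volume $O(\delta^{2n})$ of the coordinate box. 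The two computations are of course consistent (summing your terms over $j$ just reproduces the paper's decomposition), but the paper's choice of parametrization-per-summand makes the singular factor disappear structurally rather than having to be integrated away; your route buys a single parametrization and an explicit rank-one formula at the cost of a more delicate volume estimate.

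Two minor inaccuracies in your writeup, neither affecting validity: the ``up to bounded cross contributions'' hedge is unnecessary since the rank-one identity is exact, $(v\wedge\bar v)^2=0$ killing all cross terms; and the bound you state, $C\delta^{2n}|\log\delta|^{l-1}$, should in fact be $C\delta^{2n}$ with no polylogarithmic loss --- the constraint $|z_0|<\delta$ (equivalently a lower bound on $|z_1\cdots z_l|$) makes each term $\int |z_0|^2|z_j|^{-2}\,dV$ come out to $O(\delta^{2n})$ uniformly in $\tau$, as one checks by integrating out $z_j$ first.
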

	\begin{proof}
		By considering adapted coordinates around a point $x_0 \in \Sigma$, we see that it is enough to prove that for a given $\epsilon > 0$, one can choose $\delta > 0$ small enough, such that for any $0 < |\tau| < \frac{1}{2}$, $k = 0, \ldots, n$, $l = 1, \ldots, n$, we have
		\begin{equation}\label{lem_toepl_122}
			\Big|
			\int_{\substack{ |z_0|, |z_1|, \ldots, |z_n| \leq \delta \\ z_0 \cdots z_l = \tau }} dz_0 \wedge d\overline{z}_0 \wedge \cdots \wedge d\hat{z}_k \wedge d\hat{\overline{z}}_k \wedge \cdots \wedge dz_n \wedge d\overline{z}_n
			\Big|
			\leq 
			\epsilon,
		\end{equation}
		where by “hats" we denoted the missing coordinates. 
		Clearly, only the case $k \leq l$ is interesting, as otherwise the integral is zero.
		We can assume for brevity that $k = 0$ and $l = n$.
		After a polar change of coordinates, the bound (\ref{lem_toepl_122}) then reduces to the one of the form
		\begin{equation}
			\Big|
			\int_{\substack{ |r_1|, \ldots, |r_n| \leq \delta \\ r_1 \cdots r_n \geq \tau / \delta }} r_1 \cdots r_n dr_1 \wedge \cdots \wedge dr_n
			\Big|
			\leq 
			\epsilon,
		\end{equation}
		which holds trivially for $\delta > 0$ small enough.
	\end{proof}
	\par 
	\begin{sloppypar}
	Let us now denote by $B_{\tau, k}$ the orthogonal (Bergman) projection from the space of $L^2$-sections $L^2(X_{\tau}, L_{\tau}^k \otimes E_{\tau} \otimes K_{X_{\tau}})$ to $H^0(X_{\tau}, L_{\tau}^k \otimes E_{\tau} \otimes K_{X_{\tau}})$ with respect to the $L^2$-scalar product ${\rm{Hilb}}_k^{{\rm{can}}}(h^L_{\tau}, h^E_{\tau})$.
	For $x, y \in X_{\tau}$, we denote by $B_{\tau, k}(x, y) \in L_{\tau, x}^k \otimes (L_{\tau, y}^*)^k \otimes K_{X_{\tau}, x} \otimes K_{X_{\tau}, y}^* \otimes E_{\tau, x} \otimes E_{\tau, y}^*$ the Bergman kernel of $B_{\tau, k}$, defined so that for any $s \in L^2(X_{\tau}, L_{\tau}^k \otimes E_{\tau} \otimes K_{X_{\tau}})$, we have $B_{\tau, k} s (x) = \int \langle B_{\tau, k}(x, y), s(y) \rangle$.
	For $x \in X_{\tau}$, we view the diagonal Bergman kernel $B_{\tau, k}(x, x)$ as an element from ${\enmr{\mathcal{E}}} \otimes \wedge^{2 n} T\mathcal{X}/\mathbb{D}$, where $T\mathcal{X}/\mathbb{D} := T\mathcal{X} / \pi^*(T\mathbb{D})$.
	\end{sloppypar}
	\begin{thm}\label{lem_toepl_2}
		There are $a_i \in \ccal^{\infty}(\mathcal{X} \setminus \Sigma, {\enmr{\mathcal{E}}} \otimes \wedge^{2 n} T\mathcal{X}/\mathbb{D})$, $i \in \nat$, such that for any $j \in \nat$, $x \in \mathcal{X} \setminus X_0$, $\tau = \pi(x)$, there are $C > 0$, $k_0 \in \nat$, such that for any $k \geq k_0$, we have
		\begin{equation}\label{eq_toepl_2}
			\Big|
				\frac{1}{k^n} B_{\tau, k}(x, x) - \sum_{i = 0}^{j} \frac{a_i(x)}{k^i}
			\Big|
			\leq
			\frac{C}{k^{j + 1}}.
		\end{equation}
		Moreover, $a_0(x) = {\rm{Id}}_{E_{\tau}} \cdot c_1(L_{\tau}, h^L_{\tau})^n$ for any $x \in \mathcal{X} \setminus \Sigma$, and $C > 0$, $k_0 \in \nat$, can be chosen uniformly for $x \in \mathcal{X} \setminus X_0$ varying over $K \setminus X_0$ for compact subsets $K$ of $\mathcal{X} \setminus \Sigma$.
	\end{thm}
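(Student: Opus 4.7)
The plan is to adapt the classical diagonal Bergman kernel expansion (Catlin, Zelditch, Dai-Liu-Ma, Ma-Marinescu) to the degenerating family setting. The key observation is that $\pi : \mathcal{X} \to \mathbb{D}$ is a submersion on $\mathcal{X} \setminus \Sigma$ by (\ref{eq_adapted}) with $l = 0$: at every $x_0 \in \mathcal{X} \setminus \Sigma$ one may choose adapted holomorphic coordinates $(z_0, \ldots, z_n)$ with $\pi(z_0, \ldots, z_n) = z_0$ and trivialize $\mathcal{L}, \mathcal{E}$ near $x_0$. The fibers are then locally identified with the slices $\{z_0 = \tau\}$, and all of $h^{\mathcal{L}}, h^{\mathcal{E}}$, together with the fiberwise Kähler forms $c_1(\mathcal{L}, h^{\mathcal{L}})|_{X_\tau}$, depend smoothly on all coordinates. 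Because the diagonal asymptotic expansion is essentially local, it suffices to establish a localization principle: $B_{\tau,k}(x,x)$, for $x$ in a given compact $K \subset \mathcal{X} \setminus \Sigma$, is determined modulo $O(k^{-\infty})$ by the metric data in an arbitrarily small fixed neighborhood of $x$ in $\mathcal{X}$ whose closure is disjoint from $\Sigma$.

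The localization rests on a uniform spectral gap. Since $h^{\mathcal{L}}$ is smooth and positive on $\mathcal{L} \to \mathcal{X}$, the fiberwise restrictions $h^L_\tau$ have curvature bounded below by a positive multiple of a fixed reference Kähler form on $\mathcal{X}$, uniformly in $\tau$. The Bochner-Kodaira-Nakano formula (\cite[Theorem 1.4.12]{MaHol}) then yields constants $c_0 > 0$ and $k_0 \in \nat$, independent of $\tau \in \mathbb{D}^*$, such that the Kodaira Laplacian $\Box_{\tau,k}$ on $L^2(X_\tau, L_\tau^k \otimes E_\tau \otimes K_{X_\tau})$ satisfies $\Box_{\tau,k} \geq c_0 k$ on the orthogonal complement of its kernel for $k \geq k_0$. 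Combining this with finite propagation speed of the Dirac-type operator $D_{\tau,k} = \sqrt{2}(\dbar + \dbar^*)$, via the Fourier-theoretic argument of \cite[\S 4.1]{MaHol}, one obtains the following off-diagonal decay: for any compact $K \subset \mathcal{X} \setminus \Sigma$ and any open $U \supset K$ with $\overline{U} \cap \Sigma = \emptyset$, there exist $c, C > 0$ and $k_1 \in \nat$ such that for all $k \geq k_1$, $x \in K \setminus X_0$ and $y \in X_{\pi(x)} \setminus U$,
\begin{equation*}
\bigl| B_{\pi(x),k}(x,y) \bigr|_{h^L, h^E} \leq C k^n \exp(-c \sqrt{k}).
\end{equation*}

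With the localization in hand, I will apply the Dai-Liu-Ma rescaling method (\cite{DaiLiuMa}, \cite[Theorem 4.1.21]{MaHol}) fiberwise in the adapted coordinates at each $x_0 \in \mathcal{X} \setminus \Sigma$. Under the rescaling $z \mapsto z/\sqrt{k}$, the Kodaira Laplacian on the fiber through $x_0$ converges, in the appropriate weighted Sobolev norms, to the model harmonic oscillator on $\comp^n$ associated with the constant curvature $c_1(L_{\pi(x_0)}, h^L_{\pi(x_0)})(x_0)$, and the explicit model Bergman kernel produces the expansion (\ref{eq_toepl_2}). The coefficient $a_0(x_0) = \mathrm{Id}_{E} \cdot c_1(L, h^L)^n$ comes directly from the model computation on $\comp^n$, and each higher $a_i(x_0)$ is a universal polynomial in the $2i$-jets of $h^{\mathcal{L}}, h^{\mathcal{E}}, c_1(\mathcal{L}, h^{\mathcal{L}})$ at $x_0$; since these jets vary smoothly in $x_0 \in \mathcal{X} \setminus \Sigma$, every $a_i$ is a smooth section of $\enmr{\mathcal{E}} \otimes \wedge^{2n} T\mathcal{X}/\mathbb{D}$ over $\mathcal{X} \setminus \Sigma$ (extending across the regular part $X_0 \setminus \Sigma$ of the central fiber).

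The main obstacle is uniformity of the constants as $\pi(x) \to 0$: the fibers $X_\tau$ approach the singular central fiber and their global geometry degenerates, so one cannot invoke the bounded-geometry framework of \cite{MaHol} globally on $X_\tau$. The resolution is that both the off-diagonal decay and the Dai-Liu-Ma rescaling are intrinsically local near $x$: the analysis takes place inside a ball of radius $\sim 1/\sqrt{k}$ around $x$ in a fixed reference Kähler metric on $\mathcal{X}$, and for $k$ large enough this ball lies in a prescribed neighborhood of $K$ which is uniformly disjoint from $\Sigma$. Consequently, only the uniformly controlled geometric data near $K$ enters the estimates, so the constants $C, k_0$ may be chosen uniformly for $x \in K \setminus X_0$, completing the proof.
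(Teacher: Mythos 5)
Your proposal is essentially correct and deploys the same core ingredients as the paper's proof (uniform spectral gap via Bochner--Kodaira--Nakano with the $K_X$-twist, finite propagation speed, Dai--Liu--Ma rescaling). The architecture differs in one noteworthy respect. You run the Dai--Liu--Ma analysis \emph{directly} on the degenerating fibers $X_\tau$, arguing that the method is sufficiently local that the constants depend only on the geometric data near the compact set $K \ssubset \mathcal{X} \setminus \Sigma$. The paper instead constructs an explicit auxiliary \emph{non}-degenerating family $\pi' : \mathcal{X}' = \mathbb{D} \times \comp^n \to \mathbb{D}$ by extending the local potential of $h^{\mathcal{L}}$ in adapted coordinates to a complete psh function on $\comp^n$ via a regularized maximum, and the localization estimate (finite propagation speed plus the spectral gap) is then used to identify $B_{\tau,k}(x,x)$ with the Bergman kernel $B'_{\tau,k}$ of the model family up to $O(k^{-\infty})$. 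This lets the paper quote the family version of the Ma--Marinescu expansion on the non-degenerating $\mathcal{X}'$ essentially as a black box, yielding smooth coefficients on all of $\mathcal{X}'$ including $\tau = 0$, from which the smoothness of the $a_i$ across $X_0 \setminus \Sigma$ is immediate. Your route avoids the auxiliary model but shifts the burden onto a careful re-examination of the DLM argument: one must check that the rescaling step, the off-diagonal decay, and the elliptic/Sobolev estimates only consume geometric data uniformly controlled near $K$, and that the universal-polynomial dependence of the $a_i$ on jets of the data gives smoothness of the coefficients up to the regular part of the central fiber. You correctly flag these as the points requiring care, so there is no gap per se; the paper's auxiliary-family device simply packages that bookkeeping into a citation.
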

	\begin{rem}\label{rem_toepl_2}
		a) In a recent preprint \cite[Theorem 1.3]{WangZhouBerg}, Wang-Zhou considered a similar degenerating context and established another estimate for Bergman kernels.
		Our methods are different.
		\par 
		b)
		The existence of the asymptotic expansion and the fact that the constants from the bound (\ref{eq_toepl_2}) can be chosen uniformly for $x$ varying over compact subsets of $\mathcal{X} \setminus X_0$ from directly from the proof of Dai-Liu-Ma \cite{DaiLiuMa} of the Bergman kernel expansion.
	\end{rem}
	Before proving Theorem \ref{lem_toepl_2}, let us explain how it entails Theorem \ref{thm_unif_bound_toepl}.
	\begin{proof}[Proof of Theorem \ref{thm_unif_bound_toepl}]
		First, we have the following trivial relation between the trace of Berezin-Toeplitz operator and the Bergman kernel:
		\begin{equation}\label{eq_thm_toepl1}
			\tr{T_{\tau, k}(f_{\tau})}
			=
			\int_{X_{\tau}} f_{\tau}(x) \cdot {\rm{Tr}}^{\mathcal{E}_x}[B_{\tau, k}(x, x)],
		\end{equation}
		where ${\rm{Tr}}^{\mathcal{E}_x}[B_{\tau, k}(x, x)]$ is the trace of the endomorphism of $\mathcal{E}_x$ part of $B_{\tau, k}(x, x)$.
		For trivial reasons
		\begin{equation}
			\int_{X_{\tau}} {\rm{Tr}}^{\mathcal{E}_x}[B_{\tau, k}(x, x)]
			=
			\tr{B_{\tau, k}}
			=
			\dim H^0(X_{\tau}, L_{\tau}^k \otimes E_{\tau} \otimes K_{X_{\tau}}).
		\end{equation}
		From this, Theorem \ref{lem_toepl_2}, Lemma \ref{lem_toepl_1} and asymptotic Riemann-Roch theorem, stating 
		\begin{equation}\label{eq_thm_toepl2}
			\dim H^0(X_{\tau}, L_{\tau}^k \otimes E_{\tau} \otimes K_{X_{\tau}}) \sim \rk{E_{\tau}} \cdot k^n \cdot \int_{X_{\tau}} c_1(L_{\tau})^n,
		\end{equation}
		we conclude that for any $\epsilon > 0$, there is a neighborhood $U_{\epsilon} \subset X$ of $\Sigma$ and $k_0 \in \nat$, such that for any $0 < |\tau| < \frac{1}{2}$, $k \geq k_0$, we have 
		\begin{equation}
			\Big|
				\int_{X_{\tau} \cap U_{\epsilon}} {\rm{Tr}}^{\mathcal{E}_x}[B_{\tau, k}(x, x)]
			\Big|
			\leq 
			\epsilon \cdot k^n.
		\end{equation}
		From this, the uniform boundness of $f_{\tau}$ and Lemma \ref{lem_toepl_1}, we conclude that for any $\epsilon > 0$, there is a neighborhood $U_{\epsilon} \subset X$ of $\Sigma$ and $k_0 \in \nat$, such that for any $0 < |\tau| < \frac{1}{2}$, $k \geq k_0$, we have 
		\begin{equation}\label{eq_thm_toepl3}
		\begin{aligned}
			&
			\Big|
				\int_{X_{\tau} \cap U_{\epsilon}}  f_{\tau}(x) \cdot  {\rm{Tr}}^{\mathcal{E}_x}[B_{\tau, k}(x, x)]
			\Big|
			\leq 
			\epsilon \cdot k^n,
			\\
			&
			\Big|
				\int_{X_{\tau} \cap U_{\epsilon}} f_{\tau}(x) \cdot c_1(L_{\tau}, h^{L}_{\tau})^{n}
			\Big|
			\leq 
			\epsilon.
		\end{aligned}
		\end{equation}
		We conclude by (\ref{eq_thm_toepl1}), (\ref{eq_thm_toepl2}), (\ref{eq_thm_toepl3}) and Theorem \ref{lem_toepl_2}.
	\end{proof}
	\par 
	To establish Theorem \ref{lem_toepl_2}, we rely on the localization property of Bergman kernels.
	The spectral gap of the Kodaira Laplacian is central for this.
	We recall it in greater generality of non-compact manifolds -- in a form which we shall use it in the proof of Theorem \ref{lem_toepl_2}.
	\par 
	We fix a complex manifold $X$ with a holomorphic line bundle $L$ over it, endowed with a smooth positive Hermitian metric $h^L$.
	We assume that the manifold $X$ is complete when endowed with the metric associated with the Kähler form $c_1(L, h^L)$.
	Let $(E, h^E)$ be a Hermitian vector bundle over $X$, such that for some $C > 0$, the curvature $R^E$ of $(E, h^E)$, satisfies 
	\begin{equation}\label{eq_re_bound_cconsrt}
		\imun R^E \geq - C \cdot {\rm{Id}}_{E} \cdot c_1(L, h^L).
	\end{equation}
	We denote by $\square_k$ the Kodaira Laplacian of $X$ acting on the space of smooth sections of $L^k \otimes E \otimes K_X$.
	In other words, $\square_k := \dbar^* \circ \dbar$, where $\dbar^*$ is the formal adjoint of $\dbar : \mathscr{C}^{\infty}(X, L^k \otimes E \otimes K_X) \to \mathscr{C}^{\infty}(X, T^{(0, 1)*}X \otimes L^k \otimes E \otimes K_X)$ with respect to the $L^2$-norm induced by $h^L$, $h^E$ and the Kähler form $c_1(L, h^L)$. 
	Since $X$ is complete when endowed with a Kähler form $c_1(L, h^L)$, the operator $\square_k$ is essentially self-adjoint, cf. \cite[Lemma D.1.1]{MaHol}.
	In particular, its spectrum is well-defined.
	\begin{lem}\label{lem_spec_gap}
		There are $\mu > 0$, $k_0 \in \nat^*$, which depend only on the constant $C$ from (\ref{eq_re_bound_cconsrt}), such that for any $k \geq k_0$, the Laplacian has the following spectral gap property 
		\begin{equation}
			{\rm{Spec}}(\square_k) \subset \{0\} \cup [\mu k, +\infty[.
		\end{equation}
	\end{lem}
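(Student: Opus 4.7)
The plan is to derive the spectral gap from the Bochner-Kodaira-Nakano (Akizuki-Nakano) identity. For $s \in \mathscr{C}^{\infty}_c(X, L^k \otimes E \otimes K_X)$, viewed as an $(n,0)$-form with values in $F := L^k \otimes E$, the form $u := \dbar s \in \Omega^{n,1}(X, F)$ is automatically $\dbar$-closed. On the Kähler manifold $(X, \omega)$ with $\omega := \imun R^L$, the Akizuki-Nakano identity reads
\begin{equation*}
  \langle \square'' u, u \rangle = \langle \square' u, u \rangle + \langle [\imun R^F, \Lambda] u, u \rangle,
\end{equation*}
where $\square' := \partial\partial^* + \partial^*\partial$ and $\square'' := \dbar\dbardual + \dbardual\dbar$ act on forms with values in $F$. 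Since $\dbar u = 0$, the left-hand side equals $\|\dbardual u\|^2 = \|\square_k s\|^2$, while $\langle \square' u, u \rangle \geq 0$.

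For the curvature term, I would write $\imun R^F = k\omega \cdot {\rm{Id}}_E + \imun R^E$; hypothesis (\ref{eq_re_bound_cconsrt}) then yields $\imun R^F \geq (k - C) \omega \cdot {\rm{Id}}_F$ as a Hermitian form on $T^{1,0}X \otimes F$. The standard Nakano inequality for $(n,1)$-forms with values in such an $F$ gives
\begin{equation*}
  \langle [\imun R^F, \Lambda] u, u \rangle \geq (k - C) \|u\|^2 = (k - C) \langle \square_k s, s \rangle.
\end{equation*}
Putting the pieces together, for every compactly supported smooth $s$ one obtains the operator-theoretic inequality
\begin{equation*}
  \|\square_k s\|^2 \geq (k - C)\,\langle \square_k s, s \rangle.
\end{equation*}

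To convert this into the claimed spectral gap, I would invoke the completeness of $(X, \omega)$, which implies that $\square_k$ is essentially self-adjoint on $\mathscr{C}^{\infty}_c$ (cf.\ \cite[Lemma D.1.1]{MaHol}, already used in the statement of the lemma). By density, the inequality extends to the form domain of the closure, and applying the functional calculus to spectral projections of $\square_k$ forces ${\rm{Spec}}(\square_k) \subset \{0\} \cup [k - C, +\infty[$: indeed, on the spectral projection onto $[a,b] \subset \,]0,+\infty[$, the inequality reads $b^2 \geq (k-C)a$, so shrinking $b-a$ yields $a \geq k-C$. Setting $\mu := 1/2$ and $k_0 := \lceil 2C \rceil + 1$ gives the result, with constants depending only on $C$ as required. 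The only non-routine input is the Nakano inequality for the vector bundle $F = L^k \otimes E$: for a line bundle it is classical, and the twist by the auxiliary bundle $E$ is absorbed via (\ref{eq_re_bound_cconsrt}) so that $R^E$ is dominated by the dominant term $k R^L$ once $k$ is large; this is precisely where the dependence of $\mu$ and $k_0$ on $C$ alone enters.
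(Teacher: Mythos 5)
Your proof is correct and is essentially the paper's argument in a slightly different packaging: both rest on the Nakano (Bochner--Kodaira--Nakano) estimate for $(0,1)$-forms valued in $L^k \otimes E \otimes K_X$, where the twist by $K_X$ is exactly what makes the constants depend only on $C$ from (\ref{eq_re_bound_cconsrt}). The paper first bounds ${\rm{Spec}}(\square^1_k)$ from below and transfers to $\square_k$ via the $\dbar$-intertwining, whereas you specialize the identity to $u = \dbar s$ and obtain the operator inequality $\|\square_k s\|^2 \geq (k-C)\langle \square_k s, s\rangle$, then conclude by functional calculus — two standard and equivalent ways of extracting the same spectral gap.
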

	\begin{rem}
		The proof is a direct modification of \cite[Theorem 1.5.5]{MaHol}, which itself goes back to Bismut-Vasserot \cite{BVas}.
		The only new element is that $\mu$ depends only on $C$ -- a statement which is undoubtedly well-known to experts in the field.
		For this, it is crucial to consider the Laplacian acting on holomorphic sections of $L^k \otimes E$ twisted by $K_X$.
	\end{rem}
	\begin{proof}
	\begin{sloppypar}
		We define first the Laplacian $\square^1_k$ on the $(0, 1)$-forms with values in $L^k \otimes E \otimes K_X$ through the usual formula $\square^1_k := \dbar^* \circ \dbar + \dbar \circ \dbar^*$, where the adjoint is taken with respect to the $L^2$-norm induced by $h^L$, $h^E$ and the Kähler form $c_1(L, h^L)$. 
		Nakano's inequality, cf. \cite[Theorem 1.4.14 and Corollary 1.4.17]{MaHol}, states that for any $s \in \mathscr{C}^{\infty}(X, T^{(0, 1)*}X \otimes L^k \otimes E \otimes K_X)$ of compact support, we have
		\begin{equation}\label{eq_nakano}
			\langle \square^1_k s, s \rangle_{L^2} \geq 
			\frac{2}{3} \cdot 
			\sum_{j, k = 1}^{n} 
			\Big\langle (k R^L + R^E )(w_j, \overline{w}_k) \overline{w}_k \wedge \iota_{\overline{w}_j} s, s \Big\rangle_{L^2},
		\end{equation}
		where $w_i$, $i = 1, \ldots, n$, is the orthonormal basis of $T^{1, 0} X$, endowed with the Kähler form $c_1(L, h^L)$, $\iota_{\overline{w}_j}$ is the interior product with respect to $\overline{w}_j$, and $\langle \cdot, \cdot \rangle_{L^2}$ is the $L^2$-scalar product on $\mathscr{C}^{\infty}(X, T^{(0, 1)*}X \otimes L^k \otimes E \otimes K_X)$. 
		In particular, due to (\ref{eq_re_bound_cconsrt}), there is $k_0 \in \nat$, depending only on $C > 0$ from (\ref{eq_re_bound_cconsrt}), such that $\imun(k_0  R^L + R^E) \geq 0$.
		From this and (\ref{eq_nakano}), we deduce that for any $s \in \mathscr{C}^{\infty}(X, T^{(0, 1)*}X \otimes L^k \otimes E \otimes K_X)$, we have $
			\langle \square^1_k s, s \rangle_{L^2} \geq 
			\frac{2}{3} (k - k_0) \langle s, s \rangle_{L^2}
		$.
		Hence
		\begin{equation}\label{eq_spec_gap_1forms}
			{\rm{Spec}}(\square^1_k) \subset \Big[\frac{2}{3} (k - k_0), +\infty \Big[.
		\end{equation}
	\end{sloppypar}
		\par 
		Remark, that the operator $\dbar$ intervenes with Laplacians, hence, it intervenes with their spectral projections. In particular, for any eigenvector $s \in \mathscr{C}^{\infty}(X, L^k \otimes E \otimes K_X)$ of $\square_k$, the section $\dbar s$ is an eigenvector of $\square^1_k$ of the same eigenvalue.
		By this and (\ref{eq_spec_gap_1forms}), we deduce Lemma \ref{lem_spec_gap}.
	\end{proof}
	\begin{proof}[Proof of Theorem \ref{lem_toepl_2}]
		Clearly, it suffices to show that the estimate (\ref{eq_toepl_2}) is uniform in a neighborhood of any $x_0 \in X_0 \setminus \Sigma$.
		The main idea of our proof is to relate (through the localization property) the Bergman kernel on a degenerating family of manifolds with the Bergman kernel on a non-degenerating family, for which we know that the asymptotic expansion of Bergman kernels depends smoothly on the parameters of the family, see Remark \ref{rem_toepl_2}.
		\par 
		We fix adapted coordinates $h = (z_0, \ldots, z_n)$, defined in a neighborhood $U$ around $x_0$, and normalize them so that $h$ maps onto $\mathbb{D} \times \mathbb{D}_n$ in $\comp^{n + 1}$.
		We pick a local holomorphic trivialization $\sigma$ of $L$ over $U$.
		Then, under this trivialization, the Hermitian line bundle $(\mathcal{L}, h^{\mathcal{L}})|_U$ is identified with $(\comp \times \mathbb{D} \times \mathbb{D}_n, e^{- \phi})$, where $\phi$ is a plurisubharmonic smooth function over $\mathbb{D} \times \mathbb{D}_n$.
		Similarly, $(\mathcal{E}, h^{\mathcal{E}})|_U$ is identified with a holomorphically trivial vector bundle $\comp^r \times \mathbb{D} \times \mathbb{D}_n$, endowed with metric tensor $(h_{i, j})$, $i, j = 1, \ldots, r$.
		\par 
		Let us now construct a smooth function $\psi$ over $\mathbb{D} \times \comp^n$, plurisubharmonic over the fibers of $\mathbb{D} \times \comp^n \to \mathbb{D}$, such that over $\mathbb{D} \times \mathbb{D}_n(\frac{1}{4})$, $\psi$ coincides with $\phi$, and away from $\mathbb{D} \times \mathbb{D}_n$ -- with $A (|z_1|^2 + \cdots + |z_n|^2) + C$ for some constants $A > 0$, $C \in \real$, where $(z_1, \ldots, z_n)$ are the standard coordinates in $\comp^n$.
		For this, consider the function $\psi := \max_{\epsilon}(\phi, A (|z_1|^2 + \cdots + |z_n|^2- \frac{1}{3}))$, where $\max_{\epsilon}$ is the regularized maximum function, defined as in \cite[Lemma I.5.18]{DemCompl}, for $\epsilon > 0$ small enough.
		Then, for $A$ big enough, we see that $A (|z_1|^2 + \cdots + |z_n|^2 - \frac{1}{3}) < \phi$ over $\mathbb{D} \times \mathbb{D}_n(\frac{1}{2})$, and $A (|z_1|^2 + \cdots + |z_n|^2 - \frac{1}{3}) > \phi$ away from $\mathbb{D} \times \mathbb{D}_n(\frac{1}{\sqrt{2}})$.
		By the properties of the regularized maximum function from \cite[Lemma I.5.18]{DemCompl}, we see that $\psi$ is well-defined over $\mathbb{D} \times \comp^n$, it is smooth, psh over the fibers of the projection $\mathbb{D} \times \comp^n \to \mathbb{D}$ and it interpolates between $\phi$ and $A (|z_1|^2 + \cdots + |z_n|^2) - \frac{A}{3}$.
		\par 
		We also consider an arbitrary metric tensor $(h_{i, j}^0)_{i, j = 1}^{r}$ over $\mathbb{D} \times \comp^n$, such that $h^0_{i, j}$, $i, j = 1, \ldots, r$, coincide with $h_{i, j}$ over $\mathbb{D} \times \mathbb{D}_n(\frac{1}{2})$, and away from $\mathbb{D} \times \mathbb{D}_n$, they are given by $h^0_{i, j} = \delta_{i j}$, where $\delta_{i j}$ is the Kronecker delta function, giving $1$ if $i = j$ and $0$ otherwise.
		\par 
		We consider now the projection $\pi' : \mathcal{X}' \to \mathbb{D}$, $\mathcal{X}' := \mathbb{D} \times \comp^n$, onto the first coordinate and a holomorphically trivial line bundle $\mathcal{L}'$ (resp. vector bundle $\mathcal{E}'$) over $\mathcal{X}'$, endowed with a non-trivial metric $h^{\mathcal{L}'} = e^{- \psi}$ (resp. with a metric tensor $h^{\mathcal{E}'} = (h^0_{i, j})$, $i, j = 1, \ldots, r$).
		As it was done previously for the family $\pi : \mathcal{X} \to \mathbb{D}$, we denote by $X'_{\tau}$ the fibers of $\pi'$, by $L'_{\tau}$, $E'_{\tau}$,  $|\tau| < 1$, the restriction of $\mathcal{L}'$, $\mathcal{E}'$ to the fibers, etc.
		We define $B'_{\tau, k}(x, x) \in \enmr{\mathcal{E}'_x} \otimes \wedge^{2n} T^* X'_{\tau}$, $x \in X'_{\tau}$, analogously to $B_{\tau, k}(x, x)$, $x \in X_{\tau}$.
		\par 
		Remark that the family $\pi' : \mathcal{X}' \to \mathbb{D}$ is a submersion (in particular, the fibers are smooth) and the Hermitian line bundles $(\mathcal{L}', h^{\mathcal{L}'})$, $(\mathcal{E}', h^{\mathcal{E}'})$ are smooth.
		Also there is $C > 0$, such that
		\begin{equation}\label{eq_curv_unif_bnd}
			\imun R^E_{\tau} \geq - C {\rm{Id}}_{E_{\tau}} \cdot c_1(L_{\tau}, h^L_{\tau}), 
			\qquad
			\imun R^{E'}_{\tau} \geq - C {\rm{Id}}_{E'_{\tau}} \cdot c_1(L'_{\tau}, h^{L'}_{\tau}),
		\end{equation}	
		for any $0 < |\tau| < \frac{1}{2}$.
		The latter assertion holds true due to the fact that the metrics on $E_{\tau}, E'_{\tau}$ (resp. $L_{\tau}, L'_{\tau}$) are constructed by the restriction of the metric on the global manifold $\mathcal{X}$, and the metrics on the line bundles $\mathcal{L}, \mathcal{L}'$ are fiberwise positive.
		Due to (\ref{eq_curv_unif_bnd}), we can apply \cite[Theorem 6.1.1]{MaHol}, which says that there are smooth functions $a'_i \in \ccal^{\infty}(\mathcal{X}', \enmr{\mathcal{E}'} \otimes \wedge^{2n} T^* \mathcal{X}'/\mathbb{D})$, $i \in \nat$, such that for any $j \in \nat$, $x \in \mathcal{X}'$, there are $C > 0$, $k_0 \in \nat$, such that for any $k \geq k_0$, we have
		\begin{equation}\label{eq_toepl_prim}
			\Big|
				\frac{1}{k^n} B'_{\tau, k}(x, x) - \sum_{i = 0}^{j} \frac{a'_i(x)}{k^i}
			\Big|
			\leq
			\frac{C}{k^{j + 1}}.
		\end{equation}
		Moreover, $a_0(x) = {\rm{Id}}_{E_{\tau}} \cdot c_1(L_{\tau}, h^L_{\tau})^n$, for any $x \in \mathcal{X}'$.
		Remark that \cite[Theorem 6.1.1]{MaHol} is stated for a single manifold and not in the family setting, as we apply it here.
		Its proof, however, adapts well to the setting when the metric tensors and complex structures depend smoothly on the parameter and the bound (\ref{eq_curv_unif_bnd}) holds uniformly, see \cite[Theorem 4.1.1]{MaHol}. Hence, the constants $C > 0$, $k_0 \in \nat$ can be chosen uniformly for $x$ varying over compact subsets of $\mathcal{X}'$
		\par 
		By Lemma \ref{lem_spec_gap} and (\ref{eq_curv_unif_bnd}), there are $k_0 \in \nat$, $\mu > 0$, such that for any $k \geq k_0$, $0 < |\tau| < \frac{1}{2}$, we have
		\begin{equation}\label{eq_spec_gap_fammm}
			{\rm{Spec}}(\square_{k, \tau}) \subset \{0\} \cup [\mu k, +\infty[, 
			\qquad
			{\rm{Spec}}(\square'_{k, \tau}) \subset \{0\} \cup [\mu k, +\infty[.
		\end{equation}
		\par 
		We argue that from (\ref{eq_spec_gap_fammm}), Bergman kernels have localization property in the sense of \cite[Proposition 4.1.6]{MaHol}.
		By this we mean that for any $l \in \nat$, there are $C > 0$,  $k_0 \in \nat$, such that for any $0 < |\tau| < \frac{1}{2}$, $k \geq k_0$, $x \in U \cap X_{\tau}$, verifying $h(x) = (\tau, x') \in \mathbb{D} \times \mathbb{D}_n(\frac{1}{4})$, we have
		\begin{equation}\label{eq_toepl_prim2}
			\Big|
				B_{\tau, k}(x, x)
				-
				B'_{\tau, k}(x', x')
			\Big|
			\leq
			\frac{C}{k^l},
		\end{equation}
		where the absolute value is taken with respect to the metric induced by $h^E_{\tau}$ and $c_1(L_{\tau}, h^L_{\tau})$ (which are identified with $h^{E'}_{\tau}$ and $c_1(L'_{\tau}, h^{L'}_{\tau})$ by means of $h$).
		Once this is done, from (\ref{eq_toepl_prim}) and (\ref{eq_toepl_prim2}), the proof of Theorem \ref{lem_toepl_2} would follow.
		\par 
		To establish (\ref{eq_toepl_prim2}), we follow the argument from \cite[Proposition 4.1.6]{MaHol}.
		Let $\epsilon > 0$ be such that for any $0 < |\tau| < \frac{1}{2}$, the geodesic ball (with respect to the metric associated with the Kähler form $c_1(\mathcal{L}, h^{\mathcal{L}})$) around $h^{-1}(\tau, \mathbb{D}_n(\frac{1}{2}))$ lies in the domain of definition of $h$.
		Let $\rho : \real \to [0, 1]$, be a symmetric smooth bump function such that $\rho(v) = 1$ for $|v| \leq \frac{\epsilon}{4}$ and $\rho(v) = 0$ for $|v| \geq \frac{\epsilon}{2}$.
		We define the function $F: \real \to \real$ as
		\begin{equation}
			F(a) = \Big(\int_{- \infty}^{+ \infty}  \rho(v) dv \Big)^{-1} \cdot \int_{- \infty}^{+ \infty} e^{\imun v a} \rho(v) dv.
		\end{equation}
		For $k \in \nat^*$, we set $\phi_k(a) := F(a) \cdot 1_{[\sqrt{\mu k}, + \infty[}$, where $1_{[\sqrt{\mu k}, + \infty[}$ is the indicator function.
		From (\ref{eq_spec_gap_fammm}), the following identities holds
		\begin{equation}\label{eq_phik_decomp}
			\phi_k(\sqrt{\square_{\tau, k}}) + B_{\tau, k}
			=
			F(\sqrt{\square_{\tau, k}}),
			\qquad
			\phi_k(\sqrt{\square'_{\tau, k}}) + B'_{\tau, k}
			=
			F(\sqrt{\square'_{\tau, k}}),
		\end{equation}
		\par By smoothness of $\rho$, for any $m \in \nat$, there is $C > 0$, such that
		$
			\sup_{a \in \real} |a|^m F(a) \leq C
		$.
		Hence, for any $l, m \in \nat$, there are $C > 0$,  $k_0 \in \nat$, such that for any $k \geq k_0$, we have
		$
			\sup_{a \in \real} |a|^m |\phi_k(a)| \leq C / k^l
		$.
		In particular, for any $l, m, r \in \nat$, there are $C > 0$,  $k_0 \in \nat$, such that for any $k \geq k_0$, $0 < |\tau| < \frac{1}{2}$, $s \in \mathscr{C}^{\infty}(X_{\tau}, L^k_{\tau} \otimes E_{\tau} \otimes K_{X_{\tau}})$, $s' \in \mathscr{C}^{\infty}(X'_{\tau}, (L'_{\tau})^k \otimes E'_{\tau} \otimes K_{X'_{\tau}})$, we have
		\begin{equation}\label{eq_local_spec_pt}
		\begin{aligned}
			&\Big\| 
				\square_{\tau, k}^m \phi_k(\sqrt{\square_{\tau, k}}) \square_{\tau, k}^r s
			\Big\|_{L^2}
			\leq
			\frac{C}{k^l} \| s \|_{L^2},
			\\
			&\Big\| 
				(\square'_{\tau, k})^m \phi_k(\sqrt{\square'_{\tau, k}}) (\square'_{\tau, k})^r s'
			\Big\|_{L^2}
			\leq
			\frac{C}{k^l} \| s' \|_{L^2}.
		\end{aligned}
		\end{equation}
		\par 
		Since our families are smooth in the neighborhood of $x_0$ and $x'_0 = h(x_0)$, the uniform version of Sobolev and elliptic estimates in the form \cite[Lemma 1.6.2]{MaHol} hold.
		From this and (\ref{eq_local_spec_pt}), by repeating the argument of the proof of \cite[Proposition 4.1.5 after (4.1.7)]{MaHol}, we see that for any $l \in \nat$, there are $C > 0$,  $k_0 \in \nat$, such that for any $k \geq k_0$, $0 < |\tau| < \frac{1}{2}$, the Schwartz kernels $\phi_k(\sqrt{\square_{\tau, k}})(x, x)$ of $\phi_k(\sqrt{\square_{\tau, k}})$ and $\phi_k(\sqrt{\square'_{\tau, k}})(x', x')$ of $\phi_k(\sqrt{\square'_{\tau, k}})$ for any $x \in U$, $x' \in h(U)$, verify
		\begin{equation}\label{eq_local_spec_pt2}
			\Big| 
			\phi_k(\sqrt{\square_{\tau, k}})(x, x)
			\Big|
			\leq
			\frac{C}{k^l},
			\qquad
			\Big| 
			\phi_k(\sqrt{\square'_{\tau, k}})(x', x')
			\Big|
			\leq
			\frac{C}{k^l}.
		\end{equation}
		\par 
		Now, by the finite propagation speed of solutions of hyperbolic equations, cf. \cite[Theorem D.2.1 and proof of Proposition 4.1.5]{MaHol}, we see that the Schwartz kernel $F(\sqrt{\square_{\tau, k}})(x, x)$ (resp. $F(\sqrt{\square'_{\tau, k}})(x', x')$) of $F(\sqrt{\square_{\tau, k}})$ (resp. $F(\sqrt{\square'_{\tau, k}})$) depends only on the geometry of $(\pi: \mathcal{X} \to \mathbb{D}, \mathcal{L}, h^{\mathcal{L}}, \mathcal{E}, h^{\mathcal{E}})$ (resp. $(\pi': \mathcal{X}' \to \mathbb{D}, \mathcal{L}', h^{\mathcal{L}'}, \mathcal{E}', h^{\mathcal{E}'})$) in $\epsilon$-neighborhood of $x$ (resp. $x'$).
		But since $h$ is a local isometry between $(\pi: \mathcal{X} \to \mathbb{D}, \mathcal{L}, h^{\mathcal{L}}, \mathcal{E}, h^{\mathcal{E}})$ and $(\pi': \mathcal{X}' \to \mathbb{D}, \mathcal{L}', h^{\mathcal{L}'}, \mathcal{E}', h^{\mathcal{E}'})$, by our choice of $\epsilon > 0$, for any $x \in U \setminus X_0$, $h(x) = (\tau, x') \in \mathbb{D} \times \mathbb{D}_n(\frac{1}{4})$, we have
		\begin{equation}\label{eq_local_pt1}
			F(\sqrt{\square_{\tau, k}})(x, x)
			=
			F(\sqrt{\square'_{\tau, k}})(x', x').
		\end{equation}
		From (\ref{eq_phik_decomp}), (\ref{eq_local_spec_pt2}) and (\ref{eq_local_pt1}), we deduce (\ref{eq_toepl_prim2}).
	\end{proof}

\bibliography{bibliography}

		\bibliographystyle{abbrv}

\Addresses

\end{document}